\newtheorem{theorem}{Theorem}[section]
\newtheorem{prop}[theorem]{Proposition}
\numberwithin{equation}{section}
\newenvironment{psmallmatrix}
  {\left(\begin{smallmatrix}}
  {\end{smallmatrix}\right)}
\newcommand{\Reg}{\mathrm{Regular}}
\newcommand{\F}{\mathbb{F}}
\newcommand{\Fq}{\mathbf{F}_q}
\newcommand{\bmf}[1]{\mathbf{#1}}
\newcommand{\lbd}{\lambda}
\newcommand{\Zu}[1]{Z_{U_{#1}(\Fq)}}
\newcommand{\ZT}[1]{Z_{GT_{#1}(\Fq)}}
\newcommand{\Tq}[1]{GT_{#1}(\Fq)}
\newcommand{\tq}[1]{T_{#1}(\Fq)}
\newcommand{\ZU}[1]{Z_{UT_{#1}(\Fq)}}
\begin{document}
\title{Branching rules and commuting probabilities for Triangular and Unitriangular matrices}
\author{Dilpreet Kaur}
\email{dilpreetmaths@gmail.com}
\address{Department of Mathematics,  Indian Institute of Technology (IIT) Jodhpur, NH 65, Surpura Bypass Rd, Karwar, Rajasthan 342037}
\author{Uday Bhaskar Sharma}
\email{udaybsharmaster@gmail.com}
\address{Tata Institute of Fundamental Research, Dr. Homi Bhabha Road, Navy Nagar, Colaba, Mumbai 400005, India} 
\author{Anupam Singh}
\email{anupamk18@gmail.com}
\address{Indian Institute of Science Education and Research (IISER) Pune,  Dr. Homi Bhabha Road Pashan, Pune 411008, India}
\today
\thanks{The third named author would like to acknowledge support of SERB grant CRG/2019/000271 during this project.}
\subjclass[2010]{05A05,20G40,20E45}
\keywords{Triangular group, Unitriangular group, Commuting tuples of matrices, Branching rules, Commuting probability}


\begin{abstract}
This paper concerns the enumeration of simultaneous conjugacy classes of $k$-tuples of commuting matrices in the upper triangular group $GT_n(\mathbf F_q)$ and unitriangular group $UT_m(\mathbf F_q)$ over the finite field $\mathbf F_q$ of odd characteristic. This is done for $n=2,3,4$ and $m=3,4,5$, by computing the branching rules. Further, using the branching matrix thus computed, we explicitly get the commuting probabilities $cp_k$ for $k\leq 5$ in each case.  
\end{abstract}
\maketitle


\section{Introduction}
Simultaneous conjugacy of commuting $k$-tuples in a group is understood by computing its branching matrix. In~\cite{Sh1} and~\cite{SS}, the branching table/matrix of finite general linear, unitary and symplectic groups of small rank is computed. In this paper, we continue the work for certain solvable groups, namely, upper triangular matrices. Since, this work is continuation of that in~\cite{SS}, we refer a reader to the same for definition of branching and other related notation. We work with the groups of upper-triangular invertible matrices, $GT_n(\Fq)$, and the groups of upper unitriangular matrices $UT_n(\Fq)$, over a finite field $\Fq$ of odd characteristic. We compute the branching matrix for $GT_2(\Fq)$ (Theorem~\ref{TheoremGT2}), $GT_3(\Fq)$ (Theorem~\ref{TheoremGT3}), $GT_4(\Fq)$ (Theorem~\ref{TheoremGT4}), $UT_3(\Fq)$ (Theorem~\ref{TheoremUT3}), $UT_4(\Fq)$ (Theorem~\ref{TheoremUT4}) and $UT_5(\Fq)$ (Theorem~\ref{TheoremUT5}). 

Further, for a group $G$, the relation of branching matrix $B_G$ to commuting probabilities $cp_k(G)$ was explored in \cite[Theorem 1.1]{SS}. This relation is further explored in the survey article~\cite{SS2}, where commuting probabilities $cp_k(G)$ up to $k\leq 5$ is computed for $G=GL_2(\Fq)$, $GL_3(\Fq)$, $U_2(\Fq)$ and $U_3(\Fq)$. It was also proved that $cp_k(GL_2(\F_q)) = cp_k(U_2(\Fq))$ for all $k$ even though the branching matrices of the two groups are not same (see Proposition 3.3~\cite{SS2}). In~\cite{GR} (see Theorem 12) bounds for commuting probability $cp_2$, when $G$ is a solvable group or $p$-group, is computed. Using the branching matrix we compute the commuting probabilities $cp_k$, up to $k\leq 5$, for each of the groups $GT_n(\Fq)$ and $UT_n(\Fq)$ for which we have branching matrix (see Section~\ref{SCPk}).
 
For this work, we need {\it conjugacy class types} or {\it $z$-classes} (as defined in~\cite{SS} and also dealt in~\cite{Bh}). This is defined as follows: two matrices are said to be of the same conjugacy class type/$z$-class, if the centralizers of two elements are conjugate. However, a further weaker version is enough for our purpose here. We say that two matrices are of same type if their centralizers are isomorphic. This helps us reduce the size of computation (and size of branching matrix) and causes no loss of generality. Throughout, we assume $q$ is odd. We hope our computation throws some light on the subject of commuting probability and will help us understand the groups better.

\subsection*{Acknowledgments} 
The authors would like to thank Amritanshu Prasad, IMSc Chennai, for his interest in this work.

%

\section{Branching rules for $GT_2(\Fq)$}\label{SGT2}
There are four conjugacy class types in $GL_2(\Fq)$ given by the following partitions (as in~\cite{Sh1}) $(1,1)_2,~(2)_1,~(1)_1(1)_1,~\text{and\ }(1)_2$. We use this to get the same for $GT_2(\Fq)$. Since we are looking at $GT_2(\Fq)$, the last one, $(1)_2$ doesn't exist in $GT_2(\Fq)$. In this paper, we shall not use the partition based nomenclature for the conjugacy class types. Instead we use alphanumeric nomenclature as follows (similar to the pattern in~\cite{SS}).

\begin{center}
\begin{tabular}{|c|c|c|c|} \hline 
Canonical Form & No. of Classes & Centralizer & Name of Class Type\\ \hline
 $\begin{matrix}\begin{psmallmatrix} a & 0 \\ 0 & a \end{psmallmatrix},\\~a\in \Fq^*\end{matrix}$ & $q-1$ &$GT_2(\Fq)$& $C$ \\ \hline
 $\begin{matrix}\begin{psmallmatrix} a & 1 \\ 0 & a \end{psmallmatrix},\\~a\in \Fq^*.\end{matrix}$ & $q-1$ &$\left\{\begin{psmallmatrix}x_0&x_1\\&x_0\end{psmallmatrix} \mid x_0\in \Fq^*\right\}$ & $R_1$\\ \hline
 $\begin{matrix}\begin{psmallmatrix} a & 0 \\ 0 & b \end{psmallmatrix},\\~a,b\in \Fq^*,~a\neq b\end{matrix}$ & $(q-1)(q-2)$ &$\left\{\begin{psmallmatrix}x_0&\\&z_0\end{psmallmatrix} \mid x_0,z_0 \in \Fq^* \right\}$ & $R_2$\\ \hline
\end{tabular}
\end{center}
\vskip2mm

\begin{theorem}\label{TheoremGT2}
The branching rules are summarized in the table below given by the branching matrix:
$$B_{GT_{2}(\Fq)} = \begin{pmatrix} q-1 & 0 & 0 \\ q-1 & q(q-1)&0 \\ (q-1)(q-2)& 0 & (q-1)^2
\end{pmatrix}.$$
\end{theorem}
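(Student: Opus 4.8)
The plan is to read off the branching matrix directly from the definition of branching in \cite{SS}: writing $G=\Tq{2}$ and fixing representatives $x_C,x_{R_1},x_{R_2}$ of the three types in the table above, the $(s,t)$ entry of $B_{G}$ is the number of $Z_t$-conjugacy classes of elements $g\in Z_t$ for which the centralizer $Z_{Z_t}(g)$ (equivalently, the centralizer in $G$ of the commuting pair $\{x_t,g\}$) is of type $s$, where $Z_t=Z_G(x_t)$. Thus the matrix is computed one column at a time, and the whole problem reduces to understanding, for each type $t$, the conjugacy classes of the group $Z_t$ together with the types of the corresponding pair-centralizers.

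First I would justify the three rows of the type table: a short computation with conjugation by $\begin{psmallmatrix} t_1 & s \\ 0 & t_2\end{psmallmatrix}$ shows that every element of $G$ is $G$-conjugate to exactly one of $\begin{psmallmatrix} a & 0 \\ 0 & a\end{psmallmatrix}$, $\begin{psmallmatrix} a & 1 \\ 0 & a\end{psmallmatrix}$, or $\begin{psmallmatrix} a & 0 \\ 0 & b\end{psmallmatrix}$ with $a\neq b$, giving $q-1$, $q-1$ and $(q-1)(q-2)$ classes respectively; the same computation identifies the three centralizers, of orders $q(q-1)^2$, $q(q-1)$ and $(q-1)^2$, and records that the latter two are abelian. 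This also shows the set of three types is closed under branching, since every pair-centralizer that arises is one of these three subgroups.

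Next I would fill in the columns. For $t=C$ the representative is central, so $Z_C=G$ and $Z_{Z_C}(g)=Z_G(g)$ for every $g$; hence column $C$ simply records how the conjugacy classes of $G$ itself split into types, namely $(q-1,\,q-1,\,(q-1)(q-2))^{\mathsf T}$. For $t=R_1$ and $t=R_2$ the centralizer $Z_t$ is abelian, so each of its $|Z_t|$ elements is its own conjugacy class and $Z_{Z_t}(g)=Z_t$ is of type $t$ for all $g\in Z_t$; therefore column $t$ equals $|Z_t|$ times the standard basis vector $e_t$, yielding $q(q-1)$ in the $(R_1,R_1)$ slot and $(q-1)^2$ in the $(R_2,R_2)$ slot and zeros elsewhere. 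Assembling the three columns gives the stated matrix.

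The computations here are all elementary, so there is no serious obstacle; the only points that require care are conceptual rather than technical. First, one must take the centralizer of the commuting pair inside $Z_t$ rather than in $G$: it is precisely this that collapses the two abelian columns onto a single type (an element of $Z_{R_1}$ may well be central in $G$, yet its pair-centralizer is still all of $Z_{R_1}$, of type $R_1$) and makes $B_G$ lower-triangular. Second, one should verify that the three listed types are genuinely all the types and that branching never produces a new centralizer subgroup; both follow from the explicit conjugation computation in the second step.
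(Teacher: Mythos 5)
Your proposal is correct and follows essentially the same route as the paper: the first column records the type decomposition of the conjugacy classes of $GT_2(\Fq)$ since the type-$C$ representative is central, and the two regular columns are diagonal with entries equal to the centralizer orders $q(q-1)$ and $(q-1)^2$ because those centralizers are abelian. Your added remarks (that pair-centralizers must be taken inside $Z_t$, and that no new types arise) are the same implicit facts the paper relies on, just made explicit.
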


We mention the branching rules below.
\begin{prop}
For an upper triangular matrix of type $C$, the branching rules are as mentioned in the table above.
\end{prop}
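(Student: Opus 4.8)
The plan is to exploit the fact that a matrix of type $C$ is a scalar matrix $\begin{psmallmatrix} a & 0 \\ 0 & a\end{psmallmatrix}$ and hence central in $\Tq{2}$, so that its centralizer is the whole group $\Tq{2}$. Branching from a type-$C$ element therefore amounts to nothing more than listing all the conjugacy classes of $\Tq{2}$ and recording, for each class, the type of a representative (equivalently, the isomorphism class of its centralizer in $\Tq{2}$). The three entries of the first column of $B_{\Tq{2}}$ are then precisely the numbers of conjugacy classes of types $C$, $R_1$ and $R_2$, so the proposition reduces to the class count already recorded in the table.

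Taking that classification as the thing to be confirmed, first I would carry out the enumeration by reducing a general element $\begin{psmallmatrix} a & c \\ 0 & b\end{psmallmatrix}$, with $a,b\in\Fq^*$ and $c\in\Fq$, to canonical form under $\Tq{2}$-conjugation. If $a=b$ and $c=0$ the matrix is scalar, giving the $q-1$ classes of type $C$. If $a=b$ and $c\neq 0$, conjugating by a diagonal matrix rescales the superdiagonal entry by an arbitrary element of $\Fq^*$, so every such matrix is conjugate to $\begin{psmallmatrix} a & 1 \\ 0 & a\end{psmallmatrix}$, giving the $q-1$ classes of type $R_1$. If $a\neq b$, conjugating by the unipotent $\begin{psmallmatrix} 1 & x \\ 0 & 1\end{psmallmatrix}$ sends $c$ to $c+x(b-a)$, which can be cleared, so the matrix is conjugate to the diagonal $\begin{psmallmatrix} a & 0 \\ 0 & b\end{psmallmatrix}$; these account for the $(q-1)(q-2)$ classes of type $R_2$. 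A short commutator computation then confirms the three centralizers listed in the table — the full group, the abelian group of order $q(q-1)$, and the diagonal torus of order $(q-1)^2$ — so that the row labels $C$, $R_1$, $R_2$ are correctly attached and the first column of the branching matrix is $(q-1,\ q-1,\ (q-1)(q-2))^{\mathrm{T}}$.

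The branching step itself is immediate once centrality yields $C_{\Tq{2}}(g)=\Tq{2}$; the genuine content is the conjugacy-class enumeration. The only point demanding care is that, in contrast with $GL_2(\Fq)$, a diagonal matrix and the one obtained by swapping its diagonal entries are \emph{not} conjugate in $\Tq{2}$, since conjugation by upper-triangular matrices preserves the standard flag and hence the \emph{ordered} pair of eigenvalues. This is exactly what yields the count $(q-1)(q-2)$ of ordered distinct pairs for type $R_2$, rather than the unordered count one would get in the full linear group, and it is the subtlety I would be most careful to justify.
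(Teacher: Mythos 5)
Your proof is correct and rests on the same observation as the paper's one-line argument: a type $C$ matrix is central, so its centralizer is all of $GT_2(\Fq)$ and the branches are exactly the conjugacy classes listed in the table, with multiplicities equal to the class counts. The additional verification of the class enumeration (including the point that ordered distinct eigenvalue pairs are not conjugate in $GT_2(\Fq)$) is sound but is content the paper delegates to the table preceding the theorem rather than to this proof.
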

\begin{proof}
 The result follows, as this type is central.
\end{proof}

\begin{prop}
For matrices of any of the two regular types:
\begin{itemize}
\item A matrix of type $R_1$ has $q(q-1)$ branches of type $R_1$, and
\item A matrix of type $R_2$ has $(q-1)^2$ branches of type $R_2$.
\end{itemize}
\end{prop}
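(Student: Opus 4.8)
The plan is to read off both branching rules directly from the centralizers already recorded in the table, exploiting the fact that each regular type has an \emph{abelian} centralizer. Recall (following the conventions of~\cite{SS}) that the branches of a class type are obtained by fixing a representative $g$ of that type, forming its centralizer $Z = Z_{\Tq{2}}(g)$, and then sorting the conjugacy classes of $Z$ according to the isomorphism type of the centralizer-within-$Z$ of each class. So I first need the two centralizers explicitly, and then must (i) count the conjugacy classes of each and (ii) determine the type of every such class.

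First I would treat $R_1$. A representative is $\begin{psmallmatrix} a & 1 \\ 0 & a \end{psmallmatrix}$, whose centralizer the table records as
$$A_1 = \left\{\begin{psmallmatrix} x_0 & x_1 \\ 0 & x_0 \end{psmallmatrix} \,\middle|\, x_0 \in \Fq^*,\ x_1 \in \Fq \right\}.$$
A one-line matrix multiplication shows $A_1$ is abelian (the $(1,2)$-entry of a product is the symmetric expression $x_0 y_1 + x_1 y_0$), and plainly $|A_1| = q(q-1)$. Since $A_1$ is abelian, each of its $q(q-1)$ elements is its own conjugacy class, and the centralizer inside $A_1$ of every element is all of $A_1$. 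As $A_1$ is by definition the centralizer attached to type $R_1$, each of these $q(q-1)$ classes is again of type $R_1$ and none branches to $C$ or $R_2$; this produces the column $(0,\,q(q-1),\,0)^{\mathsf T}$ of $B_{\Tq{2}}$, i.e. the first bullet.

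The argument for $R_2$ is identical in structure. The representative $\begin{psmallmatrix} a & 0 \\ 0 & b \end{psmallmatrix}$ with $a\neq b$ has centralizer the diagonal torus $A_2 = \left\{\begin{psmallmatrix} x_0 & 0 \\ 0 & z_0 \end{psmallmatrix} \mid x_0, z_0 \in \Fq^*\right\}$, which is abelian of order $(q-1)^2$. As before, all $(q-1)^2$ elements are singleton classes, each with centralizer-within-$A_2$ equal to $A_2$, hence of type $R_2$, giving the column $(0,\,0,\,(q-1)^2)^{\mathsf T}$.

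The one point that needs care — and really the only conceptual content — is that the type of a branch must be measured by the centralizer taken \emph{inside} the restricted group $A_i$, not inside $\Tq{2}$. This matters for the scalar elements lying in $A_1$ (those with $x_1 = 0$): viewed in $\Tq{2}$ they are central and would look like type $C$, but their centralizer inside $A_1$ is still the whole of $A_1$, so in the iterated branching relevant to commuting tuples they count as type $R_1$. Keeping this restriction in view is exactly what prevents a spurious $C$-branch from appearing and is what makes both of these columns supported only on their diagonal entries.
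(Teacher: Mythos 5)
Your proof is correct and follows the same route as the paper, whose entire argument is the one-line observation that each regular centralizer is commutative, so every element of it is a branch and the count equals the centralizer's order. Your added remark that the branch type is read off from the joint centralizer $Z(A)\cap Z(B)=Z(A)$ (so scalar elements do not produce a spurious $C$-branch) is a correct elaboration of the same idea.
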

\begin{proof}
The centralizer of a matrix of any of the above mentioned regular types is commutative, hence each element of the centralizer is a branch. \end{proof}

\begin{proof}[Proof of Theorem~\ref{TheoremGT2}]
The branching rules stated in the above propositions, are summarised in the the branching matrix, as mentioned in the statement of the thoerem.
\end{proof}

\section{Branching in $GT_3(q)$}

Now, we compute the branching table for $GT_3(\Fq)$. The table for the conjugacy classes and their types are as follows:

\begin{center}
\begin{tabular}{|c|c|c|c|}\hline
Class Representative & Number of Classes & Centralizer size & Name of  Type\\ \hline
$aI_3$, $a \neq 0$ & $q-1$ & $(q-1)^3q^3$ & $C$\\ \hline
$\begin{matrix}\begin{psmallmatrix} a& 1&\\&a&\\&& a\end{psmallmatrix}, \begin{psmallmatrix} a& &\\&a&1\\&& a\end{psmallmatrix} ,\\ a\neq 0\end{matrix}$ & $2(q-1)$ &$(q-1)^2q^2$ & $A_1$ \\ \hline
$\begin{psmallmatrix}a& & 1\\ &a& \\ & & a\end{psmallmatrix}, a \neq 0$ & $q-1$ & $(q-1)^2q^3$ & $A_2$ \\ \hline
$\begin{matrix}\begin{psmallmatrix}a& & \\ &a& \\ & & b\end{psmallmatrix},\begin{psmallmatrix}a& & \\ &b& \\ & & a\end{psmallmatrix},\\ \begin{psmallmatrix}b& & \\ &a& \\ & & a\end{psmallmatrix}, 0\neq a\neq b\neq 0 \end{matrix}$ & $3(q-1)(q-2)$ & $(q-1)^3q$ & $B_1$ \\ \hline
$\begin{psmallmatrix} a& 1&\\&a&1\\&& a\end{psmallmatrix}, a \neq 0$ & $q-1$ & $(q-1)q^2$ & $R_1$ \\ \hline  
$\begin{matrix}\begin{psmallmatrix}a&1 & \\ &a& \\ & & b\end{psmallmatrix},\begin{psmallmatrix}a& &1 \\ &b& \\ & & a\end{psmallmatrix},\\ \begin{psmallmatrix}b& & \\ &a& 1\\ & & a\end{psmallmatrix}, a\neq b \end{matrix}$ & $3(q-1)(q-2)$ &$(q-1)^2q$ & $R_2$ \\ \hline
$\begin{matrix}\begin{psmallmatrix} a& &\\&b&\\&& c\end{psmallmatrix}\\ a\neq b \neq c \neq a \end{matrix}$  & $(q-1)(q-2)(q-3)$ & $(q-1)^3$ & $R_3$ \\ \hline 
\end{tabular}
\end{center}
\vskip2mm
The branching rules are described by the branching matrix as follows. 
\begin{theorem}\label{TheoremGT3}
The branching matrix for the group $GT_3(\Fq)$ with types written in the order $\{C, A_1, A_2, B_1, R_1, R_2, R_3\}$ is $B_{GT_3(\Fq)}$
$$= \begin{psmallmatrix}
     q-1 & 0 & 0 & 0 & 0 & 0 & 0  \\
     2(q-1) & q(q-1) & 0 & 0 & 0 &0 & 0 \\
     q-1 & 0 & q(q-1) & 0 & 0&0 &0\\
     3(q-1)(q-2) & 0 & 0 & (q-1)^2 & 0&0 &0\\
     q-1 & q(q-1) & q^2-1 & 0 & (q-1)q^2 & 0 &0\\
     3(q-1)(q-2)& q(q-1)(q-2) & q(q-1)(q-2) & (q-1)^2 & 0 & (q-1)^2q& 0\\
     (q-1)(q-2)(q-3) & 0 & 0 & (q-1)^2(q-2) &0 & 0&(q-1)^3
    \end{psmallmatrix}.
$$
\end{theorem}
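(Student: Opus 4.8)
The plan is to read off the branching matrix one column at a time. Fixing a type $t$ with representative $g$ and centralizer $H_t=Z_{\Tq{3}}(g)$, adjoining a commuting element amounts to choosing $h\in H_t$ up to $H_t$-conjugacy; the centralizer of the enlarged tuple is then $Z_{H_t}(h)=H_t\cap Z_{\Tq{3}}(h)$, and the new tuple receives the type whose standard centralizer is isomorphic to $Z_{H_t}(h)$. Thus the $(s,t)$ entry of $B_{\Tq{3}}$ is the number of $H_t$-conjugacy classes $[h]$ for which $Z_{H_t}(h)$ has type $s$. The first thing I would record is that the seven centralizer orders in the table, $(q-1)^3q^3,\ (q-1)^2q^2,\ (q-1)^2q^3,\ (q-1)^3q,\ (q-1)q^2,\ (q-1)^2q,\ (q-1)^3$, are pairwise distinct as polynomials in $q$. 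Hence the type of the enlarged tuple is detected by the single number $|Z_{H_t}(h)|$, and each column reduces to sorting the conjugacy classes of $H_t$ by the order of the centralizer of a class representative in $H_t$.

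With this reduction, five of the seven columns are nearly free. For the central type $C$ one has $H_C=\Tq{3}$, so $Z_{H_C}(h)=Z_{\Tq{3}}(h)$ and the new type is just the $\Tq{3}$-type of $h$; the $C$-column is therefore the list of class numbers in the table. The three regular types $R_1,R_2,R_3$ have abelian centralizers (respectively the commutative algebra $\Fq[N]^\times$ generated by the regular nilpotent, of order $(q-1)q^2$; the group $\{\,\begin{psmallmatrix}x&u&0\\0&x&0\\0&0&z\end{psmallmatrix}\,\}$ of order $(q-1)^2q$; and the diagonal torus of order $(q-1)^3$), so $Z_{H_t}(h)=H_t$ for every $h$ and all of its elements are singleton classes branching back to $t$, producing the three diagonal columns at once. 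The type $B_1$ is almost as easy once one observes $H_{B_1}\cong\Tq{2}\times\Fq^\times$: a class $(h_2,\zeta)$ has centralizer $Z_{\Tq{2}}(h_2)\times\Fq^\times$, so the $\Tq{2}$-types $C,R_1,R_2$ of $h_2$ (with $q-1,\ q-1,\ (q-1)(q-2)$ classes, by Theorem~\ref{TheoremGT2}) give centralizers of orders $(q-1)^3q,\ (q-1)^2q,\ (q-1)^3$, i.e.\ types $B_1,R_2,R_3$; multiplying each count by $|\Fq^\times|=q-1$ yields the column $(0,0,0,(q-1)^2,0,(q-1)^2,(q-1)^2(q-2))^{\mathsf T}$.

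The real content lies in $A_1$ and $A_2$, whose centralizers are non-abelian. For $A_1$ one has $H_{A_1}=\{\,\begin{psmallmatrix}x&y&z\\0&x&0\\0&0&w\end{psmallmatrix}:x,w\in\Fq^\times,\ y,z\in\Fq\,\}$, and I would compute conjugation explicitly: the diagonal $(x,x,w)$ and the entry $y$ are invariants, while $z\mapsto(a/d)z+(c/d)(w-x)$. Splitting on whether $x=w$ then controls everything. When $x\neq w$ the entry $z$ can be cleared and the stabilizer has order $(q-1)^2q$ (type $R_2$), giving $q(q-1)(q-2)$ classes; when $x=w$ and $z=0$ the element is central in $H_{A_1}$ (since $E_{12}$ is central there), of type $A_1$, giving $q(q-1)$ classes; and when $x=w,\ z\neq 0$ the stabilizer has order $(q-1)q^2$ (type $R_1$), giving $q(q-1)$ classes. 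This reproduces the $A_1$-column. The same strategy is then applied to $A_2$, whose centralizer $H_{A_2}=\{\,\begin{psmallmatrix}x&p&r\\0&y&s\\0&0&x\end{psmallmatrix}\,\}$ of order $(q-1)^2q^3$ is a larger, Heisenberg-type group with three unipotent coordinates and diagonal $(x,y,x)$.

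The hard part will be precisely this $A_2$ orbit analysis: one must track $p,r,s$ under conjugation, and the degeneration as $x$ approaches $y$ splits the count so as to produce the irregular entry $q^2-1=(q-1)(q+1)$ in the $R_1$-slot rather than a clean multiple of $q(q-1)$. I would organize it by first fixing the diagonal, treating $x\neq y$ and $x=y$ separately, and within $x=y$ distinguishing whether the adjoined element is of square-zero type; each stratum then contributes classes whose $H_{A_2}$-centralizer order marks it as $A_2$, $R_1$, or $R_2$. As running consistency checks I would verify that each column sums to the number of conjugacy classes of $H_t$ (for instance $q^2(q-1)$ for $A_1$ and $(q-1)(q^2+1)$ for $A_2$), and that every centralizer $Z_{H_t}(h)$ encountered is genuinely $\Tq{3}$-conjugate to one of the seven standard ones, confirming that no tuple-type outside the listed seven ever arises. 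Assembling the seven columns gives the asserted matrix $B_{\Tq{3}}$.
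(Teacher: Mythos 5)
Your proposal follows essentially the same route as the paper: branch type by type, compute $Z_{H_t}(h)$ for the conjugacy classes $h$ of each centralizer $H_t$, use the block decomposition $H_{B_1}\cong GT_2(\Fq)\times\Fq^*$ together with Theorem~\ref{TheoremGT2} for the $B_1$ column, and dispose of the central and regular columns immediately; your $A_1$ case split ($x=w$, $z=0$ central; $x=w$, $z\neq0$ giving $R_1$; $x\neq w$ giving $R_2$) reproduces the paper's Proposition~\ref{T3A1} exactly. Your extra observation that the seven centralizer orders are pairwise distinct (so the order alone pins down the type, once each centralizer is checked to be one of the standard seven) is a convenient bookkeeping device the paper does not state, and your treatment of the $A_2$ column, though only a plan, is at the same level of detail as the paper's own one-line reduction in Proposition~\ref{T3A2}, so there is no substantive gap relative to the published argument.
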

 
\begin{prop}\label{T3C}
 For an upper triangular matrix of type $C$, the branches are as in the second column of the table in the the opening paragraph of this section.
\end{prop}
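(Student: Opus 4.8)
The plan is to use the fact that a matrix of type $C$ is a scalar matrix $aI_3$ with $a\in\Fq^*$, hence central in $\Tq{3}$, so that its centralizer is the whole group, $\ZT{3}(aI_3)=\Tq{3}$. By definition, the branching of a tuple records how the extended tuples, obtained by appending one further commuting element drawn from the centralizer, distribute among the conjugacy class types, counted up to conjugacy by the centralizer of the original tuple. For a central starting element this centralizer is all of $\Tq{3}$, so the branches are simply the conjugacy classes of $\Tq{3}$ itself, sorted by type.

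First I would note that, because the initial element is central, the centralizer of the extended pair $(aI_3,g)$ equals $\ZT{3}(g)$ for every $g\in\Tq{3}$. Thus the type of $(aI_3,g)$ coincides with the type of $g$ regarded as a single element of $\Tq{3}$, and two such pairs are conjugate exactly when $g$ and $g'$ are conjugate in $\Tq{3}$. In particular the classification of the branches reduces verbatim to the classification of single elements already carried out in the table.

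It then follows that the number of branches of each type is precisely the number of $\Tq{3}$-conjugacy classes of that type, which is exactly the data in the ``Number of Classes'' column of the table opening this section (equivalently, the first column of $B_{\Tq{3}}$): $q-1$ classes of type $C$, $2(q-1)$ of type $A_1$, $q-1$ of type $A_2$, $3(q-1)(q-2)$ of type $B_1$, and so on through $R_3$.

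Since each step merely unwinds the definition of branching in the presence of centrality, there is no substantive obstacle. The only point deserving care is to verify that the weaker notion of type by isomorphic centralizers, adopted throughout the paper, neither merges nor separates any of the seven single-element classes in a way that would alter the tallies; this is confirmed by comparing against the distinct centralizer sizes recorded in the third column of the table.
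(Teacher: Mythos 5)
Your proposal is correct and matches the paper's argument: the paper's proof is the one-line observation that type $C$ matrices are central, and your write-up simply unwinds why centrality makes the branches coincide with the conjugacy classes of $\Tq{3}$ listed in the table. No further comment is needed.
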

\begin{proof}
 The result follows, since the matrices of type $C$ are central.
\end{proof}

\begin{prop}\label{T3A1}
 An upper triangular matrix of type $A_1$ has $q(q-1)$ branches of type $A_1$, $q(q-1)$ branches of type $R_1$, and $q(q-1)(q-2)$ branches of type $R_2$.
\end{prop}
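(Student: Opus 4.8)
The plan is to identify the branches of a type-$A_1$ element $g$ with the $Z$-conjugacy classes of its centralizer $Z:=C_{GT_3(\Fq)}(g)$, each class of a representative $h$ being labelled by the $GT_3(\Fq)$-type whose centralizer is isomorphic to the joint centralizer $C_{GT_3(\Fq)}(g)\cap C_{GT_3(\Fq)}(h)=C_Z(h)$; this is the same labelling convention already verified against the $GT_2(\Fq)$-table. First I would compute $Z$ for $g=aI_3+E_{12}$: commuting with $g$ is the same as commuting with $E_{12}$, which forces exactly $x_{11}=x_{22}$ and $x_{23}=0$, so
$$Z=\left\{\begin{psmallmatrix} s & u & v \\ 0 & s & 0 \\ 0 & 0 & t\end{psmallmatrix} : s,t\in\Fq^*,\ u,v\in\Fq\right\},$$
of order $(q-1)^2q^2$. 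Writing $(s,t,u,v)$ for this matrix, the multiplication reads $(s,t,u,v)(s',t',u',v')=(ss',tt',su'+us',sv'+vt')$. The second type-$A_1$ representative $aI_3+E_{23}$ is interchanged with the first by the anti-transpose automorphism $x\mapsto J(x^{-1})^{T}J$ of $GT_3(\Fq)$, so it suffices to treat $g=aI_3+E_{12}$.

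Next I would read off the conjugation action. Using $(s,t,u,v)^{-1}=(s^{-1},t^{-1},-us^{-2},-v(st)^{-1})$, a direct computation shows that conjugating $(s_1,t_1,u_1,v_1)$ by $(s,t,u,v)$ fixes $s_1,t_1,u_1$ and moves the last coordinate to $\frac{sv_1+v(t_1-s_1)}{t}$. Hence $s_1,t_1,u_1$ are class invariants, and the classes are enumerated by the orbit of the last coordinate, which splits into three cases: if $s_1\neq t_1$ the coordinate $v(t_1-s_1)/t$ sweeps out all of $\Fq$, giving $q(q-1)(q-2)$ classes indexed by $(s_1,t_1,u_1)$; if $s_1=t_1$ and $v_1=0$ the element is central in $Z$, giving $q(q-1)$ singleton classes; and if $s_1=t_1$ and $v_1\neq0$ the coordinate $sv_1/t$ sweeps out $\Fq^*$, giving $q(q-1)$ classes indexed by $(s_1,u_1)$. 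A bookkeeping check then confirms that the total $q^2(q-1)$ equals the $A_1$-column sum of $B_{GT_3(\Fq)}$ and that the class sizes recover $|Z|=(q-1)^2q^2$.

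For the type of each branch I would read $C_Z(h)$ off the same formula. In the central case $C_Z(h)=Z$ has order $(q-1)^2q^2$, the type-$A_1$ centralizer order, yielding the $q(q-1)$ branches of type $A_1$. When $s_1=t_1,\ v_1\neq0$, the fixed-point condition $(t-s)v_1=0$ forces $s=t$, so $C_Z(h)=\{(s,s,u,v)\}$ has order $(q-1)q^2$; when $s_1\neq t_1$ it forces $v=\frac{(t-s)v_1}{t_1-s_1}$, so $C_Z(h)$ has order $(q-1)^2q$. These are the type-$R_1$ and type-$R_2$ centralizer orders respectively, giving the remaining $q(q-1)$ and $q(q-1)(q-2)$ branches. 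Since among the seven types the three orders $(q-1)^2q^2$, $(q-1)q^2$, $(q-1)^2q$ occur only for $A_1$, $R_1$, $R_2$, the order alone pins down the type, and in particular no branches of type $C$, $A_2$, $B_1$ or $R_3$ arise.

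I expect the genuine obstacle to be the case $s_1=t_1,\ v_1\neq0$: there one must confirm the isomorphism $C_Z(h)=\{(s,s,u,v)\}\cong C_{GT_3(\Fq)}(\text{type-}R_1\text{ rep})$ rather than merely the coincidence of orders. Both groups are abelian of order $(q-1)q^2$, and the identification reduces to checking that each has an elementary abelian Sylow $p$-subgroup of order $q^2$. For the $R_1$-centralizer, realised as the units $1+at+bt^2$ of $\Fq[t]/(t^3)$, this is exactly where the standing hypothesis that $q$ is odd enters, since $(1+at+bt^2)^p=1+a^pt^p+\cdots=1$ precisely because $p\geq3$ makes $t^p=0$ in $\Fq[t]/(t^3)$.
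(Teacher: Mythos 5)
Your proposal is correct and follows essentially the same route as the paper: fix the canonical form $aI_3+E_{12}$, compute its centralizer $Z$, and classify the $Z$-conjugacy classes via the single relation $x_0a_2+x_2(c_0-a_0)=z_0a_2'$ (your conjugation formula for the last coordinate is exactly this equation), splitting into the cases $s_1=t_1$, $v_1=0$ (type $A_1$), $s_1=t_1$, $v_1\neq 0$ (type $R_1$) and $s_1\neq t_1$ (type $R_2$). The only substantive addition is your explicit verification that $\{(s,s,u,v)\}$ is isomorphic to the $R_1$-centralizer (using $q$ odd), a point the paper passes over by noting only commutativity; this extra care is harmless and, for the regular types, the centralizer order is all that the branching matrix actually uses.
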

\begin{proof}
 Let $A = \begin{pmatrix}a&1&\\&a&\\&&a\end{pmatrix}$, a matrix of type $A_1$. The centralizer of $A$ is: $\ZT{3}(A) = \left\{\begin{pmatrix}x_0&x_1&x_2\\&x_0&\\&&z_0\end{pmatrix}\mid x_0, z_0\neq 0 \right\}$. Let $X = \begin{pmatrix}x_0&x_1&x_2\\&x_0&\\&&z_0\end{pmatrix}$, be an invertible member of $\ZT{3}(A)$. Let $B = \begin{pmatrix}a_0&a_1&a_2\\&a_0&\\&&c_0\end{pmatrix}$, and $B' = \begin{pmatrix}a_0&a'_1&a'_2\\&a_0&\\&&c_0\end{pmatrix} = XBX^{-1}$. Thus equating $XB = B'X$ leads us to the following equations:
 
 \begin{eqnarray}
 a'_1 &=& a_1\\
 x_0a_2 + x_2c_0 &=& x_2a_0 + z_0a'_2 \label{ET3A1}
 \end{eqnarray}
 
\noindent{\bf Case: $a_0 = c_0$}. Here, equation~\ref{ET3A1} becomes $x_0a_2 = z_0a'_2$. When $a_2 = 0$, then, we have $B$ reduced to $\begin{pmatrix}a_0&a_1&\\&a_0&\\&&a_0\end{pmatrix}$, with $\ZT{3}(A,B) = \ZT{3}(A)$. Thus $(A,B)$ is of type $A_1$, and there are $q(q-1)$ such branches.

When $a_2 \neq 0$, choose $z_0$ so that $a_2 = 1$. Then $B$ is reduced to $\begin{pmatrix}a_0&a_1&1\\&a_0&\\&&a_0\end{pmatrix}$, and $\ZT{3}(A,B) =\left\{\begin{pmatrix}x_0&x_1&x_2\\&x_0&\\&&x_0\end{pmatrix} \right\}$. This subgroup is commutative. Thus $(A,B)$ is of type $R_1$, and there are $q(q-1)$ such branches. There are no further cases to see here.
\noindent{\bf Case: $a_0 \neq c_0$}. In Equation~\ref{ET3A1}, choose $x_2$ so that $a'_2 = 0$. Thus, $B$ is reduced to $\begin{pmatrix}a_0&a_1&\\&a_0&\\&&c_0\end{pmatrix}$, and $\ZT{3}(A,B) =\left\{\begin{pmatrix}x_0&x_1&\\&x_0&\\&&z_0\end{pmatrix} \right\}$. This subgroup is commutative. Thus $(A,B)$ is of type $R_2$, and there are $q^2(q-1) = q^3-q^2$ such branches.

These are all the cases here. Thus, we have a total of $q^2 + q^3-q^2 = q^3$ branches of type $R$.
\end{proof}
\begin{prop}\label{T3A2}
 An upper triangular matrix of type $A_2$ has $q(q-1)$ branches of type $A_2$, and $q^2-1$ branches of type $R_1$, and $q(q-1)(q-2)$ branches of type $R_2$.
\end{prop}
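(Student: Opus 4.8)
The plan is to follow the template of the proof of Proposition~\ref{T3A1}: fix the representative $A=\begin{pmatrix}a&0&1\\0&a&0\\0&0&a\end{pmatrix}$ of type $A_2$, determine its centralizer explicitly, and then enumerate the $\ZT{3}(A)$-conjugacy classes of a general $B\in\ZT{3}(A)$, recording the type of each pair $(A,B)$ through the simultaneous centralizer $\ZT{3}(A,B)=\ZT{3}(A)\cap\ZT{3}(B)$. A direct commutation computation gives $\ZT{3}(A)=\left\{X=\begin{pmatrix}x_0&x_1&x_2\\0&y_0&y_1\\0&0&x_0\end{pmatrix}\mid x_0,y_0\neq0\right\}$, of the expected order $(q-1)^2q^3$; the structural fact that drives everything is that $E_{13}$ is central in $\ZT{3}(A)$. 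Writing $B=\begin{pmatrix}a_0&a_1&a_2\\0&b_0&b_1\\0&0&a_0\end{pmatrix}$ and $B'=XBX^{-1}$, I would equate $XB=B'X$ entrywise to obtain the relations $a_1'y_0=x_0a_1+x_1(b_0-a_0)$, $b_1'x_0=y_0b_1+y_1(a_0-b_0)$, and $a_2'x_0=x_0a_2+x_1b_1-a_1'y_1$, which govern how the off-diagonal entries move.

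The enumeration splits on the diagonal of $B$. When $a_0\neq b_0$, the first two relations let me choose $x_1,y_1$ to annihilate $a_1$ and $b_1$, reducing $B$ to $\begin{pmatrix}a_0&0&a_2''\\0&b_0&0\\0&0&a_0\end{pmatrix}$; here $a_2''$ becomes a conjugation invariant (forcing $x_1=y_1=0$ thereafter), the stabiliser is $\left\{\begin{pmatrix}x_0&0&x_2\\0&y_0&0\\0&0&x_0\end{pmatrix}\right\}$ of order $(q-1)^2q$, so these pairs have type $R_2$, and counting $(a_0,b_0,a_2'')$ gives $q(q-1)(q-2)$ branches. When $a_0=b_0$, I split by the nilpotent part $N=B-a_0I$: if $a_1=b_1=0$ then $B=a_0I+a_2E_{13}$ is central in $\ZT{3}(A)$, hence fixed, giving $q(q-1)$ branches of type $A_2$ (one for each $(a_0,a_2)$); if exactly one of $a_1,b_1$ is nonzero, the relations reduce $B$ to $a_0I+E_{12}$ or $a_0I+E_{23}$, whose simultaneous centralizer has order $(q-1)q^2$ (type $R_1$), giving $2(q-1)$ branches; and if both $a_1,b_1$ are nonzero then $B$ is regular, so $\ZT{3}(B)$ is commutative and contained in $\ZT{3}(A)$, again of type $R_1$.

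The step I expect to be the main obstacle is this last subcase. Unlike conjugacy in the full group $GT_3(\Fq)$, the torus available inside $\ZT{3}(A)$ acts on $(a_1,b_1)$ only through $a_1\mapsto ta_1$, $b_1\mapsto t^{-1}b_1$, so the product $a_1b_1$ (the $E_{13}$-coefficient of $N^2$) is a genuine invariant that cannot be rescaled to $1$; after using the remaining freedom to normalise $a_2$ to $0$, the orbits are parametrised by $(a_0,a_1b_1)\in\Fq^*\times\Fq^*$, yielding $(q-1)^2$ further branches of type $R_1$. I would confirm this by an orbit-size count: each such class has size $(q-1)q$, matching the $(q-1)^2q$ elements with fixed $a_0$ and both $a_1,b_1\neq0$. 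Adding up, the type-$R_1$ branches total $(q-1)^2+2(q-1)=q^2-1$, which together with $q(q-1)$ of type $A_2$ and $q(q-1)(q-2)$ of type $R_2$ gives the claimed counts; a global check that all orbit sizes sum to $|\ZT{3}(A)|=(q-1)^2q^3$ provides reassurance that no class has been missed.
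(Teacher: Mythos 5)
Your proposal is correct and follows essentially the same route as the paper: the same canonical form $aI+E_{13}$, the same centralizer $\left\{\begin{psmallmatrix}x_0&x_1&x_2\\&y_0&y_1\\&&x_0\end{psmallmatrix}\right\}$, and the same three commutation relations, which the paper states and then summarizes with ``using these we reduce $B$ to the mentioned branches.'' Your write-up simply carries out the case analysis the paper omits, correctly isolating the invariant $a_1b_1$ in the subcase $a_0=b_0$ with both superdiagonal entries nonzero, which is exactly what produces the count $q^2-1=2(q-1)+(q-1)^2$ for type $R_1$.
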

\begin{proof}
 Given $A = \begin{pmatrix}a& & 1\\ &a&\\&&a\end{pmatrix}$, the canonical form of a matrix of type $A_2$. The centralizer of $A$, $\ZT{3}(A)$ is $\left\{ \begin{pmatrix}x_0&x_1&x_2\\&y_0&y_1\\&&x_0\end{pmatrix}\mid x_0,y_0\neq 0\right\}$. Let $X =  \begin{pmatrix}x_0&x_1&x_2\\&y_0&y_1\\&&x_0\end{pmatrix}\in \ZT{3}(A)$. Let $B = \begin{pmatrix}a_0&a_1&a_2\\&b_0&b_1\\&&a_0\end{pmatrix}$, and $B' = \begin{pmatrix}a_0&a'_1&a'_2\\&b_0&b'_1\\&&a_0\end{pmatrix}= XBX^{-1}$. Thus equating $XB = B'X$ gives us the following equations:
 
  \begin{eqnarray}
  x_0a_1+x_1b_0&=& x_1a_0 +y_0a'_1\\
  y_0b_1+y_1a_0&=& x_0b'_1 + y_1b_0\\
  x_0a_2+x_1b_1&=& x_0a'_2 + y_1a'_1
  \end{eqnarray}
 
Using these we reduce $B$ to the mentioned branches. 
\end{proof}
\begin{prop}\label{T3B1}
 An upper triangular matrix of type $B_1$ has $(q-1)^2$ branches of type $B_1$, and $(q-1)^2$ branches of type $R_2$, and $(q-1)^2(q-2)$ branches of type $R_3$.
\end{prop}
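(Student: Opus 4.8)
The plan is to exploit the block structure of the centralizer of a type $B_1$ matrix, reducing the computation to the already-known branching in $GT_2(\Fq)$ from Section~\ref{SGT2}. First I would fix the representative $A = \begin{psmallmatrix} a&&\\&a&\\&&b\end{psmallmatrix}$ with $a\neq b$ (placing the repeated eigenvalue in the adjacent top-left positions makes the centralizer cleanest), and compute $\ZT{3}(A)$. Solving $XA = AX$ forces the $(1,3)$ and $(2,3)$ entries of $X$ to vanish, so that $\ZT{3}(A) = \left\{ \begin{psmallmatrix} x_0 & x_1 & 0 \\ & y_0 & 0 \\ & & z_0 \end{psmallmatrix} \mid x_0,y_0,z_0 \in \Fq^*,\ x_1 \in \Fq \right\}$, which is isomorphic to $GT_2(\Fq)\times \Fq^*$ with the $GT_2(\Fq)$ factor occupying the top-left $2\times 2$ block and $\Fq^*$ the $(3,3)$ slot; this has order $(q-1)^3 q$ as in the table.

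Next I would write a branch $B = \begin{psmallmatrix} a_0 & a_1 & 0 \\ & b_0 & 0 \\ & & c_0 \end{psmallmatrix} \in \ZT{3}(A)$ and observe that conjugation by $X \in \ZT{3}(A)$ acts on $B$ exactly as $GT_2(\Fq)$-conjugation on the top-left block $\begin{psmallmatrix} a_0 & a_1 \\ & b_0 \end{psmallmatrix}$ while fixing $c_0$, with the $z_0$-component acting trivially. Hence the $\ZT{3}(A)$-conjugacy classes of branches are indexed by a $GT_2(\Fq)$-conjugacy class of the block together with an arbitrary value $c_0 \in \Fq^*$. Invoking the classification of Section~\ref{SGT2}, the block is of exactly one of the three $GT_2(\Fq)$-types $C$, $R_1$, $R_2$, contributing $q-1$, $q-1$, and $(q-1)(q-2)$ classes respectively, each to be multiplied by the $q-1$ choices of $c_0$.

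The heart of the argument is to identify the joint centralizer $\ZT{3}(A,B)$ in each case. Imposing $XB = BX$ inside $\ZT{3}(A)$ collapses to the single scalar condition $a_1(x_0 - y_0) = x_1(a_0 - b_0)$, with $z_0$ always free. When the block is scalar (type $C$) this condition is vacuous, so $\ZT{3}(A,B) = \ZT{3}(A)$ and the pair is again of type $B_1$; when the block is of type $R_1$ it forces $x_0 = y_0$, yielding a group of order $(q-1)^2 q$ which coincides with the tabulated $R_2$-centralizer; and when the block is of type $R_2$ it forces $x_1 = 0$, yielding the diagonal torus of order $(q-1)^3$, the $R_3$-centralizer. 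Since $c_0$ never enters this condition, it affects only the count of classes, not their type. Combining, the branches number $(q-1)^2$ of type $B_1$, $(q-1)^2$ of type $R_2$, and $(q-1)(q-2)\cdot(q-1)=(q-1)^2(q-2)$ of type $R_3$, as claimed.

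The only genuine care needed — the main obstacle — is the bookkeeping that separates the role of $c_0$, which distinguishes conjugacy classes but is invisible to the centralizer, from the role of the block type, which determines the type of the pair; together with confirming that the two reduced centralizers are genuinely isomorphic to the tabulated $R_2$- and $R_3$-centralizers and not merely of the right order. Both points are settled directly by comparing the explicit matrix forms above with those computed for $R_2$ and $R_3$.
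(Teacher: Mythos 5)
Your proof is correct and takes essentially the same approach as the paper: both identify $\ZT{3}(A)$ with $\Tq{2}\times \Fq^*$ in block form and read off the branches as pairs consisting of a conjugacy class of $\Tq{2}$ in the top-left block together with a scalar $c_0\in\Fq^*$, the type being determined by the block. You simply make explicit the centralizer computations that the paper's one-line proof leaves implicit.
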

\begin{proof}
 One of the canonical forms of an upper triangular matrix of type $B_1$ is $A = \begin{pmatrix}aI_2 & \\ & b\end{pmatrix}$, where $a \neq b \in \Fq^*$. Hence the centralizer of $A$ is $$\ZT{3}(A)  = \left\{\begin{pmatrix}X& \\ & z_0\end{pmatrix} \mid X\in \Tq{2}, z_0 \neq 0 \right\}.$$ Thus the branches of $A$ are of the form $\begin{pmatrix}C & \\ & d\end{pmatrix}$, where $d\neq 0$, and $C$ is a conjugacy class of $\Tq{2}$. Hence, the result.
\end{proof}
\begin{prop}\label{T3R}
For matrices of the $\Reg$ types:
\begin{itemize}
\item A matrix of type $R_1$ has $(q-1)q^2$ branches of type $R_1$.
\item For type $R_2$, there are $(q-1)^2q$ branches of type $R_2$.
\item For type $R_3$, there are $(q-1)^3$ branches of type $R_3$
\end{itemize}
\end{prop}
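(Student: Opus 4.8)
The plan is to exploit the single structural fact shared by all three types: each is a \Reg\ type, i.e.\ the centralizer $\ZT{3}(A)$ of a representative $A$ is commutative. Once this is established, the branching is forced by exactly the mechanism already used for the regular types of $GT_2(\Fq)$: if $\ZT{3}(A)$ is abelian and $B\in\ZT{3}(A)$, then $\ZT{3}(A)\sbteq\ZT{3}(B)$, so the joint centralizer satisfies $\ZT{3}(A,B)=\ZT{3}(A)$. Hence every branch has the same centralizer as $A$, and therefore the same type, while the conjugation action of $\ZT{3}(A)$ on itself is trivial, making each element of $\ZT{3}(A)$ a distinct branch. The number of branches is then exactly $|\ZT{3}(A)|$.

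First I would write down the centralizer explicitly in each case and read off that it is abelian. For $R_1$, with $A=\begin{psmallmatrix} a&1&\\&a&1\\&&a\end{psmallmatrix}$, the matrix is a single Jordan block (up to the scalar $a$), so $\ZT{3}(A)$ consists of the invertible polynomials $x_0I+x_1N+x_2N^2$ in the nilpotent part $N$; this algebra is commutative and has order $(q-1)q^2$. For $R_2$, with $A=\begin{psmallmatrix}a&1&\\&a&\\&&b\end{psmallmatrix}$ and $a\neq b$, the centralizer reduces to $\left\{\begin{psmallmatrix}x_0&x_1&\\&x_0&\\&&z_0\end{psmallmatrix}\mid x_0,z_0\neq 0\right\}$ of order $(q-1)^2q$, which is again abelian. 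For $R_3$, with $A=\begin{psmallmatrix}a&&\\&b&\\&&c\end{psmallmatrix}$ and $a,b,c$ distinct, I would check that any upper-triangular matrix commuting with $A$ is forced to be diagonal, whence $\ZT{3}(A)=\left\{\begin{psmallmatrix}x_0&&\\&y_0&\\&&z_0\end{psmallmatrix}\mid x_0,y_0,z_0\neq 0\right\}$ of order $(q-1)^3$, which is commutative.

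With the three centralizers in hand, the proposition follows at once from the mechanism above: each representative contributes $|\ZT{3}(A)|$ self-branches and no branches of any other type, giving $(q-1)q^2$ branches of type $R_1$, $(q-1)^2q$ branches of type $R_2$, and $(q-1)^3$ branches of type $R_3$, as claimed.

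The computations here are genuinely routine, so there is no serious obstacle; the only point demanding a line of work is the commutativity for $R_3$, where one must use that the eigenvalues $a,b,c$ are pairwise distinct to rule out nonzero off-diagonal entries in a commuting upper-triangular matrix. I would also note explicitly that commutativity is precisely what guarantees distinct centralizer elements remain inequivalent as branches (the conjugation action being trivial), so that the branch count is the full centralizer order rather than a count of smaller conjugacy classes.
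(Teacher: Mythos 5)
Your argument is correct and is essentially the paper's own proof: the paper likewise observes that the centralizers of the regular types are commutative, so every centralizer element is a self-branch and the branch count is the centralizer order listed in the conjugacy class table. You merely make explicit the centralizer computations and the reason commutativity forces $\ZT{3}(A,B)=\ZT{3}(A)$, which the paper leaves implicit.
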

\begin{proof}
 The result follows, as the centralizers of matrices of any of the $\Reg$ types are commutative.
\end{proof}
\begin{proof}[Proof of Theorem~\ref{TheoremGT3}]
 From the data in Propositions~\ref{T3C} to~\ref{T3R}, the branching rules are summarized to the branching table/matrix described in the statement of the theorem.
\end{proof}

\section{Branching for $GT_4(q)$}
In this section, we discuss the simultaneous conjugacy classes of tuples of commuting matrices of $GT_4(\Fq)$. The conjugacy classes of $GT_4(\Fq)$ is described in Appendix~\ref{CCGT4}. The branching rules are as follows (types written in the order listed in last column of Appendix~\ref{CCGT4}):
\begin{theorem}\label{TheoremGT4}
The branching matrix for $GT_4(\Fq)$ is of size $28$ ($22$ types of $GT_4(\Fq)$ and $6$ new types), which we write as $B_{GT_4(\Fq)} = \left(\mathcal A\mid \mathcal B \mid \mathcal C \right)$ (split in three parts along the columns for convenience of writing) described in Table~\ref{TableA}, ~\ref{TableB} and~\ref{TableC}.
\end{theorem}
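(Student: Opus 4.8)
The plan is to compute $B_{\Tq{4}(\Fq)}$ one column at a time, following exactly the orbit-counting scheme already carried out for $\Tq{2}$ and $\Tq{3}$ in the previous two sections. The key point, as in~\cite{SS}, is that branching is insensitive to the length of the commuting tuple: adjoining one more commuting matrix to a tuple with simultaneous centralizer $Z$ amounts to choosing $B\in Z$ and partitioning such $B$ into $Z$-conjugacy classes, since $(A,B)$ and $(A,B')$ are simultaneously conjugate exactly when $B'=gBg^{-1}$ for some $g\in Z=\ZT{4}(A)$. Thus every type is determined by, and may be represented through, its centralizer, and the column indexed by a type $T$ records, for a fixed representative $A$ of $T$, how many $\ZT{4}(A)$-conjugacy classes of elements $B\in\ZT{4}(A)$ carry each child type, the child type of $(A,B)$ being read from the joint centralizer $\ZT{4}(A,B)=\ZT{4}(A)\cap\ZT{4}(B)$. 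Appendix~\ref{CCGT4} lists $22$ native (single-matrix) types; I expect six further types to appear only as joint centralizers of genuine pairs, and since these must branch in turn when a further commuting matrix is adjoined, they contribute six extra rows and columns, producing the stated $28\times 28$ matrix.

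Two families of columns are immediate. For the central type $C$ we have $A=aI_4$ and $\ZT{4}(aI_4)=\Tq{4}$, so $(aI_4,B)$ is classified by the $\Tq{4}$-conjugacy class of $B$ alone; its column is therefore the distribution of the conjugacy classes of $\Tq{4}$ across the $22$ native types, i.e.\ the ``number of classes'' data of Appendix~\ref{CCGT4}. At the opposite extreme, every fully regular type has abelian centralizer, and then for each $B\in\ZT{4}(A)$ one has $\ZT{4}(A,B)=\ZT{4}(A)$; each of the $|\ZT{4}(A)|$ elements is then its own branch and the type is preserved, so these columns are diagonal with entry equal to the centralizer order, exactly as in Proposition~\ref{T3R}.

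The substance of the proof is in the remaining columns, those of the non-central types with non-abelian centralizer. For each such type I would fix the canonical representative $A$, write a general $B\in\ZT{4}(A)$ with indeterminate entries, and impose $XB=B'X$ for $X\in\ZT{4}(A)$, precisely as in Propositions~\ref{T3A1}--\ref{T3B1}. This produces a linear system over $\Fq$ in the entries of $B$; solving it shows which entries can be normalized by the $\ZT{4}(A)$-action and which survive as invariants, and the case analysis branches on the coincidences among the diagonal entries (and, for the more degenerate types, among the off-diagonal slots that can be scaled). For each resulting normal form I would compute $\ZT{4}(A,B)$, match its isomorphism type against the list of $28$ types, and thereby label and count the branches. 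A useful shortcut is available for the block-diagonal types: when $A$ is a block sum with blocks of distinct eigenvalues, such as $aI_3\oplus b$ or $aI_2\oplus bI_2$, the centralizer factors as a direct product ($\Tq{3}\times\Fq^*$ and $\Tq{2}\times\Tq{2}$, respectively), and the branching is read off from the products of the entries already recorded in $B_{\Tq{2}(\Fq)}$ and $B_{\Tq{3}(\Fq)}$ of Theorems~\ref{TheoremGT2} and~\ref{TheoremGT3}; this both shortens the computation and gives independent consistency checks.

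The main obstacle will be the combinatorial volume together with the need to pin down the six new types. The types with the largest unipotent parts have centralizers carrying several free upper-triangular parameters, so their systems $XB=B'X$ involve many equations and a deep case tree (which diagonal blocks coincide, which off-diagonal slots survive normalization); keeping the orbit counts mutually consistent across this tree is the delicate step. The six new types, being absent from Appendix~\ref{CCGT4}, must moreover be isolated intrinsically --- as the isomorphism types of joint centralizers $\ZT{4}(A,B)$ that match no single-matrix centralizer --- and then branched on their own to fill their rows and columns. Once each column is computed and cross-checked (its sum against the class number of the corresponding centralizer, and the block columns against the smaller groups), the entries assemble into the three blocks $\mathcal A\mid\mathcal B\mid\mathcal C$ displayed in Tables~\ref{TableA}, \ref{TableB} and~\ref{TableC}, which is the assertion of the theorem.
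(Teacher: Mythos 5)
Your proposal follows essentially the same route as the paper: the central column is read off from the class counts in Appendix~A, the regular columns are diagonal with entry equal to the (abelian) centralizer order, the block-diagonal $B$-types are handled via the product decomposition of their centralizers using $B_{GT_2(\Fq)}$ and $B_{GT_3(\Fq)}$, and the remaining columns — including those of the six new types arising as joint centralizers of pairs — are computed case by case from the linear system $XB=B'X$ exactly as in Propositions~\ref{BrT4A1}--\ref{BrtNT1} and their successors. The plan is correct; what remains is the (lengthy) execution of the case analysis that the paper carries out in those propositions.
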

\noindent For the convenience, the branching of non-regular types are in part $\mathcal A$, those of regular types in part $\mathcal B$, and those of the new types in part $\mathcal C$. In each of the sub-tables, the regular branches are in blue, and the new types in red. The $0_{r,s}$ denotes the zero matrix of size $r\times s$. Rest of the section is devoted to proof of this.

\begin{landscape}
\begin{table}
\caption{The matrix $\mathcal A$}
\label{TableA}
\small
\begin{center}
$$ \begin{psmallmatrix}C & A_1& A'_1 & A_2 &A_3&A_4&A_5&A_6&A_7&A_8&A_9&B_1&B_2&B_3&B_4&B_5&B_6\\
q - 1 & 0 & 0 & 0 & 0 & 0 & 0 & 0 & 0 & 0 & 0 & 0 & 0 & 0 & 0 & 0 & 0  \\ 
2 q - 2 & q^{2} - q & 0 & 0 & 0 & 0 & 0 & 0 & 0 & 0 & 0 & 0 & 0 & 0 & 0 & 0 & 0 \\ 
q - 1 & 0 & q^{2} - q & 0 & 0 & 0 & 0 & 0 & 0 & 0 & 0 & 0 & 0 & 0 & 0 & 0 & 0 \\ 
2 q - 2 & 0 & 0 & q^{2} - q & 0 & 0 & 0 & 0 & 0 & 0 & 0 & 0 & 0 & 0 & 0 & 0 & 0 \\ 
q - 1 & 0 & 0 & 0 & q^{2} - q & 0 & 0 & 0 & 0 & 0 & 0 & 0 & 0 & 0 & 0 & 0 & 0 \\ 
q - 1 & 0 & 0 & 0 & 0 & q^{2} - q & 0 & 0 & 0 & 0 & 0 & 0 & 0 & 0 & 0 & 0 & 0 \\ 
q - 1 & 0 & q^{2} - q & q^{2} - q & q^{2} - q & 0 & q^{3} - q^{2} & q(q-1)^2 & 0 & 0 & 0 & 0 & 0 & 0 & 0 & 0 & 0 \\ 
q - 1 & 0 & 0 & 0 & 0 & 0 & 0 & q^{2} - q & 0 & 0 & 0 & 0 & 0 & 0 & 0 & 0 & 0 \\ 
2 q - 2 & q^{2} - q & 0 & q^{2} - q & 0 & 0 & 0 & 0 & q^{3} - q^{2} & 0 & 0 & 0 & 0 & 0 & 0 & 0 & 0 \\ 
q - 1 & q^{2} - q & 0 & 0 & 2 q^{2} - 2 q & 0 & 0 & 0 & 0 & q^{3} - q^{2} & 0 & 0 & 0 & 0 & 0 & 0 & 0 \\ 
q - 1 & 0 & 0 & 0 & q^{2} - q & 0 & 0 & 0 & 0 & 0 & q^{3} - q^{2} & 0 & 0 & 0 & 0 & 0 & 0 \\ 
\begin{smallmatrix}(3q-3).\\ (q-2)\end{smallmatrix} & 0 & 0 & 0 & 0 & 0 & 0 & 0 & 0 & 0 & 0 & (q-1)^2 & 0 & 0 & 0 & 0 & 0 \\ 
\begin{smallmatrix}(4q-4).\\ (q-2)\end{smallmatrix} & 0 & 0 & 0 & 0 & 0 & 0 & 0 & 0 & 0 & 0 & 0 & (q-1)^2 & 0 & 0 & 0 & 0 \\ 
\begin{smallmatrix}(8q-8).\\ (q-2)\end{smallmatrix} & \begin{smallmatrix}2(q^2-q).\\ (q-2)\end{smallmatrix} & \begin{smallmatrix}2(q^2-q).\\ (q-2)\end{smallmatrix} & \begin{smallmatrix}(q^2-q).\\ (q-2)\end{smallmatrix} & 0 & 0 & 0 & 0 & 0 & 0 & 0 & 0 & (q-1)^2 & q(q-1)^2 & 0 & 0 & 0 \\ 
\begin{smallmatrix}(4q-4).\\ (q-2)\end{smallmatrix} & 0 & 0 & \begin{smallmatrix}(q^2-q).\\ (q-2)\end{smallmatrix} & \begin{smallmatrix}2(q^2-q).\\ (q-2)\end{smallmatrix} & 0 & 0 & 0 & 0 & 0 & 0 & 0 & (q-1)^2& 0 & q(q-1)^2 & 0 & 0 \\ 
\begin{smallmatrix}(6q-6).\\ (q-2)\end{smallmatrix} & \begin{smallmatrix}(q^2-q).\\ (q-2)\end{smallmatrix}  &  \begin{smallmatrix}(q^2-q).\\ (q-2)\end{smallmatrix} &  \begin{smallmatrix}(q^2-q).\\ (q-2)\end{smallmatrix} &  \begin{smallmatrix}(q^2-q).\\ (q-2)\end{smallmatrix} & 0 & 0 & 0 & 0 & 0 & 0 & 2(q-1)^2 & 0 & 0 & 0 &  \begin{smallmatrix}(q^2-q).\\ (q-2)\end{smallmatrix} & 0 \\ 
 \begin{smallmatrix}(6q-6).\\ (q-2).\\(q-3)\end{smallmatrix} & 0 & 0 & 0 & 0 & 0 & 0 & 0 & 0 & 0 & 0 & \begin{smallmatrix}2 q^{3} - 8 q^{2} +\\ 10 q - 4 \end{smallmatrix}& \begin{smallmatrix}q^{3} - 4 q^{2} +\\ 5 q - 2\end{smallmatrix} & 0 & 0 & 0 & (q-1)^3 \\ 
\color{blue}q - 1 & \color{blue}q^{2} - q & 0 & \color{blue}q^{2} - q & \color{blue}q^{2} - 1 & \color{blue}q^{3} - q^{2} & 0 &\color{blue} q^{3} - q^{2} &\color{blue} q^{3} - q^{2} & \color{blue}q^{3} - q & \color{blue}\begin{smallmatrix}q^{4} - q^{3} -\\ q^{2} + q\end{smallmatrix} & 0 & 0 & 0 & 0 & 0 & 0 \\ 
\color{blue} \begin{smallmatrix}(4q^-4).\\ (q-2)\end{smallmatrix} & \color{blue} \begin{smallmatrix}(2q-2).\\ (q-2)\end{smallmatrix} &\color{blue}  \begin{smallmatrix}(2q-2).\\ (q-2)\end{smallmatrix} &\color{blue}  \begin{smallmatrix}(3q-3).\\ (q-2)\end{smallmatrix} & \color{blue}\begin{smallmatrix}2 q^{3} - 4 q^{2} - \\2 q + 4\end{smallmatrix} & 0 & 0 & 0 & \color{blue}\begin{smallmatrix}(q^3-q^2).\\ (q-2)\end{smallmatrix} &\color{blue}\begin{smallmatrix}(q^3-q^2).\\ (q-2)\end{smallmatrix} & 0 & 0 & \color{blue}(q-1)^2 & \color{blue}q(q-1)^2 \color{blue}& \color{blue}\begin{smallmatrix}q^{3} - q^{2} - \\q + 1\end{smallmatrix} & 0 & 0 \\ 
\color{blue}  \begin{smallmatrix}(3q-3).\\ (q-2)\end{smallmatrix} & \color{blue}\begin{smallmatrix}(q^2-q).\\ (q-2)\end{smallmatrix}  & \color{blue}\begin{smallmatrix}(q^2-q).\\ (q-2)\end{smallmatrix} & \color{blue}\begin{smallmatrix}(q^2-q).\\ (q-2)\end{smallmatrix} &\color{blue} \begin{smallmatrix}(q^2-q).\\ (q-2)\end{smallmatrix} & \color{blue}\begin{smallmatrix}(q^3-q^2).\\ (q-2)\end{smallmatrix} & \color{blue}\begin{smallmatrix}(q^3-q^2).\\ (q-2)\end{smallmatrix} & \color{blue}\begin{smallmatrix}(q^3-q^2).\\ (q-2)\end{smallmatrix} & 0 & 0 & 0 & \color{blue}(q-1)^2 & 0 & 0 & 0 & \color{blue}q(q-1)^2 & 0 \\ 
\color{blue}\begin{smallmatrix}(6q-6).\\ (q-2).\\(q-3)\end{smallmatrix} &\color{blue} \begin{smallmatrix}q^{4} - 6 q^{3} +\\11 q^{2} - 6 q\end{smallmatrix}  & \color{blue}\begin{smallmatrix}q^{4} - 6 q^{3} +\\ 11 q^{2} - 6 q\end{smallmatrix} &\color{blue} \begin{smallmatrix}q^{4} - 6 q^{3} +\\ 11 q^{2} - 6 q \end{smallmatrix}&\color{blue}\begin{smallmatrix} q^{4} - 6 q^{3} + \\11 q^{2} - 6 q\end{smallmatrix} & 0 & 0 & 0 & 0 & 0 & 0 & \color{blue}\begin{smallmatrix}2 q^{3} - 8 q^{2} +\\ 10 q - 4\end{smallmatrix} &\color{blue} \begin{smallmatrix}q^{3} - 4 q^{2} +\\ 5 q - 2\end{smallmatrix} \color{blue}& \color{blue}\begin{smallmatrix}q^{4} - 4 q^{3} + \\5 q^{2} - 2 q\end{smallmatrix} &\color{blue} \begin{smallmatrix}q^{4} - 4 q^{3} +\\ 5 q^{2} - 2 q\end{smallmatrix} & \color{blue}\begin{smallmatrix}q^{4} - 4 q^{3} +\\ 5 q^{2} - 2 q\end{smallmatrix} &\color{blue} \begin{smallmatrix}q^{3} - 3 q^{2} +\\ 3 q - 1\end{smallmatrix} \\ 
\color{blue}4!{q-1\choose 4} & 0 & 0 & 0 & 0 & 0 & 0 & 0 & 0 & 0 & 0 & \color{blue}\begin{smallmatrix}q^{4} - 6q^{3}\\ + 13q^{2} - 12q\\ + 4\end{smallmatrix} &\color{blue}\begin{smallmatrix}q^{4} - 7q^{3} + \\17q^{2} - 17q\\ + 6\end{smallmatrix} & 0 & 0 & 0 &\color{blue} \begin{smallmatrix}q^{4} - 5 q^{3} +\\ 9 q^{2} - 7 q\\ + 2\end{smallmatrix} \\ 
 0 & \color{red}q^{2} - q & 0 & 0 & 0 & \color{red}q(q-1)^2 & 0 & 0 & 0 & 0 & 0 & 0 & 0 & 0 & 0 & 0 & 0 \\ 
0 & 0 & \color{red}2 q^{2} - 2 q & \color{red}q - 1 & 0 & 0 & 0 & 0 & 0 & 0 & 0 & 0 & 0 & 0 & 0 & 0 & 0 \\ 
0 & 0 & 0 & \color{red}q^{2} - q & \color{red}2 q - 2 & 0 & 0 & 0 & 0 & 0 & 0 & 0 & 0 & 0 & 0 & 0 & 0 \\ 
0 & 0 & 0 & 0 & \color{red}q - 1 & 0 & 0 & \color{red}q^{3} - q^{2} & 0 & 0 & 0 & 0 & 0 & 0 & 0 & 0 & 0 \\ 
0 & 0 & 0 & 0 & 0 & \color{red}q^{2} - q & 0 & 0 & 0 & 0 & 0 & 0 & 0 & 0 & 0 & 0 & 0 \\ 
0 & 0 & \color{red}q^{3} + q^{2} - 2 q &\color{red} q^{2} - 1 & 0 & 0 & \color{red}q^{4} - q^{2} & \color{red}q^{3} - q^{2} & 0 & 0 & 0 & 0 & 0 & 0 & 0 & 0 & 0 
    \end{psmallmatrix}
$$    
\end{center}
\end{table}
\end{landscape}

\begin{table}
\caption{The matrix $\mathcal B$}
\label{TableB}
$$\mathcal B=
{\color{blue} \begin{pmatrix}
R_1&R_2&R_3&R_4&R_5 \\ &&&&\\
 &  & 0_{17\times 5} &  &  \\ &&&&\\
q^{4} - q^{3} & 0 & 0 & 0 & 0 \\
0 & q^{4} - 2 q^{3} + q^{2} & 0 & 0 & 0 \\
0 & 0 & q^{4} - 2 q^{3} + q^{2} & 0 & 0 \\
0 & 0 & 0 & q^{4} - 3 q^{3} + 3 q^{2} - q & 0 \\
0 & 0 & 0 & 0 & (q-1)^4 \\ &&&&\\
 &  & 0_{6\times 5} &  & 
\end{pmatrix}}$$ 
\end{table}
\begin{table}
\caption{The matrix $\mathcal C$}
\label{TableC}
\begin{Small}
$$  \mathcal C = \color{red}\begin{pmatrix}
tNT_1& tNT_2 &tNT_3 & tNT_4 &tNT_5 &NR_1\\
 &  &  &  &  &  \\
 &  & 0_{17 \times 6} &  &  &  \\
 &  &  &  &  &  \\
\color{blue}q^{3} - q^{2} & 0 & \color{blue}q^{3} - q^{2} & \color{blue}q^{4} - 2 q^{3} + q^{2} & \color{blue}q^{4} - q^{3} - q^{2} + q & 0 \\
0 & \color{blue}q^{4} - 3 q^{3} + 2 q^{2} & \color{blue}q^{4} - 3 q^{3} + 2 q^{2} & 0 & 0 & 0 \\
\color{blue}q^{4} - 3 q^{3} + 2 q^{2} & 0 & 0 & 0 & 0 & 0 \\
0 & 0 & 0 & 0 & 0 & 0 \\
0 & 0 & 0 & 0 & 0 & 0 \\
q^{3} - q^{2} & 0 & 0 & 0 & 0 & 0 \\
0 & q^{3} - q^{2} & 0 & 0 & 0 & 0 \\
0 & 0 & q^{3} - q^{2} & 0 & 0 & 0 \\
0 & 0 & 0 & q^{3} - q^{2} & 0 & 0 \\
0 & 0 & 0 & 0 & q^{3} - q & 0 \\
0 & q^{4} - q^{2} & q^{3} - q & q^{4} - q^{3} - q^{2} + q & 0 & q^{5} - q^{4}\end{pmatrix}
$$\end{Small}
\end{table}

The first column of $A$ corresponds to the central type $C$ and the entries in the column are number of classes of each type in $GT_4(\Fq)$ which is the column two of table in Appendix~\ref{CCGT4}. For all the regular types $R_1, R_2, R_3, R_4$ and $R_5$, the only branch is that type itself, and the the number of branches is the size of its centralizer which is again listed in Appendix~\ref{CCGT4}. This fully describes the matrix $\mathcal B$. Thus, it only remain to explain the matrix $\mathcal A$ and $\mathcal C$.


\subsection{Branching rules for type $A$}
Let us deal with type $A$ classes as in Section~\ref{CCGT4}.
\begin{prop}\label{BrT4A1}
The branching rules of a matrix of type $A_1$ are:
\begin{center}
$. $XB = B'X$ leads to firstly, $ZC = C'Z$, hence we shall take $C$ to be a canonical conjugacy class representative in $\Tq{2}$, and $Z \in \ZT{2}(C)$. Then we have the following set of equations:
\begin{eqnarray}
 x_0{}^t\overrightarrow{b} + {}^t\overrightarrow{y}.(C-a_0I_2) &=& {}^t\overrightarrow{b}'.Z \label{ETA31}\\
 Z.\overrightarrow{d} + (a_0I_2-C)\overrightarrow{w} &=& x_0\overrightarrow{d}'  \label{ETA32}\\
 x_0a_1 + y_1d_1+y_2d_2 &=& x_0a'_1 + b'_1w_1 + b'_2w_2\label{ETA33}
\end{eqnarray}

\noindent{\bfseries When $a_0$ is an eigenvalue of $C$:}

When $\overrightarrow{b} = \overrightarrow{d} = \overrightarrow{0}$: 

Here, Equation~\ref{ETA31} becomes ${}^t\overrightarrow{y}(C-a_0I_2) = {}^t\overrightarrow{0}$, Equation~\ref{ETA32} becomes $(a_0I_2-C)\overrightarrow{w} = \overrightarrow{0}$, and Equation~\ref{ETA33} becomes $x_0a_1 = x_0a'_1$. Hence we have $a'_1 = a_1$.

{\bfseries When $C = a_0I_2$}: Equations~\ref{ETA31} and \ref{ETA32} are void. Hence $B$ is reduced to
$
\begin{pmatrix}
a_0&&&a_1\\
&a_0&&\\
&&a_0&\\
&&&a_0
\end{pmatrix}
$, and $\ZT{4}(A,B) = \ZT{4}(A)$. $(A,B)$ is of type $A_2$, and there are $q(q-1)$ such branches.

{\bfseries When $C = \begin{pmatrix}a_0&1\\&a_0\end{pmatrix}$:} Here Equation~\ref{ETA31} becomes: $\begin{pmatrix}0&y_1\end{pmatrix} = \begin{pmatrix}0&0\end{pmatrix}$, and Equation~\ref{ETA32} becomes $\begin{pmatrix}-w_2\\0\end{pmatrix}=\begin{pmatrix}0\\0\end{pmatrix}$. Thus $y_1 =0$, and $w_2 = 0$. $B$ is reduced to 
$
\begin{pmatrix}
a_0&&&a_2\\
&a_0&1&\\
&&a_0&\\
&&&a_0
\end{pmatrix}
$, and $\ZT{4}(A,B)=\left\{ \begin{pmatrix}x_0&&y_2&x_1\\ &z_0&z_1&w_1\\&&z_0&\\&&&x_0\end{pmatrix} \right\}$. $(A,B)$ is of type $A_5$, and there are $q(q-1)$ such branches.

{\bfseries When $C = \begin{pmatrix}a_0&\\&c_0\end{pmatrix}, a_0\neq c_0$:} Here Equation~\ref{ETA31} becomes: $\begin{pmatrix}0&y_2(c_0-a_0)\end{pmatrix} = \begin{pmatrix}0&0\end{pmatrix}$, and Equation~\ref{ETA32} becomes $\begin{pmatrix}\\(a_0-c_0)w_2\end{pmatrix}=\begin{pmatrix}0\\0\end{pmatrix}$. Thus $y_2 =0$, and $w_2 = 0$. $B$ is reduced to 
$
\begin{pmatrix}
a_0&&&a_2\\
&a_0&&\\
&&c_0&\\
&&&a_0
\end{pmatrix}
$, and $\ZT{4}(A,B)=\left\{ \begin{pmatrix}x_0&y_1&&x_1\\ &z_0&&w_1\\&&z_2&\\&&&x_0\end{pmatrix} \right\}$. $(A,B)$ is of type $B_4$, and there are $q^(q-1)(q-2)$ such branches.

{\bfseries When $C = \begin{pmatrix}b_0&\\&a_0\end{pmatrix}, a_0\neq b_0$:} Here Equation~\ref{ETA31} becomes: $\begin{pmatrix}(b_0-a_0)y_1&0\end{pmatrix} = \begin{pmatrix}0&0\end{pmatrix}$, and Equation~\ref{ETA32} becomes $\begin{pmatrix}(b_0-a_0)w_1\\0\end{pmatrix}=\begin{pmatrix}0\\0\end{pmatrix}$. Thus $y_1 =0$, and $w_1 = 0$. $B$ is reduced to 
$
\begin{pmatrix}
a_0&&&a_2\\
&b_0&&\\
&&a_0&\\
&&&a_0
\end{pmatrix}
$, and $\ZT{4}(A,B)=\left\{ \begin{pmatrix}x_0&&y_2&x_1\\ &z_0&&\\&&z_2&w_2\\&&&x_0\end{pmatrix} \right\}$. This $(A,B)$ too is of type $B_4$, and there are $q(q-1)(q-2)$ such branches.

When $(\overrightarrow{b}, \overrightarrow{d}) \neq (\overrightarrow{0}, \overrightarrow{0})$.

{\bfseries When $C = a_0I_2$:} Equation~\ref{ETA31} becomes:
\begin{equation}\label{ETA34}
\begin{pmatrix}x_0b_1&x_0b_2\end{pmatrix} = \begin{pmatrix}z_0b'_1 &z_1b'_1 + z_2b_2\end{pmatrix},
\end{equation}
and Equation~\ref{ETA32} becomes:
\begin{equation}\label{ETA35}
\begin{pmatrix}z_0d_1+z_1d_2\\ z_2d_2\end{pmatrix} = \begin{pmatrix}x_0d'_1\\ x_0d'_2\end{pmatrix}.
\end{equation}

When $b_1 = 0$ and $b_2\neq 0$. In Equation~\ref{ETA34} choose $z_2$ so that $b'_2 = 1$. Hence, on replacing $b_2$ by $b'_2 = 1$ in Equation~\ref{ETA34}, we get $x_0 = z_2$. Hence in Equation~\ref{ETA35}, $x_0d'_2 = x_0d_2$. Thus $d_2 = d'_2$. 

Here, if $d_2 = 0$, in Equation~\ref{ETA35}, we have $x_0d'_1 = z_0d_1$. When $d_1 = 0$; Equation~\ref{ETA33} becomes $x_0a_1  = x_0a'_1 + w_2$. Choose $w_2$ so that $a'_1 = 0$. Hence $B$ is reduced to
$
\begin{pmatrix}
a_0&&1&\\
&a_0&&\\
&&a_0&\\
&&&a_0
\end{pmatrix}
$
, and $\ZT{4}(A,B) = \left\{\begin{pmatrix}x_0&y_1&y_2&x_1\\&z_0&z_1&w_1\\&&x_0&\\&&&x_0\end{pmatrix} \right\}$. $(A,B)$ is of type $tNT_3$, and there are $q-1$ such branches.

When $d_1 \neq 0$, choose $z_0$ so that $d'_1 = 1$. Then Equation~\ref{ETA33} becomes $x_0a_1 +y_1 = x_0a'_1 w_2$. Choose $w_2$ such that $a'_1= 0$. With these, $B$ is reduced to:
$
\begin{pmatrix}
a_0&&1&\\
&a_0&&1\\
&&a_0&\\
&&&a_0
\end{pmatrix}
$, and $\ZT{4}(A,B) =\left\{\begin{pmatrix}x_0&y_1&y_2&x_1\\&x_0&z_1&w_1\\&&x_0&y_1\\&&&x_0\end{pmatrix}\right\}$.Now, this is a centralizer we have not seen so far. Thus we have a new type, $tNT_4$. There are $q-1$ such branches.

When $d_2 \neq 0$, in Equation~\ref{ETA35}, choose $z_1$ so that $d'_1 = 0$. Equation~\ref{ETA33} becomes $x_0a_1+ y_2d_2 = x_0a'_1 + w_2$. Choose $w_2$ such that $a'_1 = 0$. So, $B$ is reduced to 
$
\begin{pmatrix}
a_0&&1&\\
&a_0&&\\
&&a_0&d_2\\
&&&a_0
\end{pmatrix}
$
and $\ZT{4}(A,B) = \left\{\begin{pmatrix}x_0&y_1&y_2&x_1\\&z_0&&w_1\\&&x_0&d_2y_2\\&&&x_0\end{pmatrix} \right\}$. $(A,B)$ is of type $A_8$, and there are $(q-1)^2$ such branches.

When $b_1 = b_2 = d_2 = 0$. Here $d_1\neq 0$. Choose $z_0$ so that $d'_1 =1$. Then, in Equation~\ref{ETA33}, we have $x_0a_1 + y_1 = x_0a'_1$. Choose $y_1$ so that $a'_1 = 0$.
Thus $B$ is reduced to:
$
\begin{pmatrix}
a_0&&&\\
&a_0&&1\\
&&a_0&\\
&&&a_0
\end{pmatrix}
$, and $\ZT{4}(A,B) = \left\{\begin{pmatrix}x_0&&y_2&x_1\\&x_0&z_1&w_1\\&&z_2&w_2\\&&&x_0\end{pmatrix} \right\}$. $(A,B)$ is of type $tNT_3$. There are $q-1$ such branches.

When $b_1 = b_2 = 0$, and $d_2 \neq 0$. In Equation~\ref{ETA35}, choose $z_2 $ such that $d'_2 = 1$, and in the same equation, choose $z_1$ so that $d'_1 = 0$. With these, Equation~\ref{ETA33} becomes $x_0a_1 + y_2 = x_0a'_1$. Choose $y_2$ such that $a'_1 = 0$. Thus $B$ is reduced to:
$
\begin{pmatrix}
a_0&&&\\
&a_0&&\\
&&a_0&1\\
&&&a_0
\end{pmatrix}
$, and $\ZT{4}(A,B)= \left\{\begin{pmatrix}x_0&y_1&&x_1\\&z_0&&w_1\\&&x_0&w_2\\&&&x_0\end{pmatrix} \right\}$. This too is of type $A_8$. There are $q-1$ such branches.

When $b_1 \neq 0$. In Equation~\ref{ETA34}, choose $z_0$ so that $b'_1 = 1$, and choose $z_1$ so that $b'_2 = 0$. On replacing $b_1$ with $b'_1 = 1$, and $b_2 $ with $b'_2 = 0$ in Equation~\ref{ETA34}, we get $z_0 = x_0$, and $z_1 =0$. Putting these in equation~\ref{ETA35} leaves us with $d'_1 = d_1$ and $z_2d_2 = x_0d'_2$. 

With these, Equation~\ref{ETA33} is reduced to $x_0a_1 + d_1y_1 = x_0a'_1+w_1$. Choose $w_1$ so that $a'_1 = 0$.

When $d_2 = 0$, $B$ is reduced to 
$
\begin{pmatrix}
a_0&1&&\\
&a_0&&d_1\\
&&a_0&\\
&&&a_0
\end{pmatrix}
$, and $\ZT{4}(A,B)= \left\{\begin{pmatrix}x_0&y_1&y_2&x_1\\&x_0&&d_1y_1\\&&z_2&w_2\\&&&x_0\end{pmatrix} \right\}$. This too is of type $A_8$. There are $q(q-1)$ such branches.

When $d_2 \neq 0$, in Equation~\ref{ETA35}, choose $z_2$ so that $d'_2 = 1$. With these Equation~\ref{ETA33} becomes $x_0a_1 + y_1d_1 + y_2 = x_0a'_1 + w_1$. Choose $w_1$ such that $a'_1 = 0$. Thus $B$ is reduced to 
$
\begin{pmatrix}
a_0&1&&\\
&a_0&&d_1\\
&&a_0&1\\
&&&a_0
\end{pmatrix}
$, and $\ZT{4}(A,B)= \left\{\begin{pmatrix}x_0&y_1&y_2&x_1\\&x_0&&y_2+d_1y_1\\&&x_0&w_2\\&&&x_0\end{pmatrix} \right\}$. This is of type $A_9$. There are $q(q-1)$ such branches.

So, with these, we are done with all the cases, when $C = a_0I_2$.

{\bfseries When $C= \begin{pmatrix}a_0&1\\&a_0\end{pmatrix}$:} Here $Z = \begin{pmatrix}z_0&z_1\\&z_0\end{pmatrix}$. Equation~\ref{ETA31} becomes:
\begin{equation*}
\begin{pmatrix}x_0b_1&x_0b_2\end{pmatrix} + \begin{pmatrix}0 & y_1\end{pmatrix} = \begin{pmatrix}z_0b'_1&z_1b'_1+z_0b'_2\end{pmatrix}
\end{equation*}
Choose $y_1$ such that $b'_2 =0$. On substituting $b_2$ with $b'_2 = 0$  in the above equation, we have $y_1 = z_1b'_1$.

Similarly, Equation~\ref{ETA32} becomes
\begin{equation*}
\begin{pmatrix}z_0d_1+z_1d_2\\ z_0d_2\end{pmatrix} + \begin{pmatrix}-w_2 \\ 0\end{pmatrix} = \begin{pmatrix}x_0b'_1\\ x_0b'_2\end{pmatrix}.
\end{equation*}
Choose $w_2$ such that $d'_1 = 0$. On substituting $d_1$ with $d'_1 = 0$ in the above equation, we have $w_2 = d_2z_1$.

When $b_1 \neq 0$, choose $z_0$ so that $b'_1 = 1$. Then, on substituting $b_1$ with $b'_1 = 1$ in Equation~\ref{ETA31}, we get $z_0 = x_0$, and thus $d'_2 = d_2$. With these, Equation~\ref{ETA33} becomes $x_0a_1 + y_2d_2 = x_0a'_1 + w_1$. Choose $w_1$ such that $a'_1 = 0$. Thus, $B$ is reduced to 
$
\begin{pmatrix}
a_0&1&&\\
&a_0&1&\\
&&a_0&d_2\\
&&&a_0
\end{pmatrix},
$
and $\ZT{4}(A,B) = \left\{\begin{pmatrix}x_0&z_1&y_2&x_1\\&x_0&z_1&d_2y_2\\&&x_0&d_2z_1\\&&&x_0 \end{pmatrix}\right\}$. This $(A,B)$ is of type $R_1$, and there are $q(q-1)$ such branches.

When $b_1 = 0$, and $d_2 \neq 0$ $y_1 = 0$. In Equation~\ref{ETA32}, choose $z_0$ so that $d'_2 = 1$. With these, Equation~\ref{ETA33} becomes $x_0a_1 + y_2 = x_0a'_1$. Choose $y_2$ so that $a'_1 = 0$. 
Thus, $B$ is reduced to 
$
\begin{pmatrix}
a_0&&&\\
&a_0&1&\\
&&a_0&1\\
&&&a_0
\end{pmatrix},
$
and $\ZT{4}(A,B) = \left\{\begin{pmatrix}x_0&&&x_1\\&x_0&z_1&w_1\\&&x_0&z_1\\&&&x_0 \end{pmatrix}\right\}$. By a routine check, one can see that this is commutative. Thus $(A,B)$ is of type $R_1$, and there are $q-1$ such branches.

{\bfseries When $C = \begin{pmatrix}a_0&\\&b_0\end{pmatrix}, b_0\neq a_0$:} Here $Z= \begin{pmatrix}z_0&\\&z_2\end{pmatrix}$. Equation~\ref{ETA31} becomes :
\begin{equation*}
\begin{pmatrix}x_0b_1 & x_0b_2\end{pmatrix} + \begin{pmatrix}0& (b_0-a_0)y_2\end{pmatrix} = \begin{pmatrix} z_0b'_1 & z_2b'_2\end{pmatrix}
\end{equation*}
As $b_0-a_0\neq 0$, choose $y_2$ such that $b'_2 = 0$. Hence, on replacing $b_2$ by $b'_2 = 0$ in the above equation, we get $y_2 = 0$.

Similarly Equation~\ref{ETA32} becomes:

\begin{equation*}
\begin{pmatrix}z_0d_1 & z_2d_2\end{pmatrix} + \begin{pmatrix}0 \\ (a_0-b_0)w_2\end{pmatrix} = \begin{pmatrix} x_0d'_1 & x_0d'_2\end{pmatrix}
\end{equation*}
Choose $w_2$ so that $d'_2 = 0$. So, if we replace $d_2$ by $d'_2 = 0$ in the above equation, we have $w_2 =0$.

When $b_1 =0$ and $d_1\neq 0$, choose $z_0$ so that $d'_1 =1$. With these, Equation~\ref{ETA33} becomes $x_0a_1 + y_1 = x_0a'_1$. Choose $y_1$ so that $a'_1 = 0$. $B$ is thus reduced to
$
\begin{pmatrix}
a_0&&&\\
&a_0&&1\\
&&b_0&\\
&&&a_0
\end{pmatrix}
$, and $\ZT{4}(A,B)=\left\{ \begin{pmatrix}x_0&&&x_1\\&x_0&&w_1\\&&z_2&\\&&&x_0\end{pmatrix}\right\}$. By a routine check, we can see that this centralizer is commutative. Thus $(A,B)$ is of type $R_2$, and there are $(q-1)(q-2)$ such branches.

When $b_1 \neq 0$. in Equation~\ref{ETA31} for this $C$, choose $z_0$ so that $b'_1 = 1$. Thus on substituting $b_1$ with $b'_1=1$ in the same, we get $z_0 = x_0$. Hence, from Equation~\ref{ETA32} for this case, we have $d'_1 = d_1$. With these Equation~\ref{ETA33} becomes $x_0a_1 + d_1y_1 = x_0a'_1 + w_1$. Choose $w_1$ so that $a'_1=0$. Hence $B$ is reduced to 
$
\begin{pmatrix}
a_0&1&&\\
&a_0&&d_1\\
&&b_0&\\
&&&a_0
\end{pmatrix}
$
and $\ZT{4}(A,B)=\left\{ \begin{pmatrix}x_0&y_1&&x_1\\&x_0&&d_1y_1\\&&z_2&\\&&&x_0\end{pmatrix}\right\}$. Easy to see that this centralizer too is commutative. Thus $(A,B)$ is of type $R_2$, and there are $q(q-1)(q-2)$ such branches.

{\bfseries When $C = \begin{pmatrix}b_0&\\&a_0\end{pmatrix}, b_0\neq a_0$:} Here $Z= \begin{pmatrix}z_0&\\&z_2\end{pmatrix}$. Equation~\ref{ETA31} becomes :
\begin{equation*}
\begin{pmatrix}x_0b_1 & x_0b_2\end{pmatrix} + \begin{pmatrix} (b_0-a_0)y_1 & 0\end{pmatrix} = \begin{pmatrix} z_0b'_1 & z_2b'_2\end{pmatrix}
\end{equation*}
As $b_0-a_0\neq 0$, choose $y_1$ such that $b'_1 = 0$. Hence, on replacing $b_1$ by $b'_1 = 0$ in the above equation, we get $y_1 = 0$.

Similarly Equation~\ref{ETA32} becomes $
\begin{pmatrix}z_0d_1 & z_2d_2\end{pmatrix} + \begin{pmatrix}(a_0-b_0)w_1\\ 0\end{pmatrix} = \begin{pmatrix} x_0d'_1 & x_0d'_2\end{pmatrix}.$
Choose $w_1$ so that $d'_1 = 0$. So, if we replace $d_1$ by $d'_1 = 0$ in the above equation, we have $w_1 =0$.

When $b_2 = 0$ and $d_2\neq 0$, choose $z_2$ so that $d'_2 =1$. With these, Equation~\ref{ETA33} becomes $x_0a_1 + y_2 = x_0a'_1$. Choose $y_2$ so that $a'_1 = 0$. $B$ is thus reduced to
$
\begin{pmatrix}
a_0&&&\\
&b_0&&\\
&&a_0&1\\
&&&a_0
\end{pmatrix}
$, and $\ZT{4}(A,B)=\left\{ \begin{pmatrix}x_0&&&x_1\\&z_0&&\\&&x_0&w_2\\&&&x_0\end{pmatrix}\right\}$. By a routine check, we can see that this centralizer is commutative. This $(A,B)$ is of type $R_2$, and there are $(q-1)(q-2)$ such branches.

When $b_2 \neq 0$, in Equation~\ref{ETA31} for this $C$, choose $z_2$ so that $b'_2 = 1$. Thus on substituting $b_2$ with $b'_2= 1$ in the same, we get $z_2 = x_0$. Hence, from Equation~\ref{ETA32} for this case, we have $d'_2 = d_2$. With these Equation~\ref{ETA33} becomes $x_0a_1 + d_2y_2 = x_0a'_1 + w_2$. Choose $w_2$ so that $a'_1=0$. Hence $B$ is reduced to 
$
\begin{pmatrix}
a_0&1&&\\
&a_0&&d_1\\
&&b_0&\\
&&&a_0
\end{pmatrix}
$
and $\ZT{4}(A,B)=\left\{ \begin{pmatrix}x_0&&y_2&x_1\\&z_0&&\\&&x_0&d_2y_2\\&&&x_0\end{pmatrix}\right\}$. Easy to see that this centralizer is commutative. This $(A,B)$  too is of type $R$, and there are $q(q-1)(q-2)$ such branches.

With these, we have covered all the subcases under the case of $a_0$ being an eigenvalue of $C$.

\noindent{When $a_0$ is not an eigenvalue of $C$:} In this case $C-a_0I_2$ is invertible. Hence, in Equation~\ref{ETA31}, choose $y_1, y_2$ so that $b'_1 = b'_2 = 0$. Similarly, in Equation~\ref{ETA32}, choose $w_1, w_2$ so that $d'_1 = d'_2 = 0$.

So, Equation~\ref{ETA33} becomes $x_0a_1 = x_0a'_1 $, thus $a'_1 = a_1$.

{\bfseries When $C = b_0I_2, b_0\neq a_0$:} $B$ is reduced to 
$
\begin{pmatrix}
a_0&&&a_1\\
&b_0&&\\
&&b_0\\
&&&a_0
\end{pmatrix}
$, and $\ZT{4}(A,B) = \left\{\begin{pmatrix}x_0&&&x_1\\&z_0&z_1&\\&&z_2&\\&&&x_0\end{pmatrix} \right\}$. This $(A,B)$ is of type $B_5$. There are $q(q-1)(q-2)$ such branches.

{\bfseries When $C = \begin{pmatrix}b_0&1\\&b_0\end{pmatrix}, b_0 \neq a_0$:}In this case, $B$ is reduced to 
$
\begin{pmatrix}
a_0&&&a_1\\
&b_0&1&\\
&&b_0&\\
&&&a_0
\end{pmatrix}
$, and $\ZT{4}(A,B) = \left\{\begin{pmatrix}x_0&&&x_1\\&z_0&z_1&\\&&z_0&\\&&&x_0\end{pmatrix} \right\}$. This one is a commutative centralizer. This $(A,B)$ is of type $R_3$, and there are $q(q-1)(q-2)$ such branches.

{\bfseries When $C = \begin{pmatrix}b_0&\\&c_0\end{pmatrix}, b_0,c_0 \neq a_0$, and $b_0\neq c_0$:}In this case, $B$ is reduced to 
$
\begin{pmatrix}
a_0&&&a_1\\
&b_0&&\\
&&c_0&\\
&&&a_0
\end{pmatrix}
$, and $\ZT{4}(A,B) = \left\{\begin{pmatrix}x_0&&&x_1\\&z_0&&\\&&z_2&\\&&&x_0\end{pmatrix} \right\}$. This $(A,B)$ is of type $R$, and there are $q(q-1)(q-2)(q-3)$ such branches.

So, those are all the cases available. 

Adding up all the branches of type $A_8$, we have a total of $q-1 + q(q-1) + (q-1)^2 = 2q(q-1)$ branches.
\end{proof}

\begin{prop}\label{BrT4A4}
A matrix of type $A_4$ has:
\begin{center}
 \begin{tabular}{c|c||c|c}\hline
Branch & No. of Branches & Branch & No. of Branches\\ \hline
$A_4$ & $q(q-1)$ & $tNT_1$ & $q(q-1)^2$\\
$R_1$ & $q^3-q^2$ & $tNT_5$ & $q(q-1)$\\ 
$R_3$& $q^2(q-1)(q-2)$ &&\\ \hline
 \end{tabular}
 \end{center}
A new type $tNT_5$ appears with centralizer $\left\{  \begin{psmallmatrix} a_0 & a_1 & b_0 & b_1\\&a_0& &b_0\\&&a_0 & c_1 \\&&& a_0 \end{psmallmatrix}\mid a_0 \neq 0\right\}$. 
\end{prop}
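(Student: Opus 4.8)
The plan is to follow the template of Propositions~\ref{BrT4A1}--\ref{BrT4A3}. I fix the canonical form
$$A=\begin{pmatrix}a&1&&\\&a&&\\&&a&1\\&&&a\end{pmatrix}$$
of type $A_4$, that is, a direct sum of two Jordan blocks of size $2$ with common eigenvalue $a$, and record its centralizer
$$\ZT{4}(A)=\left\{\begin{pmatrix}x_0&x_1&x_2&x_3\\&x_0&&x_2\\&&z_0&z_1\\&&&z_0\end{pmatrix}\mid x_0,z_0\neq0\right\},$$
obtained by solving $X(E_{12}+E_{34})=(E_{12}+E_{34})X$ (here $E_{ij}$ are the matrix units). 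I then write a generic $B\in\ZT{4}(A)$ in $2\times2$ block form with diagonal blocks $P=\begin{pmatrix}a_0&a_1\\&a_0\end{pmatrix}$, $S=\begin{pmatrix}c_0&c_1\\&c_0\end{pmatrix}$ and coupling block $M=\begin{pmatrix}a_2&a_3\\0&a_2\end{pmatrix}$, and conjugate by a generic $X\in\ZT{4}(A)$ written in the same block shape.

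The observation that makes the computation short is that the diagonal blocks of $B$ are conjugation invariant: the diagonal blocks of $X$ lie in the commutative algebra $\Fq[\nu]$, $\nu=\begin{pmatrix}0&1\\0&0\end{pmatrix}$, so they commute with $P$ and with $S$, whence $XB=B'X$ forces $P'=P$ and $S'=S$. Thus $a_0,a_1,c_0,c_1$ are invariants and only $M$ moves, by a rule of the form $a_2'=x_0z_0^{-1}a_2$ and $a_3'=z_0^{-1}\big(x_0a_3+x_1a_2+(c_1-a_1)x_2\big)-x_0z_0^{-2}z_1a_2$. I split on $c_0=a_0$. If $c_0\neq a_0$ the coupling can be cleared (use $x_2$ then $x_1$ to force $a_2=a_3=0$), and $(A,B)$ becomes two size-$2$ blocks with distinct eigenvalues, i.e.\ type $R_3$; letting $a_0,a_1,c_0\neq a_0,c_1$ vary gives the $q^2(q-1)(q-2)$ branches.

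For $c_0=a_0$ the analysis is driven by the identity $N_B^2=a_2(a_1+c_1)E_{14}$, where $N_B=B-a_0I$. When $a_2=0$ and $a_1\neq c_1$ one clears $a_3$ and the bracket $[X,a_1E_{12}+c_1E_{34}]=(c_1-a_1)x_2E_{14}$ shows the simultaneous centralizer is cut out by $x_2=0$, giving type $tNT_1$. When $a_2=0$, $a_1=c_1$ and $a_3=0$ we have $B\in\Fq[A]$, so $\ZT{4}(A,B)=\ZT{4}(A)$ and the pair stays type $A_4$. When $a_2=0$, $a_1=c_1$ and $a_3\neq0$ the only new relation is $[X,E_{14}]=(x_0-z_0)E_{14}=0$, so $\ZT{4}(A,B)=\ZT{4}(A)\cap\{x_0=z_0\}$, the new type $tNT_5$. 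Finally, when $a_2\neq0$ I normalise $a_2=1$, $a_3=0$; then $[X,N_B]=0$ imposes $x_0=z_0$ together with one further linear relation on $(x_1,z_1,x_2)$, producing a commutative group of order $q^3(q-1)$, hence type $R_1$.

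It remains to check that $tNT_5$ is genuinely new and to tally. Intersecting $\ZT{4}(A)$ with $\{x_0=z_0\}$ returns exactly the stated centralizer $\left\{\begin{psmallmatrix}a_0&a_1&b_0&b_1\\&a_0&&b_0\\&&a_0&c_1\\&&&a_0\end{psmallmatrix}\right\}$, and the single bracket $[\,b_0(E_{13}+E_{24}),\,a_1E_{12}\,]=-a_1b_0E_{14}$ shows it is noncommutative, so it is isomorphic to none of the regular types nor to $tNT_1,\dots,tNT_4$. Counting each family through the free invariants $a_0,a_1,c_1$ yields $A_4:q(q-1)$, $tNT_1:q(q-1)^2$, $tNT_5:q(q-1)$, $R_1:q^2(q-1)=q^3-q^2$ and $R_3:q^2(q-1)(q-2)$. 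I expect the bookkeeping in the case $a_2\neq0$ to be the main obstacle: one must confirm that for \emph{every} pair $(a_1,c_1)$ the simultaneous centralizer is commutative of order $q^3(q-1)$ (so that the families $a_1+c_1=0$ and $a_1+c_1\neq0$ both land in $R_1$ rather than spawning a further new type), and one must verify that the normalisations are exhaustive, so that the five branch counts sum to the $q(q-1)(q^2+1)$ conjugacy classes of $\ZT{4}(A)$.
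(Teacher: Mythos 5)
Your proof is correct and follows essentially the same route as the paper's: the same canonical form and centralizer, the same case split on $c_0=a_0$ versus $c_0\neq a_0$ and then on $a_2$, $a_1-c_1$, $a_3$, and the same five branch counts, with your only real addition being the packaging of all blocks in $\Fq[\nu]$, which makes the invariance of the diagonal blocks and the single coupling equation transparent. Two small points: in the case $c_0\neq a_0$ the entries to use are $x_2$ and $x_3$ (i.e.\ $Q_X=-P_XM(S-P)^{-1}$), not ``$x_2$ then $x_1$'', and the commutativity you flag as the main obstacle in the $a_2\neq 0$ branch is immediate from $[E_{12}+E_{34},\,E_{13}+E_{24}+(c_1-a_1)E_{34}]=E_{14}-E_{14}=0$, so it is not an obstacle.
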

\begin{proof}
 The canonical form of a matrix of this type is $A = \begin{pmatrix}a&1&&\\&a&&\\&&a&1\\&&&a\end{pmatrix}$. Then 
 $\ZT{4}(A) = \left\{\begin{pmatrix}a_0&a_1&b_0&b_1\\&a_0&&b_0\\&&c_0&c_1\\&&&c_0\end{pmatrix}\mid a_0,c_0\neq 0 \right\}.$
 Let $B = \begin{pmatrix}a_0&a_1&b_0&b_1\\&a_0&&b_0\\&&c_0&c_1\\&&&c_0\end{pmatrix}$, and $B' = \begin{pmatrix}a_0&a_1&b'_0&b'_1\\&a_0&&b'_0\\&&c_0&c_1\\&&&c_0\end{pmatrix} = XBX^{-1}$, where $X = \begin{pmatrix}x_0&x_1&y_0&y_1\\&x_0&&y_0\\&&z_0&z_1\\&&&z_0\end{pmatrix}$. $XB = B'X$ gives us the following:
 \begin{eqnarray}
  x_0b_0 + y_0c_0 &=& z_0b'_0 + y_0a_0\label{ETA41}\\
  x_0b_1 + x_1b_0 + y_0c_1 + y_1c_0 &=& y_1a_0 + y_0a_1 + z_1b'_0 + z_0b'_1\label{ETA42}
 \end{eqnarray}

\noindent{\bfseries When $a_0 = c_0$:} From Equation~\ref{ETA41}, we have $x_0b_0 = z_0b'_0$.

{\bfseries When $b_0 = 0$}: Equation~\ref{ETA42} becomes $x_0b_1 + y_0c_1 = z_0b'_1 +y_0a_1$. Here we first look at what happens when $a_1 = c_1$, and $b_1 = 0$. Here $B$  reduces to 
$\begin{pmatrix}
a_0&a_1&&\\
&a_0&&\\
&&a_0&a_1\\
&&&a_0
\end{pmatrix}$, $\ZT{4}(A,B) = \ZT{4}(A)$. $(A,B)$ is of type $A_4$, and there are $q(q-1)$ such branches.

When $a_1 = c_1$, and $b_1\neq 0$. We can choose $x_0$ such that $b'_1 = 1$. Thus $B$ is reduced to 
$\begin{pmatrix}
a_0&a_1&&1\\
&a_0&&\\
&&a_0&a_1\\
&&&a_0
\end{pmatrix}
$, and $\ZT{4}(A,B) = \left\{ \begin{pmatrix}x_0&x_1&y_0&y_1\\&x_0&&y_0\\&&x_0&z_1\\&&&x_0\end{pmatrix} \right\}$. We see a centralizer, not isomorphic to the ones seen so far. Thus, we have a new type $tNT_5$. $(A,B)$ is of type $tNT_5$, and there are $q(q-1)$ such branches.

When $a_1\neq c_1$, in Equation~\ref{ETA42}, we can choose $y_0$, so that $b'_1 = 0$. Thus, $B$ is reduced to $\begin{pmatrix}
a_0&a_1&&\\
&a_0&&\\
&&a_0&c_1\\
&&&a_0
\end{pmatrix}
$, and $\ZT{4}(A,B) = \left\{ \begin{pmatrix}x_0&x_1&&y_1\\&x_0&&\\&&z_0&z_1\\&&&z_0\end{pmatrix} \right\}$ $(A,B)$ is of type $tNT_1$, and there are $q(q-1)^2 $ such branches.

{\bfseries When $b_0 \neq 0$}: In Equation~\ref{ETA41}, choose $x_0$ such that $b'_0 = 1$. Then, on replacing $b_0$ and $b'_0$ by 1 in the same equation, we have $x_0 = z_0$. Hence, Equation~\ref{ETA42} becomes $x_0b_1 + x_1 + y_0c_1 = x_0b'_1 + z_1 + y_0a_1$. Hence, choose $z_1$ so that $b'_1 = 0$. 
Then, $B$ is reduced to $\begin{pmatrix}
a_0&a_1&1&\\
&a_0&&1\\
&&a_0&c_1\\
&&&a_0
\end{pmatrix}
$, and $\ZT{4}(A,B) = \left\{ \begin{pmatrix}x_0&x_1&&y_1\\&x_0&&\\&&x_0&\begin{smallmatrix}x_1+\\  y_0(c_1-a_1)\end{smallmatrix}\\&&&x_0\end{pmatrix} \right\}$ $(A,B)$ is of type $R_1$, and there are $q^2(q-1)$ such branches.

\noindent{\bfseries When $a_0\neq c_0$}: In Equation~\ref{ETA41}, choose $y_0$ so that $b'_0 = 0$. With this, Equation~\ref{ETA42} becomes $x_0b_1+y_1c_0 = z_0b'_1+y_1a_0$. Choose $y_1$ such that $b'_1 = 0$. Thus, $B$ is reduced to $\begin{pmatrix}
a_0&a_1&&\\
&a_0&&\\
&&c_0&c_1\\
&&&c_0
\end{pmatrix}
$, and $\ZT{4}(A,B) = \left\{ \begin{pmatrix}x_0&x_1&&\\&x_0&&\\&&z_0&z_1\\&&&z_0\end{pmatrix} \right\}$. $(A,B)$ is of type $R_3$, and there are $q^2(q-1)(q-2) = q^4-q^3 $ such branches. Thus, there are no more cases left to deal with.

\end{proof}
\begin{prop}\label{BrT4A5}
 An upper triangular matrix of type $A_5$ has $q^2(q-1)$ branches of type $A_5$, $q^2(q-1)(q-2)$ branches of type $R_3$, and $q^2(q^2-1)$ branches of type $NR_1$.
 \end{prop}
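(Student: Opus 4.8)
The plan is to follow the centralizer-reduction strategy used throughout this section. First I would record the canonical form $A=\begin{psmallmatrix}a&&&1\\&a&1&\\&&a&\\&&&a\end{psmallmatrix}$ of type $A_5$ (two Jordan blocks sitting on the index pairs $\{1,4\}$ and $\{2,3\}$) and compute its centralizer $\ZT{4}(A)=\left\{\begin{psmallmatrix}x_0&&x_2&x_3\\&y_0&y_1&y_2\\&&y_0&\\&&&x_0\end{psmallmatrix}\mid x_0,y_0\neq0\right\}$, which agrees with the centralizer already produced for $A_5$ in Propositions~\ref{BrT4A1P} and~\ref{BrT4A3}. Then I take a general $B=\begin{psmallmatrix}a_0&&a_2&a_3\\&b_0&b_1&b_2\\&&b_0&\\&&&a_0\end{psmallmatrix}\in\ZT{4}(A)$ and a general $X\in\ZT{4}(A)$ and equate $XB=B'X$.

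The key structural observation, which keeps the computation short, is that the $(1,4)$ and $(2,3)$ entries force $a'_3=a_3$ and $b'_1=b_1$; that is, $a_3$ and $b_1$ are \emph{complete conjugacy invariants} under $\ZT{4}(A)$ (they lie between equal diagonal entries, so conjugation cannot even rescale them). The remaining two entries give $a'_2y_0=x_0a_2+(b_0-a_0)x_2$ and $b'_2x_0=y_0b_2+(a_0-b_0)y_2$, which govern the fate of $a_2$ and $b_2$.

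I would then split on whether $a_0=b_0$. If $a_0\neq b_0$, the coefficient $(b_0-a_0)$ is invertible, so $x_2,y_2$ can be chosen to set $a'_2=b'_2=0$; the reduced $B$ has joint centralizer $\ZT{4}(A,B)=\left\{\begin{psmallmatrix}x_0&&&x_3\\&y_0&y_1&\\&&y_0&\\&&&x_0\end{psmallmatrix}\right\}$, which is commutative and isomorphic to the centralizer of type $R_3$; since $a_3,b_1$ range freely over $\Fq$ and $(a_0,b_0)$ over ordered distinct pairs in $\Fq^*$, this gives $q^2(q-1)(q-2)$ classes of type $R_3$. If $a_0=b_0$, the two equations degenerate to $a'_2=(x_0/y_0)a_2$ and $b'_2=(y_0/x_0)b_2$, so $(a_2,b_2)$ is acted on by $t=x_0/y_0\in\Fq^*$ via $(a_2,b_2)\mapsto(ta_2,t^{-1}b_2)$. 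The orbit $(0,0)$ fixes all of $\ZT{4}(A)$, giving $\ZT{4}(A,B)=\ZT{4}(A)$ and hence type $A_5$; the nonzero orbits (one with $b_2=0\neq a_2$, one with $a_2=0\neq b_2$, and $q-1$ distinguished by the invariant product $a_2b_2\in\Fq^*$, for $q+1$ in all) each force $x_0=y_0$ while leaving $x_2,x_3,y_1,y_2$ free, so their joint centralizer is $\left\{\begin{psmallmatrix}x_0I_2&Y\\&x_0I_2\end{psmallmatrix}\right\}$, i.e.\ type $NR_1$.

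Finally I would tally the strata: the $(0,0)$ orbit contributes $(q-1)$ choices of $a_0$ times $q^2$ choices of $(a_3,b_1)$, giving $q^2(q-1)$ branches of type $A_5$; the $q+1$ nonzero orbits contribute $(q+1)\cdot q^2(q-1)=q^2(q^2-1)$ branches of type $NR_1$; and the stratum $a_0\neq b_0$ gives $q^2(q-1)(q-2)$ branches of type $R_3$, matching the stated counts (their sum $2q^3(q-1)$ is indeed the total number of $\ZT{4}(A)$-conjugacy classes, a useful consistency check). The only genuinely delicate point is the orbit count when $a_0=b_0$: one must notice that the single torus parameter $t=x_0/y_0$ scales $a_2$ and $b_2$ \emph{inversely}, so they cannot be normalized independently and the product $a_2b_2$ survives as an invariant producing $q-1$ distinct two-parameter orbits — all of which nevertheless share the single centralizer type $NR_1$.
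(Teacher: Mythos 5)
Your proposal is correct and follows essentially the same route as the paper: the same canonical form and centralizer, the observation that $a_3$ and $b_1$ are invariant, and the same case split on $a_0=b_0$ versus $a_0\neq b_0$, arriving at the identical counts $q^2(q-1)$, $q^2(q^2-1)$ and $q^2(q-1)(q-2)$. The only cosmetic difference is that in the $a_0=b_0$ case you classify the torus orbits on $(a_2,b_2)$ via the invariant $a_2b_2$, where the paper instead normalizes $a_2=1$ (or $b_2=1$) and lets the surviving entry run over $\Fq$ — the two bookkeepings agree.
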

\begin{proof}
 A matrix of type $A_5$ has the canonical form: $A = \begin{pmatrix}a&&&1\\&a&1&\\ &&a&\\&&&a\end{pmatrix}$. Thus its centralizer $\ZT{4}(A)$ is: 
 $\left\{\begin{pmatrix}a_0 & & a_2 & a_3\\ &b_0 & b_1 & b_2 \\ & & b_0 & \\ &&&a_0\end{pmatrix} \right\}.$
Let $B = \begin{pmatrix}a_0 & & a_2 & a_3\\ &b_0 & b_1 & b_2 \\ & & b_0 & \\ &&&a_0\end{pmatrix}$, and $B' = \begin{pmatrix}a_0 & & a'_2 & a'_3\\ &b'_0 & b'_1 & b_2 \\ & & b_0 & \\ &&&a_0\end{pmatrix} = XBX^{-1}$, where $X = \begin{pmatrix}x_0 & & x_2 & x_3\\ &y_0 & y_1 & y_2 \\ & & y_0 & \\ &&&x_0\end{pmatrix}.$ Thus, from $XB = B'X$, we have $a'_3 = a_3$, $b'_1 = b_1$, and the following equations:

\begin{eqnarray}
 x_0a_2 + x_2b_0 &=& y_0a'_2 + x_2a_0 \label{ETA51}\\
 y_0b_2 + a_0y_2 &=& x_0b'_2 + y_2b_0 \label{ETA52}
\end{eqnarray}

Case $a_0 = b_0$.
Equations~\ref{ETA51} and \ref{ETA52} become $x_0a_2 = y_0a'_2$, and $y_0b_2 = x_0b'_2$ respectively.

If $a_2 = b_2 = 0$, the above equations are void, and we have $B$ reduced to 
$\begin{pmatrix}
a_0&&&a_3\\
&a_0&b_1&\\
&&a_0&\\
&&&a_0
\end{pmatrix}$, and $\ZT{4}(A,B)  = \ZT{4}(A)$. Thus $(A,B)$ is a branch of type $A_5$, and there are $q^2(q-1)$ such branches.

If $a_2 \neq 0$, then choose $x_0$ so that $a'_2 = 1$. Substituting $a_2$ with $a'_2 = 1$ in the equation $x_0a_2 = y_0a'_2$, we get $x_0 = y_0$, thus leaving us with $b'_2 = b_2$. Hence $B$ is reduced to $\begin{pmatrix}a_0&&1&a_3\\&a_0&b_1&b_2\\&&a_0&\\*&&&a_0\end{pmatrix}$, and $\ZT{4}(A,B) = \left\{\begin{pmatrix}x_0&&x_2&x_3\\&x_0&y_1&y_2\\&&x_0&\\&&&x_0\end{pmatrix} \right\}$. Thus $(A,B)$ is a branch of type $NR_1$, and there are $q^3(q-1)$ such branches.

If $a_2 =0$ and $b_2 \neq 0$, then we choose $y_0$ such that $b'_2=1$. Thus $B$ is reduced to $\begin{pmatrix}a_0&&&a_3\\&a_0&b_1&1\\&&a_0&\\*&&&a_0\end{pmatrix}$, and $\ZT{4}(A,B) = \left\{\begin{pmatrix}x_0&&x_2&x_3\\&x_0&y_1&y_2\\&&x_0&\\&&&x_0\end{pmatrix} \right\}$. This branch too is of type $NR_1$, and there are $q^2(q-1)$ such branches.

If $a_0 \neq b_0$. Then, in Equation~\ref{ETA51}, choose $x_2$ such that $a'_2 = 0$. Similarly in Equation~\ref{ETA52}, choose $y_2 $ such that $b'_2 = 0$. Thus $B$ boils down to 
$\begin{pmatrix}a_0&&&a_3\\&b_0&b_1&\\&&b_0&\\*&&&a_0\end{pmatrix}$, and $\ZT{4}(A,B) = \left\{\begin{pmatrix}x_0&&&x_3\\&y_0&y_1&\\&&y_0&\\&&&x_0\end{pmatrix} \right\}$. This $(A,B)$ is of type $R_3$, and there are $q2^2(q-1)(q-2)$ such branches. 

Adding up the branches of type $NR_1$, we have a total of $q^3(q-1) + q^2(q-1) = q^2(q^2-1)$ branches of type $NR_1$.
\end{proof}

\begin{prop}\label{BrT4A6}
 For a matrix of type $A_6$, the branchings are:
 \begin{center}
 \begin{tabular}{c|c||c|c}\hline
 Branch & No. of Branches & Branch & No. of Branches\\ \hline
 $A_6$& $q(q-1)$& $R_3$ & $q^2(q-1)(q-2)$\\
 $A_5$& $q(q-1)^2$& $tNT_4$&$q^2(q-1)$\\
 $R_1$& $q^2(q-1)$& $NR_1$& $q^2(q-1)$.\\ \hline
 \end{tabular}
  \end{center}
\end{prop}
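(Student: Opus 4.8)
The plan is to imitate the reductions carried out for types $A_2$--$A_5$, but to exploit the block symmetry of the centralizer. Take the canonical form $A=\begin{psmallmatrix}a&&1&\\&a&&1\\&&a&\\&&&a\end{psmallmatrix}$; a direct check gives
$\ZT{4}(A)=\left\{\begin{psmallmatrix}x_0&x_1&x_2&x_3\\&y_0&y_1&y_2\\&&x_0&x_1\\&&&y_0\end{psmallmatrix}\mid x_0,y_0\neq0\right\}$,
every element of which has the block form $\begin{psmallmatrix}P&Q\\0&P\end{psmallmatrix}$ with $P=\begin{psmallmatrix}x_0&x_1\\0&y_0\end{psmallmatrix}\in\Tq{2}$ and $Q\in M_2(\Fq)$ arbitrary. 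Writing a general $B\in\ZT{4}(A)$ as $\begin{psmallmatrix}S&R\\0&S\end{psmallmatrix}$ with $S=\begin{psmallmatrix}a_0&a_1\\0&c_0\end{psmallmatrix}$ and $R\in M_2(\Fq)$, and conjugating by $X=\begin{psmallmatrix}P&Q\\0&P\end{psmallmatrix}$, the relation $XB=B'X$ splits into $PS=S'P$ (so $S'=PSP^{-1}$) together with the off-diagonal equation $PR+QS=S'Q+R'P$.

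First I would reduce the diagonal block $S$: since it transforms by ordinary $\Tq{2}$-conjugation, Section~\ref{SGT2} lets me replace $S$ by one of the three $GT_2(\Fq)$ canonical forms, namely $a_0I_2$, $\begin{psmallmatrix}a_0&1\\0&a_0\end{psmallmatrix}$, or $\operatorname{diag}(a_0,c_0)$ with $a_0\neq c_0$, after which $P$ is constrained to lie in $\ZT{2}(S)$. For each such $S$ the residual freedom on $R$ is the map $R\mapsto PRP^{-1}+(QS-SQ)P^{-1}$ with $P\in\ZT{2}(S)$ and $Q\in M_2(\Fq)$; reducing $R$ modulo this action and then computing $\ZT{4}(A,B)=\ZT{4}(A)\cap\ZT{4}(B)$ identifies the branch type. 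When $S=\operatorname{diag}(a_0,c_0)$ the coboundary $QS-SQ$ absorbs the off-diagonal part of $R$ while its diagonal survives, giving a commutative joint centralizer of type $R_3$; when $S$ is the regular unipotent block the surviving data is $\alpha I_2+\beta E_{21}$ and the joint centralizer is again commutative, of type $R_1$.

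The crux is the central case $S=a_0I_2$, where $QS-SQ=0$, so $R$ is acted on purely by $\Tq{2}$-conjugation and the problem becomes the classification of $M_2(\Fq)$ under Borel conjugation. I would split this into four orbit families: scalar matrices (recovering the type $A_6$ itself); the nonscalar single-eigenvalue matrices reduced to $\begin{psmallmatrix}\lambda&1\\0&\lambda\end{psmallmatrix}$, whose joint centralizer is the six-parameter group
$\left\{\begin{psmallmatrix}x_0&x_1&x_2&x_3\\&x_0&y_1&y_2\\&&x_0&x_1\\&&&x_0\end{psmallmatrix}\mid x_0\neq0\right\}$
defining the new type $tNT_4$; the split diagonal matrices $\operatorname{diag}(\lambda,\mu)$ with $\lambda\neq\mu$, whose joint centralizer is isomorphic to that recorded for $A_5$; and the matrices with nonzero $(2,1)$-entry, which are $\Tq{2}$-regular and yield the commutative type $NR_1$. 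Multiplying each orbit count by the $q-1$ admissible scalars $a_0$ (and, in the split case, by the ordered choices $a_0\neq c_0$) then assembles the table in the statement.

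I expect the main obstacle to be exactly this central case: producing a complete, non-redundant set of representatives for $\Tq{2}\backslash M_2(\Fq)$ and checking that the four orbit families carry four genuinely distinct joint-centralizer isomorphism types. In particular one must separate $tNT_4$ from $NR_1$ by the order and commutativity of the centralizer, and verify that the split-diagonal centralizer is \emph{isomorphic} (not merely equal) to the one attached to $A_5$, since the two have their repeated diagonal blocks arranged differently. The delicate bookkeeping is the multiplicity count for the single-eigenvalue orbits feeding into $tNT_4$, which I would cross-check against the corresponding column entry of the branching matrix $B_{GT_4(\Fq)}$.
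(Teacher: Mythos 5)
Your proposal follows essentially the same route as the paper's proof: the same canonical form and block description $\begin{psmallmatrix}P&Q\\0&P\end{psmallmatrix}$ of the centralizer, the same splitting of $XB=B'X$ into $\Tq{2}$-conjugation on the diagonal block plus an off-diagonal coboundary equation, and the same case analysis (the two non-central $S$ giving $R_1$ and $R_3$, and the central case reducing to the classification of $M_2(\Fq)$ under Borel conjugation into the four families yielding $A_6$, $tNT_4$, $A_5$, $NR_1$). The only differences are cosmetic choices of transversal for the surviving data, so the argument and the counts match the paper.
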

\begin{proof}
A matrix of type $A_6$ has the canonical form 
$\begin{pmatrix}
a&&1&\\
&a&&1\\
&&a&\\
&&&a
\end{pmatrix}$. The centralizer subgroup $\ZT{4}(A)$ is $\left\{\begin{pmatrix}C&D\\&C\end{pmatrix}\mid C\in \Tq{2}\right\},$ where $D = \begin{pmatrix}d_0&d_1\\ d_2&d_3\end{pmatrix}$, and $W = \begin{pmatrix}w_0&w_1\\ w_2&w_3\end{pmatrix}$.

Let $B = \begin{pmatrix}C&D\\&C\end{pmatrix}$, and $B' = \begin{pmatrix}C'&D'\\&C'\end{pmatrix}= XBX^{-1}$, where $X = \begin{pmatrix}Z&W\\&Z\end{pmatrix}$. So $XB = B'X$ leads to $ZC = C'Z$. Hence, we can take $C$ to be a representative of a conjugacy class in $\Tq{2}$, and $Z = \ZT{2}(C)$. We have the following equation
\begin{equation}\label{ETA61}
ZD + WC = CW + D'Z
\end{equation}
So the cases to deal with here are the three conjugacy class types in $\Tq{2}$.

\noindent{\bfseries Case $C = \left(\begin{smallmatrix}a_0&1\\&a_0\end{smallmatrix}\right)$:} here $Z = \begin{pmatrix}x_0&x_1\\&x_0\end{pmatrix}$, and Equation~\ref{ETA61} becomes:
$$\begin{pmatrix}x_0d_0 + x_1d_2 & x_0d_1 + x_1d_3 + w_0\\ x_0d_2 & x_0d_3  +w_2\end{pmatrix} = \begin{pmatrix} w_2 + x_0d'_0 & w_3 + x_1d'_0 + x_0d_1\\ x_0d'_2 & x_1d'_2 + x_0d_3\end{pmatrix}$$

 Choose $w_2 $ so that $d'_0 = 0$. Thus, on replacing $d_0$ by $0$, we get $w_2 = x_1d_2$, and hence $d'_3 = d_3$.

We can choose $w_0$ such that $d'_1 = 0$. Thus $B$ is reduced to $\begin{pmatrix} a_0&1&&\\&a_0&d_2&d_3\\&&a_0&1\\&&&a_0\end{pmatrix}$, and $\ZT{4}(A,B)= \left\{\begin{pmatrix}x_0&x_1&w_0&w_1\\&x_0&x_1d_2&w_0+x_1d_3\\&&x_0&x_1\\&&&x_0\end{pmatrix}\right\}$. This $(A,B)$ is of type $R_1$, and there are $q^2(q-1)$ such branches. 

\noindent{\bfseries Case $C = \left(\begin{smallmatrix}a_0&\\&b_0\end{smallmatrix}\right), a_0\neq b_0$:} here $Z = \begin{pmatrix}x_0&\\&x_3\end{pmatrix}$, and Equation~\ref{ETA61} becomes:
$$\begin{pmatrix}x_0d_0 + a_0w_0 & x_0d_1 + w_1b_0\\ x_3d_2+a_0w_2 & x_3d_3+b_0w_3  \end{pmatrix} = \begin{pmatrix} a_0w_0 + x_0d'_0 & w_1a_0 + x_3d_1\\ b_0w_2 +x_0d_2& b_0w_3 + x_3d_3\end{pmatrix}.$$
We have $d'_0 = d_0$ and $d'_3 = d_3$. As $a_0 \neq b_0$, choose $w_2$ such that $d'_2 = 0$, and $w_1$ so that $d'_1 = 0$. Thus $B$ is reduced to $\begin{pmatrix} a_0&&d_0&\\&b_0&&d_3\\&&a_0&\\&&&d_0\end{pmatrix}$, and $\ZT{4}(A,B)= \left\{\begin{pmatrix}x_0&&w_0&\\&x_3&&w_3\\&&x_0&\\&&&x_3\end{pmatrix}\right\}$. This $(A,B)$ is of type $R_3$, and there are $q^2(q-1)(q-2)$ such branches. 

\noindent{\bfseries Case $C = a_0I_2$:} Here Equation~\ref{ETA61} becomes:
$ZD = D'Z$, where $Z \in T_2(\Fq)$. With $Z = \begin{pmatrix}x_0&x_1\\&x_2\end{pmatrix}$, we see that:
\begin{equation}
\label{ETA62}
\begin{pmatrix}x_0d_0 + x_1d_2 & x_0d_1+ x_1d_3\\ x_3d_2 & x_3d_3\end{pmatrix} = \begin{pmatrix}x_0d'_0 & x_1d'_0 + x_3d'_1 \\ x_0d'_2 & x_1d'_2 + x_3d'_3\end{pmatrix}.
\end{equation}
We see that $x_0d'_2 = x_3d_2$. We have two main cases here:

{\bfseries Case $d_2 = 0$.} In this case, from Equation~\ref{ETA62} we have $d'_0 = d_0$, and $d'_3 = d_3$, and we have $x_0d_1 + (d_3-d_0)x_1 = x_3d'_1$.

When $d_0 = d_3$, we have $x_0d_1 = x'_3 d_1$. Now, if $d_1 = 0$. we have $B = \begin{pmatrix}a_0I_2 & d_0I_2\\&a_0I_2\end{pmatrix}$, and $\ZT{4}(A,B) = \ZT{4}(A)$. Thus, $(A,B)$ is of type $A_6$, and there are $q(q-1)$ such branches.

If $d_1\neq 0$, choose $x_0$ so that $d'_1 = 1$. Thus $B$ is reduced to $\begin{pmatrix} a_0&&d_0&1\\&a_0&&d_3\\&&a_0&\\&&&a_0\end{pmatrix}$, and 
$$
\ZT{4}(A,B) = \left\{\begin{pmatrix}x_0&x_1&w_1&w_2\\ &x_0 &w_2&w_3\\&&x_0&x_1\\&&&x_0\end{pmatrix}\right\}.
$$
$(A,B)$ is therefore of type $tNT_4$, and there are $q^2(q-1)$ such branches.

When $d_0 \neq d_3$, in the (1,2)th entry of Equation~\ref{ETA62}, we choose $x_1$ so that $d'_1 = 0$. Thus $B$ is reduced to $\begin{pmatrix}a_0&&d_0&\\&a_0&&d_3\\&&a_0&\\&&&a_0\end{pmatrix}$, and $\ZT{4}(A,B) = \left\{\begin{pmatrix}x_0&&w_1&w_2\\ &x_3 &w_2&w_3\\&&x_0&\\&&&x_3\end{pmatrix}\right\}.$ This is isomorphic to the centralizer of a matrix of type $A_5$. Thus $(A,B)$ is a branch of type $A_5$, and there are $q^2(q-1)$ such branches.

{\bfseries Case $d_2 \neq 0$.} First, we choose $x_0$ such that $d'_2 = 1$. On replacing $d_2$ with $d'_2 = 1$ in Equation~\ref{ETA62}, and equating, we get $x_0 = x_3$. 

In the same equation, we can choose $x_1$ such that $d'_0  =0$. On replacing $d_0$ with $d'_0 = 0$ and equating, we get $x_1 =0$. Thus, $d'_3 = d_3$. Lastly, we have $x_0d_1 = x_0d'_1$, hence $d'_1 = d_1$.

Thus $B$ is reduced to $\begin{pmatrix}a_0&&&d_1\\&a_0&1&d_3\\&&a_0&\\&&&a_0\end{pmatrix}$, and $\ZT{4}(A,B) = \left\{\begin{pmatrix}x_0I_2&W\\&x_0I_2\end{pmatrix}\mid W \in M_2(\Fq)\right\}.$  $(A,B)$ is a branch of type $NR_1$, and there are $q^2(q-1)$ such branches.

There are no other cases. 
\end{proof}

\begin{prop}\label{BrT4A789}
The branching rules of remaining $A$ types are as follows.
\begin{enumerate}
\item For a matrix of type $A_7$, there are $q^2(q-1)$ branches of type $A_7$, $q^2(q-1)$ branches of type $R_1$, and $q^2(q-1)(q-2)$ branches of type $R_2$.
\item The type $A_8$ has $q^2(q-1)$ branches of type $A_8$,  $q^3-q$ branches of type $R_1$, and $q^2(q-1)(q-2)$ branches of type $R_2$.
 \item The type $A_9$ has $q^2(q-1)$ branches of type $A_9$, $(q^2-q)(q^2-1)$ branches of type $R_1$.
\end{enumerate}\end{prop}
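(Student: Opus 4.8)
The plan is to handle $A_7$, $A_8$ and $A_9$ by the same mechanism as Propositions~\ref{BrT4A1}--\ref{BrT4A6}. For each type I would fix the canonical form $A$ recorded in Appendix~\ref{CCGT4}, write down $\ZT{4}(A)$ explicitly, take a generic $B\in\ZT{4}(A)$ and a generic conjugator $X\in\ZT{4}(A)$, and extract the relations forced by $XB=B'X$ with $B'=XBX^{-1}$. The organising observation is the diagonal pattern of $\ZT{4}(A)$: since each of these $A$ is $aI$ plus a nilpotent, a diagonal $D$ centralises $A$ iff $d_i=d_j$ whenever the $(i,j)$ nilpotent entry is nonzero. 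For $A_7$ and $A_8$ this forces a $3+1$ split (three equal diagonal entries and one free), whereas for $A_9$, whose nilpotent part is a single Jordan block of size three together with a $1\times1$ block, it forces \emph{all four} diagonal entries to coincide. This single structural fact already predicts the shape of the answer, and the reduction of $XB=B'X$ amounts to one or two scalar relations, possibly together with one embedded $2\times2$ conjugacy $ZC=C'Z$ dispatched via the classes of $\Tq{2}$ as before.

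For $A_7$ and $A_8$ I would case-split on whether the decoupled eigenvalue of $B$ equals the repeated one. When it does not, $B$ has two genuinely distinct eigenvalues; the off-diagonal coordinates linking the two eigenspaces can be cleared using the corresponding entries of $X$, the joint centralizer becomes block diagonal and commutative, and $(A,B)$ is of type $R_2$, contributing $q^2(q-1)(q-2)$ branches from the $(q-1)(q-2)$ admissible eigenvalue pairs and the $q^2$ residual unipotent parameters. When the two eigenvalues agree, $B=aI+N'$ for a nilpotent $N'$ in the centralizer, and I would normalise the surviving off-diagonal coordinate to $0$ or $1$: the value $0$ leaves $\ZT{4}(A,B)=\ZT{4}(A)$ and yields the self-branch, while a nonzero value shrinks the centralizer to a commutative group that I would identify with that of $R_1$. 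Summing the leaves (with the $R_1$ tally possibly collected from more than one leaf, as in Propositions~\ref{BrT4A2} and~\ref{BrT4A3}) should reproduce the stated $q^2(q-1)$, $q^2(q-1)$, $q^2(q-1)(q-2)$ for $A_7$ and $q^2(q-1)$, $q^3-q$, $q^2(q-1)(q-2)$ for $A_8$.

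For $A_9$ the all-equal diagonal means there is no distinct second eigenvalue to split off, so no type carrying two eigenvalues (in particular no $R_2$, and no $B$-type) can arise; every reduction of $B$ either preserves the full centralizer, giving the self-branch $A_9$, or collapses it to a commutative group, which must be $R_1$ since that is the regular single-eigenvalue type sitting inside $\ZT{4}(A_9)$. This explains why only two branch types occur, and the self-orbit together with the one large commutative family should give $q^2(q-1)$ and $(q^2-q)(q^2-1)$ respectively.

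The routine solving of $XB=B'X$ is not the difficulty; the main obstacle is the bookkeeping of \emph{identification and exhaustiveness}. Concretely, I must check in each leaf that the collapsed centralizer is genuinely commutative and isomorphic to the centralizer of $R_1$ or $R_2$, and---in sharp contrast to $A_1,\dots,A_6$---verify that \emph{no} new type ($tNT_i$ or $NR_1$) is ever produced, so that the three rows of the branching matrix really are supported only on the columns claimed. I would then cross-check the arithmetic by confirming that, for each source type, the total number of branches equals the number of conjugacy classes of $\ZT{4}(A)$; and, should $A_7$ or $A_8$ admit two non-similar canonical forms (as $A_1$ and $A_2$ did), I would note as there that computing with a single representative suffices.
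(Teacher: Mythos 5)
Your plan is correct and follows essentially the same route as the paper: fix one canonical form for each of $A_7$, $A_8$, $A_9$, write out $\ZT{4}(A)$, reduce $XB=B'X$ to one or two scalar relations, split on whether the decoupled diagonal entry of $B$ equals the repeated one (yielding $R_2$ in the unequal case and the self-branch or $R_1$ in the equal case), and observe that for $A_9$ the all-equal diagonal of $\ZT{4}(A)$ rules out any two-eigenvalue branch. The paper's proof carries out exactly this computation, with the $R_1$ count for $A_8$ and $A_9$ indeed collected from two leaves as you anticipate, and no new types arising.
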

\begin{proof}
 \begin{enumerate}
  \item A matrix of type $A_7$ has two non-similar canonical forms, $\begin{pmatrix}a&1&&\\&a&1&\\&&a&\\&&&a\end{pmatrix}$, and $\begin{pmatrix}a&&&\\&a&1&\\&&a&1\\&&&a\end{pmatrix}$. As their centralizer subgroups in $\tq{4}$ are conjugate in $GL_4(\Fq)$, we may prove the branching for any one. Let $A = \begin{pmatrix}a&1&&\\&a&1&\\&&a&\\&&&a\end{pmatrix}$. Then $\ZT{4}(A)= \left\{\begin{pmatrix} a_0&a_1&a_2&a_3\\&a_0&a_1&\\&&a_0&\\&&&d_0\end{pmatrix}\mid a_0,d_0\neq 0\right\}$. 
  
  Let $B = \begin{pmatrix} a_0&a_1&a_2&a_3\\&a_0&a_1&\\&&a_0&\\&&&d_0\end{pmatrix}$, and $B' = \begin{pmatrix} a_0&a'_1&a'_2&a'_3\\&a_0&a'_1&\\&&a_0&\\&&&d_0\end{pmatrix} = XBX^{-1}$, where $X = \begin{pmatrix} x_0&x_1&x_2&x_3\\&x_0&x_1&\\&&x_0&\\&&&z_0\end{pmatrix}$. From $XB = B'X$ we have $a'_1 = a_1$, $a'_2 = a_2$, and this equation:
 \begin{equation}\label{ETA7}
 x_0a_3 +x_3d_0 = z_0a'_3 + x_3 a_0
 \end{equation}.
 
 If $a_0 = d_0$, then Equation~\ref{ETA7} becomes $x_0a_3 = z_0z'_3$.
 
   Here, if $a_3 = 0$, then $B$ is reduced to $\begin{pmatrix}a_0&a_1&a_2&\\ &a_0&a_1&\\&&a_0&\\&&&a_0\end{pmatrix}$, and $\ZT{4}(A,B) = \ZT{4}(A)$. Thus $(A,B)$ is of type $A_7$, and there are $q^2(q-1)$ such branches,.
   
   If $a_3\neq 0$, then choose $z_0$ so that $a'_3 = 1$. Thus, $B$ is reduced to
   $
   \begin{pmatrix}
   a_0&a_1&a_2&1\\
   &a_0&a_1&\\
   &&a_0&\\
   &&&a_0
   \end{pmatrix}
   $, and $\ZT{4}(A,B) = \left\{\begin{pmatrix}x_0&x_1&x_2&x_3\\ &x_0&x_1&\\&&x_0&\\&&&x_0\end{pmatrix}\right\}$.  This $(A,B)$ is of type $R_1$, and there are $q^2(q-1)$ such branches.
   
 When $a_0\neq d_0$, then, in Equation~\ref{ETA7}, choose $x_3$ so that $a'_3 = 0$. Thus $B$ is reduced to 
 $
 \begin{pmatrix}
 a_0&a_1&a_2&\\
 &a_0&a_1&\\
 &&a_0&\\
 &&&d_0
 \end{pmatrix}
 $, and $\ZT{4}(A,B) = \left\{\begin{pmatrix}x_0&x_1&x_2&\\ &x_0&x_1&\\&&x_0&\\&&&z_0\end{pmatrix}\right\}$. This too is commutative (by a routine check). $(A,B)$ is of type $R$, and there are $q^2(q-1)(q-2)$ such branches. There are no other cases left to analyze, so these are all the branches.

 \item Matrices of type $A_8$ have either of the two non-similar canonical forms:
 $\begin{pmatrix}a&1&&\\&a&&1\\&&a&\\&&&a\end{pmatrix}$, and $\begin{pmatrix}a&&1&\\&a&&\\&&a&1\\&&&a\end{pmatrix}$. As their centralizers are conjugate in $GL_4(\Fq)$, it is enough to prove for any one of the canonical forms. Let $A = \begin{pmatrix}a&1&&\\&a&&1\\&&a&\\&&&a\end{pmatrix}$. Then the centralizer of $A$ is $\ZT{4}(A) = \left\{\begin{pmatrix}a_0&a_1&b&a_2\\&a_0&&a_1\\&&d&c\\&&&a_0\end{pmatrix}\right\}$. let $B \in \ZT{4}(A)$ be the matrix 
 $
 \begin{pmatrix}
 a_0&a_1&b&a_2\\
 &a_0&&a_1\\
 &&d&c\\
 &&&a_0
 \end{pmatrix}
 $, and let $B' = 
  \begin{pmatrix}
  a_0&a'_1&b'&a'_2\\
  &a_0&&a'_1\\
  &&d&c'\\
  &&&a_0
  \end{pmatrix}
   = XBX^{-1}$, where $X=
    \begin{pmatrix}
    x_0&x_1&y&x_2\\
    &x_0&&x_1\\
    &&z&w\\
    &&&x_0
    \end{pmatrix}
    $. Now $XB = XB'X$ leads us to $a'_1 = a_1$, and the following equations:
    \begin{eqnarray}
    x_0b + yd &=& zb' + ya_0\label{ETA81}\\
    zc + wa_0 &=& x_0c' + wd \label{ETA82}\\
    x_0a_2 + yc &=& wb' + x_0a'_2 \label{ETA83}
    \end{eqnarray}
    
\noindent{\bfseries When $a_0 = d$:} Here, Equations~\ref{ETA81} and \ref{ETA82} become 
$x_0b = zb'$, and $zc = x_0c'$ respectively.

When $b = c = 0$, Equation~\ref{ETA83} becomes $x_0a_2 = x_0a'_2$, hence $a'_2 =a_2$. $B$ is reduced to 
$\begin{pmatrix}
a_0&a_1&&a_2\\
&a_0&&a_1\\
&&a_0&\\
&&&a_0
\end{pmatrix}
$, and $\ZT{4}(A,B) = \ZT{4}(A)$. $(A,B)$ is of type $A_8$, and there are $q^2(q-1)$ such branches.

When $b \neq 0$, choose $z$ such that $b' = 1$. Then, on substituting $b$ with $b' =1$ in Equation~\ref{ETA81}, we get $z = x_0$. Thus, we have $c = c'$. And, in Equation~\ref{ETA83}, choose $w$ so that $a'_2 = 0$. Thus $B$ is reduced to 
$
\begin{pmatrix}
a_0 & a_1 & 1&\\
&a_0&&a_1\\
&&a_0&\\
&&&a_0
\end{pmatrix}$, and $\ZT{4}(A,B) = \left\{\begin{pmatrix}x_0&x_1&y&x_2\\&x_0&&x_1\\&&x_0&cy\\&&&x_0
\end{pmatrix}\right\}$. $(A,B)$ is of type $R_1$, and there are $q^2(q-1)$ such branches.

When $b= 0$ and $c \neq 0$, in Equation~\ref{ETA82}, choose $x_0$ such that $c'=1$. Then Equation~\ref{ETA83} becomes $x_0a_2 + y = x_0a'_2$. Thus, choose $y$ so that $a'_2 = 0$. Hence $B$ is reduced to 
$
\begin{pmatrix}
a_0 & a_1 & &\\
&a_0&&a_1\\
&&a_0&1\\
&&&a_0
\end{pmatrix}$, and $\ZT{4}(A,B) = \left\{\begin{pmatrix}x_0&x_1&&x_2\\&x_0&&x_1\\&&x_0&w\\&&&x_0
\end{pmatrix}\right\}$. $(A,B)$ is of type $R_1$, and there are $q(q-1)$ such branches.

There are no further cases for us to look at here. We now look at the case of $a_0 \neq d$.

\noindent{\bfseries When $a_0\neq d$:} In Equation~\ref{ETA81}, choose $y$ such that $b'=0$, and in Equation~\ref{ETA82}, choose $w$ such that $c' = 0$. Then Equation~\ref{ETA83} becomes $x_0a_2 = a_0a'_2$, implying $a'_2 = a_2$. $B$ reduces to 
$
\begin{pmatrix}
a_0 & a_1 & &a_2\\
&a_0&&a_1\\
&&d&\\
&&&a_0
\end{pmatrix}$, and $\ZT{4}(A,B) = \left\{\begin{pmatrix}x_0&x_1&&x_2\\&x_0&&x_1\\&&z&\\&&&x_0
\end{pmatrix}\right\}$. This too is a commutative centralizer. $(A,B)$ is of type $R_2$, and there are $q^2(q-1)(q-2)$ such branches. Now, there are no more cases to look at. Adding up all the branches of type $R_1$, we have a total of $q^2(q-1) + q(q-1) = q^3-q$ branches of type $R_1$. 

 \item A matrix of type $A_9$ has the following canonical form: $A = \begin{pmatrix}a&1&1&\\&a&&\\&&a&1\\&&&a\end{pmatrix}$. Then we have $\ZT{4}(A) = \left\{\begin{pmatrix}a_0&a_1&b&a_2\\&a_0&&c\\&&a_0 &b-c \\&&&a_0\end{pmatrix}\right\}$. Let $B = \begin{pmatrix}a_0&a_1&b&a_2\\&a_0&&c\\&&a_0&b-c\\&&&a_0\end{pmatrix}$, and $B' = \begin{pmatrix}a_0&a'_1&b''&a_2\\&a_0&&c'\\&&a_0&b'-c'\\&&&a_0\end{pmatrix} = XBX^{-1}$, where $X = \begin{pmatrix}x_0&x_1&y&x_2\\&x_0&&w\\&&x_0&y-w\\&&&x_0\end{pmatrix}$, with $x_0\neq 0$. So, $XB = B'X$ leaves us with $a'_1 = a_1$, $b'=b$, and $c'= c$, and the following equation:
 \begin{equation}\label{ETA9}
 x_0a_2 + (x_1-x_2)c = x_0a'_2 + (a_1-b)w
 \end{equation}
 
 \noindent{\bfseries When $a_1=b$ and $c = 0$}
 Here Equation~\ref{ETA9} ends up as $a'_2 = a_2$. $B$ is thus reduced to
 $\begin{pmatrix}
 a_0&a_1&a_1&a_2 \\
 &a_0&&\\
 &&a_0&a_1\\
 &&&a_0
 \end{pmatrix}$, and $\ZT{4}(A,B)= \ZT{4}(A)$. Thus $(A,B)$ is of type $A_9$, and there are $q^2(q-1)$ such branches.
 
 \noindent{\bfseries When $a_1\neq b$}: Here, in Equation~\ref{ETA9}, we choose $w$ such that $a'_2 = 0$. $B$ is thus reduced to 
 $\begin{pmatrix}
 a_0&a_1&b&\\
 &a_0&&c\\
 &&a_0&b-c\\
 &&&a_0
 \end{pmatrix}
 $, with $\ZT{4}(A,B) = \left\{\begin{pmatrix}x_0&x_1&y&x_2\\&x_0&& \frac{(x_1-y)}{a_1-b}c\\ &&x_0&y- \frac{(x_1-y)}{a_1-b}c\\&&&x_0\end{pmatrix} \right\}$. $(A,B)$ is therefore of type $R_1$, and there are $q^2(q-1)^2$ such branches.
 
 \noindent{\bfseries When $a_1 = b$, and $c\neq 0$:} In Equation~\ref{ETA9}, choose $x_1 $ or $y$ such that $a'_2 =0$. Thus, $B$ is reduced to
 $\begin{pmatrix}
  a_0&a_1&a_1&\\
  &a_0&&c\\
  &&a_0&a_1-c\\
  &&&a_0
  \end{pmatrix}
  $. So $\ZT{4}(A,B) = \left\{\begin{pmatrix}x_0&x_1&x_1&x_2\\& x_0&&w\\&&x_0&x_1-w\\&&&x_0\end{pmatrix} \right\}$. This $(A,B)$ too is of type $R_1$, and there are $q(q-1)^2$ such branches.
  
  With this, we have no other cases to look at. Thus, we have $q^3$ branches of type $A_9$, and $q(q-1)^2 + q^2(q-1)^2 = (q^2-q)(q^2-1)$ branches of type $R_1$.
 \end{enumerate}
\end{proof}

\subsection{Branching rules for type $B$}
Matrices of types $B1, B2, B3, B4, B5$ are in block form of the kind $A = \begin{pmatrix}C_1 & \\ & C_2\end{pmatrix}$, where $C_1 \in GT_{m_1}(\Fq)$, and $C_2 \in GT_{m_2}(\Fq)$, where $m_1 + m_2 = 4$. Thus, $Z_{GT_4(\Fq)}(A) = \left\{\begin{pmatrix}X_1 & \\ & X_2\end{pmatrix} \right\}$ where $X_1 \in Z_{GT_{m_1}}(C_1)$ and $X_2 \in Z_{GT_{m_2}}(C_2)$. Thus, the branches of $A$ are of the form $\begin{pmatrix}D_1& \\ & D_2\end{pmatrix}$, where $D_1$ is a branch of $C_1$, and $D_2$ is a branch of $C_2$. With this argument, we can prove the following proposition.
\begin{prop}
The branching rules are as follows:
\begin{enumerate}
\item For a matrix of type $B_1$, there are:
 \begin{center}
  \begin{tabular}{c|c||c|c}\hline
 Branch & No. of Branches & Branch & No. of Branches\\ \hline
 $B_1$ & $(q-1)^2$ & $R_3$ & $(q-1)^2$\\
 $B_5$ & $2(q-1)^2$ & $R_4$ & $2(q-1)^2(q-2)$\\ 
 $B_6$& $2(q-1)^2(q-2)$ & $R_5$&$(q-1)^2(q-2)^2$\\ \hline
  \end{tabular}
  \end{center}
\item For a matrix of type $B_2$, there are:
 \begin{center}
   \begin{tabular}{c|c||c|c}\hline
  Branch & No. of Branches & Branch & No. of Branches\\ \hline
  $B_2$ & $(q-1)^2$ & $R_2$ & $(q-1)^2$\\
  $B_3$ & $(q-1)^2$ & $R_4$ & $(q-1)^2(q-2)$\\ 
  $B_4$ & $(q-1)^2$ & $R_5$ &$(q-1)^2(q-2)(q-3)$\\ 
  $B_6$ & $(q-1)^2(q-2)$ & &\\\hline
   \end{tabular}
   \end{center}
\item  For a matrix of type $B_3$, there are $q(q-1)^2$ branches of type $B_3$, $q(q-1)^2$ branches of type $R_2$, and $q(q-1)^2(q-2)$ branches of type $R_4$.
\item For a matrix of type $B_4$, there are, $q(q-1)^2$ branches of type $B_4$, $(q^2-1)(q-1)$ branches of type $R_2$, and $q(q-1)^2(q-2)$ branches of type $R_4$.
\item For a matrix of type $B_5$, there are $q(q-1)^2 $ branches of type $B_5$, $q(q-1)^2$ branches of type $R_3$, and $q(q-1)^2(q-2)$ branches of type $R_4$.
\end{enumerate}
\end{prop}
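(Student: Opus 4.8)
The plan is to run, for each of the five types, the block‑decomposition argument already set up in the paragraph preceding the proposition. First I would record the block shape of a representative: $B_1=\mathrm{diag}(aI_2,bI_2)$ and $B_5=\mathrm{diag}\!\left(\begin{psmallmatrix}a&1\\&a\end{psmallmatrix},\,bI_2\right)$ are $2+2$ blocks, while $B_2=\mathrm{diag}(aI_3,b)$, $B_3=\mathrm{diag}(A,b)$ with $A$ of type $A_1$ in $\Tq{3}$, and $B_4=\mathrm{diag}(A,b)$ with $A$ of type $A_2$ in $\Tq{3}$ are $3+1$ blocks (with $a\neq b$ throughout, and $\Tq{1}=\Fq^*$). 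In every case the centralizer splits as $Z_{\Tq{m_1}}(C_1)\times Z_{\Tq{m_2}}(C_2)$, so a branch is $\mathrm{diag}(D_1,D_2)$ with each $D_i$ a branch of $C_i$ inside $\Tq{m_i}$. The structural fact that makes the computation mechanical is that the joint centralizer factors as $\ZT{4}(A,B)=Z_{\Tq{m_1}}(C_1,D_1)\times Z_{\Tq{m_2}}(C_2,D_2)$: since $A$ forces every commuting matrix into block‑diagonal form, no cross terms appear even when the eigenvalue of the small block coincides with an eigenvalue occurring in $D_1$. Hence the $\Tq{4}$‑type of the pair depends only on the $\Tq{m_i}$‑types of $D_1$ and $D_2$.

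It follows that the number of branches landing in a given $\Tq{4}$‑type is obtained by \emph{multiplying} the branch counts of the two blocks. So I would read the branch vector of $C_1$ off the appropriate column of $B_{\Tq{m_1}}$ (Theorem~\ref{TheoremGT2} or Theorem~\ref{TheoremGT3}), do the same for $C_2$ (where $\Fq^*$ contributes only its $q-1$ central classes), and form the Cartesian product of the two vectors. As a worked instance, for $B_5$ the three branch types $C,R_1,R_2$ of the scalar block $bI_2$ paired with the single branch type $R_1$ of $\begin{psmallmatrix}a&1\\&a\end{psmallmatrix}$ yield types $B_5,R_3,R_4$, with counts $q(q-1)\cdot(q-1)$, $q(q-1)\cdot(q-1)$, and $q(q-1)\cdot(q-1)(q-2)$, matching the stated table; the cases $B_3,B_4$ use the $A_1$‑ and $A_2$‑columns of $B_{\Tq{3}}$ tensored with the $q-1$ classes of $\Fq^*$, and $B_1$ uses the $C$‑column of $B_{\Tq{2}}$ against itself.

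The delicate step, and the one I expect to be the main obstacle, is identification: for each pair of sub‑branch types $(\mathrm{type}(D_1),\mathrm{type}(D_2))$ I must name the resulting $\Tq{4}$‑type by matching the product group $Z_{\Tq{m_1}}(C_1,D_1)\times Z_{\Tq{m_2}}(C_2,D_2)$ against the centralizers listed in Appendix~\ref{CCGT4}, confirm that no product centralizer fails to match an existing type (no genuinely new types should arise, since every branch stays block‑diagonal and is thus assembled from lower‑rank centralizers), and then sum the product counts over all pairs giving the same type. The subtle bookkeeping here is the need to track the several non‑conjugate triangular forms, and internal arrangements, that a single $\Tq{3}$‑type such as $A_1$ or $B_1$ carries: forms conjugate in $GL_3(\Fq)$ but not in $\Tq{3}$ must each be accounted for, and one must decide whether they merge or stay distinct after the extra block is adjoined, since this determines the multiplicities in the final table. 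Once this correspondence between sub‑branch pairs and $\Tq{4}$‑types is fixed, the entries follow by elementary multiplication of the already‑established branch numbers.
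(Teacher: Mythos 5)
Your approach is exactly the paper's: the paragraph preceding the proposition is the entire argument it offers, namely that $\ZT{4}(A)=Z_{GT_{m_1}}(C_1)\times Z_{GT_{m_2}}(C_2)$, so branches are block-diagonal pairs $\mathrm{diag}(D_1,D_2)$ and their counts are Cartesian products of the lower-rank branch counts read off $B_{\Tq{2}}$ and $B_{\Tq{3}}$. Your justification that the joint centralizer also factors (intersecting with the block-diagonal group $\ZT{4}(A)$ kills any cross terms even when the eigenvalue of the small block collides with one occurring in $D_1$) is the right one, and your worked instances for $B_1$ and $B_5$ check out.

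One substantive warning. The ``delicate bookkeeping'' you flag -- counting every $\Tq{3}$-class within a type, not just one canonical form -- is not merely delicate: it is exactly where the stated table for part (2) fails. For $A=\mathrm{diag}(aI_3,b)$ the centralizer is $\Tq{3}\times\Fq^*$, and $\Tq{3}$ has $2(q-1)$ classes of type $A_1$ and $3(q-1)(q-2)$ classes of each of the types $B_1$ and $R_2$; since all canonical forms of a given $\Tq{3}$-type have isomorphic centralizers, they all land in the same $\Tq{4}$-type after the extra scalar block is adjoined. The Cartesian product therefore yields $2(q-1)^2$ branches of type $B_3$ and $3(q-1)^2(q-2)$ branches of each of the types $B_6$ and $R_4$, not the values $(q-1)^2$, $(q-1)^2(q-2)$, $(q-1)^2(q-2)$ printed in part (2). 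The decisive check is that the total number of branches of $A$ must equal the number of conjugacy classes of $\ZT{4}(A)\cong \Tq{3}\times\Fq^*$, namely $(q-1)\cdot(q-1)(q^2+q-1)=(q-1)^2(q^2+q-1)$; the stated entries sum to $(q-1)^2(q^2-3q+6)$, whereas the corrected counts sum to the right total. Parts (1), (3), (4) and (5) all pass the analogous check and agree with your method, so if you execute your plan faithfully you should expect -- and should not be troubled by -- a disagreement with the statement in part (2) only.
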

Finally, 
\begin{prop}
For a matrix of type $B_6$, there are, $(q-1)^3$ branches of type $B_6$, $(q-1)^3$ branches of type $R_4$, and $(q-1)^3(q-2)$ branches of type $R_5$.
\end{prop}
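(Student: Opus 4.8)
The plan is to exploit the block-diagonal structure already isolated in the paragraph preceding this group of propositions. A matrix of type $B_6$ is semisimple with eigenvalue multiplicities $(2,1,1)$, so up to conjugacy I would take the canonical form
$$A = \begin{pmatrix} C_1 & \\ & C_2 \end{pmatrix}, \qquad C_1 = aI_2, \quad C_2 = \begin{pmatrix} b & \\ & c \end{pmatrix},$$
with $a,b,c \in \Fq^*$ pairwise distinct, so that $C_1 \in \Tq{2}$ is central and $C_2 \in \Tq{2}$ is of the regular type $R_2$. As in the preamble, the centralizer splits as $\ZT{4}(A) = \ZT{2}(C_1) \oplus \ZT{2}(C_2)$, every branch of $A$ has the form $D_1 \oplus D_2$ with $D_1$ a branch of $C_1$ and $D_2$ a branch of $C_2$, and the centralizer of the resulting pair factors as $\ZT{2}(C_1,D_1) \oplus \ZT{2}(C_2,D_2)$. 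This reduces the whole computation to the already-known branching data for $GT_2(\Fq)$.

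First I would read off the two one-block branchings. Since $C_1 = aI_2$ is central, Theorem~\ref{TheoremGT2} (equivalently, the first column of $B_{GT_2(\Fq)}$) gives its branches as $q-1$ of type $C$, $q-1$ of type $R_1$, and $(q-1)(q-2)$ of type $R_2$. Since $C_2$ is regular with commutative centralizer $\cong (\Fq^*)^2$, all of its $(q-1)^2$ branches are diagonal of type $R_2$, each with pair-centralizer equal to the full diagonal torus of $\Tq{2}$.

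Next I would combine the blocks and identify the $GT_4(\Fq)$ type from the pair-centralizer $\ZT{2}(C_1,D_1) \oplus (\Fq^*)^2$, which depends only on the type of $D_1$: (i) when $D_1$ is of type $C$ we get $\ZT{2}(C_1,D_1) = \Tq{2}$, so the pair-centralizer is $\Tq{2} \oplus (\Fq^*)^2$, exactly the $B_6$ centralizer, giving $(q-1)\cdot(q-1)^2 = (q-1)^3$ branches of type $B_6$; (ii) when $D_1$ is of type $R_1$ the first factor is the commutative Jordan-block centralizer of order $q(q-1)$, so the pair-centralizer is commutative of order $q(q-1)^3$, matching type $R_4$, giving $(q-1)\cdot(q-1)^2 = (q-1)^3$ branches; (iii) when $D_1$ is of type $R_2$ the first factor is the diagonal torus, so the pair-centralizer is the full torus $(\Fq^*)^4$ of order $(q-1)^4$, matching type $R_5$, giving $(q-1)(q-2)\cdot(q-1)^2 = (q-1)^3(q-2)$ branches. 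Summing, the total $q(q-1)^3$ equals the product of the number of conjugacy classes of $\Tq{2}$ with the torus order $(q-1)^2$, a useful internal consistency check.

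The one genuinely non-formal point — the step I would be most careful about — is the identification in (ii) and (iii) of the direct-sum centralizers with the centralizers of the named regular types $R_4$ and $R_5$ of $GT_4(\Fq)$. This is a matter of matching the isomorphism type, equivalently the orders $q(q-1)^3$ for $R_4$ and $(q-1)^4$ for $R_5$ recorded in Table~\ref{TableB}, against the canonical descriptions in Appendix~\ref{CCGT4}; everything else is routine bookkeeping once the $GT_2(\Fq)$ branchings are invoked.
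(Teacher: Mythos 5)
Your proposal is correct and follows essentially the same route as the paper: the paper also takes the canonical form $\mathrm{diag}(a,a,b,c)$, observes that the centralizer is $\Tq{2}\oplus\Fq^*\oplus\Fq^*$, and obtains the three branch counts by enumerating the conjugacy class types of $\Tq{2}$ in the upper block while the lower $2\times 2$ block ranges over its $(q-1)^2$ diagonal matrices. Your write-up merely makes explicit the type identifications (via centralizer orders $q(q-1)^4$, $q(q-1)^3$, $(q-1)^4$) that the paper leaves implicit.
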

\begin{proof}
 A matrix of type $B_6$ has the canonical form: $A = \begin{psmallmatrix} a&&&\\&a&&\\&&b&\\&&&c
 \end{psmallmatrix}$. Here, $\ZT{4}(A) = \left\{
 \begin{psmallmatrix}C&&\\&c_0&\\&&d_0
 \end{psmallmatrix} \mid C \in \Tq{2}, c_0,d_0 \neq 0 \right\}.$ Enumerating the conjugacy classes of $\Tq{2}$ gives us the branches mentioned.  
\end{proof}

\subsection{Branching Rules of the New Types}
While determining the branching rules of the existing types of conjugacy classes of $GT_4(\Fq)$, we came across six new types of simultaneous conjugacy classes of pairs of commuting matrices. We called them $tNT_1$, $tNT_2$, $tNT_3$, $tNT_4$,$tNT_5$, and $NR_1$. In this subsection, we shall focus on the branching rules of these new types.
\begin{prop}\label{BrtNT1}
A commuting tuple of type $tNT_1$ has $q^2(q-1)$ branches of type $tNT_1$, $q^2(q-1)$ branches of type $R_1$, and $q^2(q-1)(q-2)$ branches of type $R_3$.
\end{prop}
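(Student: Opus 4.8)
The plan is to mimic Propositions~\ref{T3A1} and~\ref{BrT4A1} verbatim: fix a representative pair $(A,B)$ of type $tNT_1$, so that $\ZT{4}(A,B) = \left\{ \begin{psmallmatrix}x_0&x_1&&x_3\\&x_0&&\\&&z_0&z_1\\&&&z_0\end{psmallmatrix}\mid x_0z_0\neq 0 \right\}$, take a general element $D = \begin{psmallmatrix}a_0&a_1&&a_3\\&a_0&&\\&&c_0&c_1\\&&&c_0\end{psmallmatrix}$ of this group (with $a_0,c_0\neq 0$), conjugate it by a general $X = \begin{psmallmatrix}x_0&x_1&&x_3\\&x_0&&\\&&z_0&z_1\\&&&z_0\end{psmallmatrix}$ of the same group, and classify the resulting $D' = XDX^{-1} = \begin{psmallmatrix}a_0&a'_1&&a'_3\\&a_0&&\\&&c_0&c'_1\\&&&c_0\end{psmallmatrix}$ up to this conjugation. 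Equating $XD = D'X$ and reading off entries, the first thing I would record is that the $(1,2)$- and $(3,4)$-entries force $a'_1 = a_1$ and $c'_1 = c_1$, so these are invariants, while the only nontrivial relation is on the $(1,4)$-entry,
\[
z_0 a'_3 = x_0 a_3 + (c_0 - a_0)x_3,
\]
the exact analogue of Equation~\ref{ET3A1}.

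The branching is then governed by this single equation through a short case split. If $a_0 = c_0$ and $a_3 = 0$, the equation gives $a'_3 = 0$ identically, so $D$ is central in $\ZT{4}(A,B)$, whence $\ZT{4}(A,B,D) = \ZT{4}(A,B)$ and $(A,B,D)$ is again of type $tNT_1$; the free invariants $a_0\in\Fq^*$ and $a_1,c_1\in\Fq$ give $q^2(q-1)$ such branches. If $a_0 = c_0$ but $a_3\neq 0$, I scale $z_0$ to normalise $a'_3 = 1$; the stabiliser condition then collapses to $z_0 = x_0$, so $\ZT{4}(A,B,D) = \left\{ \begin{psmallmatrix}x_0&x_1&&x_3\\&x_0&&\\&&x_0&z_1\\&&&x_0\end{psmallmatrix}\right\}$, and again counting $a_0\in\Fq^*$, $a_1,c_1\in\Fq$ gives $q^2(q-1)$ branches. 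If $a_0\neq c_0$, then $c_0-a_0$ is invertible, so I choose $x_3$ to force $a'_3 = 0$, reducing $D$ to block-diagonal form with $\ZT{4}(A,B,D) = \left\{ \begin{psmallmatrix}x_0&x_1&&\\&x_0&&\\&&z_0&z_1\\&&&z_0\end{psmallmatrix}\right\}$; the ordered distinct pair $a_0\neq c_0$ in $\Fq^*$ together with $a_1,c_1\in\Fq$ yields $q^2(q-1)(q-2)$ branches.

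The one step that is not purely mechanical is identifying the two commutative groups above with the \emph{already known} regular types rather than with further new types. Here I would invoke the isomorphism convention for types: the group in the second case is abelian of order $(q-1)q^3$ and, as $q$ is odd, is isomorphic to $\Fq^*\times(\Fq^3,+)$, which is exactly the isomorphism type of the $R_1$-centralizer of Proposition~\ref{BrT4A1}; the group in the third case is literally the $R_3$-centralizer $\left\{ \begin{psmallmatrix}x_0&x_1&&\\&x_0&&\\&&z_0&z_1\\&&&z_0\end{psmallmatrix}\right\}$ recorded there. As a final consistency check I would confirm that the three orbit families partition $\ZT{4}(A,B)$: the orbit sizes are $1$, $q-1$ and $q$ respectively, and
\[
q^2(q-1)\cdot 1 + q^2(q-1)\cdot(q-1) + q^2(q-1)(q-2)\cdot q = q^3(q-1)^2 = \bigl|\ZT{4}(A,B)\bigr|,
\]
so no class is missed and the count is complete.
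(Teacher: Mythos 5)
Your proposal is correct and follows essentially the same route as the paper: the same representative centralizer, the same single relation $z_0a'_3 = x_0a_3 + (c_0-a_0)x_3$ governing the $(1,4)$-entry, and the same three-way case split ($a_0=c_0$ with $a_3=0$ giving $tNT_1$, $a_0=c_0$ with $a_3\neq 0$ normalised to $a'_3=1$ giving $R_1$, and $a_0\neq c_0$ giving $R_3$), with identical counts. Your closing orbit-size check $q^2(q-1)\cdot 1 + q^2(q-1)\cdot(q-1) + q^2(q-1)(q-2)\cdot q = q^3(q-1)^2$ is a welcome verification that the paper omits.
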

\begin{proof}
 For a commuting pair $(A,B)$ of matrices of type $tNT_1$, the centralizer is $\ZT{4}(A,B) = \left\{\begin{pmatrix}a_0&a_1&&a_3\\&a_0&&\\&&c_0&c_1\\&&&c_0\end{pmatrix}\mid a_0,c_0\neq 0 \right\}$. Let $C = \begin{pmatrix}a_0&a_1&&a_3\\&a_0&&\\&&c_0&c_1\\&&&c_0\end{pmatrix}$, and $C' = \begin{pmatrix}a_0&a'_1&&a'_3\\&a_0&&\\&&c_0&c'_1\\&&&c_0\end{pmatrix}= XCX^{-1}$ by $X = \begin{pmatrix}x_0&x_1&&x_3\\&x_0&&\\&&z_0&z_1\\&&&z_0\end{pmatrix}$. $XC = C'X$ leads us to $a'_1 = a_1$, $c'_1 = c_1$, and just one equation:
 \begin{equation}\label{ETNT1}
  x_0a_3 + x_3b_0 = z_0a'_3 + x_3a_0.
 \end{equation}

\noindent{\bfseries When $a_0=c_0$:} Here Equation~\ref{ETNT1} becomes $x_0a_3 = z_0a'_3$.

So, we have two cases over here: $a_3 = 0$, and $a_3\neq 0$.

When $a_3 = 0$, $C$ is reduced to $\begin{pmatrix}a_0&a_1&&\\&a_0&&\\&&a_0&c_1\\&&&a_0\end{pmatrix}$, with $\ZT{4}(A,B,C)= \ZT{4}(A,B)$. $(A,B,C)$ is of type $tNT_1$, and there are $q^2(q-1)$ such branches. 

When $a_3\neq 0$, we choose $z_0$ such that $a'_3 = 1$. Here, $C$ is reduced to $\begin{pmatrix}
a_0&a_1&&1\\
&a_0&&\\
&&a_0&c_1\\
&&&a_0
\end{pmatrix}$, with $\ZT{4}(A,B,C) = \left\{\begin{pmatrix}x_0&x_1&&x_3\\&x_0&&\\&&x_0&z_1\\&&&x_0\end{pmatrix}\right\}$. This  $(A,B,C)$ is of type $R_1$, and there are $q^2(q-1)$ such branches.

So now, with $a_0 = c_0$, we have no other cases left to analyse. We move on to the case of $a_0\neq c_0$.

\noindent{\bfseries When $a_0 \neq c_0$}: Here, in Equation~\ref{ETNT1}, we can choose $x_3$ so that $a'_3 = 0$. So $C$ is reduced to $\begin{pmatrix}
a_0&a_1&&\\
&a_0&&\\
&&c_0&c_1\\
&&&c_0
\end{pmatrix}$, with $\ZT{4}(A,B,C) = \left\{\begin{pmatrix}x_0&x_1&&\\&x_0&&\\&&z_0&z_1\\&&&z_0\end{pmatrix}\right\}$. This $(A,B,C)$ is of type $R_3$, and there are $q^2(q-1)(q-2)$ such branches.

So, with this, we have no other cases to look at.
\end{proof}
\begin{prop}\label{BrT4N2}
 The new type $tNT_2$ has $q^2(q-1)$ branches of type $tNT_2$, $q^2(q-1)(q-2)$ branches of type $R_2$, and $q^2(q^2-1)$ branches of type $NR_1$.
 \end{prop}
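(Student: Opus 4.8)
The plan is to imitate the earlier branching proofs (e.g.\ Proposition~\ref{BrtNT1}): fix a representative commuting pair $(A,B)$ of type $tNT_2$, let a third commuting matrix $C$ range over the centralizer $\ZT{4}(A,B)$ recorded in Proposition~\ref{BrT4A1P}, and sort the triples $(A,B,C)$ into simultaneous conjugacy classes under the conjugation action of $\ZT{4}(A,B)$. Since this centralizer is non-commutative, a genuine reduction is required here, unlike for the regular types. I would take
$$C = \begin{psmallmatrix} a_0 & & a_2 & a_3 \\ & a_0 & b_1 & b_2 \\ & & a_0 & \\ & & & c_0 \end{psmallmatrix}, \qquad X = \begin{psmallmatrix} x_0 & & x_2 & x_3 \\ & x_0 & y_1 & y_2 \\ & & x_0 & \\ & & & y_0 \end{psmallmatrix},$$
with $a_0,c_0,x_0,y_0\neq 0$, set $C'=XCX^{-1}\in\ZT{4}(A,B)$, and expand $XC=C'X$. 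The $(1,3)$ and $(2,3)$ entries show immediately that $a_2$ and $b_1$ are conjugation invariants, while the $(1,4)$ and $(2,4)$ entries leave the two relations $x_0a_3+x_3(c_0-a_0)=y_0a_3'$ and $x_0b_2+y_2(c_0-a_0)=y_0b_2'$. Everything is then governed by whether or not $a_0=c_0$.

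First I would treat the case $a_0=c_0$, where the two relations collapse to $a_3'=(x_0/y_0)a_3$ and $b_2'=(x_0/y_0)b_2$, so the pair $(a_3,b_2)$ is only rescaled by the common nonzero factor $x_0/y_0$. If $(a_3,b_2)=(0,0)$ then $C$ is central in $\ZT{4}(A,B)$, hence $\ZT{4}(A,B,C)=\ZT{4}(A,B)$ and the triple is again of type $tNT_2$; counting $a_0\in\Fq^*$ and $a_2,b_1\in\Fq$ gives $q^2(q-1)$ such branches. If $(a_3,b_2)\neq(0,0)$ I would split into the subcase $a_3\neq 0$ (rescale to $a_3=1$, leaving $b_2$ free) and the subcase $a_3=0,\,b_2\neq 0$ (rescale to $b_2=1$); in either the stabilizer forces $x_0=y_0$, so $\ZT{4}(A,B,C)=\left\{x_0I_4+\begin{psmallmatrix}0&Y\\0&0\end{psmallmatrix}\mid Y\in M_2(\Fq)\right\}$, which is precisely the centralizer of type $NR_1$ from Proposition~\ref{BrT4A1P}. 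These subcases contribute $q^3(q-1)$ and $q^2(q-1)$ branches respectively.

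Next I would treat $a_0\neq c_0$. Here $c_0-a_0\in\Fq^*$, so $x_3$ and $y_2$ can be chosen to force $a_3'=b_2'=0$, reducing $C$ to a form in which only $a_2,b_1$ survive. Computing the stabilizer then yields $x_3=y_2=0$, so $\ZT{4}(A,B,C)=\left\{\begin{psmallmatrix} x_0 & & x_2 & \\ & x_0 & y_1 & \\ & & x_0 & \\ & & & z_0\end{psmallmatrix}\right\}$, which is verbatim the type $R_2$ centralizer produced in Proposition~\ref{BrT4A2}; counting $a_0\in\Fq^*$, $c_0\in\Fq^*\setminus\{a_0\}$ and $a_2,b_1\in\Fq$ gives $q^2(q-1)(q-2)$ branches of type $R_2$. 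Adding the two $NR_1$ contributions gives $q^3(q-1)+q^2(q-1)=q^2(q^2-1)$, completing the stated tally.

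The main obstacle is the bookkeeping when $a_0=c_0$: because $a_3$ and $b_2$ are scaled by a single common factor $x_0/y_0$ rather than independently, the nonzero pairs $(a_3,b_2)$ fall into $q+1$ scaling classes (the points of $\mathbb{P}^1(\Fq)$), and it is exactly this projective count, handled by the two subcases above, that produces the factor $q^2-1$ in the $NR_1$ branch number. As a consistency check I would confirm that the three families of classes, weighted by their orbit sizes ($1$, $q-1$ and $q^2$ respectively), exhaust all $q^2(q-1)\cdot 1+q^2(q^2-1)(q-1)+q^2(q-1)(q-2)\,q^2=q^4(q-1)^2$ elements of $\ZT{4}(A,B)$.
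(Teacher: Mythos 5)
Your proposal is correct and follows essentially the same route as the paper's own proof: conjugate a general element $C$ of $\ZT{4}(A,B)$ by an element $X$ of the same centralizer, observe that the $(1,3)$ and $(2,3)$ entries are invariant, and split on $a_0=c_0$ versus $a_0\neq c_0$, with the same three subcases and the same counts ($q^2(q-1)$ for $tNT_2$, $q^3(q-1)+q^2(q-1)=q^2(q^2-1)$ for $NR_1$, and $q^2(q-1)(q-2)$ for $R_2$). The only additions beyond the paper's argument are the projective-line remark and the orbit-size consistency check, both of which are sound.
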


\begin{proof}
 For a commuting pair $(A,B)$ of type $tNT_2$, the centralizer is 
 
 $\ZT{4}(A,B) = \left\{\begin{pmatrix}a_0&&b_0&b_1\\ &a_0&b_2&b_3\\& & a_0 &\\ && & c_0\end{pmatrix}\mid \begin{matrix}a_0,b_0,b_1\\ b_2,b_3,c_0\in \Fq\end{matrix}\right\}$. Let $C =  \begin{pmatrix}a_0&&b_0&b_1\\ &a_0&b_2&b_3\\& & a_0 &\\ && & c_0\end{pmatrix}$, and $C' = \begin{pmatrix}a_0&&b'_0&b'_1\\ &a_0&b'_2&b'_3\\& & a_0 &\\ && & c_0\end{pmatrix} = XCX^{-1}$ for some $X = \begin{pmatrix}x_0&&y_0&y_1\\ &x_0&y_2&y_3\\& & x_0 &\\ && & z_0\end{pmatrix}$. So, equating $XC = C'X$ leads us to $b'_0 = b_0$, $b'_2 = b_2$, and the following equations:
 \begin{eqnarray}
  x_0b_1 + y_1c_0 &=& z_0b'_1 + y_1a_0\label{ETN21}\\
  x_0b_3 + y_3c_0 &=& z_0b'_3 + y_3a_0\label{ETN22}
 \end{eqnarray}

We have two main cases: $a_0 = c_0$, and $a_0 \neq c_0$:

\noindent{\bfseries When $a_0 = c_0$}:
Here, Equation~\ref{ETN21} becomes $x_0b_1 = z_0b'_1$, and Equation~\ref{ETN22} becomes $x_0b_3 = z_0b'_3$. 

When $b_1 = b_3 = 0$, $C$ is reduced to 
$\begin{pmatrix}
a_0&&b_0&\\
&a_0&b_2&\\
&&a_0&\\
&&&a_0
\end{pmatrix}$, with $\ZT{4}(A,B,C) = \ZT{4}(A,B)$. Thus $(A,B,C)$ is of type $tNT_2$, and there are $q^2(q-1)$ such branches.

When $b_1 \neq 0$. In Equation~\ref{ETN21}, choose $z_0$ such that $b'_1 = 1$. Then, on replacing $b_1$ and $b'_1$ by 1 in the same equation, we get $z_0 = x_0$. Hence, Equation~\ref{ETN22} becomes $x_0b_3 = x_0b'_3$, hence $b'_3 = b_3$. $C$ is reduced to 
$\begin{pmatrix}
a_0&&b_0&1\\
&a_0&b_2&b_3\\
&&a_0&\\
&&&a_0
\end{pmatrix}$, with $\ZT{4}(A,B,C) = \left\{ \begin{pmatrix}x_0&&y_0&y_1\\&x_0&y_2&y_3\\ &&x_0&\\&&&x_0\end{pmatrix}\right\}$. $(A,B,C)$ is of type $NR_1$. There are $q^3(q-1)$ such branches.

When $b_1 = 0$, and $b_3 \neq 0$. In Equation~\ref{ETN22}, choose $z_0$ so that $b'_3 = 1$. Thus $C$ is reduced to 
$\begin{pmatrix}
a_0&&b_0&\\
&a_0&b_2&1\\
&&a_0&\\
&&&a_0
\end{pmatrix}$, with $\ZT{4}(A,B,C) = \left\{\begin{pmatrix}x_0&&y_0&y_1\\&x_0&y_2&y_3\\&&x_0&\\&&&x_0\end{pmatrix}\right\}$. $(A,B,C)$ is of type $NR_1$. There are $q^2(q-1)$ such branches. We have exhausted all the cases under $a_0 = c_0$.

\noindent{\bfseries When $a_0 \neq c_0$}: Here, in Equation~\ref{ETN21}, choose $y_1$ so that $b'_1 = 0$, and in Equation~\ref{ETN22}, choose $y_3$ so that $b'_3 = 0$. $C$ is thus reduced to
$\begin{pmatrix}
a_0&&b_0&\\
&a_0&b_2&\\
&&a_0&\\&&&b_0
\end{pmatrix}
$, with $\ZT{4}(A,B,C)= \left\{\begin{pmatrix}x_0&&y_0&\\&x_0&y_2&\\&&x_0&\\&&&z_0\end{pmatrix}\right\}$. This $(A,B,C)$ is of type $R_2$, and there are $q^2(q-1)(q-2)$ such branches.

This leaves us with no further cases to analyse. Adding up the branches of type $NR_1$, we have a total of $q^2(q-1) + q^3(q-1)= q^2(q^2-1)$ branches of type $NR_1$.
\end{proof}
\begin{prop}\label{BRT4N3}
A commuting pair of type $tNT_3$ has $q^2(q-1)$ branches of type $tNT_3$, $q^2(q-1)$ branches of type $R_1$, $q^2(q-1)(q-2)$ branches of type $R_2$, and $q(q^2-1)$ branches of type $NR_1$.
\end{prop}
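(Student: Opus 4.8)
The plan is to follow the template of Propositions~\ref{BrtNT1} and~\ref{BrT4N2}. Write $G:=\ZT{4}(A,B)=\left\{\begin{psmallmatrix}x_0&x_1&x_2&x_3\\&y_0&y_1&y_2\\&&x_0&\\&&&x_0\end{psmallmatrix}\mid x_0,y_0\neq0\right\}$ for the common centralizer of the pair. Since the branches of $(A,B)$ correspond to $G$-conjugacy classes of elements $C\in G$, I would take a general $C=\begin{psmallmatrix}a_0&a_1&a_2&a_3\\&b_0&b_1&b_2\\&&a_0&\\&&&a_0\end{psmallmatrix}$, conjugate by a general $X\in G$ to a matrix $C'$ of the same shape, and read off the relations from $XC=C'X$. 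This produces five scalar equations; the structural point to record first is that $x_2,x_3$ cancel identically, so they stay free throughout and sit inside the centralizers of all the branches. The surviving equations express $a'_1,a'_2,a'_3,b'_1,b'_2$ in terms of $a_1,\dots,b_2$ and the parameters $x_0,y_0,x_1,y_1,y_2$, with the factor $(b_0-a_0)$ attached to $x_1,y_1,y_2$.

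The first split is on whether $a_0=b_0$. When $a_0\neq b_0$ the factor $(b_0-a_0)$ is invertible, so I use $x_1$ to clear $a'_1$ and $y_1,y_2$ to clear $b'_1,b'_2$; the entries $a_2,a_3$ then collapse to the two invariants $a_2-a_1b_1/(b_0-a_0)$ and $a_3-a_1b_2/(b_0-a_0)$, which are unconstrained. A quick check that the resulting centralizer is commutative of the correct size identifies every such $(A,B,C)$ as type $R_2$; letting $a_0\neq b_0$ range over ordered pairs of distinct nonzero scalars and the two invariants over $\Fq$ gives $q^2(q-1)(q-2)$ branches.

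When $a_0=b_0$ the equations reduce to the scalings $a'_1=(x_0/y_0)a_1$ and $b'_i=(y_0/x_0)b_i$ together with shifts $a'_2=a_2+(x_1/x_0)b_1$, $a'_3=a_3+(x_1/x_0)b_2$. I would treat three sub-cases. If $a_1=b_1=b_2=0$ then $C$ lies in the centre of $G$ and does not reduce, so $(A,B,C)$ stays of type $tNT_3$; with $a_0\in\Fq^*$ and $a_2,a_3$ free this gives $q^2(q-1)$ branches. If $a_1\neq0$ I normalise $a'_1=1$ (pinning the ratio $y_0/x_0$) and spend $y_1,y_2$ to clear $a_2,a_3$, after which $(b_1,b_2)$ becomes a free invariant over $\Fq^2$ and the centralizer is commutative of size $q^4-q^3$, so these are type $R_1$, once more $q^2(q-1)$ of them. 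The remaining sub-case $a_1=0$, $(b_1,b_2)\neq(0,0)$ is the crux: here $(b_1,b_2)$ is merely scaled by $s=y_0/x_0\in\Fq^*$ while $(a_2,a_3)$ may be translated by an arbitrary multiple $u(b_1,b_2)$, $u=x_1/x_0\in\Fq$. The complete invariants are then the line $\langle(b_1,b_2)\rangle\in\mathbb P^1(\Fq)$ together with the class of $(a_2,a_3)$ modulo that line, and computing the centralizer of one representative (e.g.\ $b_1=1$, $b_2=0$, $a_2=0$) shows it equals the $NR_1$ centralizer $\left\{\begin{psmallmatrix}x_0I_2&Y\\&x_0I_2\end{psmallmatrix}\right\}$ of size $q^5-q^4$. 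Counting $q+1$ lines, $q$ residues modulo each line, and $q-1$ choices of $a_0$ yields $q(q^2-1)$ branches of type $NR_1$. The main obstacle is precisely this orbit count: one must handle the simultaneous scaling of $(b_1,b_2)$ and translation of $(a_2,a_3)$ correctly, and verify that every representative in this stratum has isomorphic ($NR_1$) centralizer rather than splitting further; a final tally of $q^2(q-1)+q^2(q-1)+q^2(q-1)(q-2)+q(q^2-1)$ against the number of conjugacy classes of $G$ serves as a consistency check.
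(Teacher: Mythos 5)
Your proposal is correct and follows essentially the same route as the paper: the paper first reduces the upper-left $2\times 2$ block $D$ to a $T_2(\Fq)$ canonical form (central, $\left(\begin{smallmatrix}a_0&1\\&a_0\end{smallmatrix}\right)$, or split diagonal) and then analyses the off-diagonal block, which is exactly your split on $a_0=b_0$ versus $a_0\neq b_0$ and then on $a_1$; your projective-line orbit count for the $NR_1$ stratum merely packages the paper's two subcases (leading entry $\neq 0$, resp. leading entry $=0$ and second entry $\neq 0$, giving $q^2(q-1)+q(q-1)=q(q^2-1)$) into one computation, and all types and counts agree. One small slip to fix: your displayed shift equations in the case $a_0=b_0$ omit the terms $-(y_1/x_0)a'_1$ and $-(y_2/x_0)a'_1$, which are precisely what allows you to ``spend $y_1,y_2$'' to clear $a_2,a_3$ in the $a_1\neq 0$ subcase, so the display should be corrected even though the rest of your argument already uses the full relations.
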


\begin{proof}
 Let $(A,B)$ be a pair of commuting matrices of type $tNT_3$. Their common centralizer is $\ZT{4}(A,B) = \left\{\begin{pmatrix}D&E\\&D_{11}I_2\end{pmatrix}\mid D \in T_2(\Fq), E\in M_2(\Fq)
 \right\}$. Let $C = \begin{pmatrix}D&E\\&a_0I_2\end{pmatrix}$, where $D = \begin{pmatrix}a_0&a_1\\&b_0\end{pmatrix}$ and $E = \begin{pmatrix}b_0&b_1\\ b_2&b_3\end{pmatrix}$. Let $C' = \begin{pmatrix}D'&E'\\&a_0I_2\end{pmatrix} = XCX^{-1}$, where $X = \begin{pmatrix}Z&Y\\&x_0I_2\end{pmatrix} \in \ZT{4}(A,B)$, where $Z = \begin{pmatrix}
 x_0&x_1\\ &z_0\end{pmatrix} \in GT_2(\Fq)$, and $Y = \begin{pmatrix}y_0&y_1\\ y_2&y_3\end{pmatrix}$.
 
 So $XC = C'X$ leaves us with the following $ZD = D'Z$. Thus $D$ can be taken to be a representative of a conjugacy class in $\Tq{2}$, and $Z \in \ZT{2}(D)$. We are therefore left with the following equation:
 \begin{equation*}
 ZE + a_0Y = DY + x_0E'
 \end{equation*}
 Exapanding this, we have:
 \begin{equation}\label{ETN31}
 \begin{pmatrix}x_0b_0 + x_1b_2 & x_0b_1+x_1b_3\\ z_0b_2&z_0b_3\end{pmatrix} + \begin{pmatrix}-a_1y_2&-a_1y_3\\(a_0-b_0)y_2&(a_0-b_0)y_3\end{pmatrix} = \begin{pmatrix}x_0b'_0&x_0b'_1\\ x_0b'_2& x_0b'_3\end{pmatrix}
 \end{equation}
 
 \noindent{When $D = a_0I_2$:} Here Equation~\ref{ETN31} becomes:
 \begin{equation*}
  \begin{pmatrix}x_0b_0 + x_1b_2 & x_0b_1+x_1b_2\\ z_0b_2&z_0b_3\end{pmatrix} +  = \begin{pmatrix}x_0b'_0&x_0b'_1\\ x_0b'_2& x_0b'_3\end{pmatrix}
  \end{equation*}
  
  When $b_2 = b_3 = 0$, we have $b'_0 = b_0$, and $b'_1 = b_1$. Thus, $C$ is reduced to
  $\begin{pmatrix}
  a_0&&b_0&b_1\\
  &a_0&&\\
  &&a_0&\\
  &&&a_0
  \end{pmatrix}$, and $\ZT{4}(A,B,C) = \ZT{4}(A,B)$. $(A,B,C)$ is of type $tNT_3$, and there are $q^2(q-1)$ such branches.
  
  When $b_2 \neq 0$, choose $z_0$ such that $b'_2 = 1$. Thus, on replacing $b_0$ by $b'_0 = 1$ in Equation~\ref{ETN31}, we get $z_0 = x_0$. Hence $b'_3 = b_3$. With these, Eqaution~\ref{ETN31} becomes 
  \begin{equation*}
    \begin{pmatrix}x_0b_0 + x_1& x_0b_1+x_1b_2\\ 1&b_3\end{pmatrix} +  = \begin{pmatrix}x_0b'_0&x_0b'_1\\ 1& b'_3\end{pmatrix}
    \end{equation*}

Choose $x_1$ so that $b'_0 = 0$. On replacing $b_0$ by $b'_0 = 0$ in the above equation, we have $x_1 = 0$. Thus $b'_1 = b_1$. So $C$ is reduced to 
$
\begin{pmatrix}
a_0&&&b_1\\
&a_0&1&b_3\\
&&a_0&\\
&&&a_0
\end{pmatrix}
$
with $\ZT{4}(A,B,C) = \left\{\begin{pmatrix}x_0&&y_0&y_1\\ &x_0&y_2&y_3\\ &&x_0&\\&&&x_0\end{pmatrix}\right\}$. $(A,B,C)$ is of type $NR_1$, and there are $q^2(q-1)$ such branches.

When $b_2 = 0$ and $b_3 \neq 0$. Choose $z_0$ so that $b'_3 = 1$. Equation~\ref{ETN31} becomes 
\begin{equation*}
    \begin{pmatrix}x_0b_0 & x_0b_1+x_1\\ 0&1\end{pmatrix} +  = \begin{pmatrix}x_0b'_0&x_0b'_1\\0 & 1\end{pmatrix}
    \end{equation*}

Hence, $b'_0 = b_0$, and choose $x_1$ so that $b'_1 = 0$. $C$ is reduced to 
$\begin{pmatrix}
a_0&&b_0&\\
&a_0&&1\\
&&a_0&\\
&&&a_0
\end{pmatrix}$, with $\ZT{4}(A,B,C) = \left\{\begin{pmatrix}x_0&&y_0&y_1\\ &x_0&y_2&y_3\\ &&x_0&\\&&&x_0\end{pmatrix}\right\}$. This $(A,B,C)$ too is of type $NR_1$, and there are $q(q-1)$ such branches.

With this, we have no other cases to analyse when $D = a_0I_2$.

\noindent{When $D = \begin{pmatrix}a_0&1\\&a_0\end{pmatrix}$:} Here $Z = \begin{pmatrix}x_0&x_1\\&x_0\end{pmatrix}$. Equation~\ref{ETN31} becomes:
\begin{equation*}
 \begin{pmatrix}x_0b_0 + x_1b_2 & x_0b_1+x_1b_3\\ x_0b_2&x_0b_3\end{pmatrix} + \begin{pmatrix}-y_2&-y_3\\0&0\end{pmatrix} = \begin{pmatrix}x_0b'_0&x_0b'_1\\ x_0b'_2& x_0b'_3\end{pmatrix}
 \end{equation*}
 We have from this $b'_2 = b_2$, $b'_3 = b_3$, and we can choose $y_2$ so that $b'_0 = 0$ and $y_3$ such that $b'_1 = 0$. Hence $C$ is reduced to 
 $
 \begin{pmatrix}
 a_0&1&&\\
 &a_0&b_2&b_3\\
 &&a_0&\\
 &&&a_0
 \end{pmatrix}
 $, with $\ZT{4}(A,B, C) =\left\{\begin{pmatrix}x_0&x_1&x_2&x_3\\&x_0&b_2x_1&b_3x_1\\&&x_0&\\&&&x_0\end{pmatrix} \right\}$. This $(A,B,C)$ is of type $R_1$, and there are $q^2(q-1)$ such branches.
 
 \noindent{\bfseries When $C = \begin{pmatrix}a_0&\\&c_0\end{pmatrix}$, $c_0\neq a_0$}: Here $Z = \begin{pmatrix}x_0&\\&z_0\end{pmatrix}$. Equation~\ref{ETN31} becomes:
 \begin{equation*}
  \begin{pmatrix}x_0b_0  & x_0b_1\\ z_0b_2&z_0b_3\end{pmatrix} + \begin{pmatrix}&\\(a_0-c_0)y_2&(a_0-c_0)y_3\end{pmatrix} = \begin{pmatrix}x_0b'_0&x_0b'_1\\ x_0b'_2& x_0b'_3\end{pmatrix}
  \end{equation*}
  We have $b'_0 = b_0$ and $b'_1 = b_1$. Choose $y_2$ and $y_3$ such that $b'_2 = b'_3 = 0$. $C$ is reduced to 
  $
  \begin{pmatrix}
  a_0&&b_0&b_1\\
  &c_0&&\\
  &&a_0&\\
  &&&a_0
  \end{pmatrix}
  $, and $\left\{ \begin{pmatrix}x_0&&y_0&y_1\\&z_0&&\\&&x_0&\\&&&x_0\end{pmatrix} \right\}$. Here $(A,B,C)$ is of type $R_2$, and there are $q^2(q-1)(q-2)$ such branches. 
  
  With this, we have no other cases to deal with. 
  
  Adding up the branches of type $NR_1$, we have a total of $q(q-1)+q^2(q-1)= q(q^2-1)$ branches of this type.
 \end{proof}
\begin{prop}\label{BRT4N4}
For a pair of commuting matrices of type $tNT_4$, there are $q^2(q-1)$ branches of type $tNT_4$, $q^2(q-1)^2$ branches of type $R_1$, and $q(q^2-1)(q-1)$ branches of type $NR_1$.
\end{prop}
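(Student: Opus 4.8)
The plan is to follow the same strategy as in the previous propositions: fix a commuting pair $(A,B)$ of type $tNT_4$, write a general element $C$ of their common centralizer $\ZT{4}(A,B)$, conjugate it by a general $X\in\ZT{4}(A,B)$, and read off the canonical forms from the relation $XC=C'X$. Concretely, using the description of the $tNT_4$ centralizer from Proposition~\ref{BrT4A3}, I would write
$$C = \begin{pmatrix} a_0 & a_1 & a_2 & a_3 \\ & a_0 & c_1 & c_2 \\ & & a_0 & a_1 \\ & & & a_0 \end{pmatrix}, \qquad X = \begin{pmatrix} x_0 & x_1 & x_2 & x_3 \\ & x_0 & y_1 & y_2 \\ & & x_0 & x_1 \\ & & & x_0 \end{pmatrix},$$
with $a_0,x_0\neq 0$. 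Expanding $XC=C'X$, I expect to obtain that $a'_1=a_1$ and $c'_1=c_1$ are invariant, together with the transformation equations
\begin{eqnarray*}
a'_2 &=& a_2 + \tfrac{1}{x_0}(x_1 c_1 - a_1 y_1),\\
c'_2 &=& c_2 + \tfrac{1}{x_0}(a_1 y_1 - x_1 c_1),\\
a'_3 &=& a_3 + \tfrac{1}{x_0}(x_1 c_2 + x_2 a_1 - a_1 y_2 - a'_2 x_1).
\end{eqnarray*}
In particular $a'_2+c'_2=a_2+c_2$, so $s:=a_2+c_2$ is a further invariant. The crucial structural point, different from the earlier propositions, is that the diagonal of $X$ is the single scalar $x_0$; hence $a_1,c_1$ cannot be scaled to $1$ and survive as genuine invariants of the class.

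I would then split into cases according to $(a_1,c_1)$. When $a_1=c_1=0$, the entries $a_2,c_2$ are frozen and only $a_3$ can move, via $a'_3=a_3+x_1(c_2-a_2)/x_0$; if $a_2=c_2$ the element $C$ is central in $\ZT{4}(A,B)$ (one checks the centre consists exactly of the matrices with $a_1=c_1=0$ and $a_2=c_2$), so $\ZT{4}(A,B,C)=\ZT{4}(A,B)$ and the branch is of type $tNT_4$, giving $q^2(q-1)$ branches; if $a_2\neq c_2$ one clears $a_3$ to $0$ and the stabiliser collapses to the abelian group $\left\{\begin{psmallmatrix} x_0 & 0 & x_2 & x_3 \\ & x_0 & y_1 & y_2 \\ & & x_0 & 0 \\ & & & x_0 \end{psmallmatrix}\right\}$, which is of type $NR_1$, giving $q(q-1)^2$ branches. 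When $a_1\neq 0$, I can use the additive freedom to set $a'_2=0$ and then use $x_2$ to set $a'_3=0$, leaving the invariants $a_0,a_1,c_1,s$ and a centralizer of order $q^3(q-1)$ that a routine check shows is abelian and conjugate in $\Tq{4}$ to that of $R_1$; this yields $q^2(q-1)^2$ branches of type $R_1$.

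Finally, when $a_1=0$ but $c_1\neq 0$, setting $a'_2=0$ forces $x_1=-a_2x_0/c_1$ and thus consumes the only available parameter, so $a_3$ can only be shifted to the invariant value $r:=a_3-a_2c_2/c_1$ (one verifies $a'_3-a'_2c'_2/c_1$ is constant along the orbit); the resulting stabiliser is again of the $NR_1$ shape, contributing $q^2(q-1)^2$ branches. Adding the two contributions to $NR_1$ gives $q(q-1)^2+q^2(q-1)^2=q(q-1)^2(q+1)=q(q^2-1)(q-1)$ branches, as claimed, and a bookkeeping check confirms that the three totals $q^2(q-1)$, $q^2(q-1)^2$ and $q(q^2-1)(q-1)$ account for all $q(q-1)(2q^2-1)$ conjugacy classes of $\ZT{4}(A,B)$.

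I expect the main obstacle to be exactly this last case $a_1=0,\ c_1\neq 0$: the naive attempt to clear $a_3$ fails because normalising $a'_2=0$ uses up the single free parameter, so one must instead isolate the correct invariant $r=a_3-a_2c_2/c_1$ in order to see that these classes are genuinely distinct and of type $NR_1$ rather than $R_1$. A secondary, purely computational, hurdle is checking that the two distinct-looking stabilisers arising in the $a_1\neq 0$ and $a_1=0$ cases really are conjugate in $\Tq{4}$ to the standard centralizers of $R_1$ and $NR_1$ respectively, which is the kind of \emph{routine} commutativity-and-shape verification carried out throughout the preceding propositions.
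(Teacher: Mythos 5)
Your computation is correct --- the transformation equations you read off from $XC=C'X$ are exactly right, the case split on $(a_1,c_1)$ is exhaustive, and all three totals agree with the proposition and with the $tNT_4$ column of Table~\ref{TableC} --- but you take a genuinely different route from the paper. The paper's proof of this proposition is a single sentence: it records that $\ZT{4}(A,B)=\left\{\begin{psmallmatrix}a_0&a_1\\&a_0\end{psmallmatrix}\oplus\text{(same block)}\text{ with an arbitrary }2\times 2\text{ block }Y\text{ in the corner}\right\}$ and cites Lemma 5.14 of \cite{Sh1}, where this same group already occurred as the new type $NT_1$ and its branching was computed; no computation is carried out in the present paper. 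Your direct orbit analysis buys self-containedness and makes the invariants visible: $a_1$ and $c_1$ survive because the diagonal of $X$ is a single scalar, $s=a_2+c_2$ is conserved, and in the one delicate subcase $a_1=0$, $c_1\neq0$ you correctly isolate $r=a_3-a_2c_2/c_1$ as the remaining invariant (I checked that $a'_3-a'_2c'_2/c_1=a_3-a_2c_2/c_1$ along the orbit), which is exactly where a naive attempt to clear $a_3$ would go wrong. Your bookkeeping also passes the class-equation test, $q^2(q-1)\cdot 1+q^2(q-1)^2\cdot q^2+q(q^2-1)(q-1)\cdot q=q^5(q-1)=|\ZT{4}(A,B)|$, a confirmation the paper does not supply. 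The stabilizers you exhibit are, respectively, all of $\ZT{4}(A,B)$, an abelian group of order $q^3(q-1)$ (hence type $R_1$), and the subgroup cut out by $x_1=0$, which is literally the $NR_1$ centralizer $\left\{\begin{psmallmatrix}x_0I_2&Y\\&x_0I_2\end{psmallmatrix}\right\}$, so the type identifications are sound.
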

\begin{proof}
 The centralizer of a commuting pair $(A,B)$ of this type is $$\ZT{4}(A,B) = \left\{\begin{pmatrix} \begin{smallmatrix}a_0&a_1\\&a_0\end{smallmatrix} & B_1 \\ & \begin{smallmatrix}a_0&a_1\\&a_0\end{smallmatrix}\end{pmatrix} \mid a_0 \neq 0, B_1 \in M_2(Fq)\right\}.$$
 This was seen, and proved in \cite[, Lemma 5.14]{Sh1} as the new type $NT_1$. 
\end{proof}
\begin{prop}
For a commuting pair of type $tNT_5$, there are $q^2(q-1)$ branches of type $tNT_5$, and $q(q^2-1)(q-1)$ branches of type $R_1$.
\end{prop}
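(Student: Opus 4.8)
The plan is to follow the strategy of the preceding new-type propositions. Let $(A,B)$ be a commuting pair of type $tNT_5$, so that by Proposition~\ref{BrT4A4} the common centralizer $\ZT{4}(A,B)$ consists of the matrices
$$C = \begin{pmatrix} a_0 & a_1 & b_0 & b_1\\ & a_0 & & b_0\\ & & a_0 & c_1 \\ & & & a_0 \end{pmatrix},\qquad a_0\neq 0.$$
Taking $C$ as above and conjugating by a general $X = \begin{psmallmatrix} x_0 & x_1 & y_0 & y_1\\ & x_0 & & y_0\\ & & x_0 & z_1 \\ & & & x_0 \end{psmallmatrix}\in\ZT{4}(A,B)$ with $x_0\neq 0$, set $C' = XCX^{-1}$. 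First I would expand $XC = C'X$ and read off the scalar equations. The diagonal and the repeated $(2,4)$-entry immediately give $a'_1 = a_1$, $b'_0 = b_0$ and $c'_1 = c_1$, so $a_0,a_1,b_0,c_1$ are invariants, and the only surviving relation is the single equation
$$x_0 b'_1 = x_0 b_1 + x_1 b_0 + y_0(c_1-a_1) - b_0 z_1. \qquad (\star)$$

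The whole branching is governed by $(\star)$. The key observation is that, since $x_0\in\Fq^*$ while $x_1,y_0,z_1$ range freely, the attainable shifts of $b_1$ sweep out exactly the $\Fq$-span of $\{b_0,\ c_1-a_1\}$ inside $\Fq$. Hence there are two regimes: if $b_0 = 0$ \emph{and} $c_1 = a_1$ the span is $\{0\}$, so $(\star)$ forces $b'_1 = b_1$ and $b_1$ becomes a fourth invariant; otherwise the span is all of $\Fq$ and $b_1$ can be normalized to $0$. I would then identify $\ZT{4}(A,B,C)$ with the stabilizer of $C$, namely those $X$ satisfying $x_1 b_0 + y_0(c_1-a_1) - b_0 z_1 = 0$.

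In the first regime this condition is vacuous, so $\ZT{4}(A,B,C) = \ZT{4}(A,B)$ and the triple is again of type $tNT_5$; counting the free invariants $a_0\in\Fq^*$, $a_1=c_1\in\Fq$ and $b_1\in\Fq$ gives $q^2(q-1)$ such branches. In the second regime the single linear condition cuts the centralizer down to order $(q-1)q^3$, and I would split into the subcase $b_0\neq 0$ (solve the relation for $z_1$) and the subcase $b_0=0,\ c_1\neq a_1$ (which forces $y_0=0$). In the former the resulting centralizer is exactly the $R_1$-centralizer already exhibited in Proposition~\ref{BrT4A4}, and in the latter it is the $R_1$-centralizer of Proposition~\ref{BrtNT1}; so these branches are of type $R_1$. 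Counting the invariant tuples with $a_0\in\Fq^*$ and $(a_1,b_0,c_1)$ outside the excluded locus (there are $q^3-q$ of the latter) gives $(q-1)\cdot q(q^2-1) = q(q^2-1)(q-1)$ branches of type $R_1$, as claimed.

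The routine-but-essential obstacle is the type identification in the second regime: I must confirm the order-$(q-1)q^3$ stabilizer is genuinely commutative and isomorphic to the $R_1$-centralizer rather than to some other regular type. The cleanest verification is a nilpotent computation: writing a stabilizer element as $x_0 I + N$, one checks that for two such elements $NN'$ and $N'N$ both reduce to a single nonzero $(1,4)$-entry whose value is symmetric in the two parameter sets once the defining linear relation is substituted. Commutativity together with the order then pins the type to $R_1$, since $R_1$ is the unique regular type of $GT_4(\Fq)$ with centralizer of order $(q-1)q^3$. Finally I would run the consistency check $|\ZT{4}(A,B)| = (q-1)q^4$: the $q^2(q-1)$ singleton $tNT_5$-orbits plus the $q(q^2-1)(q-1)$ type-$R_1$ orbits, each of size $q$, account for $q^2(q-1) + q\cdot q(q^2-1)(q-1) = q^4(q-1)$ elements.
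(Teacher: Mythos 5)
Your proposal is correct and follows essentially the same route as the paper: you derive the same single linear relation $x_0b'_1 = x_0b_1 + x_1b_0 + y_0(c_1-a_1) - z_1b_0$ from $XC=C'X$, split according to whether the shift span $\langle b_0,\,c_1-a_1\rangle$ is zero or all of $\Fq$, and identify the reduced stabilizers with the $tNT_5$- and $R_1$-centralizers exactly as the paper does (the paper merely organizes the second regime by $a_1=c_1$ versus $a_1\neq c_1$ rather than by $b_0\neq 0$ versus $b_0=0$). Your aggregate count via the excluded locus and the orbit-size consistency check $q^2(q-1)+q\cdot q(q^2-1)(q-1)=q^4(q-1)=|\ZT{4}(A,B)|$ are correct and reproduce the paper's totals.
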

\begin{proof}
 The centralizer of a commuting pair $(A,B)$ of type $tNT_5$ is:
 $$\ZT{4}(A,B) = \left\{\begin{pmatrix} a_0&a_1 & b_0&b_1\\&a_0&&b_0\\&& a_0 & c_1\\ & & & a_0\end{pmatrix} \mid a_0 \neq 0 \right\}.$$ 
 Let $C = \begin{pmatrix} a_0&a_1 & b_0&b_1\\&a_0&&b_0\\&& a_0 & c_1\\ & & & a_0\end{pmatrix}$, and $C' = \begin{pmatrix} a_0&a_1 & b'_0&b'_1\\&a_0&&b'_0\\&& a_0 & c_1\\ & & & a_0\end{pmatrix} = XCX^{-1}$, for some $X = \begin{pmatrix} x_0&x_1 & y_0&y_1\\&x_0&&y_0\\&& x_0 & z_1\\ & & & x_0\end{pmatrix}$. So $XC = C'X$ leads us to $b'_0 = b_0$, and the equation:
 \begin{equation}\label{ETN51}
  x_0b_1 + x_1b_0 + y_0c_1 = x_0b'_1 + z_1b_0 + y_0a_1.
 \end{equation}
We have two main cases: $a_1 = c_1$ and $a_1 \neq c_1$.

\noindent{\bfseries When $a_1 = c_1$:}
Equation~\ref{ETN51} becomes $x_0b_1+x_1b_0 = x_0b'_1 + z_1b_0$. 

When $b_0 = 0$, we have $b'_1 =b_1$. $C$ is reduced to 
$
\begin{pmatrix}
a_0&a_1&&b_1\\
&a_0&&\\
&&a_0&a_1\\
&&&a_0
\end{pmatrix}
$, with $\ZT{4}(A,B,C) = \ZT{4}(A,B)$. $(A,B,C)$ is thus of type $tNT_5$, and there are $q^2(q-1)$ such branches.

When $b_0\neq 0$, choose $z_1$ such that $b'_1 = 0$. $C$ is reduced to 
$
\begin{pmatrix}
a_0&a_1&b_0&\\
&a_0&&b_0\\
&&a_0&a_1\\
&&&a_0
\end{pmatrix}
$, with $\ZT{4}(A,B,C) = \left\{\begin{pmatrix}x_0&x_1&y_0&y_1\\ &x_0&&y_0\\&&x_0&x_1\\&&&x_0 \end{pmatrix}\right\}$. $(A,B,C)$ is of type $R_1$, and there are $q(q-1)^2$ such branches.

So, we have no other cases to look at for $a_1 = c_1$.

\noindent{\bfseries $a_1\neq c_1$:} In Equation~\ref{ETN51}, choose $y_0$ so that $b'_1 = 0$.
Thus, $C$ is reduced to 
$
\begin{pmatrix}
a_0 &a_1&b_0&\\
&a_0&&b_0\\
&&a_0&c_1\\
&&&a_0
\end{pmatrix}
$, with $\ZT{4}(A,B,C) =\left\{\begin{pmatrix}x_0&x_1&\frac{b_0(z_1-x_1)}{c_1-a_1}&\\&x_0&&\frac{b_0(z_1-x_1)}{c_1-a_1}\\&&x_0&z_1\\&&&x_0\end{pmatrix}\right\}$. Here $(A,B,C)$ is of type $R_1$, and there are $q^2(q-1)^2$ such branches. 

With this, we have no other case to look at. So, adding up the branches of type $R$, we have a total of $q(q-1)^2+q^2(q-1)^2 = q(q^2-1)(q-1)$ branches of type $R_1$.
 
\end{proof}

\begin{prop}
 For a commuting pair of type $NR_1$, there are $q^4(q-1)$ branches of type $NR_1$.
\end{prop}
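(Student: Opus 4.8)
The plan is to exploit the fact that the common centralizer of a commuting pair $(A,B)$ of type $NR_1$ is abelian; once this is established, the branching analysis collapses to a single counting step. First I would recall from Proposition~\ref{BrT4A1P} that for such a pair one has $\ZT{4}(A,B) = \left\{ \begin{psmallmatrix} x_0 I_2 & Y \\ & x_0 I_2 \end{psmallmatrix} \mid Y \in M_2(\Fq),\ x_0 \neq 0 \right\}$, so that a prospective new member $C$ of the tuple is an arbitrary element of this set, and the type of the enlarged tuple $(A,B,C)$ is determined by $\ZT{4}(A,B,C)$.

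Next I would verify commutativity directly: multiplying $\begin{psmallmatrix} a I_2 & Y \\ & a I_2 \end{psmallmatrix}$ and $\begin{psmallmatrix} b I_2 & W \\ & b I_2 \end{psmallmatrix}$ in either order produces $\begin{psmallmatrix} ab I_2 & aW + bY \\ & ab I_2 \end{psmallmatrix}$, since the diagonal blocks are scalar matrices and hence central, while the off-diagonal block $aW + bY$ is manifestly symmetric in the two factors. Thus $\ZT{4}(A,B)$ is a commutative group.

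The consequence is that conjugation of any $C \in \ZT{4}(A,B)$ by an element of $\ZT{4}(A,B)$ is trivial, so distinct choices of $C$ give distinct simultaneous conjugacy classes. Moreover, for every such $C$ one has $\ZT{4}(A,B,C) \subseteq \ZT{4}(A,B)$ automatically, and since the latter is abelian every element of it commutes with $C$, forcing $\ZT{4}(A,B,C) = \ZT{4}(A,B)$. Hence each branch is again of type $NR_1$, and the number of branches equals the order of the centralizer. Counting parameters, namely $q^4$ choices for the block $Y \in M_2(\Fq)$ and $q-1$ choices for $x_0 \in \Fq^*$, yields exactly $q^4(q-1)$ branches.

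Since the argument is purely a commutativity-plus-counting observation, there is essentially no obstacle here (in contrast to the earlier new types such as $tNT_1$, whose non-abelian centralizers force one to count conjugation orbits rather than elements). The only point requiring care is to confirm that the centralizer of the pair indeed coincides with the abelian group recorded above, rather than a larger or smaller group; this is exactly the content of how the type $NR_1$ was defined when it first arose.
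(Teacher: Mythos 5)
Your proof is correct and follows essentially the same route as the paper: the paper simply records the centralizer $\left\{\begin{psmallmatrix}a_0I_2 & D\\ & a_0I_2\end{psmallmatrix}\mid a_0 \neq 0,\ D \in M_2(\Fq)\right\}$ and notes that the result follows because this subgroup is commutative. You have merely made explicit the commutativity check and the standard consequence that every element of an abelian centralizer is its own branch with unchanged centralizer, which is exactly what the paper leaves implicit.
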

\begin{proof}
The centralizer of a commuting pair $(A,B)$ of type $NR_1$ is 
$$Z_{GT_4(\Fq)}(A,B) = \left\{\begin{pmatrix}a_0I_2 & D\\ & a_0I_2\end{pmatrix}\mid a_0 \neq 0, D \in M_2(\Fq) \right\}.$$
The result follows, as this is a commutative subgroup.
\end{proof}

\section{Branching in $UT_3(q)$}
For the unitriangular group $UT_3(\Fq)$, the conjugacy classes are as follows:
\begin{center}
\begin{tabular}{|c|c|c|c|} \hline 
Canonical Form & No. of Classes & Centralizer & Name of Type\\ \hline
 $\begin{matrix}\begin{psmallmatrix} 1 & 0 & a \\ 0 & 1 & 0\\ 0 & 0& 1 \end{psmallmatrix},\\~a\in \Fq\end{matrix}$ & $q$ &$UT_3(\Fq)$& $C$ \\ \hline
 $\begin{matrix}\begin{psmallmatrix} 1 & a & 0 \\ 0 & 1 & 0\\ 0 & 0& 1 \end{psmallmatrix},\\~a\in \Fq^*.\end{matrix}$ & $(q-1)$ &$\left\{\begin{psmallmatrix}1 & x_0&x_1\\&1&\\&&1\end{psmallmatrix} \mid x_0,x_1 \in \Fq\right\}$ & $R_1$\\ \hline
$\begin{matrix}\begin{psmallmatrix} 1 & 0 & 0 \\ 0 & 1 & a\\ 0 & 0& 1 \end{psmallmatrix},\\~a\in \Fq^*.\end{matrix}$ & $(q-1)$ &$\left\{\begin{psmallmatrix}1 & &x_1\\&1&x_0\\&&1\end{psmallmatrix} \mid x_1, x_0 \in \Fq\right\}$ & $R_1$\\ \hline
$\begin{matrix}\begin{psmallmatrix} 1 & a & 0 \\ 0 & 1 & b\\ 0 & 0& 1 \end{psmallmatrix},\\~a,b\in \Fq^*.\end{matrix}$ & $(q-1)^2$ &$\left\{\begin{psmallmatrix} 1 & x_0&x_1\\&1&\frac{b}{a} x_0\\&&1\end{psmallmatrix} \mid x_0,x_1 \in \Fq \right\}$ & $R_2$\\ \hline
\end{tabular}
\end{center}
We see that there are two types here: central $C$ and regular $R$. Note that the centralizers of both regulars $R_1$ and $R_2$ are isomorphic (not conjugate). For the type $C$, the centralizer is full group $UT_3(\Fq)$, thus all types appear in the first column. For the regular type, it has $q^2$ branches of the same $R$ type, as the centralizer is commutative, of size $q^2$, hence the number of branches is $q^2$.

\begin{theorem}\label{TheoremUT3}
The branching matrix (with the order of type $C, R1$):
$$B_{UT_3(\Fq)} = \begin{pmatrix}
q & 0 \\
q^2-1 & q^2 
\end{pmatrix}.$$
\end{theorem}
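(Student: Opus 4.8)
The plan is to read off the two columns of the $2\times 2$ branching matrix directly from the conjugacy class table, using the standard principle that the branches of a commuting tuple with common centralizer $Z$ are indexed by the conjugacy classes of the group $Z$, and that the type of each branch is recorded by the centralizer of the extended tuple computed \emph{inside} $Z$. Concretely, if the current tuple has centralizer $Z=Z_{UT_3(\Fq)}(\text{tuple})$, then adjoining a commuting $g$ means taking $g\in Z$, two choices are equivalent iff $Z$-conjugate, and the new type is governed by $Z_Z(g)$. I would treat the two columns (the central type $C$ and the single regular type $R_1$) separately.

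First I would handle the central column. A representative of type $C$ has centralizer equal to the whole group $UT_3(\Fq)$, so adjoining a commuting element amounts to adjoining an arbitrary group element, and the branches are precisely the conjugacy classes of $UT_3(\Fq)$. Hence the first column just tabulates how many classes of each type the group has. From the table there are $q$ classes of type $C$, while the regular classes number $(q-1)+(q-1)+(q-1)^2 = q^2-1$ once the two $R_1$ rows and the single $R_2$ row are amalgamated into one regular type $R_1$ (their centralizers are isomorphic, so under the weaker notion of type used here they coincide). This gives the first column $\begin{psmallmatrix} q \\ q^2-1\end{psmallmatrix}$.

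Next I would handle the regular column. A representative of the regular type has commutative centralizer $Z$ of order $q^2$, for instance $\left\{\begin{psmallmatrix}1 & x_0&x_1\\&1&\\&&1\end{psmallmatrix}\mid x_0,x_1\in\Fq\right\}$. Since $Z$ is abelian, each element is its own conjugacy class, so there are exactly $q^2$ branches. Moreover, for any $g\in Z$ the centralizer of the extended tuple is $Z_Z(g)=Z$, again the same abelian group of order $q^2$; therefore every one of the $q^2$ branches is again of regular type, and none is central. This produces the second column $\begin{psmallmatrix} 0 \\ q^2\end{psmallmatrix}$, and assembling the two columns gives the asserted matrix $B_{UT_3(\Fq)}$.

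The one point requiring genuine care — the step I would flag as the main obstacle — is the claim that a regular tuple cannot branch back to the central type, i.e. that the ``type'' cannot jump back up the lattice. Here this is immediate, since the centralizer of a regular element already has order $q^2$, strictly smaller than $|UT_3(\Fq)|=q^3$, so no extended tuple can recover the full group as its centralizer; but it is precisely the place where one must confirm that the recorded entry in the $(C,R_1)$ position is $0$ rather than something positive. With that verified, the matrix is completely determined.
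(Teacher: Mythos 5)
Your proposal is correct and follows essentially the same route as the paper: the $C$ column is read off from the class table because the central type's centralizer is all of $UT_3(\Fq)$, and the $R_1$ column is $q^2$ on the diagonal because the regular centralizer is abelian of order $q^2$, so every branch stays regular. The extra remark about why no regular branch can return to type $C$ is a sensible sanity check, though the paper leaves it implicit.
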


 We prove the branching rules below.
\begin{prop}
An upper unitriangular matrix of type $C$ has $q$ branches of type $C$, and $q^2-1$ branches of the type $R$.
\end{prop}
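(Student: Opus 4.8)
The plan is to exploit the fact that a matrix $A$ of type $C$ lies in the center of $UT_3(\Fq)$, so that its centralizer is the whole group. Concretely, the type-$C$ canonical forms $\begin{psmallmatrix} 1 & 0 & a \\ 0 & 1 & 0 \\ 0 & 0 & 1\end{psmallmatrix}$ are exactly the central elements of $UT_3(\Fq)$, hence $\ZU{3}(A) = UT_3(\Fq)$. This is the only structural input the argument needs.

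First I would recall from the definition of branching that the branches of $A$ are the simultaneous conjugacy classes of commuting pairs $(A,B)$ with $B$ ranging over $\ZU{3}(A)$, taken up to simultaneous conjugation by that same centralizer. Since $A$ is central, every conjugation fixes $A$, so the class of the pair $(A,B)$ is determined exactly by the $UT_3(\Fq)$-conjugacy class of $B$. Moreover $\ZU{3}(A,B) = \ZU{3}(A)\cap \ZU{3}(B) = \ZU{3}(B)$, so the type of the branch $(A,B)$ is simply the type of $B$ as a single element; in particular no new types can arise here. Thus the branches of $A$ are in bijection with the conjugacy classes of $UT_3(\Fq)$, with matching type labels.

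Next I would read the branch counts directly off the conjugacy-class table opening this section. There are $q$ classes of type $C$, giving the $q$ branches of type $C$. The remaining classes are the two $R_1$ families (with $q-1$ classes each) together with the $R_2$ family (with $(q-1)^2$ classes); since all of these are amalgamated into a single type $R$ — their centralizers being isomorphic, as noted above — they contribute
$$2(q-1) + (q-1)^2 = (q-1)(q+1) = q^2-1$$
branches of type $R$, which is the asserted count and also the first column of the stated branching matrix $B_{UT_3(\Fq)}$.

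There is no genuine obstacle here: the entire content is the centrality of type $C$, after which the claim reduces to the elementary arithmetic $2(q-1)+(q-1)^2 = q^2-1$. The only point that relies on an external (already supplied) justification is the legitimacy of merging $R_1$ and $R_2$ into one type $R$, which is precisely the remark preceding the theorem that their centralizers are isomorphic.
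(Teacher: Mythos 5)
Your argument is exactly the paper's: the proposition follows because type $C$ is central, so the branches are precisely the conjugacy classes of $UT_3(\Fq)$ with their types, and the counts $q$ and $2(q-1)+(q-1)^2=q^2-1$ are read off the class table. The paper's proof is a one-line appeal to centrality; you have simply spelled out the same reasoning in more detail.
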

\begin{proof}
 The result follows as matrices of this type are central.
\end{proof}

\begin{prop}
A matrix of of any of the $R$ types has $q^2$ branches of the same $R$ type.
\end{prop}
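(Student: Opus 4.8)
The plan is to exploit the fact, already recorded in the class table, that every regular class of $UT_3(\Fq)$ has a \emph{commutative} centralizer of order $q^2$. Recall that a branch of a matrix $A$ is a simultaneous conjugacy class of a commuting pair $(A,B)$ with $B\in \ZU{3}(A)$, where two such pairs are identified precisely when they are conjugate by an element of $\ZU{3}(A)$. Thus the branches of $A$ are exactly the orbits of the conjugation action of $\ZU{3}(A)$ on itself.

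First I would observe that when $\ZU{3}(A)$ is abelian this action is trivial: for all $X,B\in \ZU{3}(A)$ we have $XBX^{-1}=B$. Hence every element of $\ZU{3}(A)$ is its own orbit, and the number of branches equals $|\ZU{3}(A)|=q^2$. This applies verbatim to all three regular classes appearing in the table (the type $R_1$ in both of its listed forms, and the type $R_2$), since each has an abelian centralizer of size $q^2$.

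Next I would identify the type of each branch. For a pair $(A,B)$ with $B\in \ZU{3}(A)$, its centralizer is $\ZU{3}(A)\cap \ZU{3}(B)=\ZU{3}(A,B)$. Because $\ZU{3}(A)$ is abelian and contains $B$, every element of $\ZU{3}(A)$ commutes with $B$; therefore $\ZU{3}(A)\subseteq \ZU{3}(B)$ and so $\ZU{3}(A,B)=\ZU{3}(A)$. Thus each of the $q^2$ branches has the same centralizer as $A$, and in particular is again of the regular type $R$. Here one uses the paper's convention that class type is detected by the isomorphism class of the centralizer, which is exactly why the two canonical forms of $R_1$ together with $R_2$ are recorded as a single regular type.

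There is essentially no obstacle here; the argument is identical to the regular-type computations carried out for $GT_2(\Fq)$ and $GT_3(\Fq)$ earlier in the paper. The only point requiring a word of care is the last one, namely confirming that the pair centralizer does not shrink to a proper subgroup, but this is immediate from commutativity of $\ZU{3}(A)$.
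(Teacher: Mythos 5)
Your proof is correct and follows the same route as the paper: the regular types have commutative centralizers of order $q^2$, so the conjugation action of $\ZU{3}(A)$ on itself is trivial and each of the $q^2$ elements is its own branch, with unchanged centralizer. The extra detail you supply (that $\ZU{3}(A,B)=\ZU{3}(A)$, so each branch remains of regular type) is a worthwhile elaboration of what the paper leaves implicit, but it is not a different argument.
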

\begin{proof}
 A matrix of any of the $R$ types is a $\Reg$ type, hence its centralizer  in $UT_3(\Fq)$ is commutative, of size $q^2$, hence the number of branches is $q^2$.
\end{proof}

\section{Branching in $UT_4(q)$}\label{Unip4}

We shift our focus to commuting tuples of matrices in $UT_4(\Fq)$. The conjugacy classes according to the types of this group are listed in Appendix~\ref{CCUT45}.
\begin{theorem}\label{TheoremUT4}
The branching rules for the upper unitriangular group is given by the following matrix (with order $C, A_1,A_2, A_3, R_1, R_2$):
 \begin{displaymath}
  B_{UT_4(\Fq)} = \begin{pmatrix}
            q & 0 & 0 & 0 & 0 & 0  \\
          2(q-1) & q^2 & 0 & 0 & 0 & 0\\
          (q-1)^2 & 0 & q^2 & 0 & 0 & 0\\
           q(q^2-1)&0 &0 &q^2 & 0 & 0 \\
           q(q-1) & q(q^2-1) & q^2(q-1) & q(q^2-1) & q^4& 0\\
           (q^2-1)(q-1) & q^2(q-1) & q(q^2-1) & 0 & 0 & q^3
         \end{pmatrix}.
 \end{displaymath}
\end{theorem}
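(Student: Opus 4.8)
The plan is to compute $B_{UT_4(\Fq)}$ one column at a time, following the same reduction strategy used for $GT_3(\Fq)$ and $GT_4(\Fq)$: the column indexed by a type $T$ records, for a fixed canonical representative $A$ of $T$, how the commuting partners $B\in\ZU{4}(A)$ distribute among the simultaneous-conjugacy types of the pairs $(A,B)$ under the action of $\ZU{4}(A)$. The canonical forms, the numbers of classes, and the centralizers of the six types $C,A_1,A_2,A_3,R_1,R_2$ are taken from Appendix~\ref{CCUT45}.

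Three columns are immediate. For the central type $C$, whose representatives are the central elements $I+aE_{14}$, we have $\ZU{4}(A)=UT_4(\Fq)$, so the branches of $A$ are exactly the conjugacy classes of the whole group; the entries of this column are therefore the numbers of classes of each type, i.e.\ the class-count column of Appendix~\ref{CCUT45}. For the two regular types $R_1$ and $R_2$ the centralizer is abelian, and if $B\in\ZU{4}(A)$ then $\ZU{4}(A)\subseteq\ZU{4}(B)$, whence $\ZU{4}(A,B)=\ZU{4}(A)$; thus every element of the abelian centralizer is its own branch of the same regular type, giving the diagonal entries $q^4$ (for $R_1$) and $q^3$ (for $R_2$) equal to the centralizer sizes, and zeros elsewhere.

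The work is in the three columns $A_1,A_2,A_3$, each handled by the explicit linear reduction of Propositions~\ref{T3A1}--\ref{T3B1}. For a fixed type I take its canonical representative $A$, a general $B\in\ZU{4}(A)$ and a general $X\in\ZU{4}(A)$, expand $XB=B'X$ into a small linear system in the off-diagonal entries of $B$, $B'$ and $X$, split into cases according to the coincidences among the relevant diagonal blocks, and use the remaining free entries of $X$ to normalise the entries of $B$ to $0$ or $1$. Each normalised form is the canonical form of a branch; its centralizer $\ZU{4}(A,B)$ identifies its type, and the number of branches of that type is the size of the residual parameter space. The targets to be reproduced are: from $A_1$, $q^2$ branches of type $A_1$, $q(q^2-1)$ of type $R_1$ and $q^2(q-1)$ of type $R_2$; from $A_2$, $q^2$ of type $A_2$ together with the same $R_1$- and $R_2$-counts interchanged; and from $A_3$, $q^2$ of type $A_3$ and $q(q^2-1)$ of type $R_1$.

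The main obstacle---and the feature that distinguishes $UT_4(\Fq)$ from $GT_4(\Fq)$, where six genuinely new types were forced to appear---is to verify that branching stays \emph{closed} within the six listed types, i.e.\ that the centralizer $\ZU{4}(A,B)$ of every normalised pair is isomorphic to one of the six known centralizers and never to a new group. This closure is exactly what makes $B_{UT_4(\Fq)}$ a $6\times6$ matrix, and it has to be checked in each case of the reductions. For arithmetic control I would use that each column sums to the number of conjugacy classes of the corresponding centralizer: columns $A_1$ and $A_2$ must each sum to $2q^3-q$, column $A_3$ to $q^3+q^2-q=q(q^2+q-1)=q\,k(UT_3(\Fq))$ (so that $\ZU{4}(A_3)$ behaves numerically like $UT_3(\Fq)$ with a central $\Fq$ factor), the regular columns to $q^4$ and $q^3$, and column $C$ to $k(UT_4(\Fq))=2q^3+q^2-2q$, the total number of classes of the group.
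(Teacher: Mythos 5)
Your proposal is correct and follows essentially the same route as the paper: the $C$-column is read off from the class counts, the $R_1$- and $R_2$-columns follow from the centralizers being abelian, and the $A_1$, $A_2$, $A_3$ columns are obtained by the same explicit $XB=B'X$ reduction within $\ZU{4}(A)$ (Propositions~\ref{TU4A1}--\ref{TU4A3}), with the target branch counts you list matching the paper's. Your column-sum consistency checks (each column summing to the number of conjugacy classes of the relevant centralizer) are a sensible addition not made explicit in the paper, and the numbers you quote are correct.
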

The first column corresponds to type $C$, thus all types of $UT_4(\Fq)$ appears. The last two columns are the regular types. There are no new types here. The proof for other columns is listed below in propositions. 

\begin{prop}\label{TU4A1}
An upper unitriangular matrix of type $A_1$ has $q^2$ branches of type $A_1$, and $q(q^2-1)$ branches of type $R_1$, and $q^2(q-1)$ branches of type $R_2$.
\end{prop}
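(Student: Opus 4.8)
The plan is to reuse the normal-form reduction employed in the $GT_4$ propositions, now inside the unipotent group. First I would fix a canonical representative of type $A_1$; from the class list one of its two forms is $A = I_4 + E_{13}$ (the other, $I_4 + E_{24}$, is carried to it by the anti-transpose symmetry of $UT_4(\Fq)$, so it suffices to treat one). A direct check of $Au = uA$ shows that $\ZU{4}(A)$ consists of the matrices $I_4 + x_1 E_{12} + x_2 E_{13} + x_3 E_{14} + x_4 E_{23} + x_5 E_{24}$, a group of order $q^5$ whose only non-trivial commutators are $[E_{12},E_{23}] = E_{13}$ and $[E_{12},E_{24}] = E_{14}$, with $E_{13}, E_{14}$ central. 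I would then take a general $B = I_4 + b_1 E_{12} + b_2 E_{13} + b_3 E_{14} + b_4 E_{23} + b_5 E_{24}$ in $\ZU{4}(A)$, a general conjugator $X$ of the same shape, set $B' = XBX^{-1}$, and read off the transformation of the coefficients from $XB = B'X$.

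The key structural observation is that, since every commutator lands in the central span of $E_{13}$ and $E_{14}$, the coefficients $b_1, b_4, b_5$ (the $E_{12}, E_{23}, E_{24}$ entries) are conjugation invariants, while conjugation translates the pair $(b_2,b_3)$ by $x_1(b_4,b_5) - b_1(x_4,x_5)$. This splits the analysis into three cases. \textbf{Case (i):} if $b_1 = b_4 = b_5 = 0$ then $B$ is central in $\ZU{4}(A)$, nothing can be normalized, $\ZU{4}(A,B) = \ZU{4}(A)$, so $(A,B)$ is again of type $A_1$; the $q^2$ choices of $(b_2,b_3)$ give $q^2$ branches. \textbf{Case (ii):} if $b_1 = 0$ but $(b_4,b_5) \neq (0,0)$, I would use $x_1$ to slide $(b_2,b_3)$ along the line $\Fq(b_4,b_5)$, leaving $(b_4,b_5)$ free and $(b_2,b_3)$ only modulo that line; this yields $q(q^2-1)$ representatives, and a routine check shows $\ZU{4}(A,B)$ is abelian of order $q^4$, i.e. type $R_1$. \textbf{Case (iii):} if $b_1 \neq 0$, I would use $x_4, x_5$ to kill $(b_2,b_3)$ entirely while $b_4, b_5$ stay free and $b_1$ stays invariant, giving $q^2(q-1)$ representatives with $\ZU{4}(A,B)$ abelian of order $q^3$, i.e. type $R_2$.

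Summing gives $q^2$ branches of type $A_1$, $q(q^2-1)$ of type $R_1$, and $q^2(q-1)$ of type $R_2$, which is exactly the claim; the three totals also add to $2q^3 - q$, the number of conjugacy classes of the order-$q^5$ group $\ZU{4}(A)$, which is a useful consistency check confirming that no new type arises. I expect the main obstacle to be the bookkeeping in Cases (ii) and (iii): one must verify that each reduced centralizer $\ZU{4}(A,B)$ is genuinely commutative and is conjugate in $UT_4(\Fq)$ (or at least isomorphic) to the centralizer defining $R_1$, respectively $R_2$, rather than to some sixth, new regular type, and one must count orbit representatives correctly --- in particular the coset-of-a-line count $q^2/q = q$ that produces the factor $q(q^2-1)$ in Case (ii). The anti-transpose symmetry then disposes of the remaining canonical form $I_4 + E_{24}$ without further computation.
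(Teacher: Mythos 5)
Your proof is correct and follows essentially the same route as the paper's: fix a canonical representative of type $A_1$, compute its centralizer in $UT_4(\Fq)$, and reduce a general element $B$ of that centralizer under conjugation, the two displayed equations in the paper being exactly your translation rule $(b_2,b_3)\mapsto (b_2,b_3)+x_1(b_4,b_5)-b_1(x_4,x_5)$. The only differences are cosmetic (you work with $I_4+E_{13}$ rather than $I_4+aE_{24}$, which is harmless since the paper itself proves the two centralizers are isomorphic), and your case analysis and centralizer identifications are carried out in more detail than the paper's terse "we use these to simplify $B$."
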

\begin{proof}
 Let $A = \begin{pmatrix} 1&&& \\&1&&a\\&&1&\\&&&1\end{pmatrix}$, a matrix of type $A_1$. The centralizer $Z_{UT_{4}}(A)$ of $A$ is: $\left\{\begin{pmatrix}1&&x_1&x_2\\&1&y_0&y_1\\&&1&z_0\\ &&&1\end{pmatrix}\mid x_i,y_i,z_0\in \Fq\right\}$. Let $X = \begin{pmatrix}1&&x_1&x_2\\&1&y_0&y_1\\&&1&z_0\\ &&&1\end{pmatrix},$ be an element of $Z_{UT_{4}}(A)$. Let $B = \begin{pmatrix}1&&b_1&b_2\\&1&c_0&c_1\\&&1&d_0\\ &&&1\end{pmatrix}$, and $B^\prime = \begin{pmatrix}1&&b_1^\prime&b_2^\prime\\&1&c_0^\prime&c_1^\prime\\&&1&d_0^\prime\\ &&&1\end{pmatrix}$ be the conjugate of $B$ by $X$, i.e., $B' = XBX^{-1}$. Thus equating $XB = B'X$ leads us to $b'_0=b_0$, $c'_0 = c_0$, $c'_1 = c_1$, and  the following equations:
 \begin{center}
 \begin{tabular}{ccc}
 $x_0c_0+b_1=y_0b_0^\prime+b_1^\prime$\\
 $x_0c_1+b_2=y_1b_0^\prime+b_2^\prime$\\
 \end{tabular}
 \end{center}
 We use these to simplify $B$ to the branches mentioned in the statement of the proposition.
\end{proof}
\begin{prop}\label{TU4A2}
 An upper unitriangular matrix of type $A_2$ has $q^2$ branches of type $A_2$, and $q^2(q-1)$ branches of type $R_1$, and $q(q^2-1)$ branches of $R_2$.
\end{prop}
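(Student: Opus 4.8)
The plan is to mirror the proof of Proposition~\ref{TU4A1}. I would take the canonical form of type $A_2$ to be
$$A = \begin{pmatrix} 1&&a&\\&1&&b\\&&1&\\&&&1\end{pmatrix},\qquad a,b\in\Fq^*,$$
that is $A = I + aE_{13}+bE_{24}$. Since conjugation alters only the $(1,4)$-entry of such a matrix, the $E_{13}$- and $E_{24}$-coefficients are invariants, so distinct $(a,b)$ account for the $(q-1)^2$ classes of this type. A direct computation of $XA=AX$ shows
$$Z_{UT_4}(A) = \left\{\, X=\begin{pmatrix} 1&x_0&x_1&x_2\\&1&y_0&y_1\\&&1&z_0\\&&&1\end{pmatrix} \ \middle|\ b\,x_0 = a\,z_0 \,\right\},$$
a group of order $q^5$. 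I would then take a generic $B\in Z_{UT_4}(A)$ with entries $p_0,p_1,p_2,r_0,r_1$ in positions $(1,2),(1,3),(1,4),(2,3),(2,4)$ and with $(3,4)$-entry $s_0=(b/a)p_0$, and conjugate it by a generic $X\in Z_{UT_4}(A)$.

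Expanding $XB=B'X$ gives at once that the first-superdiagonal entries are fixed, $p_0'=p_0$ and $r_0'=r_0$, together with
$$p_1' = p_1 + (x_0r_0 - p_0y_0),\qquad r_1' = r_1 - \tfrac{b}{a}(x_0r_0 - p_0y_0),$$
and one further equation controlling the central entry $p_2'$. Hence $p_0,r_0$ and the combination $b\,p_1 + a\,r_1$ are the conjugation invariants, while $\delta:=x_0r_0-p_0y_0$ and (through the $(1,4)$-equation) $p_2$ are the reducible quantities.

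The reduction then splits according to the first-superdiagonal part $(p_0,r_0)$ of $B$. When $p_0=r_0=0$, we have $\delta\equiv0$, so $p_1,r_1$ are frozen, and one subdivides by whether $(p_1,r_1)$ is proportional to $(a,b)$: the proportional sub-case keeps $Z_{UT_4}(A,B)=Z_{UT_4}(A)$ and supplies the $q^2$ self-branches of type $A_2$, while the transverse sub-case makes $\langle A-I, B-I\rangle$ span the whole second superdiagonal and drops the centralizer to an abelian group, giving a regular branch. When $(p_0,r_0)\neq(0,0)$, one uses $\delta$ to normalise $(p_1,r_1)$ and the $(1,4)$-equation to normalise $p_2$; the pair is then regular, and a finer split (for instance on whether $p_0=0$) separates the two regular types. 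In each terminal case I would compute $Z_{UT_4}(A,B)$ explicitly and match it against the centralizers of $R_1$ and $R_2$ listed in Appendix~\ref{CCUT45}.

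The main obstacle is the bookkeeping in the regular cases: correctly distinguishing the branches landing in $R_1$ from those landing in $R_2$ — both centralizers are abelian, so they must be told apart by order and structure rather than by commutativity — and then tallying the orbit counts so that they sum to the asserted multiplicities $q^2$, $q^2(q-1)$ and $q(q^2-1)$. A useful global consistency check is that the total number of branches must equal the number of conjugacy classes of the order-$q^5$ group $Z_{UT_4}(A)$, namely $q^2 + q^2(q-1) + q(q^2-1) = 2q^3 - q$.
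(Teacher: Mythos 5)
Your setup coincides with the paper's own proof: the same canonical form $I+aE_{13}+bE_{24}$, the same order-$q^5$ centralizer (your condition $bx_0=az_0$ is the paper's $z_0=\lambda x_0$ with $\lambda=b/a$), and the same system of equations from $XB=B'X$. The paper in fact stops exactly where you stop, asserting that "using these we reduce $B$ to the mentioned branches," so your identification of the invariants $p_0$, $r_0$, $bp_1+ar_1$ and of the reducible quantities $\delta=x_0r_0-p_0y_0$ and $p_2$ already goes slightly beyond what is written there, and your case split is the right one.

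The gap is that the step you defer as "the main obstacle" is where all the content lies, and carrying it out does not produce the asserted multiplicities. Concretely: when $p_0\neq 0$, normalising $p_1=0$ (via $y_0$) and then $p_2=0$ (via $y_1$) leaves representatives parametrised by $(p_0,r_0,r_1)$ with $p_0\neq0$, i.e.\ $q^2(q-1)$ classes, and $Z_{UT_4(\Fq)}(A,B)$ is cut out by the two conditions $y_0=(r_0/p_0)x_0$ and $y_1=(r_1/p_0)x_0+\tfrac{b}{a}x_1$, hence has order $q^3$ --- these are the $R_2$ branches. When $p_0=0$ and $r_0\neq0$, the residual stabiliser forces $x_0=0$, so $p_2$ survives as an invariant and you get $q^2(q-1)$ classes with abelian centraliser $\{x_0=0\}$ of order $q^4$, i.e.\ type $R_1$; adding the $q(q-1)$ classes from $p_0=r_0=0$, $ar_1\neq bp_1$ gives $q(q^2-1)$ branches of $R_1$ in total. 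Thus the counts come out as $R_1\colon q(q^2-1)$ and $R_2\colon q^2(q-1)$, the reverse of the statement (and of the corresponding column of the branching matrix in Theorem~\ref{TheoremUT4}). Your global check, total $=2q^3-q$, cannot detect this because it is symmetric in the two values; the sharper test is the class equation of $Z_{UT_4(\Fq)}(A)$, namely $q^2\cdot 1+n_{R_1}\cdot q+n_{R_2}\cdot q^2=q^5$ since the orbit sizes are $q^5/q^4$ and $q^5/q^3$. The values in the statement give $q^5+q^4-2q^3+q^2\neq q^5$, while the swapped values satisfy it (for $q=2$: $4+6\cdot2+4\cdot4=32=|Z_{UT_4}(A)|$ versus $4+4\cdot2+6\cdot4=36$). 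You should complete the terminal analysis and use the weighted class-equation check rather than the unweighted total.
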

\begin{proof}
 Given $A = \begin{pmatrix} 1&&a& \\&1&&b\\&&1&\\&&&1\end{pmatrix}$ , where $a,b\in \Fq^*$. the canonical form of a matrix of type $A_2$. The centralizer of $A$, $Z_{UT_{4}}(A)$ is $\left\{\begin{pmatrix}1&x_0&x_1&x_2\\&1&y_0&y_1\\&&1&\lambda x_0\\ &&&1\end{pmatrix}\mid \lambda=\frac{b}{a}, x_i,y_i,z_0\in \Fq\right\}$. Let $X =  \begin{pmatrix}1&x_0&x_1&x_2\\&1&y_0&y_1\\&&1&\lambda x_0\\ &&&1\end{pmatrix}$ be an element of $Z_{UT_{4}}(A)$. Let $B = \begin{pmatrix}1&b_0&b_1&b_2\\&1&c_0&c_1\\&&1&\lambda b_0\\ &&&1\end{pmatrix}$, and $B^\prime = \begin{pmatrix}1&b_0^\prime&b_1^\prime&b_2^\prime\\&1&c_0^\prime&c_1^\prime\\&&1&\lambda b_0^\prime\\ &&&1\end{pmatrix}$ be the conjugate of $B$ by $X$. Thus equating $XB = B'X$ gives us the following equations:
 \begin{center}
  \begin{tabular}{ccc}
  $b_0=b_0^\prime$\\
 $c_0=c_0^\prime$\\
 $x_0c_0+b_1=y_0b_0^\prime+b_1^\prime$\\
 $\lambda b_0y_0+c_1=\lambda x_0c_0^\prime+c_1^\prime$\\
 $x_0c_1+\lambda b_0x_1+b_2=y_1b_0^\prime+\lambda b_1^\prime x_0b_2^\prime$\\
  \end{tabular}
 \end{center}
Using these we reduce $B$ to the mentioned branches. 
\end{proof}
\begin{prop}\label{TU4A3}
 An upper triangular matrix of type $A_3$ has $q^2$ branches of type $A_3$, and $q(q^2-1)$ branches of type $R_1$.
\end{prop}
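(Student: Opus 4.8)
The plan is to follow the template of Propositions~\ref{TU4A1} and~\ref{TU4A2}: fix a representative $A$ of type $A_3$, write its centralizer $Z_{UT_{4}}(A)$ explicitly, and then reduce an arbitrary $B\in Z_{UT_{4}}(A)$ to a normal form by conjugating with a general $X\in Z_{UT_{4}}(A)$, reading off the type and the number of the resulting pairs $(A,B)$.

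Concretely, I would take the representative $A=\begin{pmatrix}1&a&&\\&1&&\\&&1&\\&&&1\end{pmatrix}$ with $a\in\Fq^*$, whose centralizer is $Z_{UT_{4}}(A)=\left\{\begin{pmatrix}1&x_0&x_1&x_2\\&1&&\\&&1&z_0\\&&&1\end{pmatrix}\mid x_0,x_1,x_2,z_0\in\Fq\right\}$. Writing $B=\begin{pmatrix}1&b_0&b_1&b_2\\&1&&\\&&1&d_0\\&&&1\end{pmatrix}$, taking $X$ as above and $B'=XBX^{-1}$, the relation $XB=B'X$ forces $b_0'=b_0$, $b_1'=b_1$, $d_0'=d_0$, leaving the single nontrivial relation
\[
b_2'=b_2+x_1d_0-b_1z_0 .
\]
Thus $b_0,b_1,d_0$ are invariants of the branch and $b_2$ is the only entry that can be moved.

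The reduction then splits according to whether $(b_1,d_0)=(0,0)$. If $(b_1,d_0)=(0,0)$, the relation is vacuous, $b_2$ is an invariant, and $B=\begin{pmatrix}1&b_0&&b_2\\&1&&\\&&1&\\&&&1\end{pmatrix}$ lies in the centre of $Z_{UT_{4}}(A)$; hence $Z_{UT_{4}}(A,B)=Z_{UT_{4}}(A)$ and $(A,B)$ is again of type $A_3$. Letting $b_0,b_2$ range over $\Fq$ gives $q^2$ such branches. If $(b_1,d_0)\neq(0,0)$, then $(x_1,z_0)\mapsto x_1d_0-b_1z_0$ is onto $\Fq$, so $X$ may be chosen with $b_2'=0$; the normalized $B$ retains only $b_0,b_1,d_0$, its common centralizer with $A$ is commutative, and $(A,B)$ is of the regular type $R_1$. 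Since $b_0\in\Fq$ and $(b_1,d_0)\in\Fq^{2}\setminus\{(0,0)\}$, this yields $q(q^2-1)$ branches, and $q^2+q(q^2-1)$ accounts for all the simultaneous classes in $Z_{UT_{4}}(A)$.

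The step requiring the most care is the second case: one must verify that after setting $b_2'=0$ the pair centralizer $Z_{UT_{4}}(A,B)$ is genuinely commutative (so the branch is regular) and that $b_0,b_1,d_0$ are complete invariants, i.e.\ that the residual conjugations fixing $b_2$ do not identify distinct triples. Both points follow from the invariance of $b_0,b_1,d_0$ recorded above together with a direct commutativity check on the reduced centralizer, exactly as in the regular cases of Propositions~\ref{TU4A1} and~\ref{TU4A2}; this is what licenses reading off the two branch types and their multiplicities $q^2$ and $q(q^2-1)$ as stated.
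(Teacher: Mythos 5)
Your reduction is exactly the one the paper uses (the paper only sketches it): same representative $A$, same centralizer $\ZU{4}(A)$ of order $q^4$, the same single relation $b_2'=b_2+x_1d_0-z_0b_1$ with $b_0,b_1,d_0$ invariant, and the same case split. The counts $q^2$ and $q(q^2-1)$ are correct, and the $q^2$ central branches are correctly identified as type $A_3$.

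The gap is in the step you yourself single out as "requiring the most care" and then settle by checking only commutativity. Commutativity does not distinguish $R_1$ from $R_2$: in the paper's table for $UT_4(\Fq)$ both are regular types, with centralizers of orders $q^4$ and $q^3$ respectively. In your second case the stabilizer of $B$ under conjugation by $\ZU{4}(A)$ is the subgroup cut out by $x_1d_0-z_0b_1=0$, an index-$q$ subgroup of a group of order $q^4$, so $\ZU{4}(A,B)$ is elementary abelian of order $q^3$. Indeed no non-central branch of $A_3$ can have centralizer of order $q^4$, since that would force $\ZU{4}(A,B)=\ZU{4}(A)$, which is non-abelian. An order-$q^3$ abelian centralizer is isomorphic to the centralizer of the $R_2$ type, not the $R_1$ type (order $q^4$), so under the paper's own convention (type $=$ isomorphism class of the centralizer) these branches should be labelled $R_2$; the distinction matters downstream because the diagonal entries $q^4$ versus $q^3$ govern the next branching step. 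To prove the proposition as literally stated you would need an argument for why an order-$q^3$ commutative centralizer is classified as $R_1$; as written the identification is asserted rather than proved, and the order computation points to $R_2$.
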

\begin{proof}
 One of the canonical forms of an upper triangular matrix of type $A_3$ is $A = \begin{pmatrix} 1&a&& \\&1&&\\&&1&\\&&&1\end{pmatrix}$, where $a\in \Fq^*$. Here $\ZU{4}(A) = \left\{\begin{pmatrix}1&x_0&x_1&x_2\\&1&&\\&&1&z_0\\ &&&1\end{pmatrix}\mid x_i,z_0\in \Fq\right\}$. Let $X = \begin{pmatrix}1&x_0&x_1&x_2\\&1&&\\&&1&z_0\\ &&&1\end{pmatrix},$ be an element of $Z_{UT_{4}}(A)$. Let $B = \begin{pmatrix}1&b_0&b_1&b_2\\&1&&\\&&1&d_0\\ &&&1\end{pmatrix}$, and $B^\prime = \begin{pmatrix}1&b_0^\prime&b_1^\prime&b_2^\prime\\&1&&\\&&1&d_0^\prime\\ &&&1\end{pmatrix}$ be the conjugate of $B$ by $X$, i.e., $B' = XBX^{-1}$. Thus equating $XB = B'X$ leads us to the $b'_0 = b_0$, $b'_1 = b_1$, $d'_0 = d_0$, and the following equation:
 \begin{center}
 \begin{tabular}{ccc}
 $x_1d_0+b_2=z_0b_1^\prime+b_2^\prime$\\
 \end{tabular}
 \end{center}
 We use these to simplify $B$ to the branches mentioned in the statement of the proposition.
\end{proof}
\begin{prop}\label{TU4R}
 A matrix of the $R_1$ type has $q^4$ branches of type $R_1$ and A matrix of the $R_2$ type has $q^3$ branches of type $R_2.$
\end{prop}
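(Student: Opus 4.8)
The plan is to treat $R_1$ and $R_2$ exactly as the $\Reg$ types were handled for $GT_2(\Fq)$, $GT_3(\Fq)$ and $UT_3(\Fq)$, namely by exploiting that both are regular types whose centralizers in $UT_4(\Fq)$ are commutative. The entire argument is the degenerate one: for a regular element, adjoining any commuting matrix neither enlarges nor shrinks the abelian centralizer, and the conjugation action that defines branches is trivial. So I would first record the relevant centralizers from the conjugacy class data in Appendix~\ref{CCUT45}, writing $Z = \ZU{4}(A)$ for a representative $A$ of the type under consideration; this $Z$ is a commutative subgroup of order $q^4$ when $A$ is of type $R_1$ and of order $q^3$ when $A$ is of type $R_2$.

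Next I would check that the type is preserved under branching. Every $B \in Z$ commutes with $A$, so the pairs $(A,B)$ with $B$ ranging over $Z$ are precisely the commuting extensions of $A$. Since $Z$ is abelian, every element of $Z$ also commutes with $B$, giving $Z \subseteq \ZU{4}(B)$ and hence $\ZU{4}(A,B) = \ZU{4}(A) \cap \ZU{4}(B) = Z$. Thus the common centralizer of the pair coincides with that of $A$ alone, so $(A,B)$ is again of the same regular type; no new type, and no other existing type, can occur. This is why the last two columns of $B_{UT_4(\Fq)}$ have a single nonzero entry on the diagonal.

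Finally I would count the branches. A branch of $A$ is a simultaneous conjugacy class of a pair $(A,B)$, $B \in Z$, under the conjugation action of $Z$; but for $g \in Z$ we have $gAg^{-1} = A$ and, since $Z$ is abelian, $gBg^{-1} = B$, so the action is trivial and each of the $|Z|$ elements $B$ is its own branch. This yields $q^4$ branches of type $R_1$ and $q^3$ branches of type $R_2$, as claimed. There is essentially no obstacle here—the only input external to the statement is the pair of centralizer orders, and the sole computation is the routine verification that the displayed centralizer groups in Appendix~\ref{CCUT45} have $q^4$ and $q^3$ free entries respectively.
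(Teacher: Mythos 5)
Your proposal is correct and is essentially the paper's own argument, just spelled out in full: the paper's proof simply notes that $R_1$ and $R_2$ are regular types with commutative centralizers (of orders $q^4$ and $q^3$ respectively), so that the conjugation action on the centralizer is trivial and each element is its own branch. Nothing in your more detailed justification departs from that reasoning.
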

\begin{proof}
 The type $R_1$ and $R_2$ are $\Reg$ types, hence the centralizer of matrices of such a type is a commutative.
\end{proof}
\begin{proof}[Proof of Theorem~\ref{TheoremUT4}]
 From the data in Propositions~\ref{TU4A1} to~\ref{TU4R}, we summarize the branching rules for $UT_4$, as in the table described in the theorem.
\end{proof}

Here are some isomorphisms between centralizers of matrices of the same $z$-class for some $z$-classes in $UT_4(\Fq)$.

\begin{prop}\label{Isomorphism_in_type_A1}
 The centralizer of conjugacy classes with representative $\begin{pmatrix} 1&&a& \\&1&&\\&&1&\\&&&1\end{pmatrix}$ and  $\begin{pmatrix} 1&&& \\&1&&a\\&&1&\\&&&1\end{pmatrix},$ for $a\in \Fq^*$ are isomorphic.
\end{prop}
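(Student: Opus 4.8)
The plan is to deduce the isomorphism from the antidiagonal (``flip'') symmetry of $UT_4(\Fq)$, rather than by computing both centralizers from scratch. Write $A=\begin{psmallmatrix}1&&a&\\&1&&\\&&1&\\&&&1\end{psmallmatrix}$ and $A'=\begin{psmallmatrix}1&&&\\&1&&a\\&&1&\\&&&1\end{psmallmatrix}$, so that $A$ carries $a$ in position $(1,3)$ and $A'$ carries $a$ in position $(2,4)$. Let $J=\begin{psmallmatrix}&&&1\\&&1&\\&1&&\\1&&&\end{psmallmatrix}$ be the antidiagonal permutation matrix and define $\psi(M)=JM^{T}J$. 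First I would record that $\psi$ is an anti-automorphism of $GL_4(\Fq)$: transposition reverses products, conjugation by the involution $J$ (note $J^2=I_4$) is an automorphism, so $\psi(MN)=\psi(N)\psi(M)$. The one point needing a short check is that $\psi$ stabilizes $UT_4(\Fq)$: for $M$ upper unitriangular, $M^{T}$ is lower unitriangular and conjugation by $J$ reflects the entries across the antidiagonal, returning an upper unitriangular matrix. Concretely one has the entry formula $\psi(M)_{i,j}=M_{5-j,\,5-i}$, so the diagonal $1$'s are preserved and the strictly upper part is mapped to the strictly upper part.

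The key computation is exactly this entry formula: it shows that $\psi$ interchanges the positions $(i,j)$ and $(5-j,5-i)$, and in particular swaps $(1,3)$ with $(2,4)$. Hence $\psi(A)=A'$ on the nose, with the same scalar $a$, since neither transposition nor conjugation by $J$ introduces signs. Because $\psi$ is a bijective anti-automorphism with $\psi(A)=A'$, we have $XA=AX \iff \psi(A)\psi(X)=\psi(X)\psi(A)$, so $\psi$ restricts to an anti-isomorphism $\ZU{4}(A)\to\ZU{4}(A')$. Finally, any group is anti-isomorphic to itself via $g\mapsto g^{-1}$, so an anti-isomorphism of groups composes with inversion to give an honest isomorphism; therefore $\ZU{4}(A)\cong\ZU{4}(A')$, which is the assertion.

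If one prefers a genuine automorphism and wishes to bypass the anti-isomorphism step, I would instead use $\theta(M)=J(M^{-1})^{T}J=\psi(M^{-1})$, which is an automorphism of $UT_4(\Fq)$ satisfying $\theta(A)=\psi(A)^{-1}=(A')^{-1}$, i.e.\ the matrix $A'$ with $a$ replaced by $-a$. Since the centralizer in $UT_4(\Fq)$ of such a matrix is visibly independent of the nonzero scalar sitting in position $(2,4)$, the automorphism $\theta$ maps $\ZU{4}(A)$ isomorphically onto $\ZU{4}(A')$ directly. I do not expect a genuine obstacle here: the whole argument is a formal symmetry, and the only thing demanding care is the bookkeeping in the formula $\psi(M)_{i,j}=M_{5-j,5-i}$ together with the verification that $\psi$ (equivalently $\theta$) stabilizes $UT_4(\Fq)$. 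As a fallback, both centralizers can be computed by hand—each consists of the elements of $UT_4(\Fq)$ annihilating a single superdiagonal entry (the $(3,4)$ entry for $\ZU{4}(A)$ and the $(1,2)$ entry for $\ZU{4}(A')$, the latter already recorded in Proposition~\ref{TU4A1}), both of order $q^5$—and a coordinate-matching isomorphism can be written down explicitly; but the flip argument is cleaner and conceptually explains the coincidence.
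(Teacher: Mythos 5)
Your proof is correct, and it takes a genuinely different route from the paper. The paper simply writes down both centralizers in coordinates (the subgroup of $UT_4(\Fq)$ with vanishing $(3,4)$ entry for the first representative, and with vanishing $(1,2)$ entry for the second) and then exhibits an explicit coordinate map
$\begin{psmallmatrix} 1&&x_1&x_2 \\&1&y_0&y_1\\&&1&z_0\\&&&1\end{psmallmatrix} \mapsto \begin{psmallmatrix} 1&-z_0&y_1-z_0y_0&x_2-x_1z_0 \\&1&y_0&x_1\\&&1&\\&&&1\end{psmallmatrix}$,
leaving the reader to verify that it is multiplicative; that explicit map is in fact exactly your $\theta(M)=J(M^{-1})^TJ$ written out in coordinates, so your argument explains where the paper's formula comes from. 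Your steps all check out: $\psi(M)=JM^TJ$ satisfies $\psi(M)_{i,j}=M_{5-j,5-i}$, hence preserves $UT_4(\Fq)$ and sends $I+aE_{13}$ to $I+aE_{24}$; it is an involutive anti-automorphism, so it restricts to an anti-isomorphism $\ZU{4}(A)\to\ZU{4}(A')$, and post-composing with inversion (or using $\theta$ directly, together with $\ZU{4}((A')^{-1})=\ZU{4}(A')$) yields an isomorphism. What your approach buys is conceptual clarity and reusability — the same flip symmetry immediately handles all the other "mirror pairs" of representatives in $UT_n(\Fq)$ (e.g.\ several of the pairs appearing in Proposition~\ref{Isomorphism_in_type_A3}), and it removes the unverified multiplicativity of the paper's ad hoc formula. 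What the paper's approach buys is an explicit isomorphism in the coordinates actually used elsewhere in the branching computations, with no need to introduce the outer (anti-)automorphism.
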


\begin{proof}
 The centralizer of conjugacy class with representative $\begin{pmatrix} 1&&a& \\&1&&\\&&1&\\&&&1\end{pmatrix}$ is

 $\left\{\begin{pmatrix} 1&x_0&x_1&x_2 \\&1&y_0&y_1\\&&1&\\&&&1\end{pmatrix} \mid x_i, y_i\in \Fq \right\}.$
 
 The centralizer of conjugacy class with representative  $\begin{pmatrix} 1&&& \\&1&&a\\&&1&\\&&&1\end{pmatrix}$ is 
 
 $\left\{  \begin{pmatrix} 1&&x_1&x_2 \\&1&y_0&y_1\\&&1&z_0\\&&&1\end{pmatrix} \mid x_i, y_i, z_0\in \Fq \right\}.$

 The following map gives isomorphism between these two centralizers.

 $ \begin{pmatrix} 1&&x_1&x_2 \\&1&y_0&y_1\\&&1&z_0\\&&&1\end{pmatrix} \mapsto \begin{pmatrix} 1&-z_0&y_1-z_0y_0&x_2-x_1z_0 \\&1&y_0&x_1\\&&1&\\&&&1\end{pmatrix}$
\end{proof}

\begin{prop}\label{Isomorphism_in_type_A3}
 The centralizers of all conjugacy classes of type $A_3$ are isomorphic.
\end{prop}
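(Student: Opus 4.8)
The plan is to show that every type-$A_3$ centralizer is isomorphic to a single model group, which I take to be $G_0 := \ZU{4}(A_0)$ for $A_0 = I_4 + E_{12}$, computed in Proposition~\ref{TU4A3} as $G_0 = \left\{\begin{psmallmatrix}1&x_0&x_1&x_2\\&1&0&0\\&&1&z_0\\&&&1\end{psmallmatrix}\right\}$; here and below $E_{ij}$ denotes the matrix unit with a single $1$ in position $(i,j)$. First I would record, from Appendix~\ref{CCUT45}, representatives of the type-$A_3$ classes: up to conjugacy these are $I_4 + aE_{12} + eE_{24}$ and $I_4 + fE_{34} + eE_{13}$ (one nonzero superdiagonal entry), and $I_4 + aE_{12} + fE_{34} + eE_{24}$ (two nonzero superdiagonal entries, with the $(2,3)$-entry zero), where $a,f \in \Fq^*$ and $e \in \Fq$; their total count is $2q(q-1) + q(q-1)^2 = q(q^2-1)$, matching the first column of $B_{UT_4(\Fq)}$.

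For each representative $A = I_4 + N$ with $N$ strictly upper triangular, the condition $XA = AX$ for $X = I_4 + M$ reduces to the linear equation $MN = NM$ on the strictly-upper part $M$, which I would solve directly as in Proposition~\ref{TU4A3}. In each case this cuts out a subgroup of order $q^4$; for instance for $I_4 + aE_{12} + fE_{34} + eE_{24}$ one gets $\{\,m_{23}=0,\ a\,m_{24} = f\,m_{13}+e\,m_{12}\,\}$. The key point is that all of these centralizers carry the same internal structure: the center is $2$-dimensional (order $q^2$), the commutator subgroup is the central one-parameter group $\langle I_4+E_{14}\rangle$, and one can choose generators $p,u,w$ together with $v = I_4+E_{14}$ so that $p$ and $v$ are central and $[u,w]=v$ is the only nontrivial relation. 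This is exactly a presentation of $H_q \times \Fq$, where $H_q$ is the Heisenberg group of order $q^3$ (generated by $u,w$ with center $v$) and the extra central factor is $\langle p\rangle$; the same holds for $G_0$ with $u = I_4+E_{13}$, $w = I_4+E_{34}$, $p = I_4+E_{12}$. Hence every type-$A_3$ centralizer is isomorphic to $G_0$, and an explicit isomorphism is given by the linear change of off-diagonal coordinates carrying the straight generators of $G_0$ to these (possibly twisted) generators, in the spirit of the map in Proposition~\ref{Isomorphism_in_type_A1}.

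The number of cases can be cut down by the anti-transpose automorphism of $UT_4(\Fq)$ that reverses the superdiagonal (conjugation of $(g^{-1})^{T}$ by the antidiagonal permutation matrix), which interchanges $E_{12}\leftrightarrow E_{34}$ and $E_{13}\leftrightarrow E_{24}$; it sends the $I_4 + fE_{34}+eE_{13}$ family to the $I_4 + aE_{12}+eE_{24}$ family and identifies their centralizers, so only two families need explicit treatment. The main obstacle is the family with two nonzero superdiagonal entries: there the natural centralizer generators are twisted combinations such as $u = I_4 + E_{13} + (f/a)E_{24}$ and $p = I_4 + E_{12} + (e/a)E_{24}$ rather than single matrix units, so the isomorphism to $G_0$ is a genuine linear change of coordinates and not merely a relabeling of matrix positions, and one must check it respects the nonabelian product. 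This is dissolved by the structural observation above: since the commutator pairing on the quotient of the centralizer by its center is in every case a nondegenerate alternating form valued in $\langle I_4+E_{14}\rangle$, any such group of order $q^4$ is forced to be $H_q\times\Fq$, which yields the isomorphism uniformly and reduces the verification of the explicit coordinate change to a routine check that $[u,w]=I_4+E_{14}$.
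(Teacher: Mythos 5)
Your route is genuinely different from the paper's and, with two repairs, it works. The paper proves the proposition by brute force: it tabulates the centralizers of all six canonical representatives of type $A_3$ and exhibits five explicit isomorphisms chaining them together (some requiring the quadratic correction $x_2 \mapsto x_2 \pm \tfrac{z_0(z_0-1)}{2}\lambda$, hence odd characteristic). You instead identify a single abstract isomorphism type: each centralizer is a group of order $q^4$, of class $2$, with centre of order $q^2$, derived subgroup the one-parameter group $\{I_4+tE_{14}\}$, and nondegenerate alternating commutator pairing on the quotient by the centre, hence isomorphic to $H_q\times\Fq$. The stated invariants are indeed correct for all six families (in each case the radical of the commutator form on the three free off-diagonal coordinates other than the $(1,4)$ entry is exactly one-dimensional), so your approach buys uniformity at the cost of a little abstract group theory, where the paper's buys completely explicit maps.

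Two points need attention. First, your illustrative generators for the family $I_4+aE_{12}+fE_{34}+eE_{24}$ are wrong: one computes $(E_{12}+(e/a)E_{24})(E_{13}+(f/a)E_{24}) = (f/a)E_{14}$ while the reverse product vanishes, so your $p = I_4+E_{12}+(e/a)E_{24}$ is \emph{not} central in that centralizer; the centre is spanned by $E_{14}$ together with $E_{12}+(e/a)E_{24}+(f/a)E_{34}$. This does not damage the structural argument you fall back on, but the ``explicit linear change of coordinates'' as written would not be an isomorphism. Second, the claim that \emph{any} group of order $q^4$ with centre of order $q^2$ and nondegenerate commutator form is $H_q\times\Fq$ is too strong as stated: for $q=p^k$ with $k>1$ one must use that the form is $\Fq$-bilinear (a nondegenerate $\F_p$-bilinear pairing $\F_p^{2k}\times\F_p^{2k}\to\F_p^k$ need not be equivalent to the $\Fq$-hyperbolic one), and one must also know the group has exponent $p$. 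Both hypotheses hold here --- the commutator $MM'-M'M$ is $\Fq$-bilinear in the matrix entries, and $(I_4+M)^p=I_4$ because the vanishing of the $(2,3)$ entry forces $M^3=0$ and $q$ is odd --- so once you state and check them, the classification step (for instance via the truncated Baker--Campbell--Hausdorff product $M\circ M' = M+M'+\tfrac12[M,M']$, again using odd characteristic) goes through.
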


\begin{proof}
 There are six conjugacy classes of type $A_3.$ In the following table, we give the centralizer of these conjugacy classes. We also set a notation for these conjugcay classes which will be used later in this proof.

 \begin{tabular}{|c|c|c|}\hline
 Class Representative & Centralizer in $UT_4(\Fq)$ & Name of Conjugacy class\\ \hline
 $\begin{matrix}\begin{psmallmatrix} 1&a&&\\&1&&\\&&1&\\&&&1
 \end{psmallmatrix}, a \in \Fq^* \end{matrix}$ & $\left\{\left(\begin{smallmatrix}1 & x_0&x_1&x_2\\&1&&\\&&1&z_0\\&&&1\end{smallmatrix}\right)\mid x_i,z_0 \in \Fq\right\}$  & $A_{3_1}$ \\ \hline
 $\begin{matrix}\begin{psmallmatrix} 1&&&\\&1&&\\&&1&a\\&&&1
 \end{psmallmatrix}, a \in \Fq^* \end{matrix}$ & $\left\{\left(\begin{smallmatrix}1 & x_0&&x_2\\&1&&y_1\\&&1&z_0\\&&&1\end{smallmatrix}\right)\mid x_i,y_1,z_0 \in \Fq\right\}$  & $A_{3_2}$ \\ \hline
 $\begin{matrix}\begin{psmallmatrix} 1&a&&\\&1&&\\&&1&b\\&&&1
 \end{psmallmatrix}, a,b \in \Fq^* \end{matrix}$ & $\left\{\left(\begin{smallmatrix}1 & x_0&\frac{a}{b}y_1&x_2\\&1&&y_1\\&&1&z_0\\&&&1\end{smallmatrix}\right)\mid x_i,y_1,z_0 \in \Fq\right\}$  & $A_{3_3}$ \\ \hline
 \end{tabular}
 
 \begin{tabular}{|c|c|c|}\hline
 Class Representative & Centralizer in $UT_4(\Fq)$ & Name of Conjugacy class\\ \hline
 $\begin{matrix}\begin{psmallmatrix} 1&a&&\\&1&&b\\&&1&\\&&&1
 \end{psmallmatrix}, a,b\in \Fq^* \end{matrix}$ & $\left\{\left(\begin{smallmatrix}1 & x_0&x_1&x_2\\&1&&\frac{b}{a}x_0\\&&1&z_0\\&&&1\end{smallmatrix}\right)\mid x_i,z_0 \in \Fq\right\}$  & $A_{3_4}$ \\ \hline
 $\begin{matrix}\begin{psmallmatrix} 1&&a&\\&1&&\\&&1&b\\&&&1
 \end{psmallmatrix}, a,b \in \Fq^* \end{matrix}$ & $\left\{\left(\begin{smallmatrix}1 & x_0&x_1&x_2\\&1&&y_1\\&&1&\frac{b}{a}x_1\\&&&1\end{smallmatrix}\right)\mid x_i,y_1\in \Fq\right\}$  & $A_{3_5}$ \\ \hline
 $\begin{matrix}\begin{psmallmatrix} 1&a&b&\\ &1&&\\ &&1&c\\ &&&1
 \end{psmallmatrix}, a,b,c \in \Fq^* \end{matrix}$ & $\left\{\left(\begin{smallmatrix}1 & x_0&x_1&x_2\\&1&&y_1\\&&1&\frac{c}{b}x_1-\frac{a}{b}y_1\\&&&1\end{smallmatrix}\right)\mid x_i,y_1 \in \Fq\right\}$  & $A_{3_6}$ \\ \hline
 \end{tabular}
 \begin{enumerate}
  \item The following map gives isomorphism between centralizers of representative of conjugacy classes $A_{3_1}$ and $A_{3_2}.$ 
  
 $ \begin{pmatrix} 1&x_0&&x_2 \\&1&&y_1\\&&1&z_0\\&&&1\end{pmatrix} \mapsto \begin{pmatrix} 1&z_0&y_1&x_2-y_1x_0 \\&1&&\\&&1&-x_0\\&&&1\end{pmatrix}$
 \item The following map gives isomorphism between centralizers of representative of conjugacy classes $A_{3_1}$ and $A_{3_4}.$ 
 
 $ \begin{pmatrix} 1&x_0&x_1&x_2 \\&1&&\\&&1&z_0\\&&&1\end{pmatrix} \mapsto \begin{pmatrix} 1&x_0&x_1&x_2-\left(\frac{x_0(x_0-1)}{2}\right)\lambda \\&1&&\lambda x_0\\&&1&z_0\\&&&1\end{pmatrix}$
 
 \item The following map gives isomorphism between centralizers of representative of conjugacy classes $A_{3_2}$ and $A_{3_5}.$
 
 $ \begin{pmatrix} 1&x_0&&x_2 \\&1&&y_1\\&&1&z_0\\&&&1\end{pmatrix} \mapsto \begin{pmatrix} 1&x_0&\lambda z_0&x_2+\left( \frac{z_0(z_0-1)}{2}\right)\lambda \\&1&&y_1\\&&1&z_0\\&&&1\end{pmatrix}$
 
  \item The following map gives isomorphism between centralizers of representative of conjugacy classes $A_{3_2}$ and $A_{3_3}.$ 
  
 $ \begin{pmatrix} 1&x_0&&x_2 \\&1&&y_1\\&&1&z_0\\&&&1\end{pmatrix} \mapsto \begin{pmatrix} 1&x_0+\lambda z_0&\lambda y_1&x_2+\lambda y_1z_0 \\&1&&y_1\\&&1&z_0\\&&&1\end{pmatrix}$
 
  \item The following map gives isomorphism between centralizers of representative of conjugacy classes $A_{3_2}$ and $A_{3_6}.$

 $ \begin{pmatrix} 1&x_0&&x_2 \\&1&&y_1\\&&1&z_0\\&&&1\end{pmatrix} \mapsto \begin{pmatrix} 1&x_0+\lambda_2z_0&\lambda_1 z_0+\lambda_2y_1&x_2+\lambda_2y_1z_0+\left( \frac{z_0(z_0-1)}{2}\right)\lambda_1 \\&1&&y_1\\&&1&z_0\\&&&1\end{pmatrix}$
 
 \end{enumerate}
\end{proof}

\section{Branching rules for $UT_5(\Fq)$}
In this section, we will discuss the simultaneous conjugacy classes of tuples of commuting matrices of $UT_{5}(\Fq)$. The types are listed in Section~\ref{CCUT45}. 
The branching matrix is as follows:   
\begin{theorem}\label{TheoremUT5}
The branching rule of $UT_5(\Fq)$ has $3$ new types. The branching matrix $B_{UT_5(\Fq)}$ is in table~\ref{TUT5} which is a $20\times 20$ matrix. 
\end{theorem}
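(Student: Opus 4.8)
The plan is to carry over the centralizer--conjugation method used for $GT_3(\Fq)$, $GT_4(\Fq)$ and $UT_4(\Fq)$ and to apply it type by type to the conjugacy class types of $UT_5(\Fq)$ listed in Appendix~\ref{CCUT45}. First I would clear away the two structurally trivial families. For the central type $C$ the centralizer is all of $UT_5(\Fq)$, so every type occurs as a branch with multiplicity equal to its number of conjugacy classes in $UT_5(\Fq)$, which reads off the whole first column directly from the class counts in the appendix. For each $\Reg$ type the centralizer is commutative, so the only branch is the type itself and the number of branches equals the centralizer order $|\ZU{5}(A)|$; this fills the diagonal regular columns, exactly as in Proposition~\ref{TU4R}.

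The core of the argument is the set of intermediate (neither central nor regular) types. For each such type I would fix a canonical representative $A$, write $\ZU{5}(A)$ explicitly as a parametrized matrix subgroup, take a generic $B\in \ZU{5}(A)$ together with its conjugate $B'=XBX^{-1}$ by a generic $X\in \ZU{5}(A)$, and then read off the scalar relations forced by $XB=B'X$. Using the free parameters of $X$ I would normalize $B$ into a short list of canonical shapes, determine $\ZU{5}(A,B)$ and hence the type of $(A,B)$ in each case, and sum the counts over the sub-cases to obtain the corresponding column of the branching matrix; this is precisely the bookkeeping carried out in Propositions~\ref{TU4A1}--\ref{TU4A3} and, one dimension higher, in the $A$-type propositions for $GT_4(\Fq)$. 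Two devices keep the list of cases finite and the work honest: the explicit centralizer isomorphisms of the kind established in Propositions~\ref{Isomorphism_in_type_A1} and~\ref{Isomorphism_in_type_A3}, which let me analyze a single representative per type even when a type splits into several conjugacy classes, and the observation that many $UT_5(\Fq)$ representatives are supported only on a few superdiagonals, which makes most of the relations $XB=B'X$ degenerate and collapses the case split.

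The genuinely new content is the detection and treatment of the three new types announced in the statement. A new type is recorded whenever the joint centralizer $\ZU{5}(A,B)$ produced by a reduction is isomorphic to none of the centralizers already on the list; verifying non-isomorphism is the delicate point, since one must compare a concrete matrix subgroup against every known centralizer up to abstract isomorphism rather than merely up to conjugacy. Once each new type and its centralizer are isolated I would run the same conjugation procedure on a pair $(A,B)$ realizing it to obtain its column of the matrix (its self-branches and its branches into the $\Reg$ types), in the same manner as the new types of $GT_4(\Fq)$ were handled in Propositions~\ref{BrtNT1}, \ref{BrT4N2}, \ref{BRT4N3} and~\ref{BRT4N4}. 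Collecting all columns yields the claimed $20\times 20$ matrix in Table~\ref{TUT5}.

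I expect the main obstacle to be the combinatorial size rather than any conceptual difficulty: $UT_5(\Fq)$ is a $10$-dimensional unipotent group, so the largest centralizers carry many parameters and the system $XB=B'X$ couples them enough that the case analysis for the heaviest intermediate type branches repeatedly before stabilizing, and it is exactly there that the three new types surface. The safeguard I would lean on throughout is the identity that the sum of each column equals the number of conjugacy classes of the centralizer of the corresponding representative (for a $\Reg$ type this degenerates to the centralizer order); checking this column-sum for every type, and cross-checking the first column against the total class count of $UT_5(\Fq)$, is the internal consistency test that will catch any miscount in the larger cases.
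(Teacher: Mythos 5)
Your proposal follows essentially the same route as the paper: the first column is read off from the class counts (central type), the regular columns are diagonal with entries equal to the abelian centralizer orders, and every intermediate type is handled by fixing a canonical representative $A$, writing $\ZU{5}(A)$ explicitly, normalizing a generic $B$ under conjugation by $X\in\ZU{5}(A)$, and classifying the joint centralizers case by case, with the three new types $UNT_1$, $UNT_2$, $UNT_3$ recorded exactly when $\ZU{5}(A,B)$ matches no previously seen centralizer up to isomorphism. The column-sum consistency check you propose is a sensible safeguard the paper does not state explicitly, but the overall argument is the same.
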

Once again it's easy to see the branches for central and regular types.

\begin{landscape}
\begin{table}
\caption{Branching matrix of $UT_5(\Fq)$}
\label{TUT5}
\begin{center}
 $$    \left(\begin{smallmatrix}C & A_1& A_2 &A_3&A_4&A_5&B_1&B_2&B_3&B_4&B_5&B_6&D_1&D_2&R_1&R_2&R_3& UNT_1& UNT_2& UNT_3\\ 
      &&&&&&&&&&&&&&&&&&& \\                  
\hline
     &&&&&&&&&&&&&&&&&&& \\          
 q & 0& 0 & 0 & 0 & 0 & 0 & 0 & 0 & 0 & 0 & 0 & 0 & 0 & 0 & 0 & 0 & 0 & 0 & 0\\
 2(q-1) & q^2 & 0 & 0 & 0 & 0 & 0 & 0 & 0 & 0 & 0 & 0 & 0 & 0 & 0 & 0 & 0 & 0 & 0 & 0\\
 q^2-q & q(q^2-1)& q^4 &0 &0 &0 &3q^2-3q &0 &0 &0 &0 &0 &0 &0 &0 &0 &0 & 0& 0& 0\\
 2q^2-2q & 0& 0 &q^2& 0& 0& 0 &0 &0 &0 &0 &0 &0 &0 &0 &0 &0 & 0& 0& 0\\
 2q^2-2q & 2q^2(q-1) & 0 &q(q^2-1)& q^4& 0& 0& q^3-q& 0& 0& 0& 0& 0& 0& 0& 0& 0& 0& 0& 0\\
 \begin{smallmatrix}(q^2-1).\\(2q-1)\end{smallmatrix} & 0& 0 &0 &0 &q^2& 0& 0& 0& 0& 0& 0& 0& 0& 0& 0& 0& 0& 0& 0\\
 (q-1)^2 & 0& 0 & 0 & 0 & 0 & q^2 & 0 & 0 & 0 & 0 & 0 & 0 & 0 & 0 & 0 & 0 & 0 & 0 & 0\\
 2q^2-2q & 0& 0 &0 & 0& 0 &0 & q^2 &0 & 0& 0 & 0 & 0 & 0 & 0 & 0 & 0 & 0 &0 & 0\\
 2(q-1)^2 & q^2(q-1)& 0 &q^2(q-1)& 0 & 0 & 0 & 0 & q^3 &0 & 0 &  0 & 0 & 0 & 0 & 0 & 0 & 0& 0& 0\\
 \begin{smallmatrix}(2q^2+4).\\(q-1)^2\end{smallmatrix} & \begin{smallmatrix} q(q-1).\\(q^3+q^2-1)\end{smallmatrix}& 0 & 0 & 0 & 2q(q-1) & 0 & q^2-q & 0 & q^3 & 0 & 0 & 2q(q-1) & 0 & 0 & 0 & 0 & 0& 0& 0\\
 q(q-1)^2 & 0& 0 &q^3(q-1) &0 &  0 & 0 & 0 & 0 & 0 & q^2& 0 & 0 & 0 &0  & 0 & 0 & 0& 0& 0\\
 2q(q-1)^2 & 0& 0 & 0 & 0 &q^2(q-1) &q^2(q-1) &q^4-q^3 &0 &0 & \begin{smallmatrix} (q^3+q).\\(q^2-1)\end{smallmatrix} &q^3 &0 & 0 & 0 & 0 &0 & 0& 0& 0\\
 (q-1)^3 & 0& 0 & 0&  0&  0& 0  & 0&  0&  0 & 0 & 0& q^2& 0&  0& 0  &0  &  0 & 0& 0\\
 \begin{smallmatrix}(2q+1).\\(q-1)^3\end{smallmatrix} & 0& 0 &0 & 0 & 0 & 0 & 0 & 0 &0  &0 &  0&  0& q^3&  0&  0 & 0 & 0& 0& 0\\
 {\color{blue}2(q-1)^2} & {\color{blue}2q^3-2q^2}& {\color{blue}2q^4-2q^2} &{\color{blue}\begin{smallmatrix} q(q-1).\\(q^2+q-1)\end{smallmatrix}}&0&0& {\color{blue}2q^3-4q+2}& {\color{blue}\begin{smallmatrix}(q^2-q).\\(q^2+q-1)\end{smallmatrix}}& {\color{blue}\begin{smallmatrix} q^2(q-1).\\(q^2+q-1)\end{smallmatrix}}&0 &0 & 0 & 0 & 0& {\color{blue}q^6}& 0&  0& 0& {\color{blue}q^5-q^2}& 0\\
 {\color{blue}q(q-1)^2} & {\color{blue}\begin{smallmatrix} q(q-1)^2.\\(q+1)\end{smallmatrix}}& {\color{blue}q(q^2-1)^2}&{\color{blue}\begin{smallmatrix}q(q^2-1).\\(q-1)\end{smallmatrix}}&{\color{blue}q^4(q-1)} &{\color{blue}q^2(q-1)} &{\color{blue}\begin{smallmatrix} q(q-1)^2.\\(q+2)\end{smallmatrix}} &{\color{blue}\begin{smallmatrix}(q-1).\\(q^3-q)\end{smallmatrix}}&0 &{\color{blue}q^4-q^2} &0 &0 &0 &0 &0 &{\color{blue}q^5} &0 & {\color{blue}q^4-q^2}& {\color{blue}q^4-q^3}& {\color{blue}q^4-q^2}\\
 {\color{blue}\begin{smallmatrix}(q^2-1).\\(q-1)^2\end{smallmatrix}} & {\color{blue}q^2(q-1)^2} & 0 &{\color{blue}q^2(q-1)^2} &{\color{blue}q^3(q^2-1)} &{\color{blue}\begin{smallmatrix} q(q-1).\\(q^2-1)\end{smallmatrix}}& {\color{blue}\begin{smallmatrix} (q-1)^2.\\(q^2+q+1)\end{smallmatrix}}&{\color{blue}q^2(q-1)^2} &{\color{blue}q^3(q-1)} &0 &{\color{blue}q^4-q^3} &{\color{blue}q^2(q^2-1)} &{\color{blue}q^2(q^2-1)} &{\color{blue}q^2(q^2-1)} &0 &0 &{\color{blue}q^4} & {\color{blue}q^4-q^3} & {\color{blue}q^4-q^3}& {\color{blue}q^4-q^3}\\
0& 0& 0 &{\color{red}q^2(q-1)}& 0 &{\color{red}q(q-1)^2} &{\color{red}(q-1)^2} &{\color{red}q(q-1)^2} &0 &0 &0 &0 &0 &0 &0 &0 &0 & {\color{red}q^3}& 0 & 0\\
0 & 0& 0 &0 &0 &0 & {\color{red}2q(q-1)} &{\color{red}q^3-q^2} &0 &0 &0 &0 &0 &0 &0 & 0& 0& 0& {\color{red}q^3}& 0\\
0 & 0& 0 &0 &0 &0 & {\color{red}(q-1)^3} &0 &0 &0 &0 &0 & {\color{red}q(q-1)^2} &0 &0 &0 & 0& 0& 0& {\color{red}q^3}\\
\end{smallmatrix}\right) $$
\end{center}
\end{table}
\end{landscape}

\subsection{Branching of type $A$}
\begin{prop}
 An upper unitriangular matrix of type $A_1$ has the following branches:
 \begin{center}

 \end{center}
 We consider two cases when $(a_1,b_0,c_0,c_1)={\bf 0}$ and when $(a_1,b_0,c_0,c_1)\neq {\bf 0}.$
 
 {\bf Case: $(a_1,b_0,c_0,c_1)={\bf 0}.$ } In this case, we get $a_2=a_2^\prime,~ a_3=a_3^\prime,~b_1=b_1^\prime$ and $b_2=b_2^\prime.$ Therefore $Z_{UT_5(\Fq)}(A,B)=Z_{UT_5(\Fq)}(A).$ So $(A,B)$ is a branch of type $A_2,$ and there are 
 $q^4$ branches.
 
{\bf Case: $(a_1,b_0,c_0,c_1)\neq{\bf 0}.$ } First we consider that $c_1\neq 0.$ We choose $x_1$ and $y_0$ in such a way that we get $a_3=b_2=0.$ Now if $(a_1,b_0)=(0,0),$ then by simple calculations, we get $Z_{UT_5(\Fq)}(A,B)$ is a commutative group of size $q^6.$  Thus $(A,B)$ is of regular type $R_1,$ and there are $q^3(q-1)$ branches of this type. If we consider that case when at least one of $a_1$ and $b_0$ is non-zero, then we can choose $z_0$ suitably so that we get one of $a_2$ or $b_1$ equal to zero. By routine check, we get that $Z_{UT_5(\Fq)}(A,B)$ is a commutative group of size $q^5.$  Thus $(A,B)$ is of regular type $R_2,$ and there are $(q^3-q^2)(q^2-1)$ branches of this type.

Now we consider that $c_1= 0$ and $c_0\neq 0.$ We choose $x_1$ and $y_0$ in such a way that we get $a_2=b_1=0.$ Now if $(a_1,b_0)=0,$ then by simple calculations, we get $Z_{UT_5(\Fq)}(A,B)$ is a commutative group of size $q^6.$  Thus $(A,B)$ is of regular type $R_1,$ and there are $q^2(q-1)$ branches of this type. If we consider that case when at least one of $a_1$ and $b_0$ is non-zero, then we can choose $z_1$ suitably so that we get one of $a_3$ or $b_2$ equal to zero. By routine check, we get that $Z_{UT_5(\Fq)}(A,B)$ is a commutative group of size $q^5.$  Thus $(A,B)$ is of regular type $R_2,$ and there are $(q^2-q)(q^2-1)$ branches of this type.

Next we consider when $c_1=c_0= 0$ and $b_0\neq 0.$ We choose $z_0$ and $z_1$ in such a way that we get $b_1=b_2=0.$ Now by simple calculations, we get $Z_{UT_5(\Fq)}(A,B)$ is a commutative group of size $q^6.$ Thus $(A,B)$ is of regular type $R_1,$ and there are $q^3(q-1)$ branches of this type.

Finaly we consider when $c_1=c_0=b_0 0$ and $a_1\neq 0.$ We choose $z_0$ and $z_1$ in such a way that we get $a_2=a_3=0.$ Now by simple calculations, we get $Z_{UT_5(\Fq)}(A,B)$ is a commutative group of size $q^6.$ Thus $(A,B)$ is of regular type $R_1,$ and there are $q^2(q-1)$ branches of this type.

Therefore a matrix of type $A_2$ has $q^4$ branches of type $A_2$, $2q^2(q^2-1)$ branches of regular type $R_1,$ and $q(q^2-1)^2$ braches of regular type $R_2.$
 
\end{proof}
\begin{prop}\label{T5A3}
An upper unitriangular matrix of type $A_3$ has 
\begin{center}
\begin{tabular}{c|c||c|c}\hline
 Branch & No. of Branches & Branch & No. of Branches\\ \hline
 $A_3$ & $q^2$ & $R_1$ &$q(q^2+q-1)(q-1)$ \\
 $A_4$ & $q(q^2-1)$ & $R_2$ &$q(q^2-1)(q-1)$\\
 $B_3$ & $q^2(q-1)$ & $R_3$ &$q^2(q-1)^2$\\
 $B_5$ & $q^3(q-1)$ & $UNT_1$ &$q^2(q-1)$. \\ \hline
\end{tabular}
\end{center}
It has a new type branch, named $UNT_1$, with common centralizer $\left\{\begin{psmallmatrix}1&x_0&x_1&\lambda z_0 &x_3\\&1&y_0&&y_2\\&&1&& \\&&&1&z_0\\&&&&1 \end{psmallmatrix}\right\}$. 
\end{prop}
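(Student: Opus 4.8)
The plan is to follow the template already established for types $A_1$ and $A_2$ above, and for $A_1$ in Proposition~\ref{TU4A1}: fix a canonical form, record its centralizer, conjugate a generic commuting matrix, reduce the relation $XB=B'X$ to a short list of scalar equations, and read off the branches one case at a time.

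Concretely, I would take the canonical form of type $A_3$ to be $A=I_5+aE_{13}$ with $a\neq 0$. A direct commutator computation gives
$$\ZU{5}(A)=\left\{\begin{pmatrix}1&x_0&x_1&x_2&x_3\\&1&y_0&y_1&y_2\\&&1&&\\&&&1&z_0\\&&&&1\end{pmatrix}\mid x_i,y_i,z_0\in\Fq\right\},$$
the only constraints being that the $(3,4)$ and $(3,5)$ entries vanish; in particular $|\ZU{5}(A)|=q^8$. I would write a generic $B\in\ZU{5}(A)$ of the same shape, with entries $a_0=B_{12},\,b_0=B_{23},\,a_1=B_{13}$ in the upper-left $3\times3$ block and $a_2=B_{14},\,a_3=B_{15},\,b_1=B_{24},\,b_2=B_{25},\,d_0=B_{45}$ in the tail, conjugate it by a generic $X\in\ZU{5}(A)$, and impose $XB=B'X$. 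The decisive structural observation is that, with respect to the block partition $\{1,2,3\}\cup\{4,5\}$, both $B$ and $X$ are block upper triangular, so the upper-left $3\times3$ blocks satisfy $ZC=C'Z$, where $C$ and $Z$ are the $UT_3(\Fq)$ blocks of $B$ and $X$. Exactly as in Proposition~\ref{TU4A1}, I may therefore take $C$ to be a conjugacy-class representative in $UT_3(\Fq)$ coming from Theorem~\ref{TheoremUT3} (the central forms $I_3$ and $I_3+cE_{13}$, and the regular forms carrying a nonzero super-diagonal entry), and $Z\in Z_{UT_3(\Fq)}(C)$. What then remains is a small system of linear relations among the tail entries $a_2,a_3,b_1,b_2$ and the parameters $x_2,x_3,y_1,y_2,z_0$, together with the invariance of $d_0$.

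The body of the proof is a case analysis organized first by the $UT_3(\Fq)$-type of the block $C$ and then by which tail entries survive. In each leaf I would spend the free parameters to clear entries, bring $B$ to a normal form, and match the resulting joint centralizer $\ZU{5}(A,B)$ against the list of centralizers already recorded: this produces the existing types $A_3$ (when $B$ stays, up to the invariant data, inside $\ZU{5}(A)$), $A_4$, $B_3$, $B_5$, and the three regular types $R_1,R_2,R_3$, the regular leaves being precisely those where the reduced centralizer is commutative. The one genuinely new leaf is that whose joint centralizer is
$$\left\{\begin{pmatrix}1&x_0&x_1&\lambda z_0&x_3\\&1&y_0&&y_2\\&&1&&\\&&&1&z_0\\&&&&1\end{pmatrix}\right\};$$
this group is not commutative (the $(1,2)$ and $(2,3)$ slots generate the $(1,3)$ slot) and is isomorphic to no previously listed centralizer, so it is a new, non-regular type, which I would name $UNT_1$. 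It appears exactly when the reduction forces the linkage $B_{14}=\lambda B_{45}$ together with $B_{24}=0$, the scalar $\lambda$ being the ratio produced by the super-diagonal data of $C$.

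Finally I would tally the eight branch counts and check that they sum to $k(\ZU{5}(A))=4q^4-q^3-3q^2+q$, the number of conjugacy classes of the order-$q^8$ centralizer, which is the total forced by the orbit-counting interpretation of the branching matrix. The main difficulty is not any single computation but the bookkeeping: keeping the various leaves mutually disjoint and exhaustive, and---most delicately---deciding at each leaf whether the reduced joint centralizer is commutative (hence a regular $R$-type), isomorphic to one of the non-regular types $A_4,B_3,B_5$, or genuinely the new type $UNT_1$, since several of these groups have the same order and are separated only by their commutator structure and by the $\lambda$-linkage between the $(1,4)$ and $(4,5)$ entries.
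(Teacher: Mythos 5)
Your setup coincides with the paper's: the same canonical form $A=I_5+aE_{13}$, the same centralizer $\left\{X\in UT_5(\Fq)\mid X_{34}=X_{35}=0\right\}$ of order $q^8$, and the same reduction of $XB=B'X$ to a short list of linear relations among the tail entries. Your block observation (that the upper-left $3\times3$ blocks satisfy $ZC=C'Z$, so $C$ may be taken to be a $UT_3(\Fq)$-class representative and $Z\in Z_{UT_3(\Fq)}(C)$) is correct and is exactly the device the paper uses one proposition earlier for type $A_1$; for $A_3$ itself the paper instead splits directly on which of the conjugation-invariant entries $(a_0,b_0,b_1,d_0)=(B_{12},B_{23},B_{24},B_{45})$ is the first nonzero one. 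The two decompositions cover the same ground, and your closing consistency check (the eight entries sum to $4q^4-q^3-3q^2+q$, the class number of the centralizer) is a genuine safeguard the paper does not state explicitly.

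The gap is that the proposal stops exactly where the proof begins: every branch count and every identification of a joint centralizer with $A_3$, $A_4$, $B_3$, $B_5$, $R_1$, $R_2$, $R_3$ or the new type is asserted rather than derived, and this leaf-by-leaf bookkeeping is the entire content of the proposition. It is also where the real risk lies: for instance the $R_1$ entry $q(q^2+q-1)(q-1)$ is assembled from three separate leaves contributing $q^2(q-1)^2$, $q(q-1)^2$ and $q^2(q-1)$, and distinguishing an $R_1$ leaf (commutative centralizer of order $q^6$) from a $B_4$ or $UNT_1$ leaf (non-commutative of the same order) requires the commutator check in each case. One concrete inaccuracy in your sketch: $UNT_1$ does not come from ``super-diagonal data of $C$.'' It arises in the leaf $a_0=b_0=b_1=0$, $d_0\neq 0$, where $C$ is \emph{central} in $UT_3(\Fq)$; the reduced $B$ is $I_5+a_1E_{13}+a_2E_{14}+d_0E_{45}$, and the scalar $\lambda=a_2/d_0$ enters as the linkage $X_{14}=\lambda X_{45}$ (together with $X_{24}=0$) imposed on the \emph{centralizing} matrices, not as a relation among entries of $C$. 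So the plan is sound and faithful to the paper's method, but as written it does not yet establish the table.
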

\begin{proof}
 Let $A=\begin{pmatrix} 1& &a&&\\ &1&&&\\ &&1&&\\&&&1&&\\&&&&1\end{pmatrix}, a\neq0$ a matrix of type $A_3.$ The  centralizer $Z_{UT_5}(A)$ of $A$ is $\left\{ \begin{pmatrix} 1&x_0  & x_1& x_2&x_3\\ & 1&y_0 &y_1  &y_2 \\ &&1&& \\ &&&1&z_0\\&&&&1\end{pmatrix} ~|~ ,x_i,y_i,w_0\in \Fq\right\}.$ Let $X=\begin{pmatrix} 1&x_0 & x_1& x_2&x_3\\ & 1&y_0 &y_1  &y_2 \\ &&1&& \\ &&&1&w_0\\&&&&1\end{pmatrix}$ be an element of $Z_{UT_5(\Fq)}(A).$ Let $B=\begin{pmatrix} 1&a_0  & a_1& a_2&a_3\\ & 1&b_0&b_1& b_2 \\ &&1&& \\ &&&1&d_0\\&&&&1\end{pmatrix},$ and 
  $B^\prime= \begin{pmatrix} 1&a_0^\prime & a_1^\prime& a_2^\prime&a_3^\prime\\ & 1&b_0^\prime &b_1^\prime&b_2^\prime \\ &&1&& \\ &&&1&d_0^\prime\\&&&&1\end{pmatrix}$ be a conjugate of $B$ by $X.$ Thus equating 
  $XB=B^\prime X$ gives us $a'_0 = a_0$, $b'_0 = b_0$, $b'_1 = b_1$, $d'_0 = d_0$, and the following equations:
  \begin{center}
 \begin{tabular}{c}
 $a_1+x_0b_0=a_0y_0+a_1^\prime$\\
 $a_2+x_0b_1=a_0y_1+a_2^\prime$\\
 $a_3+x_0b_2+x_2d_0=a_0y_2+a_2^\prime z_0+a_3^\prime$\\
 $b_2+y_1d_0=z_0b_1+b_2^\prime$
 \end{tabular}
 \end{center}
 We consider two cases when $(a_0,b_0,b_1,d_0)={\bf 0}$ and when $(a_0,b_0,b_1,d_0)\neq {\bf 0}.$
 
 {\bf Case: $(a_0,b_0,b_1,d_0)={\bf 0}.$ } In this case, we get $a'_1 = a_1$, $a'_2 = a_2$, $b'_2 = b_2$, and  $a_3+x_0b_2=a_2z_0+a_3^\prime$.

 If $(a_2,b_2)=\bf{0},$ then we get $b_3=b_3^\prime.$ Therefore $Z_{UT_5(\Fq)}(A,B)=Z_{UT_5(\Fq)}(A).$ So $(A,B)$ is a branch of type $A_3,$ and there are 
 $q^2$ branches.
 Now we consider that $a_2\neq0.$ In this case, we can choose $w_0$ in such a way that we get $a_3=0.$ By routine check, we get 
 $Z_{UT_5(\Fq)}(A,B)$ is a group of order $q^7$ and $(A,B)$ is the type $B_3$, and there are 
 $q^2(q-1)$ branches.
 
 If we consider $a_2=0$ and $b_2\neq 0,$ choose $x_0$ in such a way that we get $a_3=0.$  By routine check, we get 
 $Z_{UT_5(\Fq)}(A,B)$ is a group of order $q^7$ and $(A,B)$ is a branch of type $A_4,$ and there are 
 $q(q-1)$ branches.
 
 {\bf Case: $(a_0,b_0,b_1,d_0)\neq{\bf 0}.$} First we consider that $a_0\neq 0.$ In this case, we can choose $y_0,y_1$ and $y_2$ in such a way that we get $a_1=a_2=a_3=0$ and $b_2+\frac{d_0b_1}{b_0}x_0=z_0b_1+b_2^\prime.$ Now if $b_1=0,$ then we get $b_2=b_2^\prime.$ By routine check, we get 
 $Z_{UT_5(\Fq)}(A,B)$ is a group of order $q^5$ and $(A,B)$ is a branch of type $B_5,$ and there are 
 $q^3(q-1)$ branches. On the other hand if $b_1\neq0,$ then we choose $z_0$ in such a way that we get $b_2=0$ By routine check, we get 
 $Z_{UT_5(\Fq)}(A,B)$ is a commutative group of order $q^4$ and $(A,B)$ is a branch of regular type $R_3,$ and there are 
 $q^2(q-1)^2$ branches.

 Now we consider that $a_0=0$ and $b_0\neq 0.$ In this case, we can choose $x_0$ in such a way that we get $a_1=0$ and this implies $x_0=0.$ Thus we get $b_2=b_2^\prime$ and the following equalities:
 \begin{tabular}{ccc}
 $a_3+x_2d_0=a_2z_0+a_3^\prime$\\
 $b_2+d_0y_1=z_0b_1+b_2^\prime$
 \end{tabular}
 Now if $(d_0,a_2,b_1)=\bf{0},$ then we get $a_3=a_3^\prime$ and $b_2=b_2^\prime.$ By routine check, we get 
 $Z_{UT_5(\Fq)}(A,B)$ is a group of order $q^7$ and $(A,B)$ is a branch of type $A_4,$ and there are 
 $q^2(q-1)$ branches. If $d_0\neq 0,$ then we choose $x_2$ and $y_1$ in such a way that we get $a_3=b_2=0.$ By routine check, we get 
 $Z_{UT_5(\Fq)}(A,B)$ is a commutative group of order $q^5$ and $(A,B)$ is a branch of regular type $R_2,$ and there are 
 $q^2(q-1)^2$ branches.
 
 If $d_0=0$ and $a_2\neq 0,$ then we choose $w_0$ in such a way that we get $a_3=0$ and this implies $w_0=0.$ Thus we get $b_2=b_2^\prime.$ By routine check, we get 
 $Z_{UT_5(\Fq)}(A,B)$ is a commutative group of order $q^6$ and $(A,B)$ is a branch of regular type $R_1,$ and there are 
 $q^2(q-1)^2$ branches.
 
 If $d_0=a_2=0$ and $b_1\neq 0,$ then  we get $a_3=a_3^\prime$ and we choose $w_0$ in such a way that we get $b_2=0.$ By routine check, we get 
 $Z_{UT_5(\Fq)}(A,B)$ is a commutative group of order $q^6$ and $(A,B)$ is a branch of regular type $R_1,$ and there are 
 $q(q-1)^2$ branches.
 
 Now we consider that $a_0=b_0=0$ and $b_1\neq 0.$ In this case, we can choose $x_0$ and $z_0$ in such a way that we get $a_2=b_2=0.$ In addition to this, if $d_0=0,$ then we get $a_3=a_3^\prime.$ By routine check, we get 
 $Z_{UT_5(\Fq)}(A,B)$ is a commutative group of order $q^6$ and $(A,B)$ is a branch of regular type $R_3,$ and there are 
 $q^2(q-1)$ branches. Now if we consider $d_0\neq 0,$ then we can choose $x_2$ in such a way that we get $a_3=0.$ By routine check, we get 
 $Z_{UT_5(\Fq)}(A,B)$ is a commutative group of order $q^5$ and $(A,B)$ is a branch of regular type $R_2,$ and there are 
 $q(q-1)^2$ branches. 
 
 Finally we consider the case when $a_0=b_0=b_1=0$ and $d_0\neq 0,$ then we get $a_2=a_2^\prime, a_1=a_1^\prime$ and  we can choose $y_1$ and $x_2$ in such a way that we get $a_3=b_2=0.$ By routine check, we get 
 $Z_{UT_5(\Fq)}(A,B)$ is a group of order $q^6$, and $\ZU{5}(A,B) =\left\{\begin{psmallmatrix}1&x_0&x_1&\lambda z_0 &x_3\\&1&y_0&&y_2\\&&1&& \\&&&1&z_0\\&&&&1 \end{psmallmatrix}\right\}$. As we have not seen this centralizer before, and  This $(A,B)$ is a branch of new type, which we call $UNT_1$ and there are $q^2(q-1)$ branches. 
\end{proof}
\begin{prop}\label{T5A4}
 An upper unitriangular matrix of type $A_4$ has $q^4$ branches of type $A_4$, $q^3(q^2-1)$ branches of regular type $R_1$, and $q^4(q-1)$ breaches of regular type $R_2$.
\end{prop}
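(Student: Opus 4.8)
The plan is to follow the method of the earlier $A$-type propositions, e.g. Propositions~\ref{T5A2} and~\ref{T5A3}. Type $A_4$ has two canonical forms, $I_5+aE_{23}$ and $I_5+aE_{34}$ with $a\in\Fq^*$; since these are conjugate in $GL_5(\Fq)$, it suffices to treat one, and I take $A=I_5+aE_{34}$. A direct commutation check shows that commuting with $A$ forces the $(1,3)$, $(2,3)$ and $(4,5)$ entries of an element of $UT_5(\Fq)$ to vanish, so that $\ZU{5}(A)$ consists of all matrices
$$X=\begin{psmallmatrix}1&x_1&0&x_2&x_3\\&1&0&y_1&y_2\\&&1&z_1&z_2\\&&&1&0\\&&&&1\end{psmallmatrix},$$
a group of order $q^7$. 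I would then take a general $B\in\ZU{5}(A)$ with entries $b_1,b_2,b_3,c_1,c_2,d_1,d_2$ in the same seven free slots, put $B'=XBX^{-1}$, and equate $XB=B'X$.

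The step that makes everything transparent is that equating $XB=B'X$ leaves the five entries $b_1,c_1,c_2,d_1,d_2$ \emph{invariant} and changes only the $(1,4)$ and $(1,5)$ entries, according to
$$b_2'=b_2+x_1c_1-b_1y_1,\qquad b_3'=b_3+x_1c_2-b_1y_2.$$
Thus the $\ZU{5}(A)$-orbit of $B$ is the coset of $(b_2,b_3)$ by the image of the linear map $(x_1,y_1,y_2)\mapsto(x_1c_1-b_1y_1,\;x_1c_2-b_1y_2)$, and the entire branching is controlled by the rank of this map, which is $0$, $1$ or $2$.

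I would then split into these three cases. When $b_1=c_1=c_2=0$ the map is zero, $B$ is central in $\ZU{5}(A)$, the orbit is a point, $\ZU{5}(A,B)=\ZU{5}(A)$, and $(A,B)$ is again of type $A_4$; the free choices of $b_2,b_3,d_1,d_2$ give $q^4$ such $B$, hence $q^4$ branches of type $A_4$. When $b_1=0$ but $(c_1,c_2)\neq(0,0)$ the image is the line spanned by $(c_1,c_2)$, so each orbit has size $q$ and $\ZU{5}(A,B)=\{X\in\ZU{5}(A):x_1=0\}$ is abelian of order $q^6$; counting $q^4(q^2-1)$ such $B$ gives $q^3(q^2-1)$ branches of the regular type $R_1$. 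When $b_1\neq0$ the image is all of $\Fq^2$, each orbit has size $q^2$, the relations force $y_1=x_1c_1/b_1$ and $y_2=x_1c_2/b_1$, and the common centralizer is an abelian group of order $q^5$; the $(q-1)q^6$ such $B$ give $q^4(q-1)$ branches of the regular type $R_2$. As a consistency check, $q^4+q^4(q^2-1)+(q-1)q^6=q^7=\lvert\ZU{5}(A)\rvert$, so no class is missed.

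Once the two transformation equations are in hand the computation is entirely routine, so I do not anticipate a serious obstacle. The only points needing care are confirming that the centralizers obtained in the two regular cases really are commutative and of the correct orders $q^6$ and $q^5$, so that they are recorded as $R_1$ and $R_2$ rather than some other regular class: in the case $b_1=0$ the surviving root entries all sit in rows $\{1,2,3\}$ and columns $\{4,5\}$, so every pairwise bracket vanishes and commutativity is immediate, while in the case $b_1\neq0$ the coupling $y_1=x_1c_1/b_1$, $y_2=x_1c_2/b_1$ still yields a commutative group because the $E_{12}$-- and $E_{24},E_{25}$--contributions to the relevant brackets cancel. I expect this last cancellation to be the subtlest bookkeeping point, and the remaining work is simply to note that the argument is verbatim the same for the companion canonical form $I_5+aE_{23}$.
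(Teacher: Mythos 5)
Your proof is correct and follows essentially the same route as the paper's: compute the centralizer of a canonical form of type $A_4$, equate $XB=B'X$ to find that only two entries of $B$ move, and split into cases according to whether the resulting linear map $(x_1,y_1,y_2)\mapsto(x_1c_1-b_1y_1,\,x_1c_2-b_1y_2)$ has rank $0$, $1$ or $2$ (the paper works with the companion form $I_5+aE_{23}$ and phrases the same trichotomy as $(a_2,b_1,d_0)=\mathbf{0}$, then $d_0\neq 0$, then $d_0=0$ with $(a_2,b_1)\neq\mathbf{0}$). Your counts, centralizer orders, commutativity checks and the consistency check $q^4+q^4(q^2-1)+(q-1)q^6=q^7$ all agree with the paper.
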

\begin{proof}
 Let $A=\begin{pmatrix} 1& &&&\\ &1&a&&\\ &&1&&\\&&&1&&\\&&&&1\end{pmatrix}, a\neq0$ a matrix of type $A_4.$ The  centralizer  $Z_{UT_5}(A)$ of $A$ is $\left\{ \begin{pmatrix} 1&  & x_1& x_2&x_3\\ & 1&y_0 &y_1  &y_2 \\ &&1&& \\ &&&1&w_0\\&&&&1\end{pmatrix} ~|~ ,x_i,y_i,w_0\in \Fq\right\}.$ Let $X=\begin{pmatrix} 1& & x_1& x_2&x_3\\ & 1&y_0 &y_1  &y_2 \\ &&1&& \\ &&&1&w_0\\&&&&1\end{pmatrix}$ be an element of $Z_{UT_5(\Fq)}(A).$ Let $B=\begin{pmatrix} 1&  & a_1& a_2&a_3\\ & 1&b_0&b_1& b_2 \\ &&1&& \\ &&&1&d_0\\&&&&1\end{pmatrix},$ and 
  $B^\prime= \begin{pmatrix} 1& & a_1^\prime& a_2^\prime&a_3^\prime\\ & 1&b_0^\prime &b_1^\prime&b_2^\prime \\ &&1&& \\ &&&1&d_0^\prime\\&&&&1\end{pmatrix}$ be a conjugate of $B$ by $X.$ Thus equating 
  $XB=B^\prime X$ gives us  $a_1=a_1^\prime$ $a,_2=a_2^\prime$, $b_0=b_0^\prime$, $b_1=b_1^\prime$,  $d_0=d_0^\prime$ and the following equations:
  \begin{center}
 \begin{tabular}{ccc}
 $a_3+x_2d_0=a_2^\prime w_0+a_3^\prime$\\
 $b_2+ d_0y_1=w_0b_1^\prime+b_2^\prime$
 \end{tabular}
 \end{center}
 We consider two cases when $(a_2,b_1,d_0)={\bf 0}$ and when $(a_2,b_1,d_0)\neq {\bf 0}.$
 
 {\bf Case: $(a_2,b_1,d_0)={\bf 0}.$ } In this case, we get $a_3=a_3^\prime$ and $b_2=b_2^\prime.$ Therefore $Z_{UT_5(\Fq)}(A,B)=Z_{UT_5(\Fq)}(A).$ So $(A,B)$ is a branch of type $A_4,$ and there are 
 $q^4$ branches.

 {\bf Case: $(a_2,b_1,d_0)\neq{\bf 0}.$} First we consider that $d_0\neq 0.$ Now we can choose $x_2$ and $y_1$ in such a way that we get $a_3=b_2=0.$ By routine check, we get 
 $Z_{UT_5(\Fq)}(A,B)$ is a commutative group of size $q^5.$  Thus $(A,B)$ is of regular type $R_2,$ and there are $q^4(q-1)$ branches of this type. 
 
 Now we consider that $d_0=0$ and $a_2\neq 0.$ In this case, we can choose $w_0$ in such a way that we get $a_3=0.$ By routine check, we get 
 $Z_{UT_5(\Fq)}(A,B)$ is a commutative group of size $q^6.$  Thus $(A,B)$ is of regular type $R_1,$ and there are $q^4(q-1)$ branches of this type. 
 
 Finaly we consider when $d_0=a_2=0$ and $b_1\neq 0.$, now we can choose $w_0$ in such a way that we get $b_2=0.$  Again, we get 
 $Z_{UT_5(\Fq)}(A,B)$ is commutative group of size $q^6.$  Thus $(A,B)$ is of regular type $R_1,$ and there are $q^3(q-1)$ branches of this type. 
 
 Therefore we get that a matrix of type $A_4$ has $q^4$ branches of type $A_4$,   $q^3(q^2-1)$ braches of regular type $R_1,$ and $q^4(q-1)$ braches of regular type $R_2.$
 \end{proof}
\begin{prop}\label{T5A5}
 An upper unitriangular matrix of type $A_5$ has:
 \begin{center}
 \begin{tabular}{c|c||c|c}\hline
 Branch Type & No. of Branches & Branch Type & No. of Branches\\ \hline
  $A_5$& $q^2$ & $R_2$ & $q^2(q-1)$\\
  $B_4$ & $2q(q-1)$ & $R_3$ & $q(q-1)(q^2-1)$\\
  $B_6$ & $q^2(q^2-1)$ & $UNT_1$ & $q(q-1)^2$. \\ \hline
 \end{tabular}
 \end{center}
 It has the new branch $UNT_1$ already seen in previous case.
\end{prop}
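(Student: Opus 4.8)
The plan is to follow the same strategy used for types $A_1$ through $A_4$: fix a canonical representative $A$ of type $A_5$, compute $\ZU{5}(A)$, and then classify the simultaneous classes of commuting pairs $(A,B)$ by conjugating a general $B \in \ZU{5}(A)$ by the centralizer. I would take $A = I_5 + aE_{12}$ with $a \in \Fq^*$ (the reversal symmetry $i \mapsto 6-i$ shows $I_5 + aE_{45}$ gives a conjugate problem, so one representative suffices). Its centralizer is
$$\ZU{5}(A) = \left\{\begin{psmallmatrix} 1 & x_0 & x_1 & x_2 & x_3 \\ & 1 & & & \\ & & 1 & z_0 & z_1 \\ & & & 1 & w_0 \\ & & & & 1\end{psmallmatrix}\right\},$$
a group of order $q^7$ in which row $2$ is forced to be trivial.

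Writing $B = \begin{psmallmatrix} 1 & a_0 & a_1 & a_2 & a_3 \\ & 1 & & & \\ & & 1 & c_0 & c_1 \\ & & & 1 & d_0 \\ & & & & 1\end{psmallmatrix} \in \ZU{5}(A)$ and conjugating by a general $X \in \ZU{5}(A)$, the relation $XB = B'X$ is exact at the level of the nilpotent parts and yields the invariances $a_0' = a_0$, $a_1' = a_1$, $c_0' = c_0$, $d_0' = d_0$, together with
$$a_2' = a_2 + x_1c_0 - a_1z_0, \quad c_1' = c_1 + z_0d_0 - c_0w_0, \quad a_3' = a_3 + x_1c_1 + x_2d_0 - a_1z_1 - a_2'w_0.$$
The key structural point is that $a_0, a_1, c_0, d_0$ are invariants, while the bottom $3\times 3$ block on rows and columns $\{3,4,5\}$ transforms \emph{exactly} by $UT_3$-conjugation, the triple $(c_0, c_1, d_0)$ behaving as the $(1,2),(1,3),(2,3)$ entries of a matrix in $UT_3(\Fq)$. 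Accordingly I would set this block $C$ to be a conjugacy-class representative of $UT_3(\Fq)$ as listed before Theorem~\ref{TheoremUT3}, with the matching $X$-block ranging over its $UT_3$-centralizer.

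The case analysis then proceeds on the $UT_3$-type of $C$: the central case $c_0 = d_0 = 0$ (where $c_1$ is a free central parameter) and the regular cases $(c_0,d_0)\neq(0,0)$, each subdivided according to whether $a_1$ and $a_2$ vanish. In every leaf the residual relations let me normalise the movable coordinates $a_2, a_3, c_1$, after which $B$ is reduced to an explicit form, $\ZU{5}(A,B)$ is read off, and the pair is identified with one of $A_5$, $B_4$, $B_6$, $R_2$, $R_3$, or the new type $UNT_1$ from Proposition~\ref{T5A3}. For instance, the central elements $B = I_5 + a_0E_{12} + a_3E_{15}$ (block $C = I_3$ with $a_1 = a_2 = c_1 = 0$) have $\ZU{5}(A,B) = \ZU{5}(A)$ and account for the $q^2$ branches of type $A_5$, whereas a nonzero $c_1$ makes $a_3' = a_3 + x_1c_1$ solvable, forcing $x_1 = 0$ in the joint centralizer and pushing the pair into another type.

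The main obstacle will be the bookkeeping in the identification step rather than the conjugation itself: once $B$ is reduced, one must recognise the (typically $6$-, $5$-, or $4$-dimensional) group $\ZU{5}(A,B)$ up to isomorphism, distinguishing $B_4$ from $B_6$ and the regular types $R_2$ from $R_3$, and in particular detecting the non-regular new type $UNT_1$ whose centralizer was pinned down in Proposition~\ref{T5A3}. This generally requires conjugating the reduced centralizer by a suitable permutation matrix to bring it to the canonical shape of the claimed type, together with a routine commutativity check to certify the regular branches. Finally I would aggregate the subcase counts, for example combining the two regular-block subcases that produce $B_4$ into the total $2q(q-1)$ and collecting the $UNT_1$ contributions into $q(q-1)^2$, to obtain the table in the statement.
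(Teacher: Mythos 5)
Your proposal follows the paper's proof essentially verbatim: the same canonical form $A = I_5 + aE_{12}$, the same centralizer of order $q^7$, the same reduction of the lower-right $3\times 3$ block to a $UT_3(\Fq)$ conjugacy-class representative with the complementary row vector transforming by the affine equations you display, and the same case split producing the types $A_5$, $B_4$, $B_6$, $R_2$, $R_3$ and $UNT_1$. The plan is correct, and what remains is only the routine leaf-by-leaf normalisation, centralizer identification and counting that the paper carries out in full.
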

\begin{proof}
There are several canonical forms for a matrix in $UT_5(\Fq)$, of type $A_5$. We prove this proposition for the canonical form $A = \begin{pmatrix}1&a&&&\\&1&&&\\&&1&&\\&&&1&\\&&&&1\end{pmatrix}$, where $a \neq 0$. We have:
$\Zu{5}(A) = \left\{\begin{pmatrix}1&a_0&a_1&a_2&a_3\\&1&&&\\&&1&b_0&b_1\\&&&1&c_0\\&&&&1\end{pmatrix} \right\}$. We can rewrite this centralizer subgroup as $\left\{\begin{pmatrix}1&a_0&{}^t\overrightarrow{b}\\&1&\\&&C \end{pmatrix}\mid \begin{matrix}C \in UT_3(\Fq)\\{}^t\overrightarrow{b} = (b_1~b_2~b_3)\end{matrix} \right\}$. Let $B = \begin{pmatrix} 1&a_0&{}^t\overrightarrow{b}\\&1&\\
&&C\end{pmatrix}$, and $B' = \begin{pmatrix} 1&a'_0&{}^t\overrightarrow{b}\\&1&\\
&&C'\end{pmatrix}$ be a conjugate in $UT_5$ of $B$. $B' = XBX^{-1}$, where $X = \begin{pmatrix} 1&x_0&{}^t\overrightarrow{y}\\&1&\\
&&Z\end{pmatrix}$. So, equating $XB = B'X$ gives us $a'_0 = a_0$, $ZC = C'Z$. So, we may take $C$ to be the representative of a conjugacy class in $UT_3(\Fq)$, and we have the equation:
\begin{equation*}
{}^t\overrightarrow{y}.C + {}^t\overrightarrow{b} = {}^t\overrightarrow{b'}Z + {}^t\overrightarrow{y}
\end{equation*}

We rewrite this equation slightly to get:
\begin{equation}\label{EUA51}
\begin{pmatrix}y_1&y_2&y_3\end{pmatrix}(C-I_3) + \begin{pmatrix}b_1&b_2&b_3\end{pmatrix} = \begin{pmatrix}b'_1&b'_2&b'_3\end{pmatrix}Z
\end{equation}

The cases:

\noindent{\bfseries When $C = I_3$}. Here Equation~\ref{EUA51} becomes:
$\begin{pmatrix}b_1&b_2&b_3\end{pmatrix} = \begin{pmatrix}b'_1&b_2&b'_3\end{pmatrix}\begin{pmatrix}1&z_0&z_1\\&1&z_2\\&&1\end{pmatrix}$, which gives us $b'_1 = b_1$, and the following equation:
\begin{eqnarray}
b_2 &=& b'_2 + z_0b_1\label{EUA52}\\
b_3 &=& b'_3 + z_1b_1 + z_2b'_2\label{EUA53}
\end{eqnarray}

We have two subcases here: When $b_1 = 0$ and when $b_1 \neq 0$.

{\bfseries When $b_1 = 0$} Equation~\ref{EUA52} becomes $b'_2 = b_2$, and Equation~\ref{EUA53}
 becomes $b_3 = b'_3 + z_2b_2$.
 
 When $b_2 = 0$, we have $b'_3 = b_3$. So $B$ is reduced to $\begin{pmatrix} 1&a_0&&&b_3\\&1&&&\\&&1&&\\&&&1&\\&&&&1\end{pmatrix}$, and $\Zu{5}(A,B) = \Zu{5}(A)$. Thus $(A,B)$ is of type $A_5$, and there are $q^2$ such branches.
 
 When $b_2 \neq 0$, in Equation~\ref{EUA52}, choose $z_2$ so that $b'_3 = 0$. So, We have $B$ reduced to $\begin{pmatrix}1&a_0&&b_2&\\&1&&&\\&&1&&\\&&&1&\\&&&&1\end{pmatrix}$, and $\Zu{5}(A,B) =\left\{ \begin{pmatrix}1&x_0&y_1&y_2&y_3\\&1&&&\\&&1&z_0&z_1\\&&&1&\\&&&&1\end{pmatrix}\right\}$. $(A,B)$ is of type $B_4$ and there are $q(q-1)$ such branches.
 
 {\bfseries When $b_1 \neq 0$}: In Equation~\ref{EUA52}, choose $z_0$ such that $b'_2 = 0$, and in Equation~\ref{EUA52}, choose  $z_1$ such that $b'_3 = 0$. So $B$ is reduced to $\begin{pmatrix}1&a_0&b_1&&\\
 &1&&&\\
 &&1&&\\
 &&&1&\\
 &&&&1\end{pmatrix}$, and $\Zu{5}(A,B) =\left\{ \begin{pmatrix}1&x_0&y_1&y_2&y_3\\&1&&&\\&&1&&\\&&&1&z_2\\&&&&1\end{pmatrix}\right\}$. $(A,B)$ is a branch of type $B_6$, and there are $q(q-1)$ such branches.
 
 \noindent{\bfseries When $C = \left(\begin{smallmatrix}1&&c\\&1&\\&&1\end{smallmatrix}\right), c\neq 0$:} From Equation~\ref{EUA51}, we have $b'_1 = b_1$, and the following following equations:
 \begin{eqnarray}
 b_2 &=& b'_2 + z_0b_1\\
 b'_3 + cy_1 &=& b'_3 + z_1b'_1 + z_2b'_2 . 
 \end{eqnarray}
 As $c \neq 0$, choose $y_1$ so that $b'_3 = 0$. 
 
 {\bfseries Case: $b_1 = 0$}
 
 We have $b'_2 = b_2$. When $b_2 = 0$, $B$ is reduced to 
 $
 \begin{pmatrix}
 1&a_0&&&\\
 &1&&&\\
 &&1&&c\\
 &&&1&\\
 &&&&1
 \end{pmatrix}$, and $\Zu{5}(A,B) =\left\{ \begin{pmatrix}1&x_0&&y_2&y_3\\&1&&&\\&&1&z_0&z_1\\&&&1&z_2\\&&&&1\end{pmatrix}\right\}$. $(A,B)$ is of type $B_4$, and there are $q(q-1)$ such branches. 
 
 When $b_2 \neq 0$, we have $B$ reduced to 
 $\begin{pmatrix}
 1&a_0&&a_2&\\
 &1&&&\\
 &&1&&c\\
 &&&1&\\
 &&&&1
 \end{pmatrix}$, and $\Zu{5}(A,B) =\left\{ \begin{pmatrix}1&x_0&\frac{b_2}{c}z_2&y_2&y_3\\&1&&&\\&&1&z_0&z_1\\&&&1&z_2\\&&&&1\end{pmatrix}\right\}$. This centralizer is isomorphic to that of a new type, $UNT_1$, which we had come across earlier. There are $q(q-1)^2$ such branches.
 
 {\bfseries When $b_1 \neq 0$}. We can choose $z_0$ so that $b'_2 = 0$. Here $B$ is reduced to 
 $
 \begin{pmatrix}
 1&a_0&a_1&&\\
 &1&&&\\
 &&1&&c\\
 &&&1&\\
 &&&&1
 \end{pmatrix}
 $, and $\Zu{5}(A,B) =\left\{ \begin{pmatrix}1&x_0&\frac{b_1}{c_1}z_1&y_2&y_3\\&1&&&\\&&1&&z_1\\&&&1&z_2\\&&&&1\end{pmatrix}\right\}$. Hence $(A,B)$ is of type $B_6$, and there are $q(q-1)^2$ such branches.
 
 \noindent{\bfseries When $C = \left(\begin{smallmatrix}1&c&\\&1&\\&&1\end{smallmatrix}\right), c\neq 0$}: Here $Z = \begin{pmatrix}1&z_0&z_1\\&1&\\&&1 \end{pmatrix}$. With this, Equation~\ref{EUA51} becomes:
 $\begin{pmatrix}b_1 &b_2 + cy_1 & b_3 \end{pmatrix} = \begin{pmatrix}b'_1 & z_0b'_1 + b'_2 & z_1b'_1 + b'_3\end{pmatrix}$. Now, as $c \neq 0$, choose $y_1$ so that $b'_2 = 0$. 
 
 When $b_1  = 0$, we have $b'_3 = b_3$. Thus, $B$ is reduced to 
 $
 \begin{pmatrix}
 1&a_0&&&b_3\\
 &1&&&\\
 &&1&c&\\
 &&&1&\\
 &&&&1
 \end{pmatrix}$, and $\Zu{5}(A,B) =\left\{ \begin{pmatrix}1&x_0&&y_2&y_3\\&1&&&\\&&1&z_0&z_1\\&&&1&\\&&&&1\end{pmatrix}\right\}$. By a routine check, we can see that this centralizer is commutative, and of size $q^5$. $(A,B)$ is of type $R_2$, and there are $q^2(q-1)$ such branches.
 
 When $b_1 \neq 0$, choose $z_1$ such that $b'_3 = 0$. Thus, $B$ is reduced to 
 $\begin{pmatrix}
 1&a_0&a_1&&\\
 &1&&&\\
 &&1&c&\\
 &&&1&\\
 &&&&1
 \end{pmatrix}$, and $\Zu{5}(A,B) =\left\{ \begin{pmatrix}1&x_0&\frac{b_1}{c}z_0&y_2&y_3\\&1&&&\\&&1&z_0&\\&&&1&\\&&&&1\end{pmatrix}\right\}$. This centralizer is of size $q^4$, and is commutative. Thus $(A,B)$ is of type $R_3$, and there are $(q-1)^2q$ such branches.
 
 \noindent{\bfseries When $C = \left(\begin{smallmatrix}1&&\\&1&c\\&&1\end{smallmatrix}\right), c \neq 0$}: Here $Z = \begin{pmatrix}1&&z_1\\&1&z_2\\&&1\end{pmatrix}$. Here Equation~\ref{EUA51} becomes:
 $\begin{pmatrix}b_1 & b_2 & b_3 + cy_2\end{pmatrix} = \begin{pmatrix} b'_1 & b'_2 & b'_3 + z_1b'_1 + z_2b'_2\end{pmatrix}$.
 
 We have $b'_1 = b_1$ and $b'_2 = b_2$, and choose $y_2$ so that $b'_3 = 0$. Thus $B$ is reduced to 
 $
 \begin{pmatrix}
 1&a_0&b_1&b_2&\\
 &1&&&\\
 &&1&&\\
 &&&1&c\\
 &&&&1
 \end{pmatrix}$, and $\Zu{5}(A,B) =\left\{ \begin{pmatrix}1&x_0&y_1&\frac{b_1}{c}z_1+\frac{b_2}{c}z_2&y_3\\&1&&&\\&&1&&z_1\\&&&1&z_2\\&&&&1\end{pmatrix}\right\}$. This too is of type $B_6$, and there are $q^3(q-1)$ such branches.
 
 And now we have the last case:
 \noindent{\bfseries When $C = \left(\begin{smallmatrix}1&c&\\&1&d\\&&1\end{smallmatrix}\right), c,d \neq 0$}: Here $Z = \begin{pmatrix}1&z_0&z_1\\&1&\lbd z_0\\&&1\end{pmatrix}$, where $\lbd = \frac{d}{c}$. Equation~\ref{EUA51} becomes:
 $\begin{pmatrix}b_1 & b_2 + cy_1 & b_3 + dy_2\end{pmatrix} = \begin{pmatrix}b'_1&b'_2 + z_0b'_1 & b'_3 + z_1b'_1 + \lbd z_0 b'_2\end{pmatrix}$. We have $b'_1 = b_1$, and choose $y_1$ so that $b'_2 = 0$, and $y_2$ so that $b'_3 = 0$. Hence $B$ is reduce to 
 $
 \begin{pmatrix}
 1&a_0&b_1&&\\
 &1&&&\\
 &&1&c&\\
 &&&1&d\\
 &&&&1
 \end{pmatrix}$, and $\Zu{5}(A,B) =\left\{ \begin{pmatrix}1&x_0&\frac{b_1}{c}z_0&\frac{b_1}{d}z_1&y_3\\&1&&&\\&&1&z_0&z_1\\&&&1&\lbd z_0\\&&&&1\end{pmatrix}\right\}$. This centralizer is 4 dimensional, and commutative. Thus $(A,B)$ is of type $R_3$, and there are $(q-1)^2q^2$ such branches.
 
With this, we have no other cases to analyse. So from the calculations, we have:
\begin{itemize}
\item $q^2$ branches of type $A_5$.
\item $q(q-1)+q(q-1) = 2q(q-1)$ branches of type $B_4$.
\item $q(q-1) + q(q-1)^2 + q^3(q-1) = q^4-q^2$ branches of type $B_6$.
\item $q^2(q-1)$ branches of type $R_2$.
\item $q(q-1)^2 + q^2(q-1)^2 = q(q-1)(q^2-1)$ branches of type $R_3$, and
\item $q(q-1)^2$ branches of the new type $UNT_1$.
\end{itemize}
\end{proof}
\subsection{Branching of type $B$}
Now we look at the $B$ types and decide its branching.
\begin{prop}\label{T5B1}
An upper unitriangular matrix of type $B_1$ has the following branches:
\begin{center}

 \end{center}
 We look at three cases, the first case is when $\lambda a_1=b_1$ and $(a_0,b_0,b_2)={\bf 0}.$ The second case is when $\lambda a_1\neq b_1$ and $(a_0,b_0,b_2)={\bf 0}.$ The third case is when $(a_0,b_0,b_2)\neq {\bf 0}.$
 
 {\bf Case: $\lambda a_1=b_1$ and $(a_0,b_0,b_2)={\bf 0}.$ } In this case, we get $a_2=a_2^\prime.$ Therefore $Z_{UT_5(\Fq)}(A,B)=Z_{UT_5(\Fq)}(A).$ So $(A,B)$ is a branch of type $B_3,$ and there are 
 $q^3$ branches.

 {\bf Case: $\lambda a_1\neq b_1$ and $(a_0,b_0,b_2)={\bf 0}.$} In this case, we can choose $x_0$ in such a way that we get $a_2=0.$ By routine check, we get 
 $Z_{UT_5(\Fq)}(A,B)$ is a commutative group of size $q^6.$  Thus $(A,B)$ is of regular type $R_1,$ and there are $q^2(q-1)$ branches of this type. 
 
 {\bf Case: $(a_0,b_0,b_2)\neq {\bf 0}.$} We first consider that $a_0\neq 0,$ then we can choose $y_0,y_1$ and $y_2$ in such a way that we get $a_1=a_2=a_3=0$ and $b_1=b_1^\prime.$ 
 By simple calculations, we get that $Z_{UT_5(\Fq)}(A,B)$ is a commutative group of size $q^4.$ Thus $(A,B)$ is of regular type $R_3,$ and there are $q^3(q-1)$ branches of this type.   
 
 Next we consider the case when $a_0=0$ and $b_0\neq 0$. Here we can choose $x_0$ in such a way that we get $a_1=0.$  By routine check, we get 
 $Z_{UT_5(\Fq)}(A,B)$ is commutative group of size $q^6.$  Thus $(A,B)$ is of regular type $R_1,$ and there are $q^4(q-1)$ branches of this type.
 
 Finaly we consider the case when $a_0=b_0=0$ and $b_2\neq 0.$, now we can choose $x_0$ in such a way that we get $a_3=0.$  Again, we get 
 $Z_{UT_5(\Fq)}(A,B)$ is commutative group of size $q^6.$  Thus $(A,B)$ is of regular type $R_1,$ and there are $q^3(q-1)$ branches of this type. 
 
 Therefore we get that a matrix of type $B_3$ has $q^3$ branches of type $B_3$,   $q^2(q^2+q+1)(q-1)$ braches of regular type $R_1,$ and $q^3(q-1)$ braches of regular type $R_3.$
 
\end{proof}

\begin{prop}\label{T5B4}
An upper unitriangular matrix of type $B_4$ has $q^3$ branches of type $B_4$, $q^2(q^2-1)$ branches of regular type $R_2,$ and $q^3(q-1)$ branches of regular type $R_3.$
\end{prop}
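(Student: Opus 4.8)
The plan is to mirror the proof of Proposition~\ref{T5B3}. I would take the canonical form of type $B_4$ to be $A = I_5 + aE_{13} + bE_{34}$ with $a,b\in\Fq^*$, and put $\lambda = b/a$; a direct check gives
$$\ZU{5}(A) = \left\{\begin{psmallmatrix}1&x_0&x_1&x_2&x_3\\&1&&y_1&y_2\\&&1&\lambda x_1&\\&&&1&\\&&&&1\end{psmallmatrix} \mid x_0,x_1,x_2,x_3,y_1,y_2\in\Fq\right\},$$
a non-abelian group of order $q^6$. Writing a typical $B\in\ZU{5}(A)$ with $(1,2),(1,3),(1,4),(1,5),(2,4),(2,5)$-entries $a_0,a_1,a_2,a_3,b_1,b_2$ (so its $(3,4)$-entry is $\lambda a_1$), taking $X\in\ZU{5}(A)$ with entries $x_0,\dots,y_2$ as above and $B'=XBX^{-1}$, I would expand $XB=B'X$.

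The computation I would carry out first, and the one that makes everything tractable, is that inside this centralizer every nonzero product of two elements lands in the $(1,4)$ or $(1,5)$ slot (the $\lambda$-coupled $(3,4)$-entry contributes a term that cancels). Hence conjugation fixes $a_0,a_1,b_1,b_2$ (and therefore the $(3,4)$-entry) and only moves the pair $(a_2,a_3)$ by
$$a_2' = a_2 + x_0 b_1 - a_0 y_1, \qquad a_3' = a_3 + x_0 b_2 - a_0 y_2.$$
The whole branching is thus controlled by the triple $(a_0,b_1,b_2)$, and I would split into three cases. If $a_0=b_1=b_2=0$ the pair $(a_2,a_3)$ is frozen, and since $E_{14},E_{15}$ are central in $\ZU{5}(A)$ one gets $\ZU{5}(A,B)=\ZU{5}(A)$; these are the $q^3$ branches of type $B_4$, parametrised by $a_1,a_2,a_3$. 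If $a_0=0$ but $(b_1,b_2)\neq(0,0)$, the orbit of $(a_2,a_3)$ is a line, so it normalises to $q$ classes, while $XB=B'X$ forces $x_0=0$ and leaves a commutative centralizer of order $q^5$; these are the $q\cdot q(q^2-1)=q^2(q^2-1)$ branches of the regular type $R_2$. If $a_0\neq0$, one clears $a_2,a_3$ using $y_1,y_2$, and the centralizing condition forces $y_1,y_2$ to be determined by $x_0$, leaving a commutative centralizer of order $q^4$; these are the $(q-1)q\cdot q^2=q^3(q-1)$ branches of the regular type $R_3$. Summing gives $q^3+q^2(q^2-1)+q^3(q-1)=2q^4-q^2$ branches, which I would use as a consistency check since it must equal the number of conjugacy classes of the order-$q^6$ group $\ZU{5}(A)$.

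The step I expect to be the main obstacle is the precise identification of the two regular types after the reduction: in each of the last two cases I must confirm that the residual $\ZU{5}(A,B)$ is genuinely commutative and pin down its order exactly, so as to separate $R_2$ (order $q^5$) from $R_3$ (order $q^4$). This is where the constraint $(3,4)=\lambda\cdot(1,3)$ on the centralizer, which is threaded across all the cases, has to be tracked with care; once the reduced forms of $B$ are written down, the remaining work is the routine linear algebra of choosing $x_0,y_1,y_2$ to normalise $(a_2,a_3)$, exactly as in the preceding $B$-type propositions.
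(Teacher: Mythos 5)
Your proposal is correct and follows essentially the same route as the paper: a direct computation of $XB=B'X$ inside the order-$q^6$ centralizer, yielding two linear equations that move only the $(1,4)$ and $(1,5)$ entries, followed by the same three-way case split (all controlling entries zero, giving $q^3$ branches of type $B_4$; the $a_0\neq 0$ case giving a commutative order-$q^4$ stabilizer and $q^3(q-1)$ branches of $R_3$; the remaining case giving a commutative order-$q^5$ stabilizer and $q^2(q^2-1)$ branches of $R_2$). The only difference is that you work with the canonical form $I_5+aE_{13}+bE_{34}$ rather than the paper's $I_5+aE_{12}+bE_{24}$, both of which appear in Appendix~B under type $B_4$, so the counts and identifications agree exactly.
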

\begin{proof}
  Let $A=\begin{pmatrix} 1& a &&&\\ &1&&b&\\ &&1&&\\&&&1&&\\&&&&1\end{pmatrix}, a,b\neq0$ a matrix of type $B_4.$ The  centralizer  $Z_{UT_5}(A)$ of $A$ is $\left\{ \begin{pmatrix} 1& x_0 & x_1& x_2&x_3\\ & 1&& \lambda x_0 & \\ &&1&z_0& z_1\\ &&&1&\\&&&&1\end{pmatrix} ~|~ \lambda=\frac{b}{a} ,x_i,z_i\in \Fq\right\}.$ 
  
  Let $X=\begin{pmatrix} 1& x_0 & x_1& x_2&x_3\\ & 1&& \lambda x_0 & \\ &&1&z_0& z_1\\ &&&1&\\&&&&1\end{pmatrix}$ be an element of $Z_{UT_5(\Fq)}(A).$ Let $B=\begin{pmatrix} 1& a_0 & a_1& a_2&a_3\\ & 1& &\lambda a_0 & \\ &&1&c_0& c_1\\ &&&1&\\&&&&1\end{pmatrix},$ and 
  $B^\prime= \begin{pmatrix} 1& a_0^\prime & a_1^\prime& a_2^\prime&a_3^\prime\\ & 1& & \lambda a_0^\prime & \\ &&1&c_0^\prime&c_1^\prime\\ &&&1&\\&&&&1\end{pmatrix}$ be a conjugate of $B$ by $X.$ Thus equating 
  $XB=B^\prime X$ gives us $a_0=a_0^\prime$, $a_1=a_1^\prime$, $c_0=c_0^\prime$, $c_1=c_1^\prime$, and the following equations:
  \begin{center}
 \begin{tabular}{ccc}
 
 $x_1c_1+a_3=z_1a_1+a_3^\prime$\\
 $x_1c_0+a_2=z_0a_1+a_2^\prime$\\
 \end{tabular}
 \end{center}
 We look at two cases, when $(a_1,c_0,c_1)={\bf 0}$ and $(a_1,c_0,c_1)\neq{bf 0}.$
 
 {\bf Case: $(a_1,c_0,c_1)={\bf 0}$:} In this case, we get $a_2=a_2^\prime$ and $a_3=a_3^\prime.$ Therefore $Z_{UT_5(\Fq)}(A,B)=Z_{UT_5(\Fq)}(A).$ So $(A,B)$ is a branch of type $B_4,$ and there are 
 $q^3$ branches.

 {\bf Case: $(a_1,c_0,c_1)\neq{\bf 0}$:} When $a_1\neq 0,$ then we choose $z_0$ and $z_1$ in such a way that we get $a_2=a_3=0.$ By routine check, we get that
 $Z_{UT_5(\Fq)}(A,B)$ is commutative group of size $q^4.$ Thus $(A,B)$ is of the regular type $R_3,$ and there are $q^3(q-1)$ branches of this type. 
 
 When $a_1=0$ and one of $c_0$ and $c_1$ is non-zero. We can choose $x_1$ in such a way that we get either $a_2=0$ or $a_3=0.$ Again by simple calculations, we get 
 $Z_{UT_5(\Fq)}(A,B)$ is commutative group of size $q^5.$ Thus $(A,B)$ is of the regular type $R_2,$ and there are $q^2(q^2-1)$ branches of this type.   
 \end{proof}

\begin{prop}\label{T5B5}
 An upper unitriangular matrix of type $B_5$ has $q^2$ branches of type $B_5$, $(q^5-q)$ branches of regular type $B_6.$ 
\end{prop}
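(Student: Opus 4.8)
The plan is to follow the scheme of Propositions~\ref{T5B3} and~\ref{T5B4}. A matrix of type $B_5$ is block-diagonal with block sizes $2,1,2$, so I would fix the representative
$$A=\begin{pmatrix}1&a&&&\\&1&&&\\&&1&&\\&&&1&b\\&&&&1\end{pmatrix},\qquad a,b\in\Fq^*,$$
and compute, from $XA=AX$, its centralizer
$$\ZU{5}(A)=\left\{\begin{pmatrix}1&x_0&x_1&x_2&x_3\\&1&&&\lambda x_2\\&&1&&y_0\\&&&1&y_1\\&&&&1\end{pmatrix}\ \middle|\ x_i,y_j\in\Fq\right\},\qquad \lambda=\tfrac{b}{a},$$
a non-abelian group of order $q^6$. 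The key preliminary observation is that its centre is exactly $\{I+\alpha(aE_{12}+bE_{45})+\beta E_{15}\mid\alpha,\beta\in\Fq\}$, of order $q^2$: it is spanned by the direction of $A-I$ together with the centre $E_{15}$ of $UT_5(\Fq)$.

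I would then take a general $B\in\ZU{5}(A)$, set $B'=XBX^{-1}$, and read off the relations imposed by $XB=B'X$; as in the earlier $B$-type arguments this fixes several entries of $B$ and leaves a short linear system in the entries of $X$. The analysis divides into two cases. When $B$ lies in the centre above, $\ZU{5}(A,B)=\ZU{5}(A)$, so $(A,B)$ is again of type $B_5$; as central elements are fixed by conjugation this gives exactly the $q^2$ self-branches. When $B$ is non-central, I would use the linear system to normalise $B$ and then verify that $\ZU{5}(A,B)$ is a \emph{non-commutative} group of order $q^5$. Invoking the identification made in the proof of Proposition~\ref{T5B1} — that the only non-commutative centralizer of order $q^5$ arising in $UT_5(\Fq)$ is that of type $B_6$ — this shows $(A,B)$ is of type $B_6$. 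The count then follows by partitioning $\ZU{5}(A)$: the $q^2$ central elements give the $B_5$ branches, while the remaining $q^6-q^2$ non-central elements fall into $\ZU{5}(A)$-orbits of size $q$ (their centralizers having order $q^5$), producing $(q^6-q^2)/q=q^5-q$ branches of type $B_6$, which together exhaust $\ZU{5}(A)$.

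The main obstacle is the non-central case. One must check that \emph{every} non-central extension produces type $B_6$, and in particular never a regular (abelian) type, even though the regular type $R_2$ also has centralizer of order $q^5$; this is precisely the commutativity test that separates $B_6$ from $R_2$. Equivalently, one must confirm that all non-central $B$ have centralizer of order exactly $q^5$ (orbit size exactly $q$), so that the uniform count $q^5-q$ is valid and the branch numbers sum to $|\ZU{5}(A)|=q^6$; carrying out this verification across the subcases, according to which off-direction parameter of $B$ is nonzero, is the technical heart of the proof.
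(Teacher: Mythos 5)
Your proposal matches the paper's proof in all essentials: the same canonical form, the same order-$q^6$ centralizer, and the same single linear relation in the $(1,5)$-entry of $XB=B'X$, whose nontriviality for non-central $B$ is exactly what forces $|\ZU{5}(A,B)|=q^5$ and hence orbits of size $q$. The paper organizes the count as three explicit cases ($\lambda a_0=d_0$ with $(a_1,a_2,c_1)=\mathbf{0}$; $\lambda a_0\neq d_0$; and $\lambda a_0=d_0$ with $(a_1,a_2,c_1)\neq\mathbf{0}$) rather than via your centre-plus-orbit partition, but the first case is precisely your centre of order $q^2$ and the other two contribute $q^4(q-1)+q(q^3-1)=q^5-q$, agreeing with your $(q^6-q^2)/q$.
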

\begin{proof}
Let $A=\begin{pmatrix} 1&a&&&\\ &1&&&\\ &&1&&\\&&&1&b\\&&&&1\end{pmatrix}, a,b\neq0$ a matrix of type $B_5.$ The  centralizer  $Z_{UT_5}(A)$ of $A$ is $\left\{ \begin{pmatrix} 1& x_0 & x_1& x_2&x_3\\ & 1& &  & \lambda x_2\\ &&1&&z_1 \\ &&&1&w_0\\&&&&1\end{pmatrix} ~|~ \lambda=\frac{b}{a} ,x_i,z_1,w_0\in \Fq\right\}.$ Let $X=\begin{pmatrix} 1& x_0 & x_1& x_2&x_3\\ & 1&&& \lambda x_2 \\ &&1&&z_1 \\ &&&1&w_0\\&&&&1\end{pmatrix}$ be an element of $Z_{UT_5(\Fq)}(A).$ Let $B=\begin{pmatrix} 1& a_0 & a_1& a_2&a_3\\ & 1&&& \lambda a_2 \\ &&1&&c_1 \\ &&&1&d_0\\&&&&1\end{pmatrix},$ and 
  $B^\prime= \begin{pmatrix} 1& a_0^\prime & a_1^\prime& a_2^\prime&a_3^\prime\\ & 1& &&\lambda a_2^\prime \\ &&1&&c_1^\prime \\ &&&1&d_0^\prime\\&&&&1\end{pmatrix}$ be a conjugate of $B$ by $X.$ Thus equating 
  $XB=B^\prime X$ gives us  $a_0=a_0^\prime$\\
   $a_1=a_1^\prime$, $a_2=a_2^\prime$, $c_1=c_1^\prime$, $d_0=d_0^\prime$, and the following equation:
  \begin{equation*}
  x_2d_0+c_1x_1+\lambda a_2x_0+a_3=\lambda x_2a_0^\prime+z_1a_1^\prime+w_0a_2^\prime+a_3^\prime
 \end{equation*}
 We look at three cases, the first case is when $\lambda a_0=d_0$ and $(a_1,a_2,c_1)={\bf 0}.$ The second case is when $\lambda a_0\neq d_0$ and the third case is when $\lambda a_0=d_0$ but $(a_1,a_2,c_1)\neq {\bf 0}.$
 
 {\bf Case: $\lambda a_0=d_0$ and $(a_1,a_2,c_1)={\bf 0}.$ } In this case, we get $a_3=a_3^\prime.$ Therefore $Z_{UT_5(\Fq)}(A,B)=Z_{UT_5(\Fq)}(A).$ So $(A,B)$ is a branch of type $B_5,$ and there are 
 $q^2$ branches.

 {\bf Case: $\lambda a_0\neq d_0$} In this case, we can choose $x_2$ in such a way that we get $a_3=0.$ By routine check, we get 
 $Z_{UT_5(\Fq)}(A,B)$ is group of size $q^5$ isomorphic to centralizer of one of the type $B_6.$  Thus $(A,B)$ is of type $B_6,$ and there are $q^4(q-1)$ branches of this type. 
 
 {\bf Case: $\lambda a_0=d_0$ and $(a_1,a_2,c_1)\neq {\bf 0}.$} In this case, one of $a_1, a_2$ and $c_1$ is non-zero and depending on this, we can choose one of $z_1, w_0$ or $x_1$ suitably in such a way that we get $a_3=0.$ By routine check, we get 
 $Z_{UT_5(\Fq)}(A,B)$ is group of size $q^5$ isomorphic to centralizer of one of the type $B_6.$  Thus $(A,B)$ is of type $B_6,$ and there are $q(q-1)(q^2+q+1)$ branches of this type. 
 
 Therefore a  matrix of type $B_5$ has $q^2$ branches of type $B_5$ and total $q(q^4-1)$ braches of type $B_6.$
 \end{proof}
 \begin{prop}\label{T5B6}
  An upper unitriangular matrix of type $B_6$ has $q^3$ branches of type $B_6$, and $q^2(q^2-1)$ branches of regular type $R_3$.
  \end{prop}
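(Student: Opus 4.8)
The plan is to follow the same template used throughout this section: fix a representative $A$ of type $B_6$, record its common centralizer $\ZU{5}(A)$, and then conjugate a general element of that centralizer by an arbitrary member of it to read off the reduced forms and their types. The centralizer of a type-$B_6$ tuple is the non-commutative group of size $q^5$ exhibited in Proposition~\ref{T5A5},
$$\ZU{5}(A) = \left\{\begin{psmallmatrix}1 & x_0 & x_1 & x_2 & x_3 \\ & 1 & & & \\ & & 1 & & \\ & & & 1 & z_0 \\ & & & & 1\end{psmallmatrix} \mid x_0,x_1,x_2,x_3,z_0 \in \Fq\right\}.$$
First I would take a general element $C = I_5 + c_0E_{12} + c_1E_{13} + c_2E_{14} + c_3E_{15} + d_0E_{45}\in\ZU{5}(A)$, let $X$ be the general element of $\ZU{5}(A)$ with parameters $x_0,x_1,x_2,x_3,z_0$, and impose $XC = C'X$.

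The crucial simplification, which makes the whole computation short, is that the only nonzero product among the matrix units spanning the nilpotent part of $\ZU{5}(A)$ is $E_{14}E_{45}=E_{15}$. Equating $XC = C'X$ therefore forces $c_0'=c_0$, $c_1'=c_1$, $c_2'=c_2$, $d_0'=d_0$, and leaves the single genuine relation
$$c_3' = c_3 + x_2d_0 - c_2z_0,$$
so conjugation moves only the $(1,5)$-entry. Everything then hinges on the coefficient $x_2d_0 - c_2z_0$, and I would split into the two cases $(c_2,d_0)=(0,0)$ and $(c_2,d_0)\neq(0,0)$.

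When $c_2=d_0=0$ the entry $c_3$ is a conjugation invariant, every element of $\ZU{5}(A)$ centralizes $C$, hence $\ZU{5}(A,C)=\ZU{5}(A)$ and $(A,C)$ is again of type $B_6$; the free parameters $c_0,c_1,c_3$ produce $q^3$ such branches. When $(c_2,d_0)\neq(0,0)$ the coefficient $x_2d_0-c_2z_0$ sweeps out all of $\Fq$, so I can normalize $c_3'=0$; the invariants $(c_0,c_1,c_2,d_0)$ with $(c_2,d_0)\neq(0,0)$ then parametrize the orbits, giving $q\cdot q\cdot(q^2-1)=q^2(q^2-1)$ branches, and the centralizer $\ZU{5}(A,C)$ is cut out by the single condition $x_2d_0=c_2z_0$, hence has order $q^4$. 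The orbit-size tally $q^3\cdot 1 + q^2(q^2-1)\cdot q = q^5 = |\ZU{5}(A)|$ serves as a consistency check on the counts.

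The main point requiring care, rather than a real obstacle, is confirming that this order-$q^4$ centralizer is commutative and is the regular type $R_3$ (whose centralizer has order $q^4$ by Proposition~\ref{T5B3}). I would verify commutativity by treating the condition $x_2d_0=c_2z_0$ in the sub-cases $c_2\neq0$ (where $z_0$ becomes a scalar multiple of $x_2$) and $c_2=0,\ d_0\neq0$ (where $x_2=0$ is forced); in both the only potentially non-commuting pair of generators, namely $E_{14}$ and $E_{45}$, is driven into an abelian configuration, so the group commutator, which lives in the $E_{15}$-coordinate, vanishes. Identifying the resulting commutative order-$q^4$ group with the centralizer of $R_3$ (up to the isomorphism of centralizers permitted by our notion of type) finishes the argument, and combining the two cases yields the stated branching.
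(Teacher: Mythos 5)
Your proof is correct and takes essentially the same route as the paper: conjugation moves only the $(1,5)$-entry via a single relation $c_3'=c_3+x_2d_0-c_2z_0$, and the two cases $(c_2,d_0)=(0,0)$ versus $(c_2,d_0)\neq(0,0)$ yield the $q^3$ branches of type $B_6$ and the $q^2(q^2-1)$ commutative order-$q^4$ centralizers of type $R_3$, exactly as in the paper's argument. The only (harmless) difference is that you work with the isomorphic model of the $B_6$ centralizer from Proposition~\ref{T5A5} rather than the centralizer of the canonical form $I_5+aE_{12}+bE_{23}$ used in the paper's proof; since branching data is intrinsic to the isomorphism class of the centralizer, this is legitimate, and your counts (including the orbit-size check summing to $q^5$) match the statement.
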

  \begin{proof}
    Let $A=\begin{pmatrix} 1& a &&&\\ &1&b&&\\ &&1&&\\&&&1&&\\&&&&1\end{pmatrix}, a,b\neq0$ a matrix of type $B_6.$ The  centralizer  $Z_{UT_5}(A)$ of $A$ is $\left\{ \begin{pmatrix} 1& x_0 & x_1& x_2&x_3\\ & 1& \lambda x_0&  & \\ &&1&& \\ &&&1&w_0\\&&&&1\end{pmatrix} ~|~ \lambda=\frac{b}{a} ,x_i,w_0\in \Fq\right\}.$ Let $X=\begin{pmatrix} 1& x_0 & x_1& x_2&x_3\\ & 1& \lambda x_0&  & \\ &&1&& \\ &&&1&w_0\\&&&&1\end{pmatrix}$ be an element of $Z_{UT_5(\Fq)}(A).$ Let $B=\begin{pmatrix} 1& a_0 & a_1& a_2&a_3\\ & 1& \lambda a_0&  & \\ &&1&& \\ &&&1&d_0\\&&&&1\end{pmatrix},$ and 
    $B^\prime= \begin{pmatrix} 1& a_0^\prime & a_1^\prime& a_2^\prime&a_3^\prime\\ & 1& \lambda a_0^\prime&  & \\ &&1&& \\ &&&1&d_0^\prime\\&&&&1\end{pmatrix}$ be a conjugate of $B$ by $X.$ Thus equating 
    $XB=B^\prime X$ gives us $a_0=a_0^\prime$, $a_1=a_1^\prime$, $a_2=a_2^\prime$, $d_0=d_0^\prime$, and the following equation:
    \begin{center}
   \begin{tabular}{ccc}
   $x_2d_0+a_3=w_0a_2^\prime+a_3^\prime$\\
   \end{tabular}
   \end{center}
   We look at two cases, when $(a_2,d_0)=(0,0)$ and $(a_2,d_0)\neq(0,0).$
   
   {\bf Case: $(a_2,d_0)=(0,0)$} In this case, we get $a_3=a_3^\prime.$ Therefore $Z_{UT_5(\Fq)}(A,B)=Z_{UT_5(\Fq)}(A).$ So $(A,B)$ is a branch of type $B_6,$ and there are 
   $q$ branches.

   {\bf Case: $(a_2,d_0)\neq(0,0)$} In this case, one of $d_0$ and $a_2$ is non-zero. We can choose $x_2$ or $w_0$ in such a way that we get $a_3=0.$ By routine check, we get 
   $Z_{UT_5(\Fq)}(A,B)$ is commutative group of size $q^4.$ Thus $(A,B)$ is of the regular type $R_3,$ and there are $q^2(q^2-1)$ branches of this type.  
   \end{proof}
 \subsection{Branching of type $D$}
 Now we look at the branching for type $D$.
 \begin{prop}\label{T5D1}
 An upper unitriangular matrix of type $D_1$ has the following branches:
 \begin{center}
 \begin{tabular}{c|c||c|c}\hline
 Branch & No. of Branches &  Branch & No. of Branches\\ \hline
  $D_1$ & $q^2$& $R_2$ & $q^2(q-1)$\\
  $B_4$ & $2q(q-1)$ & $R_3$ & $q^2(q^2-1)$.\\ 
  $UNT_3$ & $q(q-1)^2$ &&\\\hline
 \end{tabular}
 \end{center}
 \end{prop}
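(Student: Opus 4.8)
The plan is to run the same conjugation-and-reduction analysis used throughout this section. First I would take the canonical form $A$ of type $D_1$ recorded in Appendix~\ref{CCUT45} and write out $\ZU{5}(A)$ explicitly. The proposition forces this to be a subgroup of order $q^7$ whose centre has order $q^2$: indeed a commuting pair $(A,B)$ retains the whole centraliser $\ZU{5}(A)$ precisely when $B$ lies in $Z(\ZU{5}(A))$, and these are exactly the $q^2$ branches of type $D_1$. I would then pick a general $B\in\ZU{5}(A)$ and a general conjugator $X\in\ZU{5}(A)$ and expand $XB=B'X$, where $B'=XBX^{-1}$.

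As in the preceding cases, this relation should leave the two ``deep'' entries of $B$ (the ones pointing along the two central directions of $\ZU{5}(A)$) invariant; these contribute a free factor of $q^2$ to every count. It should also fix a few further ``connector'' entries and express the remaining primed entries of $B'$ as the original entries shifted by affine combinations of the entries of $X$. The simultaneous centraliser $\ZU{5}(A,B)$ is then the subgroup of $\ZU{5}(A)$ cut out by the homogeneous versions of those relations, so that $|\ZU{5}(A,B)|=q^{7-r}$ with $r$ the rank of the resulting linear system in the entries of $X$, and the branch type is read off from this order together with whether $\ZU{5}(A,B)$ is commutative.

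The governing dichotomy is whether all the connector entries of $B$ vanish. When they do, only one homogeneous relation survives; if the two residual reducible entries also vanish then $B$ is central, giving the $q^2$ branches of type $D_1$, whereas the remaining choices produce non-commutative centralisers of order $q^6$. According to whether exactly one or both of those two entries are nonzero, these split into $2q(q-1)$ branches of type $B_4$ and $q(q-1)^2$ branches of the new type $UNT_3$ introduced in Proposition~\ref{T5B1}. When at least one connector entry is nonzero, the affine relations let one clear all the reducible entries, the rank rises to $2$ or $3$, and the commutative centralisers of order $q^5$ and $q^4$ then give the regular branches; enumerating the vanishing patterns of the connectors should yield $q^2(q-1)$ branches of type $R_2$ and $q^2(q^2-1)$ branches of type $R_3$.

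I expect the genuine difficulty to lie in two places. The first is separating the two order-$q^6$ families: $B_4$ and $UNT_3$ have centralisers of the same order but are not isomorphic, and the distinction rests on the ratio of the two nonzero reducible entries—when both are nonzero this ratio forces the coupled $UNT_3$ centraliser, which I would confirm by exhibiting an explicit isomorphism onto the standard $UNT_3$ model exactly as was done when $UNT_3$ first arose in Proposition~\ref{T5B1}. The second is the bookkeeping for the regular branches, where the connector patterns must be counted without overlap; as a global consistency check, the five branch numbers should sum to the class number of $\ZU{5}(A)$, namely $q^{4}+2q^{3}-q^{2}-q$.
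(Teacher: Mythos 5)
Your skeleton is the paper's: conjugate a general $B\in\ZU{5}(A)$ by a general $X\in\ZU{5}(A)$, normalize, and read the type off the common centralizer. Two of your anchors are also correct: the $q^2$ branches of type $D_1$ are exactly the elements of $Z(\ZU{5}(A))$ (a subgroup of $\ZU{5}(A)$ isomorphic to a $D_1$-centralizer has the full order $q^7$, hence is all of $\ZU{5}(A)$), and the five counts do sum to $q^{4}+2q^{3}-q^{2}-q$. The split of the non-central, non-regular branches by whether one or both of the $(1,4)$- and $(2,5)$-entries are nonzero is also the right criterion for $B_4$ versus $UNT_3$. But one structural claim you rely on is false, and it is contradicted by the very numbers you are trying to prove: the two entries of $B$ along the central directions are \emph{not} conjugation-invariant and do \emph{not} ``contribute a free factor of $q^2$ to every count'' --- $2q(q-1)$ and $q(q-1)^2$ are not divisible by $q^2$. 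Concretely, writing $b_1,d_1,d_2$ for the $(1,4)$-, $(1,5)$- and $(2,5)$-entries of $B$, the relation $XB=B'X$ gives (when the embedded $3\times3$ block of $B$ is trivial) $d_1'=d_1+z_1d_2-\tfrac{c_3}{c_2}z_3b_1$, so the central $(1,5)$-entry is invariant only when $b_1=d_2=0$ and is normalized to $0$ on precisely the strata producing $B_4$ and $UNT_3$; only the component of $B$ along $A-I$ survives there, contributing a factor $q$, not $q^2$.

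The second gap is that you never supply the device that makes the analysis finite. In block form $XB=B'X$ forces $ZC=C'Z$ for the embedded upper-left $3\times3$ unitriangular blocks, so the first move is to replace $C$ by one of the five conjugacy-class representatives of $UT_3(\Fq)$ and restrict $Z$ to $Z_{UT_3(\Fq)}(C)$; this five-way split is what actually organizes the proof ($D_1$, $B_4$, $UNT_3$ and part of $R_2$ arise from $C=I_3$ and $C=I+cE_{13}$; the other three classes of $C$ yield only $R_3$, in amounts $q^2(q-1)$, $q^2(q-1)$ and $q^2(q-1)^2$). Your ``connector'' dichotomy is never pinned down to specific entries, and as stated it does not reproduce this partition: for example $C=I_3$ with the $(2,4)$-entry off the line $\Fq\cdot(A-I)$ already yields $R_2$ branches, and $q(q-1)$ of the $D_1$ branches live over $C\neq I_3$. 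Without the reduction of $C$ to canonical form (and the per-case knowledge of $Z_{UT_3(\Fq)}(C)$ needed to decide which entries can be cleared), the rank computation you describe cannot actually be carried out, so the plan as written does not yet constitute a proof.
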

\begin{proof}
 An upper unitriangular matrix of type $D_1$ has the canonical form $A = \begin{pmatrix}
1&&c_1&&\\&1&&c_2&\\&&1&&c_3\\&&&1&\\&&&&1
 \end{pmatrix}$, where $a,b,c \neq 0$. $\ZU{5}(A) = \left\{\begin{pmatrix}1&a_1&a_2&b_1&d_1 \\&1&a_3&b_2&d_2\\&&1&\frac{c_2}{c_1}a_1&\frac{c_3}{c_1}a_2\\&&&1&\frac{c_3}{c_2}a_3\\&&&&1\end{pmatrix}\right\}$, which we rewrite as:
 $$\ZU{5}(A) = \left\{\begin{pmatrix}C & \overrightarrow{b} &\overrightarrow{d}\\& 1& \frac{c_3}{c_2}C_{23}\\&&1 \end{pmatrix}\mid C\in UT_3(\Fq), \overrightarrow{b} = \left(\begin{smallmatrix}b_1\\b_2\\\frac{c_2}{c_1}C_{12}\end{smallmatrix}\right), \overrightarrow{d} =\left(\begin{smallmatrix}d_1\\d_2\\\frac{c_3}{c_1}C_{13}\end{smallmatrix}\right) \right\}.$$
Let $B = \begin{pmatrix}C & \overrightarrow{b} &\overrightarrow{d}\\& 1& \frac{c_3}{c_2}C_{23}\\&&1 \end{pmatrix}$, and $B' =\begin{pmatrix}C' & \overrightarrow{b'} &\overrightarrow{d'}\\& 1& \frac{c_3}{c_2}C'_{23}\\&&1 \end{pmatrix}$ be a conjugate of $B$ by a member $X = \begin{pmatrix}Z & \overrightarrow{y} &\overrightarrow{w}\\& 1& \frac{c_3}{c_2}Z_{23}\\&&1 \end{pmatrix} \in \ZU{5}(A)$, with $\overrightarrow{y} =\left(\begin{smallmatrix}y_1\\y_2\\\frac{c_2}{c_1}Z_{12}\end{smallmatrix}\right) $, and $\overrightarrow{w} = \left(\begin{smallmatrix}w_1\\w_2\\\frac{c_3}{c_1}Z_{13}\end{smallmatrix}\right)$. We thus have $XB = B'X$. First thing we see is that $ZC = C'Z$. So we can take $C$ to be a conjugacy class representative in $UT_3(\Fq)$, and we thus have the following equations:
\begin{eqnarray}
 Z\overrightarrow{b} + \overrightarrow{y} &=& C\overrightarrow{y} + \overrightarrow{b'} \label{ED11}\\
 Z\overrightarrow{d} + \frac{c_3}{c_2}C_{23}\overrightarrow{y} + \overrightarrow{w} &=& C'\overrightarrow{w} + \frac{c_3}{c_2}Z_{23}\overrightarrow{b'} + \overrightarrow{d'}\label{ED12}
\end{eqnarray}

\noindent{\bfseries When $C = I_3$:} In this case $C_{12} = C_{13} = C_{23} = 0$. We have $Z = \begin{pmatrix}1&z_1&z_2\\&1&z_3\\&&1\end{pmatrix}$. 
Equation~\ref{ED11} becomes:
$\begin{pmatrix}b_1 + z_1b_2\\ b_2\\ 0 \end{pmatrix} = \begin{pmatrix}b'_1\\ b'_2\\ 0 \end{pmatrix}$. We look at two cases here: When $b_2 \neq 0$, and when $b_2 = 0$.

When $b_2 = 0$, We have $b'_1 = b_1$, and Equation~\ref{ED12} becomes:
$$\begin{pmatrix}d_1 + z_1d_2\\ d_2\\ 0\end{pmatrix} =  \begin{pmatrix}d'_1 + \frac{c_3}{c_2}z_3b_1\\d'_2 +\frac{c_3z_3}{c_2}b'_2\\ 0\end{pmatrix}$$ We have $d'_2 = d_2$. 

When $b_2 = b_1 = d_2 = 0$: We have $d'_1 = d_1$. Thus $B$ is reduced to $\begin{pmatrix}1&&&&d_1\\&1&&&\\&&1&&\\&&&1&\\&&&&1\end{pmatrix}$. So $\ZU{5}(A,B) = \ZU{5}(A)$. Hence $(A,B)$ is a branch of type $D_1$, and there are $q$ such branches.

When $b_2 = b_1 = 0$, and $d_2 \neq 0$, we can choose $z_1$ such that $d_1 = 0$. Thus, $B$ is reduced to $\begin{pmatrix}1&&&&\\&1&&&d_2\\&&1&&\\&&&1&\\&&&&1\end{pmatrix}$, and $\ZU{5}(A,B) = \left\{\begin{pmatrix}1&&z_2&y_1&w_1\\&1&z_3&y_2&w_2\\&&1&&\frac{c_3}{c_1}z_2\\&&&1&\frac{c_3}{c_2}z_3\\&&&&1\end{pmatrix}\right\}$, which is of type ~~~. So $(A,B)$ is a branch of type $B_4$, as $\ZU{5}(A,B)$ can be conjugated by the elementary matrix that swaps rows and columns 1 and 2 to get the centralizer subgroup of one of the canonical matrices of type $B_4$, and there are $(q-1)$ branches of this type.

When $b_1 \neq 0$, in Equation~\ref{ED12}, we choose $z_3$ so that $d_1 = 0$. Thus $B$ is reduced to $\begin{pmatrix}1& & &b_1&\\&1&&&d_2\\&&1&&\\&&&1&\\&&&&1\end{pmatrix}$, and $\ZU{5}(A,B) = \left\{\begin{pmatrix}1 & z_1 & z_2 & y_1 & w_1\\ &1 &\frac{c_2d_2}{c_3b_1}z_1&y_2&w_2\\&&1&\frac{c_2}{c_1}z_1&\frac{c_3}{c_1}z_2\\&&&1&\frac{d_2}{b_1}z_1\\&&&&1 \end{pmatrix}\right\}$. Again, we have 2 cases here:

When $d_2 = 0$, $B = \begin{pmatrix}1& & &b_1&\\&1&&&\\&&1&&\\&&&1&\\&&&&1\end{pmatrix}$. Here $\ZU{5}(A,B) = \left\{\begin{pmatrix}1 & z_1 & z_2 & y_1 & w_1\\ &1 &&y_2&w_2\\&&1&\frac{c_2}{c_1}z_1&\frac{c_3}{c_1}z_2\\&&&1&\\&&&&1 \end{pmatrix}\right\}$. On conjugating by an elementary matrix, which swaps rows and columns 2 and 3 of each element of $\ZU{5}(A,B)$, we get the centralizer of one of the canonical matrices of the type $B_4$. Thus there are $q-1$ branches of type $B_4$.

When $d_2 \neq 0$, we have $\ZU{5}(A,B) = \left\{\begin{pmatrix}1 & z_1 & z_2 & y_1 & w_1\\ &1 &\frac{c_2d_2}{c_3b_1}z_1&y_2&w_2\\&&1&\frac{c_2}{c_1}z_1&\frac{c_3}{c_1}z_2\\&&&1&\frac{d_2}{b_1}z_1\\&&&&1 \end{pmatrix}\right\}$. Thus this branch is of the  new type $UNT_3$, and there are $(q-1)^2$ such branches.

When $b_2 \neq 0$, choose $z_1$ such that $b'_1 = 0$. Thus equating Equation~\ref{ED11} with $b_1$ replaced by $0$, we get that $z_1 = 0$. Thus with $b_1 = 0$ and $z_1 = 0$, we get from Equation~\ref{ED12}, $d'_1 = d_1$, and with a nice choice of $z_3$, we can reduce $d'_2$ to 0. Hence, $B$ is reduced to 
$\begin{pmatrix}
1&&&&d_1\\&1&&b_2&\\&&1&&\\&&&1&\\&&&&1                                              \end{pmatrix}$, and $\ZU{5} = \left\{\begin{pmatrix}1 &  & z_2 & y_1 & w_1\\ &1 &&y_2&w_2\\&&1&&\frac{c_3}{c_1}z_2\\&&&1&\\&&&&1 \end{pmatrix}\right\}$, which is a centralizer of type $R_2$. Thus $(A,B)$ is a branch of type $R_2$, and there are $q(q-1)$ such branches.

{\bfseries When $C= \left(\begin{smallmatrix}1&&c\\&1&\\&&1\end{smallmatrix}\right), c\neq 0$ :} Here Equation~\ref{ED11} becomes: $\begin{pmatrix}b_1 + z_1b_2\\b_2\\0\end{pmatrix} = \begin{pmatrix}b'_1 + \frac{c_3c}{c_2}z_1\\b'_2\\0\end{pmatrix}$. So we have $b'_2 =  b_2$. We see 2 cases here: $b_2 = \frac{c_2}{c_1}c$, and $b_2 \neq\frac{c_2}{c_1}c$. 

When $b_2 \neq \frac{c_2}{c_1}c$. In the above equation, we choose $z_1$ such that $b'_1 = 0$. Thus, with substituting $b_1$ with $b'_1 = 0$ in the above equation, we get $z_1 = 0$. Thus, with this, Equation~\ref{ED12} becomes
$\begin{pmatrix}d_1 \\ d_2 + \frac{c_3}{c_1}cz_3\\\frac{c_3}{c_1}c \end{pmatrix} = \begin{pmatrix}d'_1 \\d'_2+ \frac{c_3}{c_2}b_2z_3\\\frac{c_3}{c_1}c\end{pmatrix}.$
As $b_2\neq \frac{c_2}{c_1}c$, we can choose a $z_3$ so that $d'_2  = 0$, and we have $d'_1 = d_1$. So $B$ boils down to $\begin{pmatrix}1&&c&&d_1\\&1&&b_2&\\&&1&&\frac{c_3}{c_1}c\\&&&1&\\&&&&1\end{pmatrix}$, with $\ZU{5}(A,B) = \left\{ \begin{pmatrix}1 &  & z_2 & y_1 & w_1\\ &1 &&y_2&w_2\\&&1&&\frac{c_3}{c_1}z_2\\&&&1&\\&&&&1 \end{pmatrix} \right\}$. Thus $(A,B)$ too is a branch of type $R_2$, and there are $q(q-1)^2$ such branches.

When $b_2 = \frac{c_2}{c_1}c$, we get from Equation~\ref{ED11}, $b'_1 = b_1$. Equation~\ref{ED12} boils down to: $\begin{pmatrix}d_1 + z_1d_2\\ d_2\\0\end{pmatrix} = \begin{pmatrix}d'_1 + \frac{c_3}{c_2}z_3b_1\\ d'_2\\0\end{pmatrix}$. So we have $d'_2 = d_2$. We look first at $b_1 = d_2 = 0$. $B$ is reduced to $\begin{pmatrix}1&&c&&d_1\\&1&&\frac{c_2}{c_1}c&\\&&1&&\frac{c_3}{c_1}c\\&&&1&\\&&&&1\end{pmatrix}$, and $\ZU{5}(A,B) = \ZU{5}(A)$. Thus $(A,B)$ is a branch of type $D_1$, and there are $q(q-1)$ such branches.

When $b_1 \neq 0$ choose $z_3$ such that $d'_1 = 0$. So, $B$ becomes:
$\begin{pmatrix}1&&c&b_1&\\&1&&\frac{c_2}{c_1}c&d_2\\&&1&&\frac{c_3}{c_1}c\\&&&1&\\&&&&1\end{pmatrix}$. We have two cases here:

When $d_2 = 0$, we have $B = \begin{pmatrix}1&&c&b_1&\\&1&&\frac{c_2}{c_1}c&\\&&1&&\frac{c_3}{c_1}c\\&&&1&\\&&&&1\end{pmatrix}$ and $$\ZU{5}(A,B) = \left\{ \begin{pmatrix}1 & z_1 & z_2 & y_1 & w_1\\ &1 &&y_2&w_2\\&&1&\frac{c_2}{c_1}z_1&\frac{c_3}{c_1}z_2\\&&&1&\\&&&&1 \end{pmatrix} \right\},$$ thus $(A,B)$ is of a type $B_4$, and there are $(q-1)^2$ such branches.

When $d_2 \neq 0$, $B =\begin{pmatrix}1&&c&b_1&\\&1&&\frac{c_2}{c_1}c&d_2\\&&1&&\frac{c_3}{c_1}c\\&&&1&\\&&&&1\end{pmatrix} $, and $$\ZU{5}(A,B) = \left\{ \begin{pmatrix}1 & z_1 & z_2 & y_1 & w_1\\ &1 &\frac{c_2d_2}{c_3b_1}z_1&y_2&w_2\\&&1&\frac{c_2}{c_1}z_1&\frac{c_3}{c_1}z_2\\&&&1&\frac{d_2}{b_1}z_1\\&&&&1 \end{pmatrix} \right\},$$ so, this branch too is of the type $UNT_3$. Thus there are $(q-1)^3$ branches of this new type.

When $b_1 = 0$, and $d_2 \neq 0$. We choose $z_1$ so that $d'_1 = 0$. Thus $B$ is reduced to $\begin{pmatrix}1&&c&&\\&1&&\frac{c_2}{c_1}c&d_2\\&&1&&\frac{c_3}{c_1}c\\ &&&1&\\&&&&1\end{pmatrix}$, and $\ZU{5} = \left\{\begin{pmatrix}1&&z_2&y_1&w_1\\&1&z_3&y_2&w_2\\&&1&&\frac{c_3}{c_1}z_2\\&&&1&\frac{c_3}{c_2}z_3\\&&&&1\end{pmatrix}\right\}$. This is of type $B_4$. $(A,B)$ is a branch of type $B_4$, and there are $(q-1)^2$ such branches.

\noindent{\bfseries When $C= \left(\begin{smallmatrix}1&c&\\&1&\\&&1\end{smallmatrix}\right), c\neq 0$:} Here, $Z = \begin{pmatrix}1&z_1&z_2\\&1&\\&&1\end{pmatrix}$. Equation~\ref{ED11} boils down to $\begin{pmatrix}b_1 +b_2z_1 + \frac{c_2}{c_1}cz_2 \\ b_2\\\frac{c_2}{c_1}c\end{pmatrix} = \begin{pmatrix}cy_2 + b'_1\\b'_2\\\frac{c_2}{c_1}c\end{pmatrix}$. So $b'_2 = b_2$. As $c \neq 0$, we choose $y_2$ such that $b'_1 = 0$. Equation~\ref{ED12} becomes:
$\begin{pmatrix}d_1 +d_2z_1  \\ d_2\\0\end{pmatrix} = \begin{pmatrix}cw_2 + d'_1\\d'_2\\0\end{pmatrix}$. We have $d'_2 = d_2$. Take $w_2$ such that $d'_1 = 0$. So $B$ is reduced to $\begin{pmatrix}1&c&&&\\&1&&b_2&d_2\\&&1&\frac{c_2}{c_1}c&\\&&&1&\\&&&&1\end{pmatrix}$, and therefore $\ZU{5}(A,B) = \left\{\begin{pmatrix}1&z_1&z_2&y_1&y_2\\&1&&\frac{b_2}{c}z_1 + \frac{c_2}{c_1}z_2 &\frac{d_2}{c}z_1 \\&&1&\frac{c_2}{c_1}z_1&\frac{c_3}{c_1}z_2\\&&&1&\\&&&&1\end{pmatrix}\right\}$, which is of size $q^4$. It is routine to check that this centralizer is commutative. Thus this is a centralzier of type $R_3$. Thus $(A,B)$ is a branch of type $R_3$, and there are $q^2(q-1)$ such branches. 

\noindent{\bfseries When $C= \left(\begin{smallmatrix}1&&\\&1&c\\&&1\end{smallmatrix}\right), c\neq 0$:} In this case $Z = \begin{pmatrix}1&&z_2\\&1&z_3\\&&1\end{pmatrix}$. With this, Equation~\ref{ED11} becomes $\begin{pmatrix}b_1\\b_2\\0\end{pmatrix}=\begin{pmatrix}b'_1\\b'_2\\0\end{pmatrix}$. So, our focus thus is solely on Equation~\ref{ED12}. The equation is reduced to $\begin{pmatrix}d_1+\frac{c_3}{c_2}cy_1\\ d_2 + \frac{c_3}{c_2}cy_2\\0\end{pmatrix} = \begin{pmatrix}d'_1 + \frac{c_3}{c_2}b_1z_3\\ d'_2 + \frac{c_3}{c_1}cz_2 + \frac{c_3}{c_2}b_2z_3\\0\end{pmatrix}$

As $\frac{c_3}{c_2}c\neq 0$, choose $y_1$, $y_2 $ so that $d'_1 = d'_2 = 0$. Thus $B$ is reduced to $\begin{pmatrix}1&&&b_1&\\&1&c&b_2&\\&&1&&\\&&&1&\frac{c_3}{c_2}c\\&&&&1\end{pmatrix}$, and $\ZU{5}(A,B)= \left\{ \begin{pmatrix}1&&z_2&\frac{b_1}{c}z_3&w_1\\&1&z_3&\frac{c_2}{c_1}z_2+\frac{b_2}{c}z_3&w_2\\&&1&&\frac{c_3}{c_1}z_2\\&&&1&\frac{c_3}{c_2}z_3\\&&&&1\end{pmatrix} \right\}$. This is of size $q^4$, and with a routine check we see that it is commutative. This is a centralizer of type $R_3$, hence $(A,B)$ is a branch of type $R_3$, and there are $q^2(q-1)$ such branches.

\noindent{\bfseries When $C= \left(\begin{smallmatrix}1&c_0&\\&1&d_0\\&&1\end{smallmatrix}\right), c_0,d_0\neq 0$:} Here $Z = \begin{pmatrix}1&z_1&z_2\\&1&\lbd_0z_1\\&&1\end{pmatrix}$, where $\lbd_0 = \frac{d_0}{c_0}$. Equation~\ref{ED11} becomes:
$\begin{pmatrix}b_1+z_1b_2 + z_2\frac{c_2}{c_1}c_0\\b_2 + \frac{c_2}{c_1}d_0z_1\\ 0\end{pmatrix} = \begin{pmatrix}c_0y_2 +b'_1\\\frac{c_2}{c_1}d_0z_1 + b'_2\\ 0\end{pmatrix}$. As $c_0$ and $d_0$ are non-zero, we have $b'_2 = b_2$. We choose $y_2$ such that $b'_1 = 0$. Hence, on replacing $b_1$ with $0$ in the above equation we get $y_2 = \frac{b_2}{c_0}z_1+ \frac{c_2}{c_1}z_2$. With these, Equation~\ref{ED12} boils down to $\begin{pmatrix}d_1+z_1d_2+\frac{c_3}{c_2}d_0y_1\\d_2\\0\end{pmatrix}=\begin{pmatrix}c_0w_2 + d'_1\\d'_2\\0\end{pmatrix}$. So $d'_2 = d_2$, and choose $w_2$ such that $d'_1 = 0$. Hence, $B$ is reduced to $\begin{pmatrix}1&c_0&&&\\&1&d_0&b_2&d_2\\&&1&\frac{c_2}{c_1}c_0&\\&&&1&\frac{c_3}{c_2}d_0\\&&&&1\end{pmatrix}$, with $\ZU{5}(A,B) = \left\{\begin{pmatrix}1&z_1&z_2&y_1&w_1\\&1&\frac{d_0}{c_0}z_1 &\frac{b_2}{c_0}z_1+\frac{c_2}{c_1}z_2 & \frac{c_3}{c_2c_0}y_1 + \frac{d_2c_0}{d_0}z_1 \\&&1&\frac{c_2}{c_1}z_1&\frac{c_3}{c_1}z_2\\&&&1&\frac{c_3d_0}{c_2c_0}z_1\\&&&&1\end{pmatrix}\right\}$. This too is of type $R_3$. So $(A,B)$ is a brach of type $R_3$, and there are $q^2(q-1)^2$ such branches.

So, on adding up the branches of each of the types, we have
\begin{itemize}
\item $q^2$ branches of type $D_1$,
\item $2q(q-1)$ branches of type $B_4$,
\item $q^2(q-1)$ branches of type $R_2$,
\item $q^2(q^2-1)$ branches of type $R_3$, and 
\item $q(q-1)^2$ branches of type $UNT_3$. 
\end{itemize}
These match with the estimations done for $q = 3$ in GAP.
\end{proof}

 
 \begin{prop}\label{T5D2}
 An upper unitriangular matrix of type $D_2$ has $q^3$ branches of type $D_2$, and $q^2(q^2-1)$ branches of regular type $R_3$.
 \end{prop}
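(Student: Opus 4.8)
The plan is to carry out the same explicit conjugation computation used throughout this section (compare Propositions~\ref{T5B3}--\ref{T5B6}), exploiting that the asserted profile---$q^3$ branches of type $D_2$ together with $q^2(q^2-1)$ branches of the regular type $R_3$---is identical to the one already established for type $B_6$ in Proposition~\ref{T5B6}. First I would fix the canonical representative $A$ of type $D_2$ from Appendix~\ref{CCUT45} and write its centralizer $\ZU{5}(A)$ as an explicit group of unitriangular matrices, recording which entries are free parameters and which are linked by the ratios coming from the off-diagonal entries of $A$. A general $B\in\ZU{5}(A)$ then carries a fixed list of scalar parameters, and I would introduce a second element $X\in\ZU{5}(A)$ and its conjugate $B'=XBX^{-1}$.

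Next I would expand $XB=B'X$ entry by entry. The expected outcome, by analogy with the $B_6$ computation, is that all but one of the free entries of $B$ are conjugation-invariant, so that $B'$ agrees with $B$ except possibly in a single top-corner slot, and the entire system collapses to one scalar relation of the form
\begin{equation*}
(\text{fixed entry})\,x + (\text{corner of }B) = (\text{corner of }B') + (\text{fixed entry})\,w ,
\end{equation*}
where $x$ and $w$ are among the free parameters of $X$. The branching is then governed solely by whether the corner entry of $B$ can be normalized to $0$.

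The decisive case split turns on the vanishing of the two coefficients multiplying $x$ and $w$ in that relation. When both coefficients vanish the corner entry is invariant, no reduction is possible, $\ZU{5}(A,B)=\ZU{5}(A)$, and $(A,B)$ is again of type $D_2$; counting the free choices of the remaining invariant parameters yields $q^3$ such branches. When at least one coefficient is nonzero the corner entry can be cleared, a routine check shows that $\ZU{5}(A,B)$ is commutative of size $q^4$, so $(A,B)$ is regular of type $R_3$, and tallying the admissible parameter choices gives $q^2(q^2-1)$ branches. The two totals sum to the number of conjugacy classes of $\ZU{5}(A)$, which is the desired consistency check.

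The main obstacle I anticipate is bookkeeping rather than anything conceptual: reading off from the explicit centralizer precisely which entries of $B$ survive as invariants, and then, in the regular case, verifying commutativity of $\ZU{5}(A,B)$ and confirming that the reduced group is isomorphic to an $R_3$ centralizer rather than to a genuinely new type. I would guard against the easy miscount of invariant parameters---which would perturb the leading count away from $q^3$ by lower-order terms---by cross-checking both branch counts against the $q=3$ GAP data already used to validate Proposition~\ref{T5D1}.
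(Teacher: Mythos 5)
Your proposal is correct and follows essentially the same route as the paper: the paper also reduces $XB=B'X$ to the single relation $x_2d_0+a_3=w_0a_2'+a_3'$ with $a_0,a_1,a_2,d_0$ invariant, splits on $(a_2,d_0)=(0,0)$ versus not, and obtains $q^3$ branches of type $D_2$ and $q^2(q^2-1)$ branches of the commutative size-$q^4$ centralizer, i.e.\ type $R_3$. Your observation that the computation mirrors the $B_6$ case is exactly right.
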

 \begin{proof}
   Let $A=\begin{pmatrix} 1& a &&&\\ &1&b&&\\ &&1&&c\\&&&1&&\\&&&&1\end{pmatrix}, a,b,c\neq0$ a matrix of type $D_2.$ The  centralizer $Z_{UT_5}(A)$ of $A$ is $\left\{ \begin{pmatrix} 1& x_0 & x_1& x_2&x_3\\ & 1& \lbd_1x_0&  & \lbd_2x_1\\ &&1&& \lbd_2x_0\\ &&&1&w_0\\&&&&1\end{pmatrix} ~|~ \lambda_1=\frac{b}{a} , \lambda_2=\frac{c}{a} ,x_i,w_0\in \Fq\right\}.$ 
   
   Let $X=\begin{pmatrix} 1& x_0 & x_1& x_2&x_3\\ & 1& \lambda_1x_0&  & \lambda_2x_1\\ &&1&& \lambda_2x_0\\ &&&1&w_0\\&&&&1\end{pmatrix}$ be an element of $Z_{UT_5(\Fq)}(A).$ Let $B=\begin{pmatrix} 1& a_0 & a_1& a_2&a_3\\ & 1& \lambda_1a_0&  & \lambda_2a_1\\ &&1&& \lambda_2a_0\\ &&&1&d_0\\&&&&1\end{pmatrix},$ and 
   $B^\prime= \begin{pmatrix} 1& a_0^\prime & a_1^\prime& a_2^\prime&a_3^\prime\\ & 1& \lambda_1a_0^\prime&  & \lambda_2a_1^\prime\\ &&1&& \lambda_2a_0^\prime\\ &&&1&d_0^\prime\\&&&&1\end{pmatrix} = XBX^{-1}$. Thus equating 
   $XB=B^\prime X$ gives us $a_0=a_0^\prime$, $a_1=a_1^\prime$, $a_2=a_2^\prime$, $d_0=d_0^\prime$, and the following equation:
   \begin{center}
  \begin{tabular}{ccc}
    $x_2d_0+a_3=w_0a_2^\prime+a_3^\prime$\\
  \end{tabular}
  \end{center}
  We look at two cases, when $(a_2,d_0)=(0,0)$ and $(a_2,d_0)\neq(0,0).$
  
  {\bf Case: $(a_2,d_0)=(0,0)$} In this case, we get $a_3=a_3^\prime.$ Therefore $Z_{UT_5(\Fq)}(A,B)=Z_{UT_5(\Fq)}(A).$ So $(A,B)$ is a branch of type $D_2,$ and there are 
  $q$ branches.

  {\bf Case: $(a_2,d_0)\neq(0,0)$} In this case, one of $d_0$ and $a_2$ is non-zero. We can choose $x_2$ or $w_0$ in such a way that we get $a_3=0.$ By routine check, we get 
  $Z_{UT_5(\Fq)}(A,B)$ is commutative group of size $q^4.$ Thus $(A,B)$ is of the regular type $R_3,$ and there are $q^2(q^2-1)$ branches of this type.   
  \end{proof}
 
 \begin{prop}\label{T5R}
 A matrix of the $R_1$ type has $q^6$ branches of type $R_1,$ a matrix of the $R_2$ type has $q^5$ branches of type $R_2,$ and
 a matrix of the $R_3$ type has $q^4$ branches of type $R_3.$
\end{prop}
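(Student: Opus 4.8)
The plan is to invoke the same principle used throughout the paper for regular types. By definition, $R_1$, $R_2$, and $R_3$ are $\Reg$ types, so the centralizer in $UT_5(\Fq)$ of a representative of each is a commutative subgroup. I would first record, from the conjugacy-class data in Section~\ref{CCUT45}, that these centralizers have orders $q^6$, $q^5$, and $q^4$, respectively.

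The key observation is that when the common centralizer $Z = \ZU{5}(A_1,\dots,A_k)$ of a commuting tuple is abelian, every element $B \in Z$ commutes with each $A_i$ and with every element of $Z$; hence $\ZU{5}(A_1,\dots,A_k,B) = Z$, and no element of $Z$ can conjugate $B$ to a different element. Therefore the simultaneous conjugacy class of the extended tuple $(A_1,\dots,A_k,B)$ is pinned down by $B$ alone, and distinct $B \in Z$ yield distinct branches. This gives a bijection between the branches and the elements of $Z$, so that the number of branches is exactly $|Z|$, and each branch is again of the same regular type because the centralizer is unchanged.

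Applying this to each case yields $q^6$ branches of type $R_1$, $q^5$ branches of type $R_2$, and $q^4$ branches of type $R_3$, as claimed. There is no real obstacle here beyond confirming commutativity of the three centralizers and reading off their orders; the argument is identical in spirit to Proposition~\ref{TU4R} and to the treatment of $\Reg$ types in the $GT_n(\Fq)$ sections, so I would simply state it briefly rather than repeat the verification.
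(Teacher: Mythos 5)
Your proposal is correct and is essentially the paper's own argument: the paper's proof of Proposition~\ref{T5R} consists of the single observation that $R_1$, $R_2$, $R_3$ are $\Reg$ types with commutative centralizers, so each element of the centralizer is its own branch and the branch count equals the centralizer order read off from the table in Appendix~\ref{CCUT45}. You have merely spelled out the bijection between branches and centralizer elements that the paper leaves implicit.
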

\begin{proof}
 The type $R_1, R_2$ and $R_3$ are $\Reg$ types, hence the centralizer of matrices of such a type is a commutative.
\end{proof}

\subsection{Branching Rules for the New Types} While determining the branching rules for the types in $UT_5(\Fq)$, we observed that there are some commuting pairs of elements of $UT_5(\Fq)$, which are not isomorphic to the centralizers of any of the elements in $UT_5(\Fq)$. Thus, giving rise to what we call ``new types''. The new types, we have seen so far are $UNT_1$ (first observed in Proposition~\ref{T5A3}), $UNT_2$ (observed in Proposition~\ref{T5B1}) and $UNT_3$ (observed in Propositions~\ref{T5B1}). Now, we compute the branching for these cases and we see that no further new types occur.

\begin{prop}\label{T5UN1}
The new type $UNT_1$ has $q^3$ branches of type $UNT_1$, $q^2(q^2-1)$ branches of type $R_2$, and $q^4-q^3$ branches of type $R_3$.
\end{prop}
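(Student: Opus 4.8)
The plan is to follow the same centralizer-reduction technique used throughout the paper for the new types $tNT_i$ and $NR_1$ in $GT_4(\Fq)$, and for the earlier type analyses in $UT_5(\Fq)$. First I would fix a commuting pair $(A,B)$ of type $UNT_1$ and write down its common centralizer explicitly, which from Proposition~\ref{T5A3} is
$$\ZU{5}(A,B) = \left\{\begin{psmallmatrix}1&x_0&x_1&\lambda z_0 &x_3\\&1&y_0&&y_2\\&&1&& \\&&&1&z_0\\&&&&1 \end{psmallmatrix}\mid x_i,y_i,z_0\in \Fq\right\},$$
for some fixed nonzero $\lambda$. I would then take a general element $C$ of this centralizer and a general element $X$ of the same centralizer, form the conjugate $C'=XCX^{-1}$, and equate $XC=C'X$. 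This produces a list of entries that are forced to be invariant under conjugation, together with a small number of genuine reduction equations involving the remaining parameters. The goal is to extract canonical representatives for the simultaneous conjugacy classes of the triple $(A,B,C)$ and, for each representative, recompute $\ZU{5}(A,B,C)$ to identify its type.

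The key steps, in order, are as follows. I would split the analysis according to which of the coordinate entries of $C$ that carry nontrivial conjugation action vanish or not — mirroring the case splits (e.g. ``$a_0=c_0$ versus $a_0\neq c_0$'', or ``the relevant scalar is zero versus nonzero'') that organize every preceding proof. In each branch I would use the free parameters $x_0,x_1,y_0,y_2,z_0$ of the conjugating element $X$ to normalize the movable entries of $C$ to $0$ or $1$, count how many choices of $C$ lead to each normalized form, and thereby tally the number of branches of each resulting type. Concretely I expect three outcomes: a ``do nothing'' branch where $\ZU{5}(A,B,C)=\ZU{5}(A,B)$, giving $q^3$ branches that remain of type $UNT_1$; a branch where a normalization collapses the centralizer to a commutative five-dimensional one of type $R_2$, contributing $q^2(q^2-1)$ branches; and a branch where the reduction yields a commutative four-dimensional centralizer of type $R_3$, contributing $q^4-q^3$ branches. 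After enumerating all cases I would add up the branch counts and check that the total equals $|\ZU{5}(A,B)|=q^6$, which is the built-in consistency check used implicitly (and sometimes verified against GAP for $q=3$, as in Proposition~\ref{T5D1}) throughout the paper.

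The main obstacle I anticipate is purely bookkeeping rather than conceptual: because the centralizer has the constrained entry $\lambda z_0$ linking the $(1,4)$ and $(4,5)$ positions, the conjugation equations couple parameters in a slightly asymmetric way, so I must be careful that after each normalization the residual centralizer is computed correctly and that its commutativity (when I claim type $R_2$ or $R_3$) is genuinely verified by a direct bracket computation, not merely inferred from its size. A second, subtler point is confirming that no genuinely \emph{new} type appears here — that is, that every residual centralizer is isomorphic to the centralizer of some existing type in $UT_5(\Fq)$ rather than a fourth new type. Since the proposition statement already asserts that only $R_2$ and $R_3$ (besides $UNT_1$ itself) arise, the verification reduces to exhibiting, in each case, an explicit matching with the known regular centralizers and confirming dimensions and commutativity; this is routine but must be done with care to get the multiplicities $q^2(q^2-1)$ and $q^4-q^3$ exactly right.
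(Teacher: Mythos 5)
Your proposal follows exactly the paper's method: it writes down the common centralizer from Proposition~\ref{T5A3}, conjugates a general element $C$ by a general $X$ of that centralizer, splits on the vanishing of the entries that carry a nontrivial conjugation action (the paper uses $(a_0,b_2)=(0,0)$ versus not, then $b_0$ and $a_0$ separately), and normalizes the movable entries to reach the three branch types with the stated counts. One small correction to your sanity check: since $\ZU{5}(A,B)$ is non-commutative, the branch counts do not sum to $|\ZU{5}(A,B)|=q^6$ (the plain sum is $2q^{4}-q^{2}$); the correct class-equation check is $\sum_T n_T\,|\ZU{5}(A,B)|/|Z_T| = q^{3}+ (q^{4}-q^{2})q + (q^{4}-q^{3})q^{2} = q^{6}$, which does hold for the stated multiplicities.
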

\begin{proof}For some pair $(A,B)$ of commuting elements in $UT_5(\Fq)$, of type $UNT_1$, the centralizer subgroup is $\ZU{5}(A,B) = \left\{\begin{psmallmatrix}1&x_0&x_1&\lambda z_0 &x_3\\&1&y_0&&y_2\\&&1&& \\&&&1&z_0\\&&&&1 \end{psmallmatrix}\right\}$, where $\lbd \neq 0$ is fixed. Let $C = \begin{pmatrix}1& a_0& a_1 & \lbd c_0& a_3\\ &1&b_0& & b_2\\ &&1&& \\&&&1& c_0\\&&&&1\end{pmatrix}$, and let $C' = \begin{pmatrix}1& a'_0& a'_1 & \lbd c'_0& a'_3\\ &1&b'_0& & b'_2\\ &&1&& \\&&&1&c_0\\&&&&1\end{pmatrix} = XCX^{-1}$, where $X =\begin{pmatrix}1&x_0&x_1&\lambda z_0 &x_3\\&1&y_0&&y_2\\&&1&& \\&&&1&z_0\\&&&&1 \end{pmatrix}$. On equation $XC = CX$, we get $a'_0 = a_0$, $b'_0 = b_0$, $b'_2 = b_2$, $c'_0 = c_0$, and the following equations:

\begin{eqnarray}
a_1 + x_0b_0 &=& a'_1 + a_0y_0 \label{EU11}\\
a_3 + x_0b_2  &=& a'_3 +y_2a_0 \label{EU12}
\end{eqnarray}

We look at two main cases: $(a_0,b_2) = (0,0)$, and $(a_0,b_2)\neq (0,0)$.

\noindent{\bfseries When $a_0 = b_2 = 0$:} Equation~\ref{EU12} becomes $a'_3 = a_3$, Equation~\ref{EU11} becomes $a'_1 = a_1 + x_0b_0$. We have two subcases here:

When $b_0 =0$, then we get $a'_1 = a_1$. Thus $C$ boils down to $\begin{pmatrix}1&&a_1&\lbd c_0&a_3\\&1&&&\\&&1&& \\ &&&1&c_0\\&&&&1\end{pmatrix}$, and $\ZU{5}(A,B,C) = \ZU{5}(A,B)$. $(A,B,C)$ is therefore of type $UNT_1$, and there are $q^3$ such branches. 

When $b_0 \neq 0$, in Equation~\ref{EU11}, we can choose $x_0$ such that $a'_1 = 0$. Hence $C$ is reduced to $\begin{pmatrix}1& & &\lbd c_0&a_3\\&1&b_0&&\\ & & 1&&\\ &&&1&c_0 \\ &&&&1\end{pmatrix}$, and $\ZU{5}(A,B,C) = \left\{\begin{pmatrix}1& & x_1 & \lbd z_0& x_3\\ &1&y_0& & y_2\\ &&1&&\\&&&1&z_0\\&&&&1\end{pmatrix} \right\}$. Easy to see that this is a commutative group of size $q^5$. $(A,B,C)$ is a branch of type $R_2$, and there are $q^2(q-1)$ such branches.

\noindent{\bfseries When $(a_0,b_2)\neq (0,0)$:} When $a_0 \neq 0$, in Equation~\ref{EU11}, we choose $y_0$ such that $a'_1 = 0$. Thus, on replacing $a_1$ with $a'_1 = 0$ in that equation, we get $y_0 = \frac{b_0}{a_0}x_0$. In Equation~\ref{EU12} choose $y_2$ so that $a'_3 = 0$. Thus $C$ is reduced to $\begin{pmatrix}1&a_0&&\lbd c_0&\\&1&b_0&&b_2\\&&1&&\\ & & & 1&c_0\\&&&&1\end{pmatrix}$, and thus $\ZU{5}(A,B,C)= \left\{\begin{pmatrix}1& x_0& x_1 & \lbd z_0& x_3\\ &1&\frac{b_0}{a_0}x_0 & & \frac{b_2}{a_0}x_0 \\ &&1&& y_1\\&&&1&z_0\\&&&&1\end{pmatrix} \right\}$. Easy to see that this subgroup is a commutative one of size $q^4$. Thus $(A,B,C)$ is a branch of type $R_3$, and there are $(q-1)q^3 = q^4-q^3$ such branches. 

When $a_0 = 0$, and $b_2 \neq 0$. Equation~\ref{EU12} becomes $a_3 + x_0b_2 = a'_3$, and  Choose $x_0$ such that $a'_3 = 0$. Then, on replacing $a_3$ with $a'_3 = 0$ in Equation~\ref{EU12}, we get $x_0 = 0$. With these, Equation~\ref{EU11} becomes $a'_1 = a_1$. $C$ thus boils down to $\begin{pmatrix}
1 & &a_1 &\lbd c_0 &\\ 
&1&b_0&&b_2\\
&&1&&\\
&&&1&c_0\\
&&&&1
\end{pmatrix}$, and $\ZU{5}(A,B,C) = \left\{\begin{pmatrix}1& & x_1 &\lbd z_0& x_3\\ &1&y_0& & y_2\\ &&1&&\\&&&1&z_0\\&&&&1\end{pmatrix} \right\}$. This branch too is of type $R_2$, and there are $q^3(q-1)$ such branches. So, in total there are $q^3(q-1) + q^2(q-1) = q^4 -q^2$ branches of type $R_2$.\end{proof}
\begin{prop}\label{T5UN2}
The new type $UNT_2$ has $q^3$ branches of type $UNT_2$, $q^5-q^2$ branches of type $R_1$, and $q^4-q^3$ branches of type $R_3$.
\end{prop}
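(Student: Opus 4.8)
The plan is to follow the same normalization scheme used in Propositions~\ref{T5UN1} and~\ref{T5B1}. By definition (see Proposition~\ref{T5B1}), a commuting pair $(A,B)$ of type $UNT_2$ has common centralizer
$$\ZU{5}(A,B) = \left\{\begin{psmallmatrix}1&x_1&y_1&y_2&x_2\\&1&z_1&z_2&w_1\\&&1&0&\lambda x_1\\&&&1&x_1\\&&&&1 \end{psmallmatrix}\right\},$$
for a fixed $\lambda$. I would take a generic $C\in\ZU{5}(A,B)$ with free entries $a_0=(C)_{12}$, $a_1=(C)_{13}$, $a_2=(C)_{14}$, $a_3=(C)_{15}$, $b_0=(C)_{23}$, $b_1=(C)_{24}$, $b_2=(C)_{25}$ (so that $(C)_{35}=\lambda a_0$ and $(C)_{45}=a_0$ are forced), together with a generic $X\in\ZU{5}(A,B)$, and expand $XC=C'X$ with $C'=XCX^{-1}\in\ZU{5}(A,B)$.

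First I would read off the invariants. Since the superdiagonal entries $(1,2)$ and $(2,3)$ are adjacent, they cannot be altered by a commutator, so $a'_0=a_0$ and $b'_0=b_0$, whence $(C')_{35}=\lambda a_0$ and $(C')_{45}=a_0$ are fixed as well. The remaining content of $XC=C'X$ then reduces to a short list of scalar equations expressing $a'_1,a'_2,a'_3,b'_1,b'_2$ in terms of the unprimed entries and the parameters $x_1,y_1,z_1,\dots$ of $X$; these are precisely the entries I can hope to normalize.

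The case analysis would be organized by the pair $(a_0,b_0)$, which governs whether the normalizing parameters act effectively. When $a_0=b_0=0$ the tied entries also vanish and $C$ sits in the upper block; a sub-split on the deeper entries $(2,4),(2,5),(1,4),(1,5)$ yields the central canonical forms (whose stabilizer is all of $\ZU{5}(A,B)$, giving the $q^3$ branches of type $UNT_2$) together with several families whose reduced centralizer is commutative of order $q^6$, i.e. of type $R_1$. When at least one of $a_0,b_0$ is nonzero I can use it to kill the corresponding deep entries; the freedom that remains decides whether the stabilizer drops to the commutative group of order $q^6$ (type $R_1$) or all the way to order $q^4$ (type $R_3$). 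Collecting the subcases should produce $q^5-q^2$ branches of type $R_1$ and $q^4-q^3$ branches of type $R_3$, and summing with the $UNT_2$ count gives the asserted totals.

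The main obstacle is the computational bookkeeping: the $5\times 5$ products $XC$ and $C'X$ generate several quadratic cross terms (for instance the $(1,4)$, $(1,5)$ and $(2,5)$ entries mix $x_1,z_1,y_1$ against $a_0,b_0$ and against the tied entries $\lambda a_0,a_0$), so I must track these carefully to write the correct normalization equations and, crucially, to compute each reduced centralizer $\ZU{5}(A,B,C)$ exactly. The delicate point in each case is to verify that the reduced centralizer is genuinely commutative of the claimed order (so the branch is a known regular type rather than a new one) and, in the noncommutative case, that it is again isomorphic to the $UNT_2$ centralizer; this is what guarantees, as stated, that no further new types occur.
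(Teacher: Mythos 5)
Your plan matches the paper's proof in all essentials: it sets up $XC=C'X$ inside the $UNT_2$ centralizer, reads off the same invariants (the $(1,2)$ and $(2,3)$ entries, and consequently the tied $(3,5)$, $(4,5)$ entries), splits on whether the $(1,2)$ entry and then the $(2,3)$, $(2,4)$ entries vanish, and correctly identifies the $q^3$ branches of type $UNT_2$ as the central elements of $\ZU{5}(A,B)$, with every other orbit normalizing to a commutative stabilizer of order $q^6$ (type $R_1$) or $q^4$ (type $R_3$). The bookkeeping you defer is exactly what the paper carries out, and the subcase counts $q^2(q-1)+q^3(q-1)+q^4(q-1)=q^5-q^2$ for $R_1$ and $q^3(q-1)=q^4-q^3$ for $R_3$ come out as you predict.
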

\begin{proof}
A commuting pair $(A,B)$ of type $UNT_2$ has the centralizer $\left\{\begin{pmatrix}1&x_1&y_1&y_2&x_2\\&1&z_1&z_2&w_1\\&&1&&\lbd x_1\\&&&1&x_1\\&&&&1\end{pmatrix} \right\}$. 

Let $C = \begin{pmatrix}1&a_1&b_1&b_2&a_2\\&1&c_1&c_2&d_1\\&&1&&\lbd a_1\\&&&1&a_1\\&&&&1\end{pmatrix}$, $C' = \begin{pmatrix}1&a'_1&b'_1&b'_2&a'_2\\&1&c'_1&c'_2&d_1\\&&1&&\lbd a'_1\\&&&1&a'_1\\&&&&1\end{pmatrix}$ be a conjugate of $C$, and let $X = \begin{pmatrix}1&x_1&y_1&y_2&x_2\\&1&z_1&z_2&w_1\\&&1&&\lbd x_1\\&&&1&x_1\\&&&&1\end{pmatrix}$ such that $XC = C'X$. Equating $XC = C'X$ gives us $a'_1 = a_1$, $c'_1 = c_1$ and $c'_2 = c_2$, and the following bunch of equations:
\begin{eqnarray}
\begin{pmatrix}a_1 & b_1 + x_1c_1 & b_2 + x_1c_2\end{pmatrix} &=& \begin{pmatrix}a_1 & b'_1 + a_1z_1 & b'_2 + a_1z_2\end{pmatrix}\label{EU21}\\
d_1 + (\lbd z_1 + z_2)a_1 &=& d'_1 + (\lbd c_1 + c_2)x_1 \label{EU22}\\
a_2 + x_1d_1 + (\lbd y_1+y_2)a_1 &=& a'_2 + (\lbd b'_1 + b'_2)x_1+ w_1 a_1 \label{EU23}
\end{eqnarray}
There are two main cases here:

\noindent{\bfseries Case: $a_1 =0$}

When $c_1 = c_2 = 0$, Equation~\ref{EU21} leads us to $b'_1 = b_1$, $b'_2 = b_2$, and from Equation~\ref{EU22} $d'_1 = d_1$. With these, Equation~\ref{EU23} becomes $a_2 + x_1d_1 = a'_2 + (\lbd b_1 + b_2) x_1$. 

When $d_1 = \lbd b_1 + b_2$, we get $a'_2 =a_2$. Thus $C$ is reduced to $\begin{pmatrix}1&&b_1&b_2&a_2\\&1&&&\lbd b_1 + b_2 \\ &&1&&\\&&&1&\\&&&&1\end{pmatrix}$, and $\ZU{A,B,C} = \ZU{5}(A,B)$. Thus, $(A,B,C)$ is of type $UNT_2$, and there are $q^3$ such branches.

When $d_1 \neq \lbd b_1 + b_2$, we can choose $x_1$ such that $a'_2 = 0$. Thus $C$ is reduced to $\begin{pmatrix}
1& & b_1& b_2 &\\
&1&&&d_1\\
&&1&&\\
&&&1&\\
&&&&1
\end{pmatrix}$, and $\ZU{5}(A,B,C)=\begin{pmatrix}1&&y_1&y_2&x_2\\&1&z_1&z_2&w_1\\&&1&&\\&&&1&\\&&&&1\end{pmatrix}$. Thus $(A,B,C)$ is of type $R_1$, and there are $q^2(q-1)$ such branches.

When $c_1 \neq 0$, in Equation~\ref{EU21}, we can choose $x_1$ so that $b'_1 = 0$. Thus on replacing $b_1$ with $b'_1 = 0$, we get $x_1 = 0$, and thus $b'_2 = b_2$. And Equation~\ref{EU22} reduces to $d'_1 = d_1$, and Equation~\ref{EU23} boils down to $a'_2 = a_2$. $C$ is reduced to 
$\begin{pmatrix}
1& & & b_2 & a_2\\
&1&c_1&c_2&d_1\\
&&1&&\\
&&&1&\\
&&&&1
\end{pmatrix}$, and $\ZU{5}(A,B,C) = \begin{pmatrix}1&&y_1&y_2&x_2\\&1&z_1&z_2&w_1\\&&1&&\\&&&1&\\&&&&1\end{pmatrix}$. $(A,B,C)$ is thus of type $R_2$, and there are $(q-1)q^4$ such branches.

When $c_1 = 0$, and $c_2 \neq 0$. In Equation~\ref{EU21}, we get $b'_1 = b_1$, and choose $x_1$ such that $b'_2 = 0$. Hence on substituting $b_2$ with $b'_2 = 0$ and equating Equation~\ref{EU21}, we get $x_1 = 0$. With this Equation~\ref{EU22} boils down to $d'_1 = d_1$, and Equation~\ref{EU23} boils down to $a'_2 = a_2$. $C$ is reduced to
$\begin{pmatrix}
1&&b_1&&a_2\\
&1&&c_2&d_1\\
&&1&&\\
&&&1&\\
&&&&1
\end{pmatrix}$, and $Z(A,B,C) = \begin{pmatrix}1&&y_1&y_2&x_2\\&1&z_1&z_2&w_1\\&&1&&\\&&&1&\\&&&&1\end{pmatrix}$. $(A,B,C)$ is a branch of type $R_1$, and there are $q^3(q-1)$ such branches.

\noindent{\bfseries Case $a_1\neq 0$:} In this case, in Equation~\ref{EU21}, we choose $z_1$ and $z_2$ such that $b'_1 = 0$ and $b'_2 = 0$ respectively. Thus, on replacing $b_1$ by $b'_1 = 0$, and $b_2$ by $b'_2 = 0$ in Equation~\ref{EU21}, and equating, we get $z_1 = \frac{c_1}{a_1}x_1$ and $z_2 = \frac{c_2}{a_1}x_1$. Putting these in Equation~\ref{EU22} leads us to $d_1 +\left(\lbd\frac{c_1}{a_1}x_1 +\frac{c_2}{a_1}x_1 \right)a_1 = d'_2 + (\lbd c_1 + c_2)x_1$. Thus $d'_1 = d_1$.

With all this, Equation~\ref{EU23} boils down to $a_2 + x_1d_1 + (\lbd y_1 + y_2)a_1 = a'_2 + w_1a_1$. Choose $w_1$ so that $a'_2 = 0$. Hence $C$ is reduced to 
$\begin{pmatrix}
1&a_1&&&\\
&1&c_1&c_2&d_1\\
&&1&&\lbd a_1\\
&&&1&a_1\\
&&&&1
\end{pmatrix}$, and 
$$\ZU{5}(A,B) = \begin{pmatrix}1&x_1&y_1&y_2&x_2\\&1&\frac{c_1}{a_1}x_1&\frac{c_2}{a_1}x_1&\lbd y_1 + y_2 + \frac{d_1}{a_1}x_1\\&&1&&\lbd x_1\\&&&1&x_1\\&&&&1\end{pmatrix}.$$
Easy to check that the above centralizer subgroup is a commutative one, of size $q^4$. Thus $(A,B,C)$ is of type $R_3$, and there are $(q-1)q^3$ such branches. 

Adding up all the branches of type $R_1$ gives us $q^2(q-1) + q^3(q-1) + q^4(q-1) = q^5 - q^2$ branches of type $R_1$.
\end{proof}

\begin{prop}\label{T5UN3}
The new type $UNT_3$ has $q^3$ branches of type $UNT_3$, $q^4-q^2$ branches of type $R_2$, and $q^4 - q^3$ branches of type $R_3$.
\end{prop}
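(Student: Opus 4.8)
The plan is to follow exactly the same template used for the other new types $UNT_1$ and $UNT_2$ in Propositions~\ref{T5UN1} and~\ref{T5UN2}, since the statement asserts the branch counts $q^3$, $q^4-q^2$, and $q^4-q^3$ which are structurally identical to the $UNT_2$ output. First I would write down the common centralizer of a commuting pair $(A,B)$ of type $UNT_3$, taken from the $UNT_3$ centralizer displayed in Proposition~\ref{T5B1}, namely the set of matrices $\begin{psmallmatrix}1&x_1&y_1&y_2&x_2\\&1&\lbd_1 x_1 & z_2&w_1\\&&1&\lbd_2x_1 &\frac{\lbd_2}{\lbd_1} y_1\\&&&1&x_1\\&&&&1 \end{psmallmatrix}$ with $\lbd_1,\lbd_2$ fixed nonzero scalars. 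I would then introduce a generic element $C$ of this subgroup and a conjugate $C'=XCX^{-1}$, equate $XC=C'X$, and read off which entries are automatically fixed (the ``diagonal'' generator entries) and which free equations remain. Because the subgroup is parametrized by the five free variables $x_1,y_1,y_2,z_2,w_1$, I expect a short list of two or three linear reduction equations in these parameters, exactly analogous to Equations~(\ref{EU21})--(\ref{EU23}) in the $UNT_2$ proof.

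The main case division I would use is on whether the entry $x_1$ (the parameter coupling the rigidly-linked positions) is zero or nonzero, mirroring the $a_1=0$ versus $a_1\neq0$ split in Proposition~\ref{T5UN2}. When $x_1=0$ the conjugation decouples and I expect a further subdivision according to whether the remaining ``upper'' entries vanish: one sub-branch returns the full centralizer, giving the $q^3$ self-branches of type $UNT_3$; the others collapse the centralizer to a commutative group of size $q^5$, yielding branches of the regular type $R_2$. When $x_1\neq0$ the linked entries force the reductions through, and the resulting stabilizer is a commutative group of size $q^4$, giving the regular type $R_3$ branches, of which there are $(q-1)q^3=q^4-q^3$. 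After each reduction I would compute $|\ZU{5}(A,B,C)|$ and check commutativity by a routine bracket computation, then read off the type by matching the centralizer against the list of known centralizers in $UT_5(\Fq)$, precisely as done throughout Section~\ref{Unip4} and in the preceding two propositions.

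Finally I would tally the branch counts across all sub-cases. The $R_2$ total should assemble as a sum like $q^2(q-1)+q^3(q-1)=q^4-q^2$ (paralleling the $UNT_1$ count in Proposition~\ref{T5UN1}), and the $R_3$ total as $(q-1)q^3=q^4-q^3$; I would verify these sums explicitly at the end of the proof, and note agreement with the GAP estimates for $q=3$ as the other propositions do. The step I expect to be the genuine obstacle is bookkeeping the constrained entries correctly: unlike $UNT_2$, the $UNT_3$ centralizer has \emph{two} independent linkage constants $\lbd_1,\lbd_2$ and a cross-ratio entry $\frac{\lbd_2}{\lbd_1}y_1$, so I must be careful that when I solve for a parameter (say $z_2$ or $w_2$) to kill one off-diagonal entry of $C$, the induced value is consistent with the simultaneous constraint coming from the linked position below it. Getting the two coupled equations in the $x_1\neq0$ case to produce $d'_1=d_1$ cleanly — so that the final centralizer is genuinely commutative of size $q^4$ — is where the calculation could most easily go wrong, and I would treat that verification as the crux of the argument.
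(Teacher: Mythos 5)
Your proposal follows essentially the same route as the paper's proof: same centralizer, same $XC=C'X$ setup, the same main case split on the $x_1$-slot entry of $C$ being zero or not, the same sub-case structure in the degenerate case (full centralizer giving $q^3$ self-branches versus commutative size-$q^5$ stabilizers of type $R_2$, totalling $q^2(q-1)+q^3(q-1)$), and the same commutative size-$q^4$ stabilizer of type $R_3$ in the nondegenerate case. You also correctly identify the crux — tracking the two linkage constants $\lbd_1,\lbd_2$ so that the coupled reduction equations stay consistent — which is exactly where the paper's computation does the work (the sub-cases there are governed by the equalities $c_2=\lbd_2 b_1$ and $b_2=d_1$ rather than by vanishing of entries, but this is the bookkeeping detail you flagged).
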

\begin{proof}
A commuting pair $(A,B)$ of matrices in $UT_5(\Fq)$ of type $UNT_3$ has as its centralizer:
$\left\{\begin{pmatrix}1&x_1&y_1&y_2&x_2\\&1&\lbd_1 x_1&z_2&w_1\\&&1&\lbd_2 x_1 & \frac{\lbd_2}{\lbd_1}y_1\\&&&1&x_1\\&&&&1\end{pmatrix} \right\}$. 

Let $C = \begin{pmatrix}1&a_1&b_1&b_2&a_2\\&1&\lbd_1 a_1&c_2&d_1\\&&1&\lbd_2 a_1 & \frac{\lbd_2}{\lbd_1}b_1\\&&&1&a_1\\&&&&1\end{pmatrix}$, and $C' = \begin{pmatrix}1&a'_1&b'_1&b'_2&a'_2\\&1&\lbd_1 a'_1&c'_2&d'_1\\&&1&\lbd_2 a'_1 & \frac{\lbd_2}{\lbd_1}b'_1\\&&&1&a'_1\\&&&&1\end{pmatrix} = XCX^{-1}$, where $X = \begin{pmatrix}1&x_1&y_1&y_2&x_2\\&1&\lbd_1 x_1&z_2&w_1\\&&1&\lbd_2 x_1 & \frac{\lbd_2}{\lbd_1}y_1\\&&&1&x_1\\&&&&1\end{pmatrix}$. From $XC = C'X$, we get $a'_1 = a_1$, $b'_1 = b_1$, $c'_2 = c_2$, and the following equations:
\begin{eqnarray}
b_2+ x_1c_2 + \lbd_2y_1a_1 &=& b'_2 + z_2a_1 + \lbd_2x_1b_1\label{EU31}\\
d_1 + \lbd_2x_1b_1 + z_2a_1 &=& d'_1 + \lbd_2y_1a_1 + x_1c_2\label{EU32}\\
a_2 + x_1d_1 + y_2a_1 &=& a'_2 + w_1a_1 + x_1b'_2. \label{EU33}
\end{eqnarray}
\noindent{\bfseries Case $a_1 = 0$:} Equation ~\ref{EU31} becomes $b_1 + x_1c_2 = b'_1 + x_1\lbd_2b_1$. When $c_2 = \lbd_2 b_1$, then $b'_2 = b_2$, and similarly in Equation~\ref{EU32}, $d'_1 = d_1$. Here, if $b_2 = d_1$, we get from Equation~\ref{EU33}, $a'_2 = a_2$. Hence $C$ is reduced to $\begin{pmatrix}1&&b_1&b_2&a_2\\&1&&\lbd_2b_1 & b_2\\ &&1&&\frac{\lbd_2}{\lbd_1}b_1\\ &&&1&\\&&&&1\end{pmatrix}$, and $\ZU{5}(A,B,C) = \ZU{5}(A,B)$. $(A,B,C)$ is a branch of type $UNT_3$, and there are $q^3$ such branches. 

When $b_2 \neq d_1$, choose $x_1$ such that $a'_2 = 0$. $C$ is reduced to 
$\begin{pmatrix}
1& &b_1 & b_2 &\\
&1&&\lbd_2b_1&d_1\\
&&1&&\frac{\lbd_2}{\lbd_1}b_1\\
&&&1&\\
&&&&1
\end{pmatrix}$, and $\ZU{5}(A,B,C) = \left\{\begin{pmatrix}1&&y_1&y_2&x_2\\&1&&z_2&w_1\\&&1&&\frac{\lbd_2}{\lbd_1}y_1\\&&&1&\\&&&&1\end{pmatrix}\right\}$. $(A,B,C)$ is thus of type $R_2$, and there are $q^2(q-1)$ such branches.

When $c_2 \neq \lbd_2b_1$. In this case, in equation~\ref{EU31} itself, we choose $x_1$ such that $b'_2 = 0$. And on substituting $b_2$ with $0$ in this equation and equating, we get $x_1 = 0$. Thus, Equation~\ref{EU32} becomes $d'_1 = d_1$, and from Equation~\ref{EU33}, we get $a'_2 = a_2$. Thus $C$ is reduced to 
$\begin{pmatrix}
1&&b_1&&a_2\\
&1&&c_2&d_1\\
&&1&&\frac{\lbd_2}{\lbd_1}b_1\\
&&&1&\\
&&&&1
\end{pmatrix}$, and $\ZU{5}(A,B,C) = \left\{\begin{pmatrix}1&&y_1&y_2&x_2\\&1&&z_2&w_1\\&&1&&\frac{\lbd_2}{\lbd_1}y_1\\&&&1&\\&&&&1\end{pmatrix}\right\}$. This too is a branch of type $R_2$, and there are $q^3(q-1)$.

\noindent{\bfseries Case $a_1\neq 0$:} In Equation~\ref{EU31} choose $z_2$ such that $b'_2 =0$. Thus, substituting $b_2$ with $b'_2 = 0$ in this equation, leads us to $z_2 = \lbd_2 y_1 + \frac{(c_2 - \lbd_2b_1)}{a_1}x_1$. With these Equation~\ref{EU32} becomes $d'_1 = d_1$. Thus Equation~\ref{EU33} becomes $a_2 + x_1d_1 + y_2a_1 = a'_2 + w_1a_1$. Choose $w_1$ such that $a'_2 = 0$. Thus $C$ is reduced to
$\begin{pmatrix}
1&a_1&b_1&&\\
&1&\lbd_1a_1&c_2&d_1\\
&&1&\lbd_2a_1&\frac{\lbd_2}{\lbd_1}b_1\\
&&&1&a_1\\
&&&&1
\end{pmatrix}$, and 
$$\ZU{5}(A,B,C) = \left\{\begin{pmatrix}1&x_1&y_1&y_2&x_2\\&1&\lbd_1x_1&\lbd_2 y_1 + \frac{(c_2 - \lbd_2b_1)}{a_1}x_1&y_2 + \frac{d_1}{a_1}x_1\\&&1&\lbd_2 x_1&\frac{\lbd_2}{\lbd_1}y_1\\&&&1&x_1\\&&&&1\end{pmatrix}\right\}.$$
By a routine check, one can see that this centralizer group is commutative. Thus we have a branch of type $R_3$, and there are $(q-1)q^3$ such branches. 

Adding up the branches of type $R_2$, there is a total of $q^2(q-1) + q^3(q-1) = q^4 -q^2$ branches of type $R_2$.
\end{proof}

\section{Commuting Probabilities}\label{SCPk}

The number of simultaneous conjugacy classes of commuting $k$-tuples in $UT_n(\Fq)$ is denoted by $c_{UT}(n, k, q)$ and  the same for $GT_n(\Fq)$ is denoted by $c_{GT}(n, k, q)$. From Lemma 7.1~\cite{SS}, it follows that $c_{GT}(n, k, q)= {\bmf 1}.B_{GT_n(\Fq)}^k.e_1 $ and $c_{UT}(n,k, q)= {\bmf 1}.B_{UT_n(\Fq)}^k.e_1$ where $\bmf{1} = \begin{pmatrix}1&1&\cdots&1\end{pmatrix}$, and $\bmf{e}_1 = {}^t\begin{pmatrix}1&0&0&\cdots &0\end{pmatrix}$. We note that all of the branching matrices computed in this paper for triangular and unitriangular groups have entries polynomial in $q$ with integer coefficients. Thus, $c_{UT}(n,k, q)$ for $n=3,4, 5$ and $c_{GT}(n, k,q)$ for $n=2,3,4$ are polynomials in $q$ with integer coefficients.

From Theorem 1.1 in~\cite{SS}, for $k \geq 2$, and any finite group $G$, the probability that a $k$-tuple commutes is $cp_k(G) = \displaystyle\frac{c_G(k-1)}{|G|^{k-1}} = \frac{\bmf{1}B_G^{k-1}.e_1}{|G|^{k-1}}$. Now, that we have determined the branching matrix for the groups $GT_i(\Fq)$ for $i = 2,3,4$, and $UT_j(\Fq)$ for $j = 3,4,5$, for each of the groups, we will mention the commuting probabilities for $k\leq 5$. This computation is done using Sage~\cite{Sagemath}.

For the triangular groups we have:
\begin{center}
\begin{tabular}{c|c||c|c}\hline
$k$ & $cp_k(GT_2(\Fq))$ & $k$ & $cp_k(GT_2(\Fq))$\\ \hline
$2$&$\frac{1}{q - 1}$& $4$&$\frac{q^{2} - 2 q + 4}{q^{5} - 3 q^{4} + 3 q^{3} - q^{2}}$\\ &&&\\
$3$& $\frac{q^{2} - q + 2}{q^{4} - 2 q^{3} + q^{2}}$ & $5$ & $\frac{q^{4} - 3 q^{3} + 7 q^{2} - 3 q + 2}{q^{8} - 4 q^{7} + 6 q^{6} - 4 q^{5} + q^{4}}$\\ \hline
\end{tabular}
\end{center}
\vskip2mm
\begin{center}
\begin{tabular}{c|c||c|c}\hline
$k$ & $cp_k(GT_3(\Fq))$ & $k$ & $cp_k(GT_3(\Fq))$\\ \hline
$2$&$\frac{q^{2} + q - 1}{q^3(q-1)^2}$& $4$&$\frac{q^{5} - 3 q^{4} + 7 q^{3} - 5 q^{2} + 11 q + 4}{q^8(q-1)^6}$\\&&&\\
$3$& $\frac{q^{3} - q^{2} + q + 5}{q^5(q-1)^4}$ & $5$ & $\frac{q^{7} - 5 q^{6} + 17 q^{5} - 32 q^{4} + 54 q^{3} - 34 q^{2} + 25 q + 2}{q^{11}(q-1)^8}$\\ \hline
\end{tabular}
\end{center}
\vskip2mm 
\begin{center}
\begin{tabular}{c|c}\hline
$k$ & $cp_k(GT_4(\Fq))$ \\ \hline
$2$ & $\frac{q^{3} + 3 q^{2} - 2 q - 1}{q^{10}(q-1)^3}$\\ &\\
$3$ & $\frac{12 q^{5} - 52 q^{4} + 116 q^{3} - 97 q^{2} + 63 q - 37}{q^{20}(q-1)^6}$ \\ &\\
$4$ & $\frac{6 q^{8} - 16 q^{7} + 3 q^{6} + 195 q^{5} - 593 q^{4} + 1105 q^{3} - 1129 q^{2} + 912 q - 477}{q^{30}(q-1)^9}$ \\&\\
$5$ & $\frac{7 q^{11} - 32 q^{10} + 122 q^{9} - 192 q^{8} + 342 q^{7} - 714 q^{6} + 2038 q^{5} - 3954 q^{4} + 6136 q^{3} - 6304 q^{2} + 4596 q - 2213}{q^{40}(q-1)^{12}}$ \\ \hline
\end{tabular}
\end{center}
\vskip2mm 
In the case of unitriangular group we have: 
\begin{center}
\begin{tabular}{c|c||c|c}\hline
$k$ & $cp_k(UT_3(\Fq))$ & $k$ & $cp_k(UT_3(\Fq))$ \\ \hline
$2$ & $\frac{q^{2} + q - 1}{q^{3}}$ & $3$ & $\frac{q^{3} + q^{2} - 1}{q^{5}}$\\ &&&\\
$4$ & $\frac{q^{4} + q^{3} - 1}{q^{7}}$ & $5$ & $\frac{q^{5} + q^{4} - 1}{q^{9}} $.\\  \hline
\end{tabular}
\end{center}
\vskip2mm 
\begin{center}
\begin{tabular}{c|c||c|c}\hline
$k$ & $cp_k(UT_4(\Fq))$ & $k$ & $cp_k(UT_4(\Fq))$\\ \hline
$2$ & $\frac{2 q^{3} - 1}{q^{6}}$& $4$& $\frac{q^{7} + 3 q^{6} - 3 q^{5} + 5 q^{4} - 4 q^{3} - 3 q + 2}{q^{15}}$\\&&&\\
$3$& $\frac{2 q^{4} + 3 q^{3} - 2 q^{2} - 3 q + 1}{q^{10}}$ & $5$ & $\frac{q^{10} + 2 q^{9} - 2 q^{8} + 3 q^{7} - q^{6} + q^{4} - 3 q^{3} - 2 q + 2}{q^{20}}$\\ \hline
\end{tabular}
\end{center}
\vskip2mm 
\begin{center}
\begin{tabular}{c|c}\hline
$k$ & $cp_k(UT_5(\Fq))$ \\ \hline
$2$ & $\frac{5 q^{4} - 4 q^{3} + 9 q^{2} - 14 q + 5}{q^{10}}$\\ &\\
$3$& $\frac{11 q^{8} - 7 q^{7} + 23 q^{6} - 41 q^{5} + 5 q^{4} + 11 q^{3} + 3 q^{2} - 7 q + 3}{q^{20}}$ \\ &\\
 $4$& $\frac{2 q^{13} + 3 q^{12} + 5 q^{11} + 10 q^{10} - 6 q^{9} - 20 q^{8} + 8 q^{7} - 27 q^{6} + 42 q^{5} - 24 q^{4} + 9 q^{3} + q^{2} - 5 q + 3}{q^{29}}$\\ &\\
 $5$ & $\frac{\begin{psmallmatrix}2 q^{18} + 5 q^{16} - 5 q^{15} + 23 q^{14} - 25 q^{13} + 28 q^{12} - 41 q^{11} + 23 q^{10} - 17 q^{9} + \\10 q^{8} - 25 q^{7} + 18 q^{6} + 23 q^{5} - 26 q^{4} + 7 q^{3} + 3 q^{2} - 5 q + 3\end{psmallmatrix}}{q^{38}}$\\ \hline
\end{tabular}
\end{center}
\appendix
\section{Conjugacy classes of $GT_4(\Fq)$}\label{CCGT4}
The conjugacy classes for upper triangular group can be algorathmically computed following Belitskii's algorithm as described in~\cite{Ko} and in the appendix of~\cite{Bh}. We list them here for the convenience of reader and also to set the notation for types.

\def\arraystretch{1}
\begin{longtable}{|c|c|c|c|} \hline
Class Representatives & $\begin{smallmatrix} \text{Number of}\\ \text{Classes} 
\end{smallmatrix}$ &$\begin{smallmatrix} \text{Order of}\\ \text{Centralizer} 
\end{smallmatrix}$ & $\begin{smallmatrix} \text{Name of}\\ \text{Type} 
\end{smallmatrix}$ \\ \hline
$a_0I_4, a_0 \neq 0 $ & $(q-1)$ & $(q-1)^4q^6$ & $C$ \\ \hline

$\begin{matrix} \begin{psmallmatrix}a&1& &\\&a&&\\&&a&\\&&&a\end{psmallmatrix},~\begin{psmallmatrix} a&&&\\&a&&\\&&a&1\\&&&a\end{psmallmatrix}  \\ a\neq 0 \end{matrix}$ & $2(q-1)$ & $(q-1)^3q^4$ & $A_1$\\ \hline

$\begin{matrix} \begin{psmallmatrix}a&& &\\&a&1&\\&&a& \\&&&a\end{psmallmatrix} \\ a\neq 0\end{matrix}$ & $q-1$ & $(q-1)^3q^4$ & $A'_1$ \\ \hline

$\begin{matrix} \begin{psmallmatrix}a&&1 &\\&a&&\\&&a&\\&&&a\end{psmallmatrix},~\begin{psmallmatrix} a&&&\\&a&&1\\ &&a& \\&&&a \end{psmallmatrix}  \\ a\neq 0 \end{matrix}$ & $2(q-1)$ & $(q-1)^3q^5$ & $A_2$ \\ \hline

$\begin{matrix} \begin{psmallmatrix}a&& &1\\&a&&\\&&a& \\&&&a\end{psmallmatrix} \\ a\neq 0\end{matrix}$  & $q-1$ & $(q-1)^3q^6$ & $A_3$ \\ \hline

$\begin{matrix} \begin{psmallmatrix}a&1& &\\&a&&\\&&a&1 \\&&&a\end{psmallmatrix} \\  a\neq 0\end{matrix}$  & $q-1$ & $(q-1)^2q^4$ & $A_4$ \\ \hline

$\begin{matrix} \begin{psmallmatrix}a&&&1 \\&a&1&\\&&a& \\&&&a\end{psmallmatrix} \\  a\neq 0\end{matrix}$  & $q-1$ & $(q-1)^2q^4$ & $A_5$ \\ \hline

$\begin{matrix} \begin{psmallmatrix}a&&1& \\&a&&1\\&&a& \\&&&a\end{psmallmatrix} \\  a\neq 0\end{matrix}$ & $q-1$ & $(q-1)^2q^5$ & $A_6$ \\ \hline

$\begin{matrix} \begin{psmallmatrix}a&1&& \\&a&1&\\&&a& \\&&&a \end{psmallmatrix}, \begin{psmallmatrix}a&&& \\&a&1&\\&&a&1 \\&&&a\end{psmallmatrix} \\  a\neq 0\end{matrix}$ & $2(q-1)$ & $(q-1)^2q^3$ & $A_7$ \\ \hline

$\begin{matrix} \begin{psmallmatrix}a&1&& \\&a&&1\\&&a& \\&&&a\end{psmallmatrix}, \begin{psmallmatrix}a&&1& \\&a&&\\&&a&1 \\&&&a\end{psmallmatrix}  \\  a\neq 0\end{matrix}$ & $q-1$ & $(q-1)^2q^4$ & $A_8$ \\ \hline

$\begin{matrix} \begin{psmallmatrix}a&1&1& \\&a&&\\&&a&1 \\&&&a\end{psmallmatrix} \\  a\neq 0\end{matrix}$& $q-1$ & $q(q-1)q^4$ & $A_9$ \\ \hline

$\begin{matrix} \begin{psmallmatrix}a&&& \\&a&&\\&&b& \\&&&b\end{psmallmatrix}, \begin{psmallmatrix}a&&& \\&b&&\\&&a& \\&&&b\end{psmallmatrix} \\ \begin{psmallmatrix}a&&& \\&b&&\\&&b& \\&&&a\end{psmallmatrix}; a\neq b\end{matrix}$ & $3(q-1)(q-2)$ & $(q-1)^4q^2$ & $B_1$ \\ \hline

$\begin{matrix} \begin{psmallmatrix}a&&& \\&a&&\\&&a& \\&&&b\end{psmallmatrix}, \begin{psmallmatrix}a&&& \\&a&&\\&&b& \\&&&a\end{psmallmatrix} \\ \begin{psmallmatrix} a&&& \\&b&&\\&&a& \\&&&a \end{psmallmatrix}, \begin{psmallmatrix} b&&& \\&a&&\\&&a& \\&&&a\end{psmallmatrix}; a\neq b\end{matrix}$ & $4(q-1)(q-2)$ & $(q-1)^4q^3$ & $B_2$ \\ \hline

$\begin{matrix} \begin{psmallmatrix}a&1&& \\&a&&\\&&a& \\&&&b\end{psmallmatrix}, \text{and  3 more}\\ \begin{psmallmatrix}a&&& \\&a&1&\\&&a& \\&&&b\end{psmallmatrix},\text{and  3 more}; \\a\neq b\end{matrix}$ & $8(q-1)(q-2)$ & $(q-1)^3q^2$ & $B_3$ \\ \hline

$\begin{matrix} \begin{psmallmatrix}a&&1& \\&a&&\\&&a& \\&&&b\end{psmallmatrix}, \begin{psmallmatrix}a&&&1 \\&a&&\\&&b& \\&&&a\end{psmallmatrix} \\ \begin{psmallmatrix}a&&&1 \\&b&&\\&&a& \\&&&a\end{psmallmatrix}, \begin{psmallmatrix} b&&& \\&a&&1\\&&a& \\&&&a\end{psmallmatrix}; a\neq b\end{matrix}$ & $4(q-1)(q-2)$ & $(q-1)^3q^3$ & $B_4$ \\ \hline

$\begin{matrix} \begin{psmallmatrix}a&1&& \\&a&&\\&&b& \\&&&b\end{psmallmatrix}, \text{and  5 more}; \\a\neq b\end{matrix}$ & $6(q-1)(q-2)$ & $(q-1)^3q^2$ & $B_5$ \\ \hline

$\begin{matrix} \begin{psmallmatrix}a&&& \\&a&&\\&&b& \\&&&c\end{psmallmatrix}, \text{and  5 more}; \\a\neq b\neq c\neq a\end{matrix}$ & $6(q-1)(q-2)(q-3)$ & $(q-1)^4q$ & $B_6$ \\ \hline
\multicolumn{4}{|c|}{The $\Reg$ types} \\\hline
$\begin{matrix}  \begin{psmallmatrix}a&1&& \\&a&1&\\&&a&1 \\&&&a\end{psmallmatrix}, \\a\neq 0\end{matrix}$ & $q-1$ & $(q-1)q^3$ & $R_1$ \\ \hline

$\begin{matrix} \begin{psmallmatrix}a&1&& \\&a&1&\\&&a& \\&&&b\end{psmallmatrix}, \text{and  3 more}; \\a\neq b\end{matrix}$ & $4(q-1)(q-2)$ & $(q-1)^2q^2$ & $R_2$ \\ \hline

$\begin{matrix} \begin{psmallmatrix}a&1&& \\&a&&\\&&b&1 \\&&&b\end{psmallmatrix}, \begin{psmallmatrix}a&&1& \\&b&&1\\&&a& \\&&&b\end{psmallmatrix} \\\begin{psmallmatrix}a&&&1 \\&b&1&\\&&b& \\&&&a\end{psmallmatrix} ; a\neq b\end{matrix}$ & $3(q-1)(q-2)$ & $(q-1)^2q^2$ & $R_3$ \\ \hline

$\begin{matrix} \begin{psmallmatrix}a&1&& \\&a&&\\&&b& \\&&&c\end{psmallmatrix}, \text{and  5 others}; \\a\neq b\neq c\neq a\end{matrix}$ & $6(q-1)(q-2)(q-3)$ & $(q-1)^3q$ & $R_4$ \\ \hline

$\begin{matrix} \begin{psmallmatrix}a&&& \\&b&&\\&&c& \\&&&d\end{psmallmatrix}, \\a\neq b\neq c\neq a\\ a,b,c \neq d\end{matrix}$ & $\begin{smallmatrix}(q-1).(q-2).\\(q-3).q-4)\end{smallmatrix}$ & $(q-1)^4$ & $R_5$ \\ \hline
\end{longtable}

\section{Conjugacy classes of $UT_4(\Fq)$ and $UT_5(\Fq)$}\label{CCUT45}
Understanding conjugacy classes in unitriangular group is a challenging problem. We refer a reader to~\cite{VA1,VA2} for the reference. We list down the same for $UT_4(\Fq)$ and $UT_5(\Fq)$, what we need for our purpose.

\begin{longtable}{|c|c|c|c|}\hline
 Class Representatives & Number of Classes & Centralizer size  & Name of  Type\\ 
 &&in $UT_4(\Fq)$&\\
 \hline
 
 $\begin{psmallmatrix} 1&&& a\\&1&&\\&&1&\\&&&1\end{psmallmatrix}, a \in \Fq $ & $q$ &$q^6$ & $C$ \\ \hline

 $\begin{matrix}\begin{psmallmatrix} 1&&a& \\&1&&\\&&1&\\&&&1\end{psmallmatrix}, \begin{psmallmatrix} 1&&& \\&1&&a\\&&1&\\&&&1\end{psmallmatrix} ,\\ a \in \Fq^* \end{matrix}$ & $(q-1),(q-1)$ &$q^5$ & $A_1$ \\ \hline
 $\begin{psmallmatrix}1&&a& \\&1&&b\\&&1&\\&&&1\end{psmallmatrix}, a,b \in \Fq^*$ & $(q-1)^2$ & $q^5$ & $A_2$ \\ \hline
 $\begin{matrix}\begin{psmallmatrix}1&a&& \\&1&&\\&&1&\\&&&1\end{psmallmatrix},\begin{psmallmatrix}1&&& \\&1&&\\&&1&a\\&&&1\end{psmallmatrix},\\ \begin{psmallmatrix}1&a&& \\&1&&\\&&1&b\\&&&1\end{psmallmatrix}, \begin{psmallmatrix}1&a&& \\&1&&b\\&&1&\\&&&1\end{psmallmatrix},\\\begin{psmallmatrix}1&&a& \\&1&&\\&&1&b\\&&&1\end{psmallmatrix}, \begin{psmallmatrix}1&a&b& \\&1&&\\&&1&c\\&&&1\end{psmallmatrix},\\a, b,c\in \Fq^* \end{matrix}$ & $\begin{matrix}(q-1),(q-1),\\(q-1)^2,(q-1)^2,\\(q-1)^2,(q-1)^3\end{matrix}$ & $q^4$ & $A_3$ \\ \hline
 $\begin{matrix}\begin{psmallmatrix} 1&&& \\&1&a&\\&&1&\\&&&1\end{psmallmatrix}, \begin{psmallmatrix}1&&&b \\&1&a&\\&&1&\\&&&1\end{psmallmatrix},\\a,b \in \Fq^*\end{matrix}$ & $(q-1),(q-1)^2$ & $q^4$ & $R_1$ \\ \hline  
 $\begin{matrix}\begin{psmallmatrix}1&a&& \\&1&b&\\&&1&\\&&&1\end{psmallmatrix},\begin{psmallmatrix}1&&& \\&1&b&\\&&1&a\\&&&1\end{psmallmatrix},\\ \begin{psmallmatrix}1&a&& \\&1&b&\\&&1&c\\&&&1\end{psmallmatrix}, a,b,c\in \Fq^* \end{matrix}$ & $\begin{matrix}(q-1)^2, (q-1)^2,\\ (q-1)^3\end{matrix}$ &$q^3$ & $R_2$ \\ \hline
\end{longtable}

\def\arraystretch{1}
\begin{longtable}{|c|c|c|c|} \hline
Class Representatives & $\begin{matrix} \text{Number of}\\ \text{Classes} \end{matrix}$ &$\begin{matrix} \text{Order of}\\ \text{Centralizer in $UT_{5}(\Fq)$} \end{matrix}$ & $\begin{matrix} \text{Name of}\\ \text{Type} \end{matrix}$ \\ \hline
$\begin{matrix}\left(\begin{smallmatrix} 1&&&& a\\&1&&&\\&&1&&\\&&&1&\\&&&&1\end{smallmatrix}\right), a \in \Fq \end{matrix}$ & $q$ & $q^{10}$ & $C$ \\ \hline

$\begin{matrix} \left(\begin{smallmatrix}1&&&a&\\&1&&&\\&&1&&\\&&&1&\\&&&&1\end{smallmatrix}\right),~\left(\begin{smallmatrix}1&&&&\\&1&&&a\\&&1&&\\&&&1&\\&&&&1\end{smallmatrix}\right)  \\ a\in \Fq^* \end{matrix}$ & $(q-1),(q-1)$ & $q^9$ & $A_1$ \\ \hline

$\begin{matrix} \left(\begin{smallmatrix}1&&&& \\&1&&a&\\&&1&&\\&&&1&\\&&&&1\end{smallmatrix}\right),~\left(\begin{smallmatrix}1&&&& b\\&1&&a&\\&&1&&\\&&&1&\\&&&&1\end{smallmatrix}\right)  \\ a,b\in \Fq^* \end{matrix}$ & $(q-1),(q-1)^2$ & $q^8$ & $A_2$ \\ \hline

 $\begin{matrix} \left(\begin{smallmatrix}1&&a&&\\&1&&&\\&&1&&\\&&&1&\\&&&&1\end{smallmatrix}\right),~\left(\begin{smallmatrix}1&&&&\\&1&&&\\&&1&&a\\&&&1&\\&&&&1\end{smallmatrix}\right) , \\
a,b\in \Fq^* \end{matrix}$  &$\begin{matrix} (q-1),(q-1)^2,\\(q-1),(q-1)^2 \end{matrix}$ & $q^8$ & $A_3$\\ \hline
$\begin{matrix} \left(\begin{smallmatrix}1&&&&\\&1&a&&\\&&1&&\\&&&1&\\&&&&1\end{smallmatrix}\right),~\left(\begin{smallmatrix}1&&&&b\\&1&a&&\\&&1&&\\&&&1&\\&&&&1\end{smallmatrix}\right) , \\
\left(\begin{smallmatrix}1&&&&\\&1&&&\\&&1&a&\\&&&1&\\&&&&1\end{smallmatrix}\right),~\left(\begin{smallmatrix}1&&&&b\\&1&&&\\&&1&a&\\&&&1&\\&&&&1\end{smallmatrix}\right) \\ 
a,b\in \Fq^* \end{matrix}$  &$\begin{matrix} (q-1),(q-1)^2,\\(q-1),(q-1)^2 \end{matrix}$ & $q^7$ & $A_4$\\ \hline

$\begin{matrix} \left(\begin{smallmatrix}1&a&&&\\&1&&&\\&&1&&\\&&&1&\\&&&&1\end{smallmatrix}\right),~\left(\begin{smallmatrix}1&a&&&\\&1&&&b\\&&1&&\\&&&1&\\&&&&1\end{smallmatrix}\right) , \\
\left(\begin{smallmatrix}1&&a&&\\&1&&&\\&&1&&b\\&&&1&\\&&&&1\end{smallmatrix}\right),~\left(\begin{smallmatrix}1&a&&&\\&1&&&\\&&1&&b\\&&&1&\\&&&&1\end{smallmatrix}\right) ,\\ 
\left(\begin{smallmatrix}1&a&c&&\\&1&&&\\&&1&&b\\&&&1&\\&&&&1\end{smallmatrix}\right),~\left(\begin{smallmatrix}1&&&&\\&1&&&\\&&1&&\\&&&1&a\\&&&&1\end{smallmatrix}\right) ,\\ 
\left(\begin{smallmatrix}1&&&b&\\&1&&&\\&&1&&\\&&&1&a\\&&&&1\end{smallmatrix}\right),~\left(\begin{smallmatrix}1&&b&&\\&1&&&\\&&1&&\\&&&1&a\\&&&&1\end{smallmatrix}\right) ,\\ 
\left(\begin{smallmatrix}1&&b&c&\\&1&&&\\&&1&&\\&&&1&a\\&&&&1\end{smallmatrix}\right),~
a,b,c\in \Fq^* \end{matrix}$  &$\begin{matrix} (q-1),(q-1)^2,\\(q-1)^2,(q-1)^2,\\(q-1)^3,(q-1), \\(q-1)^2,(q-1)^2,\\(q-1)^3\end{matrix}$ & $q^7$ & $A_5$\\ \hline

$\begin{matrix} \left(\begin{smallmatrix}1&&&a& \\&1&&&b\\&&1&&\\&&&1&\\&&&&1\end{smallmatrix}\right), a,b\in \Fq^*\end{matrix}$ & $(q-1)^2$ & $q^9$ & $B_1$ \\ \hline
$\begin{matrix} 
\left(\begin{smallmatrix}1&&a&&\\&1&&&b\\&&1&&\\&&&1&\\&&&&1\end{smallmatrix}\right),~\left(\begin{smallmatrix}1&&&b&\\&1&&&\\&&1&&a\\&&&1&\\&&&&1\end{smallmatrix}\right) \\ 
a,b\in \Fq^* \end{matrix}$  &$\begin{matrix} (q-1),(q-1)^2,\\(q-1),(q-1)^2 \end{matrix}$ & $q^8$ & $B_2$\\ \hline

$\begin{matrix} \left(\begin{smallmatrix}1&&a&&\\&1&&b&\\&&1&&\\&&&1&\\&&&&1\end{smallmatrix}\right),~\left(\begin{smallmatrix}1&&&&\\&1&&a&\\&&1&&b\\&&&1&\\&&&&1\end{smallmatrix}\right) , \\ 
a,b\in \Fq^* \end{matrix}$  &$ (q-1)^2,(q-1)^2$ & $q^7$ & $B_3$\\ \hline

$\begin{matrix} \left(\begin{smallmatrix}1&a&&&\\&1&&b&\\&&1&&\\&&&1&\\&&&&1\end{smallmatrix}\right),~\left(\begin{smallmatrix}1&a&&&\\&1&&b&\\&&1&&c\\&&&1&\\&&&&1\end{smallmatrix}\right) , \\
\left(\begin{smallmatrix}1&&&&\\&1&a&&\\&&1&&b\\&&&1&\\&&&&1\end{smallmatrix}\right),~\left(\begin{smallmatrix}1&&&c&\\&1&a&&\\&&1&&b\\&&&1&\\&&&&1\end{smallmatrix}\right) ,\\ 
\left(\begin{smallmatrix}1&&b&&\\&1&&&\\&&1&a&\\&&&1&\\&&&&1\end{smallmatrix}\right),~\left(\begin{smallmatrix}1&b&&&\\&1&&&\\&&1&a&\\&&&1&\\&&&&1\end{smallmatrix}\right), \\ 
\left(\begin{smallmatrix}1&b&c&&\\&1&&&\\&&1&a&\\&&&1&\\&&&&1\end{smallmatrix}\right),~\left(\begin{smallmatrix}1&&b&&\\&1&&&c\\&&1&a&\\&&&1&\\&&&&1\end{smallmatrix}\right), \\ 
\left(\begin{smallmatrix}1&b&&&\\&1&&&c\\&&1&a&\\&&&1&\\&&&&1\end{smallmatrix}\right),~\left(\begin{smallmatrix}1&b&c&&\\&1&&&d\\&&1&a&\\&&&1&\\&&&&1\end{smallmatrix}\right), \\ 
\left(\begin{smallmatrix}1&&&&\\&1&&b&\\&&1&&\\&&&1&a\\&&&&1\end{smallmatrix}\right),~\left(\begin{smallmatrix}1&&c&&\\&1&&b&\\&&1&&\\&&&1&a\\&&&&1\end{smallmatrix}\right) , \\
\left(\begin{smallmatrix}1&&&&\\&1&b&&\\&&1&&\\&&&1&a\\&&&&1\end{smallmatrix}\right),~\left(\begin{smallmatrix}1&&&c&\\&1&b&&\\&&1&&\\&&&1&a\\&&&&1\end{smallmatrix}\right) ,\\ 
\left(\begin{smallmatrix}1&&&&\\&1&b&c&\\&&1&&\\&&&1&a\\&&&&1\end{smallmatrix}\right),~\left(\begin{smallmatrix}1&&&d&\\&1&b&c&\\&&1&&\\&&&1&a\\&&&&1\end{smallmatrix}\right) \\ 
a,b,c,d\in \Fq^* \end{matrix}$  &$\begin{matrix} (q-1)^2,(q-1)^3,\\(q-1)^2,(q-1)^3,\\(q-1)^2,(q-1)^2, \\(q-1)^3,(q-1)^3,\\(q-1)^3, (q-1)^4\\(q-1)^2,(q-1)^3,\\(q-1)^2,(q-1)^3,\\(q-1)^3,(q-1)^4\\ \end{matrix}$ & $q^6$ & $B_4$\\ \hline

$\begin{matrix} \left(\begin{smallmatrix}1&b&&&\\&1&&&\\&&1&&\\&&&1&a\\&&&&1\end{smallmatrix}\right),~\left(\begin{smallmatrix}1&b&&c&\\&1&&&\\&&1&&\\&&&1&a\\&&&&1\end{smallmatrix}\right)  \\ 
a,b,c\in \Fq^* \end{matrix}$  &$ (q-1)^2,(q-1)^3$ & $q^6$ & $B_5$ \\ \hline

$\begin{matrix} \left(\begin{smallmatrix}1&a&&&\\&1&b&&\\&&1&&\\&&&1&\\&&&&1\end{smallmatrix}\right),~\left(\begin{smallmatrix}1&a&&&\\&1&b&&\\&&1&&\\&&&1&c\\&&&&1\end{smallmatrix}\right) , \\ 
\left(\begin{smallmatrix}1&&&&\\&1&&&\\&&1&a&\\&&&1&b\\&&&&1\end{smallmatrix}\right),~\left(\begin{smallmatrix}1&c&&&\\&1&&&\\&&1&a&\\&&&1&b\\&&&&1\end{smallmatrix}\right)  \\ 
a,b,c\in \Fq^* \end{matrix}$  &$\begin{matrix} (q-1)^2,(q-1)^3,\\ (q-1)^2,(q-1)^3\\\end{matrix}$ & $q^5$ & $B_6$\\ \hline

$\begin{matrix} \left(\begin{smallmatrix}1&&a&&\\&1&&b&\\&&1&&c\\&&&1&\\&&&&1\end{smallmatrix}\right),~ a,b,c\in \Fq^*\end{matrix}$  & $(q-1)^3$ & $q^7$ &  $D_1$\\ \hline

$\begin{matrix} \left(\begin{smallmatrix}1&a&&&\\&1&b&&\\&&1&&c\\&&&1&\\&&&&1\end{smallmatrix}\right),~\left(\begin{smallmatrix}1&a&&&\\&1&&b&\\&&1&&\\&&&1&c\\&&&&1\end{smallmatrix}\right) , \\ 
\left(\begin{smallmatrix}1&a&&&\\&1&b&c&\\&&1&&\\&&&1&d\\&&&&1\end{smallmatrix}\right),~\left(\begin{smallmatrix}1&&a&&\\&1&&&\\&&1&b&\\&&&1&c\\&&&&1\end{smallmatrix}\right) , \\ 
\left(\begin{smallmatrix}1&a&b&&\\&1&&&\\&&1&c&\\&&&1&d\\&&&&1\end{smallmatrix}\right),~
a,b,c,d\in \Fq^* \end{matrix}$  &$\begin{matrix} (q-1)^3,(q-1)^3,\\ (q-1)^4,(q-1)^3\\(q-1)^4\end{matrix}$ & $q^5$ & $D_2$\\ \hline

$\begin{matrix} \left(\begin{smallmatrix}1&&&a&\\&1&b&&\\&&1&&\\&&&1&\\&&&&1\end{smallmatrix}\right),~\left(\begin{smallmatrix}1&&&&\\&1&&&a\\&&1&b&\\&&&1&\\&&&&1\end{smallmatrix}\right) , \\ 
a,b\in \Fq^* \end{matrix}$  &$ (q-1)^2,(q-1)^2$ & $q^6$ & $R_1$\\ \hline

$\begin{matrix} \left(\begin{smallmatrix}1&&&&\\&1&a&&\\&&1&b&\\&&&1&\\&&&&1\end{smallmatrix}\right),~\left(\begin{smallmatrix}1&&&&c\\&1&a&&\\&&1&b&\\&&&1&\\&&&&1\end{smallmatrix}\right)  \\ 
a,b,c\in \Fq^* \end{matrix}$  &$ (q-1)^2,(q-1)^3$ & $q^5$ & $R_2$\\ \hline

$\begin{matrix} \left(\begin{smallmatrix}1&a&&&\\&1&b&&\\&&1&c&\\&&&1&\\&&&&1\end{smallmatrix}\right),~\left(\begin{smallmatrix}1&&&&\\&1&a&&\\&&1&b&\\&&&1&c\\&&&&1\end{smallmatrix}\right) , \\
\left(\begin{smallmatrix}1&a&&&\\&1&b&&\\&&1&c&\\&&&1&d\\&&&&1\end{smallmatrix}\right),~
a,b,c,d\in \Fq^* \end{matrix}$  &$\begin{matrix} (q-1)^3,(q-1)^3\\ (q-1)^4\end{matrix}$ & $q^4$ & $R_3$\\ \hline

\end{longtable}


\end{document}